\documentclass[10pt,reqno,final]{article}

\usepackage{amsmath,amsfonts,amssymb,amsthm,version}
\usepackage{mathrsfs,fancybox,pifont}
\usepackage{graphicx}
\usepackage{url}
\usepackage{hyperref}
\usepackage{bookmark}
\usepackage{color}
\usepackage{xcolor}
\usepackage{subfigure,multirow}
\usepackage{epstopdf}
\usepackage{cases}
\usepackage{mathtools}
\usepackage{algorithm,algorithmic}
\usepackage{authblk}
\usepackage{array} 
\usepackage{bm}
\usepackage{tikz-cd}
\usepackage{booktabs}
\usetikzlibrary{arrows.meta}
\usepackage{xspace}

\newcommand{\Aut}{\mathrm{Aut}}
\newcommand{\End}{\mathrm{End}}
\newcommand{\Hom}{\mathrm{Hom}}
\newcommand{\Res}{\mathrm{Res}}
\newcommand{\PSL}{\mathrm{PSL}}
\newcommand{\SL}{\mathrm{SL}}

\newcommand{\Q}{\mathbb{Q}}
\newcommand{\R}{\mathbb{R}}

\newcommand{\Rep}{\mathbf{Rep}}

\newcommand{\Loc}{\mathbf{Loc}}
\newcommand{\Higgs}{\mathbf{Higgs}}

\newcommand{\GL}{\mathrm{GL}}

\newcommand{\Lie}{\mathrm{Lie}}

\newcommand{\MIC}{\mathbf{MIC}}
\newcommand{\PU}{\mathrm{PU}}

\newcommand{\et}{\text{\'et}}
\newcommand{\Id}{\mathrm{Id}}

\allowdisplaybreaks

\numberwithin{equation}{section}
\numberwithin{figure}{section}
\numberwithin{table}{section}

\theoremstyle{plain}
\newtheorem{theorem}{Theorem}[section]

\newtheorem{corollary}{Corollary}[section]
\newtheorem{proposition}{Proposition}[section]

\theoremstyle{definition}
\newtheorem{definition}{Definition}[section]
\newtheorem{example}{Example}

\theoremstyle{remark}
\newtheorem{remark}{Remark}[section]

\title{Uniformization as Tannakian Reconstruction}
\author[1]{Xiaojin Lin\thanks{xj-lin@mail.tsinghua.edu.cn}}
\author[2]{Mao Sheng\thanks{msheng@mail.tsinghua.edu.cn}}
\affil[1,2]{Yau Mathematical Sciences Center, Tsinghua University}
\affil[2]{Beijing Institute of Mathematical Sciences and Applications}

\date{}

\begin{document}
\maketitle
 
\begin{abstract}
    Classical hyperbolic uniformization identifies every hyperbolic log--orbi
    curve with a compactified quotient of the upper half-plane by a cofinite
    Fuchsian lattice, unique up to conjugacy.  We reconstruct this lattice
    intrinsically.  For each hyperbolic log--orbi curve \(\mathcal C\), we
    construct a canonical maximal principal \(\PSL_2\)-Higgs object
    \((\mathcal U_{\mathcal C},\vartheta_{\mathcal C})\).  \'Etale-locally it
    comes from the standard square-root \(\SL_2\)-model; the central
    \(\mu_2\)-ambiguity disappears after passage to \(\PSL_2\).

    Using vector tame non-abelian Hodge theory and regular-singular
    Riemann--Hilbert as input, we assemble the required principal realizations
    Tannakianly, with parahoric structures encoding the orbifold and cusp data
    on the coarse curve.  After choosing \(c\in C^\circ\) and conjugating, the
    Betti realization of the canonical object is represented by a discrete,
    faithful, finite-covolume representation
    \[
    \rho_{\mathcal C}\colon
    \pi_1^{\mathrm{orb}}(\mathcal C,c)\longrightarrow \PSL_2(\mathbb R),
    \]
    whose image is the uniformizing lattice.  Compatibility with finite
    \'etale pullback makes the lattice construction a quasi-inverse to the
    compactified quotient functor.  Thus classical uniformization is recast as
    an intrinsic Tannakian reconstruction theorem.  We also identify finite
    \'etale covers with finite continuous sets for the profinite completion of
    the reconstructed lattice and, after fixing a separable closure and the
    resulting geometric generic point, recover the absolute Galois group of
    \(\mathbb C(C)\) as the inverse limit of the based \'etale fundamental
    groups of orbifold models over \(C\).

    \medskip
    \noindent\textbf{Keywords}: uniformization, non-abelian Hodge theory, orbifolds, Galois theory, Fuchsian groups, Tannakian reconstruction.

    \medskip
    \noindent\textbf{2020 Mathematics Subject Classification:} 14A20, 14A21, 14C30, 14D07, 14F30, 14H30 (Primary), 14H57.
\end{abstract}


\section{Introduction}

\subsection{Uniformization as Tannakian Reconstruction}
\label{subsec:intro-tannakian-reconstruction}

    Classical hyperbolic uniformization identifies a hyperbolic log--orbi curve
    with a compactified quotient
    \[
    \mathcal C\simeq(\Gamma\backslash\mathbb H)^c
    \]
    by a cofinite Fuchsian lattice
    \(\Gamma\subset\PSL_2(\mathbb R)\), unique up to conjugacy.  In its usual
    form this is an existence-and-uniqueness theorem.  The problem considered
    here is more specific: can the uniformizing representation and its image
    lattice be reconstructed from an intrinsic geometric object on
    \(\mathcal C\), compatibly with finite \'etale morphisms?

    The natural Hodge-theoretic candidate exposes an obstruction.  Locally,
    the standard maximal rank-two Higgs model is built on
    \(\Theta\oplus\Theta^{-1}\), where
    \(\Theta^{\otimes2}\simeq\omega_{\mathcal C}\).  As an \(\SL_2\)-object it
    depends on a square root of the log--orbi canonical bundle.  Such a square
    root exists only for certain orbifold structures; in particular, all
    inertia groups must have odd order.  Even when such a square root exists,
    it is not canonical: different choices differ by central
    \(\mu_2\)-twists.  Thus the evident rank-two construction introduces
    auxiliary spin data precisely where an intrinsic reconstruction should be
    choice-free.

    The obstruction also identifies the correct group.  After pushout along
    \(\SL_2\to\PSL_2\), the central twists become invisible, so the local
    maximal models descend to a canonical principal \(\PSL_2\)-Higgs object
    \((\mathcal U_{\mathcal C},\vartheta_{\mathcal C})\).  Its projective form
    is forced by descent rather than chosen for convenience.

    We apply vector tame non-abelian Hodge theory and regular-singular
    Riemann--Hilbert representation by representation, then reconstruct the
    principal realizations Tannakianly.  Parahoric structures provide only the
    auxiliary coarse-curve description of the full orbifold and logarithmic
    local data.  Applying the parahoric non-abelian Hodge package to
    \((\mathcal U_{\mathcal C},\vartheta_{\mathcal C})\) and then invoking
    maximality yields the desired uniformizing Fuchsian lattice.

    To formulate the categorical payoff, let \(\mathbf{FL}\) be the category
    of cofinite Fuchsian lattices with transporter-coset morphisms and let
    \(\mathbf{HypLO}\) be the category of hyperbolic log--orbi curves with
    finite \'etale morphisms.  Compactified quotient defines
    \[
    Q_c\colon\mathbf{FL}\longrightarrow\mathbf{HypLO},
    \qquad
    \Gamma\longmapsto(\Gamma\backslash\mathbb H)^c.
    \]

    \begin{theorem}[Poincaré--Koebe hyperbolic uniformization, categorical form]
    \label{thm:classical-uniformization-essential-surjectivity}
    The functor \(Q_c\) is essentially surjective.  Equivalently, every
    hyperbolic log--orbi curve is the compactified quotient by a cofinite
    Fuchsian lattice, unique up to conjugacy; orbifold points correspond to
    elliptic fixed points of the same order and logarithmic points to cusps
    \cite[\S\S1--2, pp.~223--228]{Faltings83RealprojstronRiemannsur}.
    \end{theorem}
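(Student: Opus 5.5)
This is the classical theorem, so the plan is to reduce it to the uniformization of Riemann surfaces plus the Fenchel--Fox/Selberg realization of the orbifold fundamental group. The paper's own Tannakian route will later reprove it independently.

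Let \(\mathcal C\) be hyperbolic with coarse curve \(C\) of genus \(g\), orbifold points \(x_1,\dots,x_r\) with orders \(m_i\ge 2\), and log points \(y_1,\dots,y_s\). Hyperbolicity means
\[
2g-2+s+\sum_i\bigl(1-\tfrac1{m_i}\bigr)>0 .
\]

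\textbf{Step 1: pass to a manifold cover.} Remove the points \(y_j\) and consider the orbifold fundamental group
\[
\pi=\Bigl\langle a_k,b_k,\gamma_i,\delta_j \,\Big|\, \prod[a_k,b_k]\prod\gamma_i\prod\delta_j=1,\ \gamma_i^{m_i}=1\Bigr\rangle .
\]
Outside the single exceptional case \(g=0\), \(r=2\), \(m_1\ne m_2\) (and \(r=1\)), which is excluded by hyperbolicity, Fox's theorem gives a finite-index torsion-free normal subgroup \(K\subset\pi\) in which each \(\gamma_i\) has exact image order \(m_i\) in \(\pi/K\). This yields a finite Galois cover \(Y\to C\) of a punctured smooth curve, branched exactly with index \(m_i\) over \(x_i\) and unbranched elsewhere. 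Equivalently, \(\mathcal C\simeq[Y^c/G]\) with log structure, where \(G=\pi/K\) acts on \(Y\). By the Riemann--Hurwitz formula, \(Y\) minus its punctures has negative Euler characteristic.

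\textbf{Step 2: uniformize and descend.} By the Poincaré--Koebe theorem, the universal cover of \(Y^\circ\) is \(\mathbb H\), and \(\pi_1(Y^\circ)\) acts by a torsion-free Fuchsian group \(\Gamma_Y\) with cofinite volume. The punctures correspond to parabolic classes: small punctured discs have infinite-cyclic fundamental group, and the hyperbolic metric there is the complete cusp metric. The \(G\)-action lifts to \(\mathbb H\), and the group \(\Gamma\) of all lifts is an extension \(1\to\Gamma_Y\to\Gamma\to G\to1\) acting by biholomorphisms, hence inside \(\PSL_2(\mathbb R)\). Since \(G\) is finite, \(\Gamma\) is discrete, and it is cofinite because \(\Gamma_Y\) is.

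Stabilizers of points of \(\mathbb H\) over \(x_i\) are cyclic of order \(m_i\), generated by elliptic elements, because the local branching is \(z\mapsto z^{m_i}\). Points over \(y_j\) are cusps of \(\Gamma\). Hence \((\Gamma\backslash\mathbb H)^c\simeq\mathcal C\) as log--orbi curves. The compactification adds the cusps, and the orbifold charts match since the coarse map near an elliptic fixed point is the quotient by the stabilizer.

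\textbf{Step 3: uniqueness.} Suppose \((\Gamma\backslash\mathbb H)^c\simeq(\Gamma'\backslash\mathbb H)^c\). The isomorphism lifts to the universal orbifold covers, which are both \(\mathbb H\), giving an element of \(\Aut(\mathbb H)=\PSL_2(\mathbb R)\) that conjugates \(\Gamma\) to \(\Gamma'\). This uses that \(\mathbb H\to(\Gamma\backslash\mathbb H)^{\circ}\) is the orbifold universal cover, since \(\Gamma\) is the full deck group.

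\textbf{Main obstacle.} The delicate point is Step 1: the existence of the torsion-free finite-index subgroup with exact local orders (Fox's theorem, i.e. Bundgaard--Nielsen--Fox), together with the fact that the algebraic log--orbi structure equals the analytic quotient stack. I would cite Fox's theorem rather than reprove it, and cross-check against \cite{Faltings83RealprojstronRiemannsur} as stated.
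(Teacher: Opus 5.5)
Your argument is correct, but it is not the paper's route. The paper does not prove this statement where it is stated. It quotes it as classical input from Faltings, and then rederives its content independently in Theorems~\ref{thm:uniformizing-representation} and~\ref{thm:categorical-uniformization}, working directly on the stack.

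There the argument runs as follows:
\begin{enumerate}
\item The canonical maximal $\PSL_2$-Higgs object on $\mathcal C$ is shown to be stable of degree zero.
\item Tame non-abelian Hodge theory then yields a $\rho_{\mathcal C}$-equivariant projective period map $u\colon\widetilde{\mathcal C^\circ}\to\mathbb D$.
\item Maximality of the Higgs field makes $u$ a local isometry.
\item The local models on the cyclic charts $[\Delta/\mu_m]$ and at the cusps give completeness and finite area.
\item Hence $u$ is a covering, thus a biholomorphism, and $\rho_{\mathcal C}(\pi_1^{\mathrm{orb}}(\mathcal C,c))$ is the lattice.
\end{enumerate}

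Your Step~3 is essentially the lifting argument in the full-faithfulness part of Theorem~\ref{thm:categorical-uniformization}. The real difference is in the existence step. You reduce to Koebe uniformization of an honest punctured Riemann surface through a finite Galois manifold cover. That cover comes from Fox's theorem, which is elementary when $s>0$ because $\pi$ is then a free product of cyclic groups. You then treat the cone points by global Galois descent, with no analysis beyond the classical cusp picture.

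The paper never passes to a global manifold cover. It handles cone points on local charts and pays instead with harmonic metrics and local asymptotics. In exchange, its lattice comes out as the Betti realization of a canonical object on $\mathcal C$. That object is independent of any auxiliary choice of $Y$ and $G$, and it has compatibility with finite \'etale pullback built in (Proposition~\ref{prop:Betti-realization-functoriality}). In your route that functoriality has to be recovered afterwards by lifting arguments like your Step~3.

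One small correction: $\mathcal C\simeq[Y^c/G]$ is false in general. Some $\delta_j$ must map nontrivially to $G$. For instance, when $g=0$, $r=1$, $s=2$, the relation $\gamma_1\delta_1\delta_2=1$ would otherwise kill $\gamma_1$. In that case the stack quotient acquires stabilizers over the $y_j$, which $\mathcal C$ does not have. What does hold is $\mathcal C^\circ\simeq[Y^\circ/G]$, with the log points re-added with trivial stabilizer through the Kummer charts $t=u^{e}$. That is all your Step~2 actually uses.
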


    The equivalence of the underlying covering categories is classical once
    uniformization and covering theory are invoked.  The contribution
    established here is the
    intrinsic construction of the inverse: the uniformizing lattice is
    recovered as the Betti realization of a canonical principal
    \(\PSL_2\)-object, and this reconstruction is compatible with finite
    \'etale pullback.

    Uniformizing representations also have classical descriptions through
    projective structures, special coordinates, differential equations, and
    Higgs bundles
    \cite{DanielMichaelAlbert00ThemonogposSchwaequaonclosedRiesur,
    Gunning1967Specialcoordinatecoverings,Hitchin87TheselfdualequonRiemansurface,
    Simpson88constrctingVHSusingYMandapptouniformization}.  The analytic
    identification below uses the maximal-Higgs period-map approach.  Faltings's
    related projective construction treats marked compact Riemann surfaces
    with elliptic and cuspidal data under the additional hypothesis that the
    uniformizing \(\PSL_2(\mathbb R)\)-group lifts to \(\SL_2(\mathbb R)\)
    \cite[\S1, p.~224; Theorem~1, p.~228]
    {Faltings83RealprojstronRiemannsur}.  The construction here is intrinsically
    projective, requires no global theta characteristic or chosen
    \(\SL_2\)-lift, and is formulated on every object of \(\mathbf{HypLO}\).

    \subsection{Main Results}
    \label{subsec:intro-main-results}

    \begin{theorem}[Categorical uniformization]
    \label{thm:main-uniformization-intro}
    The compactified quotient functor
    \[
    Q_c:\mathbf{FL}\longrightarrow \mathbf{HypLO},
    \qquad
    \Gamma\longmapsto (\Gamma\backslash\mathbb H)^c,
    \]
    is an equivalence of categories.  
    After choosing a uniformizing identification and a base point for each
    object, a
    quasi-inverse is given by
    \[
    U:\mathbf{HypLO}\longrightarrow \mathbf{FL},
    \qquad
    \mathcal C\longmapsto
    \Gamma_{\mathcal C}:=
    \rho_{\mathcal C}\bigl(\pi_1^{\mathrm{orb}}(\mathcal C,c)\bigr)
    \subset \PSL_2(\mathbb R).
    \]
    Here \(\rho_{\mathcal C}\) is a representative of the Betti realization
    of the canonical maximal \(\PSL_2\)-object attached to \(\mathcal C\).
    Different choices give naturally isomorphic quasi-inverses.  If
    \[
    f\colon \mathcal C_1\longrightarrow \mathcal C_2
    \]
    is finite \'etale, then
    \[
    \deg(f)=
    [\Gamma_{\mathcal C_2}:g\Gamma_{\mathcal C_1}g^{-1}]
    \]
    for any representative \(g\) of the transporter-coset morphism
    \(U(f)\) defined in Definition~\ref{def:FL}.
    \end{theorem}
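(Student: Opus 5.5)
The plan is to prove full faithfulness of \(Q_c\) directly and then take essential surjectivity from Theorem~\ref{thm:classical-uniformization-essential-surjectivity}. After that, the lattice \(\Gamma_{\mathcal C}\) built from the canonical object is shown to agree with the classical uniformizing lattice up to conjugacy, which makes \(U\) a quasi-inverse. The degree formula then follows from covering theory.

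\textbf{Full faithfulness.} Recall that a transporter-coset morphism \(\Gamma_1\to\Gamma_2\) is a coset \(\Gamma_2 g\) with \(g\Gamma_1g^{-1}\subset\Gamma_2\). Such a \(g\) induces the map \(\Gamma_1 z\mapsto \Gamma_2 gz\) on quotients of \(\mathbb H\). Because \(g\Gamma_1g^{-1}\) has finite index in \(\Gamma_2\) (both groups are cofinite), this map is finite étale in the orbifold sense. It extends over cusps with logarithmic structure preserved, since parabolic elements go to parabolic elements, and elliptic stabilizers are compatible.

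Conversely, let \(f\colon\mathcal C_1\to\mathcal C_2\) be finite étale. Then \(f\) lifts to the universal orbifold covers. Both of these are \(\mathbb H\), by the classical theorem applied to the open parts \(\mathcal C_i^\circ\). The lift is a biholomorphism of \(\mathbb H\), hence some \(g\in\PSL_2(\mathbb R)\), and it is well defined up to left multiplication by \(\Gamma_2\). It intertwines \(\Gamma_1\) with a subgroup of \(\Gamma_2\). Faithfulness holds because two transporters giving the same map differ by a deck transformation in \(\Gamma_2\). Fullness comes from this lifting argument, using that a finite étale map of compactified log--orbi curves is determined by its restriction to the open parts.

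\textbf{Identifying \(U\).} Fix a uniformizing identification \(\mathcal C\simeq(\Gamma\backslash\mathbb H)^c\). The canonical object \((\mathcal U_{\mathcal C},\vartheta_{\mathcal C})\) is a maximal principal \(\PSL_2\)-Higgs object. By the parahoric non-abelian Hodge package, its Betti realization is a \(\PSL_2(\mathbb R)\)-valued representation \(\rho_{\mathcal C}\), real because it comes from a uniformizing-type variation. Maximality means the Higgs field \(\vartheta_{\mathcal C}\) is an isomorphism onto the log--orbi canonical bundle. Consequently the period map \(\widetilde{\mathcal C^\circ}\to\mathbb H\) is a local biholomorphism that is equivariant for \(\rho_{\mathcal C}\).

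The key step, and the one I expect to be the main obstacle, is to show that this period map is the identity of \(\mathbb H\) up to conjugation. The approach is a uniqueness argument for the harmonic metric. On the tautological \(\PSL_2\)-local system attached to \(\Gamma\), the hyperbolic metric gives a tame harmonic metric. Its associated Higgs object is maximal with the correct parabolic weights at cusps and the correct parahoric data at orbifold points, and it is therefore isomorphic to \((\mathcal U_{\mathcal C},\vartheta_{\mathcal C})\). The uniqueness statement is what carries the proof: the canonical object is determined up to isomorphism, because locally it is the pushout of the square-root model and the descent is unique. Uniqueness of the non-abelian Hodge correspondence then gives \(\rho_{\mathcal C}\sim\) the tautological embedding \(\pi_1^{\mathrm{orb}}(\mathcal C,c)\cong\Gamma\hookrightarrow\PSL_2(\mathbb R)\). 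It follows that \(\Gamma_{\mathcal C}\) is conjugate to \(\Gamma\), and in particular that \(\rho_{\mathcal C}\) is discrete, faithful and of finite covolume.

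\textbf{Functoriality and degree.} Functoriality of \(U\) uses compatibility of the canonical object with finite étale pullback, \(f^*(\mathcal U_{\mathcal C_2},\vartheta_{\mathcal C_2})\simeq(\mathcal U_{\mathcal C_1},\vartheta_{\mathcal C_1})\). This holds because \(f^*\omega_{\mathcal C_2}=\omega_{\mathcal C_1}\) and the local models pull back to local models. Hence \(\rho_{\mathcal C_1}\) is conjugate to \(\rho_{\mathcal C_2}\circ f_*\), which defines \(U(f)\) as a transporter coset. Independence of choices up to natural isomorphism follows from the fact that changing the base point or the conjugation modifies everything by a single element of \(\PSL_2(\mathbb R)\).

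For the degree, \(f\) corresponds to the subgroup \(f_*\pi_1^{\mathrm{orb}}(\mathcal C_1)\subset\pi_1^{\mathrm{orb}}(\mathcal C_2)\), whose index equals \(\deg f\) by orbifold covering theory. Since \(\rho_{\mathcal C_2}\) is faithful, this index transfers to \([\Gamma_{\mathcal C_2}:g\Gamma_{\mathcal C_1}g^{-1}]\). As a check, the formula is consistent with Riemann--Hurwitz via covolume: \(\mathrm{covol}\) is multiplicative in the index and equals \(-2\pi\chi^{\mathrm{orb}}\).
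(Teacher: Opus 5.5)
Your full-faithfulness argument (lift \(f\) through the universal covers to an element of \(\PSL_2(\mathbb R)\), well defined up to left multiplication by \(\Gamma_2\)) and your degree argument (the index of \(f_*\pi_1^{\mathrm{orb}}(\mathcal C_1)\), transported by faithfulness) are essentially the paper's. The identification of \(U\) takes a different route. The paper does not feed in classical uniformization. Instead, Theorem~\ref{thm:uniformizing-representation} shows that the equivariant period map of the canonical object is a local isometry (by maximality) and complete (by the cone and cusp models), hence a covering and a biholomorphism \(\widetilde{\mathcal C^\circ}\simeq\mathbb D\). Faithfulness, discreteness, finite covolume and \(Q_cU(\mathcal C)\simeq\mathcal C\) follow at once, and \(U(f)\) is defined by lifting \(f\). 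You instead assume classical uniformization and run NAH backwards from the Fuchsian harmonic bundle. That can be made to work, but the step you say carries the proof fails as you justify it.

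\textbf{The gap.} Being \'etale-locally the pushout of the square-root model, with the right local data, does not determine a \(\PSL_2\)-Higgs object up to isomorphism, so ``the descent is unique'' is not available.
\begin{itemize}
\item The automorphism sheaf of the model is the unipotent sheaf \(\bigl(\begin{smallmatrix}1&0\\ c&1\end{smallmatrix}\bigr)\), with \(c\) a local section of \(\mathcal Hom(\Theta,\Theta^{-1})\simeq\omega^{-1}\). Hence its forms are classified by an \(H^1\) of \(\omega^{-1}\). Already on a compact curve of genus \(g\ge2\) this is \(H^1(C,K_C^{-1})\), of dimension \(3g-3\).
\item Concretely, take a theta characteristic \(\Theta^2\simeq K_C\), a nonsplit extension \(0\to\Theta^{-1}\to E\to\Theta\to0\), and \(\vartheta\colon E\to\Theta\xrightarrow{\sim}\Theta^{-1}\otimes K_C\subset E\otimes K_C\). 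This object is locally isomorphic to the model and is stable, since its only \(\vartheta\)-invariant line is \(\Theta^{-1}\).
\item Its pushout to \(\PSL_2\) is not isomorphic to the split canonical object. Since NAH is injective on isomorphism classes, its Betti realization is not conjugate to the uniformizing representation. Your uniqueness principle would wrongly identify it with \((\mathcal U_{\mathcal C},\vartheta_{\mathcal C})\).
\end{itemize}

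\textbf{What is missing} is a \emph{global} Hodge splitting. The equivariant harmonic map of the tautological representation is the identity of \(\mathbb H\), which is holomorphic. So the Fuchsian harmonic bundle is a variation of Hodge structure, and its Higgs object has a global torus reduction with \(\vartheta\) of Hodge degree \(-1\). Maximality then identifies the root line with \(\omega_{\mathcal C}\). You must still check, via Simpson's local table, that the filtered extensions give type \((1-\frac1{m_x})\varpi^\vee\) with regular equivariant field at cone points, and \(\varpi^\vee\) with nonzero nilpotent residue at cusps. Only then does the object match the split object of Theorem~\ref{thm:canonical-maximal-psl2-higgs}.

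\textbf{A smaller point.} A conjugator realizing \([\rho_{\mathcal C_1}]=[\rho_{\mathcal C_2}\circ f_*]\) lies a priori only in \(\mathrm{PGL}_2(\mathbb R)\). To get a transporter in \(\PSL_2(\mathbb R)\) that agrees with the lift of \(f\), which is what makes \(Q_cU\simeq\mathrm{id}\) natural, you need holomorphy of the period maps. Alternatively, define \(U(f)\) by lifting, as the paper does.
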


    The fundamental geometric input driving this reconstruction is the following distinguished Dolbeault object.

    \begin{theorem}[Canonical maximal \(\PSL_2\)-Higgs object]
    \label{thm:canonical-psl2-object-intro}
    Let \(\mathcal C\) be a hyperbolic log--orbi curve. 
    There exists a canonical principal \(\PSL_2\)-Higgs object \( (\mathcal U_{\mathcal C},\vartheta_{\mathcal C}) \) on \(\mathcal C\), characterized \'etale-locally as the pushout along \( \SL_2\longrightarrow \PSL_2 \) of the rank-two model
    \[
    \left(
    \Theta\oplus\Theta^{-1},
    \begin{pmatrix}
    0&0\\
    1&0
    \end{pmatrix}
    \right),
    \qquad
    \Theta^{\otimes2}\simeq \omega_{\mathcal C}.
    \]
    This global object is independent of all local choices of the square root \(\Theta\). Its local \(\PSL_2\)-parahoric type is canonically given by
    \[
    \theta_{\mathcal C,x}=\kappa_x\varpi^\vee,
    \qquad
    \kappa_x=
    \begin{cases}
    1-\frac{1}{m_x}, & x\in D_{\mathrm{orb}},\\[2mm]
    1, & x\in D_{\log}.
    \end{cases}.
    \]
    At an orbifold point of order \(m_x\), the denominator of the fractional
    local type is precisely \(m_x\); the object is regular on the stack chart,
    although its coarse strongly parabolic residue is nonzero.  At a
    logarithmic point the fractional \(\PSL_2\)-type is trivial, while its
    nilpotent residue in the strictly positive Moy--Prasad piece is nonzero.
    Moreover, every finite \'etale map
    \(f:\mathcal C_1\to\mathcal C_2\) carries a canonical coherent isomorphism
    \[
    f^*(\mathcal U_{\mathcal C_2},\vartheta_{\mathcal C_2})
    \simeq
    (\mathcal U_{\mathcal C_1},\vartheta_{\mathcal C_1}).
    \]
    \end{theorem}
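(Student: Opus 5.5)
The plan is to build $(\mathcal U_{\mathcal C},\vartheta_{\mathcal C})$ by étale descent of local $\SL_2$-models pushed to $\PSL_2$, and then to read off the parahoric type and the functoriality from the local model.

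\textbf{Step 1: local models and the gerbe of square roots.} Choose an étale cover $\{V_i\to\mathcal C\}$ of the log--orbi curve, fine enough that on each $V_i$ (a scheme chart, or a chart of the form $[\tilde V_i/\mu_{m}]$ trivializing the stacky structure) the line bundle $\omega_{\mathcal C}|_{V_i}$ admits a square root $\Theta_i$. Here $\omega_{\mathcal C}$ is the log--orbi canonical bundle $\omega(D_{\log})$ on the stack. On each $V_i$ set
\[
(E_i,\phi_i)=\Bigl(\Theta_i\oplus\Theta_i^{-1},\begin{pmatrix}0&0\\1&0\end{pmatrix}\Bigr),
\]
where the entry $1$ is the tautological map $\Theta_i\to\Theta_i^{-1}\otimes\omega_{\mathcal C}$ induced by $\Theta_i^{2}\simeq\omega_{\mathcal C}$. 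Let $P_i$ be the associated $\SL_2$-Higgs torsor. On overlaps $V_{ij}$, two square roots differ by a $\mu_2$-torsor $L_{ij}$: we have $\Theta_j\simeq\Theta_i\otimes L_{ij}$ with $L_{ij}^{2}\simeq\mathcal O$. Twisting by $L_{ij}$ acts centrally, so it gives an isomorphism $P_j\simeq P_i\otimes L_{ij}$ of $\SL_2$-Higgs objects, where $-1\in\mu_2$ acts by the center. After pushout along $\SL_2\to\PSL_2$ the central twist becomes trivial, which yields canonical isomorphisms $\psi_{ij}$ of $\PSL_2$-Higgs torsors. The key point is that these are canonical and not merely existent.

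\textbf{Step 2: cocycle condition, the main obstacle.} I will check that $\psi_{ij}$ is independent of the chosen identifications $\Theta_i^2\simeq\omega$. The identifications are ambiguous up to $\mathcal O^\times$ scalars, and preserving the Higgs field forces compatibility, so the residual ambiguity is $\mu_2$ or a scalar. Preserving $\phi$ cuts the automorphism group of $(E_i,\phi_i)$ down to the center together with a unipotent-free part. I would show directly that $\Aut_{\PSL_2}(P_i\times^{\SL_2}\PSL_2,\phi)$ is trivial: it is the centralizer of a regular nilpotent $\begin{pmatrix}0&0\\1&0\end{pmatrix}$ that also preserves the grading, and since $\phi$ is nowhere vanishing away from points where it is a local isomorphism, graded automorphisms $\mathrm{diag}(a,a^{-1})$ with $a^{-2}=1$ are central. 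Triviality of automorphisms forces the $\psi_{ij}$ to satisfy the cocycle condition automatically. It also gives independence of all choices, since any two constructions are locally isomorphic by a unique isomorphism, and these unique isomorphisms glue. Stack descent for $\PSL_2$-torsors with Higgs field then produces the global $(\mathcal U_{\mathcal C},\vartheta_{\mathcal C})$. The care needed here is at the orbifold charts: square roots must be taken $\mu_m$-equivariantly. When $m$ is even such a root may fail to exist equivariantly, so the chart has to be refined to a scheme chart $\tilde V_i$, and descent is then done along $\tilde V_i\to[\tilde V_i/\mu_m]$ using the same rigidity.

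\textbf{Step 3: parahoric type.} I will push the object down to the coarse curve and compute it in a local coordinate. At an orbifold point with $w^m=z$ we have $\omega_{\mathcal C}=\mathcal O\,dw$, and $dw$ corresponds to $z^{1/m-1}dz$ on the coarse curve. The $\SL_2$-weights of $\Theta^{\pm1}$ are therefore $\pm\frac12(1-\frac1m)$. In $\PSL_2$ this gives the cocharacter $\kappa_x\varpi^\vee$ with $\kappa_x=1-\frac1m$, and its denominator is exactly $m$ because $\gcd(m-1,m)=1$. The Higgs field, regular upstairs, becomes a strongly parabolic residue $\mathrm{res}=\begin{pmatrix}0&0\\1&0\end{pmatrix}$, which is nonzero. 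At a log point $\omega=\mathcal O\,dz/z$, so the weight is integral: $\kappa=1$ gives the trivial fractional type, and the residue is a nonzero nilpotent lying in the positive Moy--Prasad filtration.

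\textbf{Step 4: functoriality.} If $f$ is finite étale, then $f^*\omega_{\mathcal C_2}\simeq\omega_{\mathcal C_1}$ canonically, because log--orbi étale maps are étale on the stacks. Hence $f^*$ of a local model is again a local model, and rigidity (Step 2) gives a unique isomorphism $f^*(\mathcal U_{\mathcal C_2},\vartheta_{\mathcal C_2})\simeq(\mathcal U_{\mathcal C_1},\vartheta_{\mathcal C_1})$. Uniqueness yields coherence for composites.
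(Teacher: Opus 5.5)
Your route is the paper's: local square-root models, the observation that $L_{ij}=\Theta_j\otimes\Theta_i^{-1}$ is a central $\mu_2$-twist killed by $\SL_2\to\PSL_2$, descent, a chart computation of type and residue, and functoriality from $f^*\omega_{\mathcal C_2}\simeq\omega_{\mathcal C_1}$. Steps 1 and 3 agree with the paper. In the orbifold residue, the coarse class is $\frac1m$ times the invariant root vector $wF$, but nonvanishing is what matters.

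The gap is the rigidity lemma in Step 2. You rest on it the cocycle condition, the independence of choices, and the uniqueness and coherence in Step 4. The lemma is false: the local $\PSL_2$-Higgs object does \emph{not} have trivial automorphism group. An automorphism only has to commute with $\phi$; it need not preserve the splitting $\Theta\oplus\Theta^{-1}$. Write $g=\begin{pmatrix}a&b\\c&d\end{pmatrix}$ with $b\in\omega_{\mathcal C}$ and $c\in\omega_{\mathcal C}^{-1}$. The condition $g\phi=\phi g$ forces only $b=0$ and $d=a$. So, modulo scalars, the automorphisms are $\begin{pmatrix}1&0\\c&1\end{pmatrix}$ with $c$ an arbitrary local section of $\omega_{\mathcal C}^{-1}$. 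The centralizer of a regular nilpotent is a one-dimensional unipotent group, not something ``unipotent-free''. Your $\mathrm{diag}(a,a^{-1})$ computation only shows that \emph{grading-preserving} automorphisms are trivial. Hence the claim that any two local constructions are related by a unique isomorphism is false, and neither the cocycle identity nor canonicity follows from it.

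The repair is to descend the graded object. Your comparisons $\psi_{ij}$ from Step 1 are diagonal, and among graded isomorphisms your computation does give uniqueness. This is the paper's argument: it checks directly that on triple overlaps the discrepancy is a central sign, and that changing the cover, the roots, or the comparison maps alters only central twists. More economically, the induced reduction to $T_{\mathrm{ad}}\simeq\mathbb G_m$ (via $\varpi^\vee$) is the frame torsor of $\Theta^{\otimes2}\simeq\omega_{\mathcal C}$. So $\mathcal U_{\mathcal C}=\mathrm{Fr}(\omega_{\mathcal C})\times^{\mathbb G_m,\varpi^\vee}\PSL_2$, and $\vartheta_{\mathcal C}$ is the tautological section of $\mathfrak g_{-\alpha}\otimes\omega_{\mathcal C}^{-1}\otimes\omega_{\mathcal C}=\mathfrak g_{-\alpha}\otimes\mathcal O$. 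No descent is needed, and the isomorphisms $\alpha_f$ and their coherence come from the composition-compatible isomorphisms $f^*\omega_{\mathcal C_2}\simeq\omega_{\mathcal C_1}$, as in the paper. If you want to argue with ungraded isomorphisms in Step 4, uniqueness holds only globally. There $\Aut(\mathcal U_{\mathcal C},\vartheta_{\mathcal C})=H^0(\mathcal C,\omega_{\mathcal C}^{-1})$, which vanishes because $\deg\omega_{\mathcal C}>0$ --- an input from hyperbolicity that your argument never uses.
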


    The central pushout \(\SL_2\to\PSL_2\) removes the square-root
    ambiguity; see Subsection~\ref{subsec:canonical-maximal-psl2}.
    Corollary~\ref{cor:log-orbi-parahoric-local-behavior} gives the auxiliary
    coarse-curve translation: orbifold algebraic type means regular
    equivariance and corresponds to strongly parabolic data whose coarse
    positive residue may be nonzero, while logarithmic points carry the full
    adjusted locus and may have nilpotent residue.  Together with the displayed
    cocharacter system, the computed residue determines the full canonical
    Dolbeault parameter \(\boldsymbol\theta_{\mathcal C}\).

    The realization theorem is obtained by combining this log--orbi/parahoric
    dictionary with Tannakian reconstruction.  By
    Proposition~\ref{prop:tannakian-parahoric-connections}, principal
    log--orbi \(G\)-\(\lambda\)-objects are recovered from their full
    tensor-compatible family of associated vector objects.  Therefore the
    vector-valued NAH/RH equivalences
    of Theorem~\ref{thm:vector-parahoric-nah-rh}, together with the principal
    reconstruction of
    Proposition~\ref{prop:tannakian-reconstruction-principal-realizations},
    give the tensor-functorial principal theorem below.

    \begin{theorem}[Tannakian log--orbi NAH/RH realization]
    \label{thm:nah-rh-log-orbi-intro}
    Let \(\mathcal C\) be a connected log--orbi curve, let \(G\) be a connected
    complex reductive group, and let
    \[
    \boldsymbol\eta
    =(\boldsymbol\theta,\boldsymbol\delta,\boldsymbol n)
    \]
    be a corresponding realization datum: \(\boldsymbol\theta\) and
    \(\boldsymbol\delta\) are admissible rational full local parameters,
    of orbifold algebraic type and logarithmic adjusted type, with zero
    transformed Betti weight, while \(\boldsymbol n\) records
    the corresponding local monodromy conjugacy data.  Then there are
    natural equivalences of groupoids
    \[
    \Higgs_G(\mathcal C,\boldsymbol\theta)_{\mathrm{poly},0}
    \xrightarrow{\ \mathrm{NAH}\ }
    \MIC_G(\mathcal C,\boldsymbol\delta)_{\mathrm{red}}
    \xrightarrow{\ \mathrm{RH}\ }
    \Rep_G^{\mathrm{red}}
    \bigl(\pi_1^{\mathrm{orb}}(\mathcal C),\boldsymbol n\bigr).
    \]
    On associated representations they are compatible with rigid tensor
    operations, and they are functorial under finite \'etale pullback.
    \end{theorem}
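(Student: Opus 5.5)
The plan is to reduce the principal statement to the vector-valued one by Tannakian reconstruction, as the introduction indicates. First I fix a faithful representation \(V\) of \(G\), so that every finite-dimensional representation of \(G\) is a subquotient of tensor constructions on \(V\) and \(V^\vee\). For each representation \(W\), the full local parameters \(\boldsymbol\theta,\boldsymbol\delta\) and the local monodromy data \(\boldsymbol n\) push forward to vector parameters \(W_*\boldsymbol\theta\), \(W_*\boldsymbol\delta\), \(W_*\boldsymbol n\). Admissibility, rationality, orbifold algebraic type, logarithmic adjusted type and vanishing of the transformed Betti weight are all preserved under this pushforward, since they are statements about cocharacters, their fractional parts and their Moy--Prasad pieces. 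Theorem~\ref{thm:vector-parahoric-nah-rh} then gives, for each \(W\), equivalences
\[
\Higgs(\mathcal C,W_*\boldsymbol\theta)_{\mathrm{poly},0}\to\MIC(\mathcal C,W_*\boldsymbol\delta)_{\mathrm{red}}\to\Rep^{\mathrm{red}}(\pi_1^{\mathrm{orb}}(\mathcal C),W_*\boldsymbol n).
\]
A principal object \(E\) in any of the three groupoids is sent to the family \((E\times^G W)_W\), which is a fibre functor with values in the corresponding vector category. By Proposition~\ref{prop:tannakian-parahoric-connections}, this family determines \(E\) up to unique isomorphism.

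The key step is to check that the vector NAH and RH functors are compatible with \(\otimes\), duals, the unit object and morphisms induced by \(G\)-maps \(W\to W'\). For RH this holds by construction, since monodromy of a regular-singular connection is tensor-compatible. For NAH I would argue through harmonic metrics. The tame harmonic metric on \(E_1\otimes E_2\) is the tensor product of the harmonic metrics on \(E_1\) and \(E_2\); the parabolic weights and residues add, so the local parameters match those pushed forward along \(W_1\otimes W_2\). Uniqueness of the tame harmonic metric up to automorphisms of polystable objects then makes the comparison isomorphisms canonical and coherent.

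I expect the main obstacle to be \emph{polystability and reductivity transfer}. Tensor products of polystable parahoric Higgs objects with zero degree must again be polystable. I would deduce this from the existence of tame harmonic metrics rather than prove it directly, since tensor products of harmonic bundles are harmonic. On the Betti side, semisimplicity is preserved under tensor products in characteristic zero. Granting this, Proposition~\ref{prop:tannakian-reconstruction-principal-realizations} turns the tensor-compatible family of vector equivalences into the displayed equivalences of \(G\)-groupoids. Full faithfulness follows because morphisms of principal objects are tensor-compatible families of vector morphisms. Essential surjectivity follows because a reductive \(\rho\colon\pi_1^{\mathrm{orb}}(\mathcal C)\to G\) yields a tensor functor \(W\mapsto\rho_W\), and applying the inverse vector equivalences gives a tensor functor into \(\Higgs\), which reconstructs a principal \(G\)-Higgs object whose parameter is \(\boldsymbol\theta\) by the local type computation.

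Finally, for functoriality under finite \'etale \(f\colon\mathcal C_1\to\mathcal C_2\), I would note that \(f^*\) preserves harmonic metrics and tameness. It pulls back local parameters via the ramification indices, which is built into the log--orbi dictionary of Corollary~\ref{cor:log-orbi-parahoric-local-behavior}, and it corresponds on the Betti side to restriction along \(\pi_1^{\mathrm{orb}}(\mathcal C_1)\to\pi_1^{\mathrm{orb}}(\mathcal C_2)\). These compatibilities hold vectorially and commute with tensor operations, so they ascend to principal objects by the same reconstruction.
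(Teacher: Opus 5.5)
Your architecture matches the paper's: vector NAH/RH from Theorem~\ref{thm:vector-parahoric-nah-rh}, tensor compatibility via Simpson's harmonic-metric arguments, Tannakian reconstruction, and representation-by-representation pullback. The gap is on the Dolbeault side, at the principal level.

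The groupoid \(\Higgs_G(\mathcal C,\boldsymbol\theta)_{\mathrm{poly},0}\) is not defined by asking that every associated vector object be polystable of degree zero. It is defined by the parahoric Ramanathan condition: an inequality for every Higgs-invariant parahoric reduction \(P_Q\) and strictly antidominant character, with equality cases coming from Levi reductions. Proposition~\ref{prop:tannakian-reconstruction-principal-realizations} matches all principal Higgs objects of type \(\boldsymbol\theta\) with all tensor functors. It says nothing about how the principal stability condition corresponds to a condition on the values. Your obstacle paragraph only shows that vector polystability survives tensor operations once it holds for a generator. You still need two implications.
\begin{enumerate}
\item If \((P,\vartheta)\) is Ramanathan-polystable of degree zero, then \((P_V,\vartheta_V)\) is polystable of degree zero for a faithful \(V\). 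Without this, the functor \(W\mapsto\mathrm{NAH}(P_W,\vartheta_W)\) is not even defined.
\item The principal Higgs object reconstructed from a reductive \(\varrho\) by inverse vector NAH is Ramanathan-polystable. Essential surjectivity onto the polystable groupoid needs this, and ``the local type computation'' only pins down its type \(\boldsymbol\theta\).
\end{enumerate}
The same issue returns in your finite \'etale step. There, ``\(f^*\) preserves harmonic metrics'' is a vectorwise statement and does not by itself give principal polystability of the pullback.

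The missing ingredient is a principal Hitchin--Kobayashi / Corlette--Donaldson bridge. The paper handles (1) in Proposition~\ref{prop:associated-representations-semisimplicity}. Principal polystability yields a tame principal Hermitian--Einstein reduction. Its central term vanishes because every character degree vanishes, and the reduction induces solutions of the vector Hitchin equation on every \(P_V\).

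For (2), the paper first invokes the Biquard--Garc\'{\i}a-Prada--Mundet existence of an adapted principal harmonic reduction for a reductive representation. That reduction produces a principal polystable Higgs object. Tannakian full faithfulness then identifies it with the reconstructed object.

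If you want to stay vectorial, you must show two things. First, some harmonic metric on a faithful \(P_V\) comes from a reduction to a maximal compact subgroup \(K\subset G\); uniqueness of harmonic metrics holds only up to automorphisms, so this is not automatic. Second, such a \(K\)-reduction forces the Ramanathan inequalities. That principal content is exactly what your proposal leaves out.

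A smaller point: Proposition~\ref{prop:tannakian-parahoric-connections} lives on the coarse curve. To reconstruct on \(\mathcal C\), you must pass through Proposition~\ref{prop:log-orbi-parahoric-dictionary}.
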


    The Betti term consists of ordinary reductive local systems with the
    prescribed full local monodromy datum and no additional filtration.
    Orbifold inertia is finite semisimple and may be nontrivial, while cusp
    monodromy may have a nontrivial unipotent factor.  For general \(G\), the
    full corresponding datum is input; only the distinguished \(\PSL_2\)-object
    carries the canonical datum constructed here.

    \begin{theorem}[Uniformizing representation]
    \label{thm:uniformizing-representation-intro}
    For a hyperbolic log--orbi curve \(\mathcal C\), the Betti realization of
    \[
    (\mathcal U_{\mathcal C},\vartheta_{\mathcal C})
    \in
    \Higgs_{\PSL_2}
    (\mathcal C,\boldsymbol\theta_{\mathcal C})_{\mathrm{poly},0}
    \]
    is an unbased \(\PSL_2\)-local system.  After choosing \(c\in C^\circ\), it
    is represented, up to conjugation, by a faithful homomorphism
    \[
    \rho_{\mathcal C}\colon
    \pi_1^{\mathrm{orb}}(\mathcal C,c)\longrightarrow \PSL_2(\mathbb C),
    \]
    whose image is a discrete finite-covolume subgroup of \(\PSL_2(\mathbb R)\).
    Hence
    \[
    \Gamma_{\mathcal C}:=
    \rho_{\mathcal C}\bigl(\pi_1^{\mathrm{orb}}(\mathcal C,c)\bigr)
    \subset \PSL_2(\mathbb R)
    \]
    is the uniformizing Fuchsian lattice of \(\mathcal C\).

    If
    \(
    f\colon \mathcal C_1\longrightarrow \mathcal C_2
    \)
    is a finite \'etale morphism of hyperbolic log--orbi curves, then, after
    choosing compatible base points and a connecting path,
    \[
    [\rho_{\mathcal C_1}]=[\rho_{\mathcal C_2}\circ f_*]
    \]
    in the \(\PSL_2(\mathbb R)\)-character variety.  Equivalently, \(f\)
    induces a finite-index inclusion
    \[
    g\Gamma_{\mathcal C_1}g^{-1}\hookrightarrow \Gamma_{\mathcal C_2},
    \]
    well-defined up to conjugation.
    \end{theorem}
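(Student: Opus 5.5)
The plan is to identify the Betti realization of \((\mathcal U_{\mathcal C},\vartheta_{\mathcal C})\) with the monodromy of the classical uniformization. The identification goes through the maximal-Higgs period map, and the Betti realization itself is produced by the Tannakian machinery of Theorem~\ref{thm:nah-rh-log-orbi-intro}.

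\textbf{Step 1: the object lies in the source groupoid.} First, I check that \((\mathcal U_{\mathcal C},\vartheta_{\mathcal C})\) belongs to \(\Higgs_{\PSL_2}(\mathcal C,\boldsymbol\theta_{\mathcal C})_{\mathrm{poly},0}\). Stability is the standard maximal-Higgs argument. Étale-locally, the only \(\vartheta\)-invariant line subbundle of \(\Theta\oplus\Theta^{-1}\) is \(\Theta^{-1}\). Its parabolic degree is negative because \(\deg\omega_{\mathcal C}>0\) in the log--orbi sense; this is exactly the hyperbolicity condition \(2g-2+\sum_x\kappa_x>0\) with the weights \(\kappa_x\) of Theorem~\ref{thm:canonical-psl2-object-intro}. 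I would verify this on the adjoint (rank-three) representation, which is a genuine vector object on \(\mathcal C\), so no square root is needed globally. Zero transformed Betti weight is read off from the local types \(\kappa_x\varpi^\vee\).

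\textbf{Step 2: the realizations are real and give a period map.} Applying Theorem~\ref{thm:nah-rh-log-orbi-intro} yields a reductive \(\PSL_2(\mathbb C)\)-local system, hence \(\rho_{\mathcal C}\) up to conjugacy once \(c\in C^\circ\) is chosen. Because the Higgs object is fixed by \(\vartheta\mapsto t\vartheta\), which is realized by the diagonal gauge \(\mathrm{diag}(t^{1/2},t^{-1/2})\) and is well defined in \(\PSL_2\), the harmonic metric is a variation of Hodge structure. Its monodromy therefore lands in the real form \(\PU(1,1)\simeq\PSL_2(\mathbb R)\). The period map is a \(\rho_{\mathcal C}\)-equivariant holomorphic map
\[
\Phi\colon\widetilde{C^\circ}\longrightarrow\mathbb H .
\]
Its differential is the Higgs field, and the Higgs field is an isomorphism \(\Theta\to\Theta^{-1}\otimes\omega_{\mathcal C}\). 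Hence \(\Phi\) is étale away from the orbifold points. At an orbifold point the stack chart is regular, so \(\Phi\) is étale on the orbifold universal cover as well.

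\textbf{Step 3: \(\Phi\) is a biholomorphism (the main obstacle).} The pluriharmonic metric \(h\) coincides with the pullback of the Poincaré metric. I would show this by checking that the Hitchin equation for this maximal object is precisely the constant-curvature \(-1\) equation for the metric \(h^{-2}\) on \(\omega_{\mathcal C}^{-1}\). Uniqueness of harmonic metrics with the prescribed tame local behaviour then identifies \(h\) with the complete hyperbolic orbifold metric. The asymptotics at cusps (nilpotent residue, weight \(1\)) and at cone points (angle \(2\pi/m_x\), weight \(1-1/m_x\)) must be matched against the parahoric data. This matching is the delicate step: completeness at cusps must be derived from the nilpotent residue, and the correct cone angle from the fractional type.

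Given this identification, \(\Phi\) is a local isometry from a complete orbifold to \(\mathbb H\), hence a covering, hence a biholomorphism. It follows that \(\rho_{\mathcal C}\) is faithful, and its image \(\Gamma_{\mathcal C}\) acts properly with quotient \(C^\circ\) with its orbifold structure. The covolume equals \(2\pi\bigl(2g-2+\sum_x\kappa_x\bigr)<\infty\). By Theorem~\ref{thm:classical-uniformization-essential-surjectivity} and uniqueness up to conjugacy, \(\Gamma_{\mathcal C}\) is the uniformizing lattice.

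\textbf{Step 4: functoriality under finite étale maps.} For \(f\colon\mathcal C_1\to\mathcal C_2\) finite étale, the canonical isomorphism \(f^*(\mathcal U_{\mathcal C_2},\vartheta_{\mathcal C_2})\simeq(\mathcal U_{\mathcal C_1},\vartheta_{\mathcal C_1})\) of Theorem~\ref{thm:canonical-psl2-object-intro} combines with the pullback-functoriality in Theorem~\ref{thm:nah-rh-log-orbi-intro} to give \([\rho_{\mathcal C_1}]=[\rho_{\mathcal C_2}\circ f_*]\). Since \(f_*\) is injective with image of index \(\deg f\), and both representations are faithful, the image \(\rho_{\mathcal C_2}(f_*\pi_1^{\mathrm{orb}})=g\Gamma_{\mathcal C_1}g^{-1}\) has finite index in \(\Gamma_{\mathcal C_2}\). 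The element \(g\) is well defined up to the stated conjugations, determined by the choice of connecting path.
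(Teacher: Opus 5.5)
Your outline follows the paper's architecture. The shared steps are: stable degree-zero membership from the unique invariant line; Tannakian realization to a \(\PSL_2(\mathbb C)\)-local system; a weight-one variation whose period map has differential equal to the maximal lower-left Higgs entry; local isometry plus completeness, giving a covering of the disc; and functoriality from the canonical isomorphisms \(\alpha_f\). Your variations in Steps~1--2 are harmless and arguably cleaner. You test stability on \(\operatorname{ad}\mathcal U_{\mathcal C}\simeq\omega_{\mathcal C}\oplus\mathcal O\oplus\omega_{\mathcal C}^{-1}\) instead of descending the Borel reduction. You obtain the Hodge splitting from the global \(\mathbb C^*\)-symmetry through \(\varpi^\vee\) instead of gluing local \(\SL_2\)-lifts after projectivization.

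The real divergence is Step~3. The paper does not compare with an \emph{a priori} hyperbolic metric. It takes the curvature \(-1\) metric \(u^*g_{\mathbb D}\) induced by the tame harmonic metric and proves completeness and finite area directly from local models. On the cyclic chart the metric is smooth, hence a cone of angle \(2\pi/m\); at logarithmic points it has Poincar\'e-cusp asymptotics. Uniformization is thus an output. Identifying \(h\) with ``the complete hyperbolic orbifold metric'' by uniqueness of tame harmonic metrics instead presupposes that metric, i.e.\ it consumes classical uniformization as input. In that direction, what you must check is that the hyperbolic metric is adapted to the parahoric weights: \(|dz|\) of size \(|z|^{1-1/m}\) at a cone point, \(|dz/z|\) of size \(|\log|z||\) at a cusp. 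It is not that completeness follows from the nilpotent residue; your text mixes the two directions. Your route is valid for the statement as posed and makes the identification with the classical lattice immediate. But it turns the theorem into a verification rather than the intrinsic reconstruction the paper aims at.

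Three smaller corrections.
\begin{enumerate}
\item Zero Betti weight is not read off from the local types. In Simpson's table the Betti weight is \(-2b\), with \(b\) the real part of the graded residue eigenvalue. It vanishes because the residues (\(m^{-1}\widetilde F\) at cone points, \(F\) at cusps) are strictly filtration-raising, so their Levi part is zero. The types \(\kappa_x\varpi^\vee\) only fix the semisimple monodromy eigenvalues.
\item The period map must live on the orbifold universal cover \(\widetilde{\mathcal C^\circ}\), not on \(\widetilde{C^\circ}\). On the latter the metric is incomplete at the deleted cone points and \(\Phi\) is not injective.
\item Realization only gives conjugacy in \(\PSL_2(\mathbb C)\). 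To place \(g\) in \(\PSL_2(\mathbb R)\), use that \(\alpha_f\) preserves the harmonic metrics by uniqueness, hence the \(\PU(1,1)\)-reductions and Hodge orientation. The paper instead obtains \(g\) by lifting \(f\) to the universal covers.
\end{enumerate}
Apart from this, your derivation of the finite-index inclusion from \([\pi_1^{\mathrm{orb}}(\mathcal C_2):f_*\pi_1^{\mathrm{orb}}(\mathcal C_1)]=\deg f\) is fine.
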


    The mixed smooth, cone, and cusp period-map argument is given in
    Theorem~\ref{thm:uniformizing-representation}; Simpson supplies the
    compact and tame period-domain frameworks used there
    \cite[\S\S8--9]{Simpson88constrctingVHSusingYMandapptouniformization}
    \cite[Main Theorem, \S7, and Theorem~8]
    {Sim90harmobdonnoncomcurves}.

    \begin{theorem}[Galois enhancement]
    \label{thm:galois-category-compatibility-intro}
    For every hyperbolic log--orbi curve \(\mathcal C\) and a chosen
    uniformizing representative \(\Gamma_{\mathcal C}\), categorical
    uniformization induces an equivalence of Galois categories
    \[
    \mathbf{FEt}(\mathcal C)
    \simeq
    \mathbf{FSet}^{\mathrm{cont}}_{\widehat{\Gamma}_{\mathcal C}}.
    \]
    Equivalently, after choosing a geometric base point \(\bar x\to C^\circ\),
    a compatible analytic base point and lift, there is an isomorphism of
    profinite groups
    \[
    \pi_1^{\mathrm{\acute et}}(\mathcal C,\bar x)
    \simeq
    \widehat{\Gamma}_{\mathcal C}.
    \]
    Without these choices the group isomorphism is defined only up to inner
    automorphism, and \(\Gamma_{\mathcal C}\) only up to conjugacy.
    \end{theorem}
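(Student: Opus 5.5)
The plan is to obtain the Galois-category equivalence from the categorical uniformization of Theorem~\ref{thm:main-uniformization-intro} combined with classical covering theory. Afterwards, pass to fibre functors to get the profinite group isomorphism.

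\emph{Step 1: finite \'etale covers of \(\mathcal C\) are finite-index subgroups up to conjugacy.} Fix \(\mathcal C\) and a uniformizing representative \(\Gamma_{\mathcal C}=\rho_{\mathcal C}(\pi_1^{\mathrm{orb}}(\mathcal C,c))\), with \(\rho_{\mathcal C}\) faithful by Theorem~\ref{thm:uniformizing-representation-intro}. Let \(\mathcal C'\to\mathcal C\) be a connected finite \'etale cover. Then \(\mathcal C'\) is again a hyperbolic log--orbi curve, since finite \'etale maps preserve negativity of the log--orbi Euler characteristic. By Theorem~\ref{thm:main-uniformization-intro} the cover corresponds to a transporter coset \(g\Gamma_{\mathcal C'}g^{-1}\subset\Gamma_{\mathcal C}\) of index \(\deg(f)\). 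Conversely, a finite-index subgroup \(\Gamma'\subset\Gamma_{\mathcal C}\) is again a cofinite Fuchsian lattice. It gives \(Q_c(\Gamma')\to Q_c(\Gamma_{\mathcal C})\simeq\mathcal C\), which is finite \'etale in the orbifold sense because inertia is computed by stabilizers. Full faithfulness of \(Q_c\) identifies morphisms of covers with \(\Gamma_{\mathcal C}\)-equivariant maps of coset spaces \(\Gamma_{\mathcal C}/\Gamma'\). Extending additively over disjoint unions, we get an equivalence \(\mathbf{FEt}(\mathcal C)\simeq\mathbf{FSet}^{\mathrm{fi}}_{\Gamma_{\mathcal C}}\), the finite \(\Gamma_{\mathcal C}\)-sets with finite-index stabilizers, i.e.\ all finite \(\Gamma_{\mathcal C}\)-sets. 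These are exactly the continuous finite \(\widehat\Gamma_{\mathcal C}\)-sets.

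\emph{Step 2: compatibility with fibre functors.} Choose a geometric point \(\bar x\to C^\circ\) lying over the analytic base point \(c\), together with a lift \(\tilde c\in\mathbb H\). For an \'etale cover \(\mathcal C'\), the GAGA/Riemann existence theorem for Deligne--Mumford curves identifies the fibre \(\mathcal C'_{\bar x}\) with the analytic fibre over \(c\). Via the lift \(\tilde c\), this analytic fibre is identified with \(\Gamma_{\mathcal C}/\Gamma'\). One then checks that the monodromy action of \(\pi_1^{\mathrm{orb}}(\mathcal C,c)\) on the fibre agrees with left translation through \(\rho_{\mathcal C}\). This holds because \(\rho_{\mathcal C}\) is the holonomy of the developing map \(\widetilde{C^\circ}\to\mathbb H\), so the equivalence of Step 1 is an equivalence of Galois categories with fibre functors. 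Grothendieck's theorem then gives \(\pi_1^{\mathrm{\acute et}}(\mathcal C,\bar x)\simeq\mathrm{Aut}(\text{fibre functor})\simeq\widehat\Gamma_{\mathcal C}\). Changing \(\bar x\), \(\tilde c\) or the connecting path changes the identifications by a path or a deck element, hence by an inner automorphism, and \(\Gamma_{\mathcal C}\) moves within its conjugacy class.

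The main obstacle is the algebraization in Step 2. Every finite analytic orbifold cover of \(\mathcal C\), in particular \(Q_c(\Gamma')\) for arbitrary finite-index \(\Gamma'\), must be shown to be algebraic and \'etale over \(\mathcal C\) as a stack, with the correct behaviour at cusps. I would handle this by passing to the coarse space. Riemann existence for the punctured coarse curve gives an algebraic cover of \(C^\circ\) with ramification bounded over \(D_{\mathrm{orb}}\cup D_{\log}\). The stack structure is then recovered by normalization, with orbifold orders read off from elliptic stabilizers and cusps kept logarithmic. Matching the result with the log--orbi stack requires the local computation that a ramification index dividing \(m_x\) gives an \'etale chart.
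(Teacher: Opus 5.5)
Your proposal is correct, but it is organized differently from the paper's proof. The paper never uses the equivalence \(Q_c\), or its slice over \(\Gamma_{\mathcal C}\), at this point. It uses the uniformizing covering \(q_{\mathcal C}\colon\mathbb H\to\mathcal C^\circ\) from Theorem~\ref{thm:uniformizing-representation} only to identify \(\pi_1^{\mathrm{orb}}(\mathcal C,x)\) with the deck group \(\Gamma_{\mathcal C}\) via the chosen lift. It then quotes the analytic--\'etale comparison of Theorem~\ref{thm:analytic-etale-comparison}. That theorem was proved earlier for every connected log--orbi curve, by restriction to \(\mathcal C^\circ\), Riemann existence for Deligne--Mumford stacks, and the classification of topological covering stacks. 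It already gives \(\mathbf{FEt}(\mathcal C)\) as finite \(\pi_1^{\mathrm{orb}}\)-sets, compatibly with fibre functors. The passage to \(\widehat{\Gamma}_{\mathcal C}\) and the inner-automorphism bookkeeping are then as in your proposal.

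You instead build \(\mathbf{FEt}(\mathcal C)\simeq\mathbf{FSet}_{\Gamma_{\mathcal C}}\) from full faithfulness and essential surjectivity of \(Q_c\) on the slice over \(\Gamma_{\mathcal C}\), and extend over disjoint unions. You then check fibre compatibility and algebraicity by hand, using coarse Riemann existence, normalization, and the chart computation of Proposition~\ref{prop:orbifold-etale-resolution}.

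What each route buys:
\begin{itemize}
\item Your route makes the phrase ``categorical uniformization induces'' literal. It also makes explicit the algebraicity of \((\Gamma'\backslash\mathbb H)^c\), which the paper simply presupposes when it lets \(Q_c\) take values in \(\mathbf{HypLO}\).
\item The paper's route is shorter and modular. All covering theory and algebraization are delegated to a comparison theorem valid even for non-hyperbolic curves, and uniformization enters only to turn the abstract orbifold group into a concrete lattice.
\end{itemize}

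One point of organization: the algebraization you flag as the obstacle ``in Step~2'' is really what underwrites Step~1. Essential surjectivity there needs \(Q_c(\Gamma')\) to be an algebraic log--orbi curve, finite \'etale over \(\mathcal C\). The fibre comparison in Step~2 is comparatively formal once that equivalence is in hand.
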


    The same finite-cover description yields the function-field application:
    ramification is resolved on suitable orbifold models over \(C\), and their
    based \'etale fundamental groups recover \(G_{\mathbb C(C)}\) by inverse
    limit.

    \begin{theorem}[Orbifold approximation of the absolute Galois group]
    \label{thm:GF-orbifold-limit-intro}
    Let \(C\) be a smooth projective connected curve over \(\mathbb C\), and
    let \(F=\mathbb C(C)\).  Fix a separable closure \(F^{\mathrm{sep}}/F\) and
    the resulting geometric generic point \(\bar\eta\).  Relative to these
    choices, there is a canonical isomorphism of profinite groups
    \[
    G_F:=\pi_1^{\mathrm{\acute et}}(\operatorname{Spec}F,\bar\eta)
    \xrightarrow{\sim}
    \varprojlim_{\mathcal X\in\operatorname{Orb}(C)}
    \pi_1^{\mathrm{\acute et}}(\mathcal X,\bar\eta),
    \]
    where the transition maps are induced by refinements.  Without a chosen
    geometric generic point, this identification is defined only up to inner
    automorphism.  Every finite extension of \(F\) is detected at a stage: it
    is represented by a finite \'etale cover of some
    \(\mathcal X\in\operatorname{Orb}(C)\).
    \end{theorem}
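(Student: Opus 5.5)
We prove Theorem~\ref{thm:GF-orbifold-limit-intro} by realizing both sides as fundamental groups of Galois categories with the same fibre functor at \(\bar\eta\), and then checking that the finite-level categories exhaust the category of finite separable \(F\)-algebras.

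\textbf{Step 1: the index system.} Take \(\operatorname{Orb}(C)\) to be the system of orbifold structures \(\mathcal X\) with coarse space \(C\). Each is determined by a finite branch divisor together with multiplicities \(m_x\). Refinement \(\mathcal X'\succeq\mathcal X\) means that the branch locus grows and the multiplicities become divisible, \(m_x\mid m'_x\). Each refinement gives a canonical morphism \(\mathcal X'\to\mathcal X\), which is an isomorphism over the common generic point. The poset is directed: two structures have a common refinement, obtained by taking the union of the branch loci and the lcm of the multiplicities. In the application one also allows logarithmic points, equivalently the open curves \(C\setminus S\); these can be handled the same way, or folded in because a cover ramified over a finite set \(S\) is dominated by an orbifold structure.

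\textbf{Step 2: compatible fibre functors.} Every \(\mathcal X\) contains \(\operatorname{Spec}F\) as a dense open substack, since the generic point has trivial inertia. So \(\bar\eta\) is a geometric point of each \(\mathcal X\). Restricting a finite étale cover to the generic point gives a fully faithful functor
\[
\mathbf{FEt}(\mathcal X)\to\mathbf{FEt}(\operatorname{Spec}F).
\]
Full faithfulness holds because \(\mathcal X\) is normal, so a finite étale cover is the normalization of \(\mathcal X\) in its generic fibre. These functors are compatible with pullback along refinements and with the fibre functors at \(\bar\eta\). By SGA1 formalism, the induced maps \(G_F\to\pi_1^{\mathrm{\acute et}}(\mathcal X,\bar\eta)\) are surjective and compatible with the transition maps. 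Hence they assemble into a canonical continuous surjection
\[
G_F\to\varprojlim_{\mathcal X}\pi_1^{\mathrm{\acute et}}(\mathcal X,\bar\eta).
\]
It remains to show this map is injective, which is the detection statement.

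\textbf{Step 3: detection, the key step.} Let \(L/F\) be finite, and pass to its Galois closure with group \(H\). Let \(Y\to C\) be the normalization of \(C\) in \(L\). Then \(Y\to C\) is a finite \(H\)-cover, branched over a finite set \(S\), with ramification index \(e_y\) at each point \(y\) over \(x\); since we are in characteristic zero, all ramification is tame and cyclic. Define \(\mathcal X\) by putting multiplicity \(m_x=e_x\) at each \(x\in S\). We claim \([Y/H]\to\mathcal X\) is an isomorphism, so that \(Y\to\mathcal X\) is an \(H\)-torsor. This is checked étale-locally at \(x\). There the cover is
\[
\coprod_{H/I}\bigl(z\mapsto z^{e}\bigr),
\qquad I\simeq\mu_e,
\]
and \([\operatorname{Spec}\mathbb C[[z]]/\mu_e]\) is exactly the standard orbifold chart of order \(e\). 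This local identification is the main point to verify carefully. The notion of "finite étale over a stack" must match \(\mu_e\)-equivariance on the chart; the log--orbi local models and Corollary~\ref{cor:log-orbi-parahoric-local-behavior} give precisely this chart description. Alternatively, the claim follows from Abhyankar's lemma.

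\textbf{Step 4: conclusion.} By Step 3, every object of \(\mathbf{FEt}(\operatorname{Spec}F)\) lies in the essential image of some \(\mathbf{FEt}(\mathcal X)\). Since the system is directed, the 2-colimit of the Galois categories \(\mathbf{FEt}(\mathcal X)\) is equivalent to \(\mathbf{FEt}(\operatorname{Spec}F)\), compatibly with the fibre functors at \(\bar\eta\). The standard dictionary then turns a filtered colimit of Galois categories into an inverse limit of profinite groups, giving the isomorphism. Changing \(\bar\eta\) changes the fibre functor by an isomorphism, which conjugates every group compatibly; this accounts for the ambiguity up to inner automorphism. The hyperbolicity of the individual \(\mathcal X\) is not needed here. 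Optionally, a cofinal hyperbolic subsystem lets one rewrite each term as \(\widehat{\Gamma}_{\mathcal X}\) via Theorem~\ref{thm:galois-category-compatibility-intro}.
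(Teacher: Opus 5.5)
Your proposal is correct and follows the paper's overall architecture: the root-stack poset cofiltered by pointwise lcm, full faithfulness of generic restriction by normality, detection of every extension at some stage, and the Galois-category passage from the filtered \(2\)-colimit to the inverse limit. Your ``surjective at each stage, hence onto the limit by compactness, injective by detection'' is an equivalent packaging of the paper's direct construction of the inverse on fibres.

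Where you differ is the detection lemma. The paper's Proposition~\ref{prop:orbifold-etale-resolution} treats an arbitrary, possibly non-Galois, cover directly. It puts \(m_x=\operatorname{lcm}_{y\mapsto x}e_y\) on the target and stabilizer order \(m_x/e_y\) on the source, and proves finite \'etaleness by a chart computation glued via Cadman's universal property. You instead pass to the Galois closure \(M\), with \(Y\) the normalization of \(C\) in \(M\), and identify \([Y/H]\) with the root stack of orders \(e_x\). Then \(Y\to[Y/H]\) is tautologically an \(H\)-torsor, so \'etaleness is free and Corollary~\ref{cor:finite-Galois-quotients} is immediate. The price is the quotient-stack/root-stack identification and a descent back to \(L\). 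The two routes meet: for tame ramification the Galois closure has index \(\operatorname{lcm}_{y\mapsto x}e_y\) over \(x\), and for \(L=M^{H'}\) the stack \([Y/H']\) is precisely the paper's orbifold source.

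To complete your version:
\begin{itemize}
\item Build the global map \([Y/H]\to\mathcal X\) from the root-stack universal property before checking locally that it is an isomorphism; the reduced preimage of \(x\) is an \(H\)-invariant \(e_x\)-th root of its pullback.
\item Say explicitly that \(L=M^{H'}\) is represented by \([Y/H']\to[Y/H]\), and a finite \'etale algebra by a disjoint union over an lcm refinement. The statement's last clause requires this for all finite extensions, not only Galois ones.
\item Replace the appeal to Corollary~\ref{cor:log-orbi-parahoric-local-behavior}, which concerns residues of parahoric connections, by the covering charts described after Definition~\ref{def:etale-morphism-log-orbi}.
\end{itemize}
The aside on logarithmic points is unnecessary, since \(\operatorname{Orb}(C)\) contains none.
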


\subsection{Outline of the paper}
\label{subsec:intro-outline}

    Section~\ref{sec:log-orbi-fuchsian} fixes the geometric and Fuchsian
    categories.  Sections~\ref{sec:parahoric-canonical-psl2}
    and~\ref{sec:tannakian-nah-rh} construct the canonical log--orbi
    \(\PSL_2\)-object and its Tannakian realizations, using parahoric language
    only for coarse local data.  Section~\ref{sec:categorical-uniformization}
    proves uniformization and its Galois enhancement;
    Section~\ref{sec:examples-moduli} gives two illustrations; and
    Section~\ref{sec:function-field-galois} proves the orbifold approximation
    of the function-field Galois group.

\subsection{Conventions and notation}\label{subsec:intro-conventions}

    Throughout the paper we work over $\mathbb C$ and write $\mathbb H$ for the upper half-plane. The main conventions are:
    \[
    \renewcommand{\arraystretch}{1.25}
    \begin{array}{@{}>{\displaystyle}l@{\hspace{1.4em}}>{\raggedright\arraybackslash}p{0.58\textwidth}@{}}
    \mathcal C=(C,D_{\mathrm{orb}}^{\mathbf m},D_{\log})
    & a log--orbi curve: a smooth Deligne--Mumford curve with orbifold points and logarithmic marked points; all geometric objects are curves unless stated otherwise. \\

    \mathbf{HypLO},\ \mathbf{FL}
    & hyperbolic log--orbi curves and cofinite Fuchsian lattices in \(\PSL_2(\mathbb R)\). \\

    Q_c:\mathbf{FL}\to\mathbf{HypLO}
    & the compactified quotient functor. \\

    \mathbf{FEt}(\mathcal C)
    & finite \'etale covers of \(\mathcal C\). \\

    \pi_1^{\mathrm{orb}}(\mathcal C),\ \pi_1^{\et}(\mathcal C)
    & the discrete analytic orbifold group used on the Betti side, and the profinite \'etale group used for finite-cover and Galois-category arguments. \\

    \operatorname{Orb}(C)
    & the cofiltered refinement category over the fixed coarse curve \(C\). \\

    \theta_x\in X_*(T)_{\mathbb Q}/W
    & the cocharacter component of a rational Dolbeault or de Rham local parameter; Moy--Prasad filtrations use the unnormalized rational-cocharacter convention. \\

    \boldsymbol\eta=(\boldsymbol\theta,\boldsymbol\delta,\boldsymbol n)
    =((\theta_x,\delta_x,n_x))_{x\in D}
    & corresponding full Dolbeault/de Rham parameters and Betti local monodromy datum; their cocharacter components are marked by \({}^{\mathrm{coch}}\), and the Betti object has no additional filtration. \\

    \mathfrak g_{\ge a}(\theta_x),\ \mathfrak g_{>0}(\theta_x)
    & the associated Moy--Prasad pieces. \\

    \Higgs_G(\mathcal C,\boldsymbol\theta)
    & principal log--orbi \(G\)-Higgs objects with prescribed full Dolbeault parameter. \\

    \MIC_G(\mathcal C,\boldsymbol\delta)
    & principal log--orbi logarithmic \(G\)-connections with prescribed full de Rham parameter; their parahoric presentation is auxiliary. \\

    \Rep_G(\pi_1^{\mathrm{orb}}(\mathcal C),\boldsymbol n)
    & ordinary \(G\)-representations with local monodromy datum \(\boldsymbol n\). \\
    \end{array}
    \]

    \section{Log--orbi curves and Fuchsian lattices}\label{sec:log-orbi-fuchsian}

    In this section we fix the two categories entering categorical uniformization.  
    On the geometric side we define hyperbolic log--orbi curves and their finite \'etale covers; on the group-theoretic side we define cofinite Fuchsian lattices and the compactified quotient functor
    \[
    Q_c:\mathbf{FL}\longrightarrow \mathbf{HypLO}.
    \]
    Here and below, ``finite \'etale'' has the meaning fixed in
    Definition~\ref{def:etale-morphism-log-orbi}.

    \subsection{Galois categories and Grothendieck fundamental groups}

    We use Grothendieck's Galois-category formalism: a fiber functor
    \(F:\mathscr C\to\mathbf{FSet}\) determines
    \(\pi_1(\mathscr C,F)=\operatorname{Aut}(F)\) and an equivalence
    \(\mathscr C\simeq
    \mathbf{FSet}^{\mathrm{cont}}_{\pi_1(\mathscr C,F)}\)
    \cite[Exp.~V]{SGA11971}.  We apply it below to finite \'etale covers of a
    connected log--orbi curve.

\subsection{Log--orbi curves and finite \'etale covers}\label{subsec:log-orbi-curves}

    We now fix the geometric objects studied in this paper.

    \begin{definition}[Log--orbi curve]
    \label{def:log-orbi-curve}
    A \emph{log--orbi curve} over \(\mathbb C\) is a pair
    \(
    (\mathcal C,D_{\log})
    \), where \(\mathcal C\) is a smooth proper connected one-dimensional
    Deligne--Mumford stack over \(\mathbb C\) with trivial generic stabilizer,
    and \(D_{\log}\) is a finite reduced effective Cartier divisor contained in
    the nonstacky locus.  We equip \(\mathcal C\) with the divisorial fs log
    structure associated with \(D_{\log}\).  Let \(C\) be the coarse moduli
    curve of \(\mathcal C\); it is a smooth projective curve, and we identify
    \(D_{\log}\) with its image in \(C\).  Let
    \[
    D_{\mathrm{orb}}\subset C
    \]
    be the image of the stacky locus.  We use
    \[
    \mathcal C^\circ:=\mathcal C\setminus D_{\log}
    \qquad\text{and}\qquad
    C^\circ:=C\setminus(D_{\mathrm{orb}}\cup D_{\log}).
    \]
    Thus \(\mathcal C^\circ\) is the open orbifold retaining the stacky points,
    whereas \(C^\circ\) is its nonstacky coarse locus.
    \end{definition}

    Equivalently, the stacky part of \(\mathcal C\) is determined by the coarse
    curve \(C\), finitely many orbifold points
    \[
    D_{\mathrm{orb}}=\{x_1,\ldots,x_r\}\subset C,
    \]
    and their isotropy orders \(m_i\ge2\).  We therefore often write
    \[
    \mathcal C=(C,D_{\mathrm{orb}}^{\mathbf m},D_{\log}),
    \qquad
    D_{\mathrm{orb}}^{\mathbf m}:=\{(x_i,m_i)\}_{i=1}^r.
    \]
    We use here the standard structure of a smooth proper stacky curve with
    trivial generic stabilizer
    \cite[Definition~5.2.1 and Lemma~5.3.10]{JohnDavid22thecanonicalringofastackycurve}.

    Locally, the two special loci have different meanings.  Near an orbifold point
    of order \(m\), the curve is analytically modeled on
    \[
    [\Delta/\mu_m],
    \]
    where \(\mu_m\) acts on the disc by multiplication
    \cite[Propositions~3.5 and~4.5]{BehrendNoohi06UniformizationofDMcurves}.
    Near a logarithmic point, the underlying space is an ordinary marked disc
    equipped with the divisorial logarithmic structure.

    \begin{definition}[Finite \'etale morphism of log--orbi curves]
    \label{def:etale-morphism-log-orbi}
    A morphism
    \[
    p\colon \mathcal C_2\longrightarrow \mathcal C_1
    \]
    of log--orbi curves is called \emph{finite \'etale} if the underlying
    morphism of Deligne--Mumford stacks is globally representable and finite,
    its restriction over \(\mathcal C_1\setminus D_{1,\log}\) is \'etale, and
    the induced morphism of divisorial fs log stacks is Kummer \'etale along the
    logarithmic boundary.  We also require
    \[
    D_{2,\log}=\bigl(p^{-1}D_{1,\log}\bigr)_{\mathrm{red}}.
    \]
    \end{definition}

    In terms of the notation
    \(
    \mathcal C_1=(C_1,D_{1,\mathrm{orb}}^{\mathbf m_1},D_{1,\log}), \,
    \mathcal C_2=(C_2,D_{2,\mathrm{orb}}^{\mathbf m_2},D_{2,\log}),
    \)
    a finite \'etale morphism
    \[
    p: \mathcal C_2\longrightarrow \mathcal C_1
    \]
    is induced on coarse curves by a finite map
    \(
    f\colon C_2\longrightarrow C_1
    \)
    with the following local behavior.  Over \(C_1^\circ\), the coarse
    map \(f\) is unramified.  If \(x\in D_{1,\mathrm{orb}}\) has orbifold order
    \(m_x\), and \(y\in f^{-1}(x)\) has orbifold order \(m_y\), then analytic
    coordinates may be chosen so that \(p\) has the internal chart
    \[
    [\Delta_w/\mu_{m_y}]\longrightarrow[\Delta_z/\mu_{m_x}],
    \qquad z=w,
    \]
    with an injective inertia homomorphism
    \(\mu_{m_y}\hookrightarrow\mu_{m_x}\).  For coarse coordinates
    \(u=w^{m_y}\) and \(t=z^{m_x}\), one has
    \[
    t=u^{e_y},
    \qquad
    e_y m_y=m_x.
    \]
    Thus \(e_y\) is the coarse ramification index and divides \(m_x\).  At a
    point \(y\in D_{2,\log}\) over \(x\in D_{1,\log}\), the Kummer chart is
    \[
    \mathbb N\longrightarrow\mathbb N,
    \qquad 1\longmapsto e_y,
    \]
    for a positive Kummer index \(e_y\); equivalently, local boundary
    parameters satisfy \(t=u^{e_y}\) up to multiplication by a unit.  In
    particular,
    \[
    D_{2,\log}=\bigl(f^{-1}D_{1,\log}\bigr)_{\mathrm{red}}.
    \]
    These representable covering charts, whose inertia maps at orbifold points
    are injective, must not be confused with root-stack refinements that enlarge
    stabilizers and are generally nonrepresentable.  The same charts give the
    canonical log-canonical pullback isomorphism
    \[
    p^*\omega_{\mathcal C_1}\simeq\omega_{\mathcal C_2},
    \qquad
    \omega_{\mathcal C_i}:=\Omega^1_{\mathcal C_i}(\log D_{i,\log}).
    \]

    Log--orbi curves, and hence the objects of \(\mathbf{HypLO}\), remain
    connected by definition.  We write
    \(
    \mathbf{FEt}(\mathcal C)
    \)
    for the category of finite \'etale covers of \(\mathcal C\), allowing a
    source that is a finite disjoint union of connected log--orbi curves, and
    \[
    \mathbf{FEt}^{\mathrm{conn}}(\mathcal C)
    \subset
    \mathbf{FEt}(\mathcal C)
    \]
    for the full subcategory whose sources are connected.  After choosing a
    geometric base point
    \(
    \bar c\to C^\circ,
    \)
    the fiber functor
    \[
    F_{\bar c}\colon \mathbf{FEt}(\mathcal C)\longrightarrow \mathbf{FSet},
    \qquad
    (\mathcal D\to\mathcal C)\longmapsto \mathcal D_{\bar c},
    \]
    assigns to a cover its finite geometric fiber over \(\bar c\).

    \begin{proposition}
    \label{prop:FEt-galois}
    Let \(\mathcal C\) be a connected log--orbi curve over \(\mathbb C\), and let
    \(\bar c\to C^\circ\) be a geometric base point.  Then
    \(
    (\mathbf{FEt}(\mathcal C),F_{\bar c})
    \)
    is a Galois category.  Its Grothendieck fundamental group is
    \[
    \pi_1^{\et}(\mathcal C,\bar c)
    :=
    \Aut(F_{\bar c}),
    \]
    and there is an equivalence
    \[
    \mathbf{FEt}(\mathcal C)
    \simeq
    \mathbf{FSet}^{\mathrm{cont}}_{\pi_1^{\et}(\mathcal C,\bar c)}.
    \]
    \end{proposition}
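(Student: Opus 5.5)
We verify Grothendieck's axioms (G1)--(G6) of \cite[Exp.~V, \S4]{SGA11971} for the pair \((\mathbf{FEt}(\mathcal C),F_{\bar c})\); the fundamental group and the equivalence with \(\mathbf{FSet}^{\mathrm{cont}}_{\pi_1^{\et}(\mathcal C,\bar c)}\) then follow formally from \cite[Exp.~V, Th.~4.1]{SGA11971}.

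\emph{Reduction to an algebraic cover.} The cleanest route avoids checking the axioms by hand on stacks with log structure. We choose, by the Kummer description in Definition~\ref{def:etale-morphism-log-orbi}, the root stack
\[
\mathcal C^{\sharp}:=\sqrt[\,N]{D_{\log}/\mathcal C},
\]
with \(N\) varying in a cofinal system. A finite \'etale log--orbi cover of \(\mathcal C\) whose Kummer indices divide \(N\) is the same as a representable finite \'etale cover of the Deligne--Mumford stack \(\mathcal C^{\sharp}\). Via the local charts \(t=u^{e_y}\) and \(z=w\) recorded after Definition~\ref{def:etale-morphism-log-orbi}, this is exactly finite \'etale descent of the divisorial log structure. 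This exhibits \(\mathbf{FEt}(\mathcal C)\) as the filtered union of the categories \(\mathbf{FEt}(\mathcal C^{\sharp}_N)\). Each of these is a Galois category with fiber functor at \(\bar c\), by the standard theory of finite \'etale covers of a connected Noetherian DM stack (Noohi, or SGA1 applied to an \'etale atlas with descent). The transition functors are fully faithful and compatible with \(F_{\bar c}\), since \(\bar c\in C^\circ\) lies away from all modified loci.

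\emph{Passing to the union.} The axioms pass to the union. A final object is \(\mathcal C\) itself. Fiber products, finite coproducts, quotients by finite groups, and epi/mono factorizations are computed in some \(\mathbf{FEt}(\mathcal C^\sharp_N)\), with \(N\) large enough to contain all the finitely many covers involved. They then stay log--orbi covers: fiber products are normalized, the condition \(D_{\log}=(p^{-1}D_{\log})_{\mathrm{red}}\) is preserved, and sources remain finite unions of connected log--orbi curves. \(F_{\bar c}\) is exact, since it is exact at each stage. \(F_{\bar c}\) is conservative: a morphism of covers that is bijective on the fiber over \(\bar c\) is an isomorphism. This uses connectedness of each component together with the fact that a finite \'etale map of connected covers is determined by degree, which is read off at \(\bar c\).

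\emph{Main obstacle.} The hardest step is the first one. We must show that the Kummer-\'etale condition along \(D_{\log}\) makes log covers equivalent to representable \'etale covers of the root stack, and that quotients and fiber products do not leave the class. In particular, the quotient of a log--orbi curve by a finite group acting freely over \(\bar c\) must still satisfy representability, meaning injective inertia. Here we use that the stabilizers inject into the inertia groups \(\mu_{m_x}\) of the base, so the quotient's inertia embeds as well. We would check this locally on the charts \([\Delta/\mu_m]\).
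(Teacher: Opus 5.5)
Your proposal is correct in outline, but it takes a more roundabout route than the paper, which never introduces root stacks. The paper restricts to the open orbifold \(\mathcal C^\circ=\mathcal C\setminus D_{\log}\), which keeps the orbifold points. Restriction identifies \(\mathbf{FEt}(\mathcal C)\) with representable finite \'etale covers of the single Deligne--Mumford curve \(\mathcal C^\circ\). The inverse is normalization of \(\mathcal C\) in the function algebra of the cover; at a deleted point this is locally \(t=u^e\), hence automatically Kummer \'etale, and morphisms extend uniquely across the boundary. The SGA1 formalism is then applied once, with the same fiber functor at \(\bar c\).

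You instead replace each log point by a \(\mu_N\)-stacky point of \(\mathcal C^\sharp_N\), use a Kummer/root-stack dictionary at each level, and check that the axioms and \(F_{\bar c}\) pass to the filtered union over \(N\). This needs more than you state. First, the dictionary must recover the log--orbi source from a cover of \(\mathcal C^\sharp_N\), which carries extra stabilizers of order \(N/e_y\) over \(D_{\log}\). Second, the transition functors must be fully faithful and preserve fiber products and finite quotients. Your stated reason, that \(\bar c\) lies away from the modified loci, gives only compatibility with the fiber functors. Full faithfulness comes from unique extension of morphisms across the boundary, which is precisely the paper's key step; once you have it, you could work on \(\mathcal C^\circ\) directly.

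The quotient axiom also needs no freeness hypothesis. The quotient of a representable finite \'etale cover by any finite group of \(\mathcal C\)-automorphisms is formed on an atlas and is automatically representable.

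What your route buys is that every stage is a proper orbifold curve. It also exhibits \(\pi_1^{\et}(\mathcal C,\bar c)\) as \(\varprojlim_N\pi_1^{\et}(\mathcal C^\sharp_N,\bar c)\), the log-point analogue of the root-stack approximation the paper carries out over \(\operatorname{Orb}(C)\) in Section~\ref{sec:function-field-galois}.
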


    \begin{proof}
    Restriction to \(\mathcal C^\circ=\mathcal C\setminus D_{\log}\)
    identifies \(\mathbf{FEt}(\mathcal C)\) with the ordinary category of
    representable finite \'etale covers of the algebraic Deligne--Mumford curve
    \(\mathcal C^\circ\).  Indeed, the inverse sends such a cover to the
    normalization of \(\mathcal C\) in its function algebra.  Locally at a
    deleted point the resulting map has the form \(t=u^e\), hence is Kummer
    \'etale for the divisorial log structures, and morphisms extend uniquely
    across the boundary.  The assertion is therefore the usual Grothendieck
    Galois formalism for finite \'etale covers of
    \(\mathcal C^\circ\), with the same fiber functor over \(\bar c\).
    \end{proof}

\subsection{Orbifold fundamental groups}\label{subsec:orbifold-fundamental-groups}

    We use two fundamental groups attached to a connected log--orbi curve
    \(\mathcal C\): the profinite \'etale fundamental group, which controls finite
    covers, and the discrete analytic orbifold fundamental group, which underlies
    the Betti theory developed below.

    For a geometric base point \(\bar c\to C^\circ\),
    Proposition~\ref{prop:FEt-galois} defines the profinite group
    \(\pi_1^{\et}(\mathcal C,\bar c)=\Aut(F_{\bar c})\).  On the analytic side,
    the discrete orbifold fundamental group is the deck group of the universal
    orbifold cover.

    \begin{definition}[Discrete orbifold fundamental group]
    \label{def:discrete-orbifold-fundamental-group}
    Let \(\mathcal C\) be a connected complex log--orbi curve and
    \(c\in C^\circ\).  Choose a pointed universal analytic orbifold cover
    \[
    (\widetilde{\mathcal C^\circ},\widetilde c)
    \longrightarrow
    (\mathcal C^\circ,c).
    \]
    The \emph{discrete orbifold fundamental group} of \(\mathcal C\) is
    \[
    \pi_1^{\mathrm{orb}}(\mathcal C,c)
    :=
    \operatorname{Deck}
    (\widetilde{\mathcal C^\circ}/\mathcal C^\circ).
    \]
    The lift \(\widetilde c\) identifies this deck group with the usual
    path-based orbifold fundamental group; changing the lift changes that
    identification by an inner automorphism.
    \end{definition}

    The existence and basic functorial properties of this group may be formulated in the language of fundamental groups of Deligne--Mumford stacks \cite{Noohi02Fundamentalgpsofalgstacks,BehrendNoohi06UniformizationofDMcurves}.

    If
    \(
    \mathcal C=(C,D_{\mathrm{orb}}^{\mathbf m},D_{\log})
    \)
    has signature
    \[
    (g;m_1,\ldots,m_r;s),
    \]
    where \(g=g(C)\), the \(m_i\) are the orbifold orders, and
    \(s=|D_{\log}|\), then van Kampen's theorem gives the standard presentation
    \[
    \pi_1^{\mathrm{orb}}(\mathcal C,c)
    \cong
    \left\langle
    a_1,b_1,\ldots,a_g,b_g;c_1,\ldots,c_r;d_1,\ldots,d_s
    \ \middle|\
    \prod_{i=1}^g[a_i,b_i]\cdot c_1\cdots c_r\cdot d_1\cdots d_s=1,\ 
    c_j^{m_j}=1
    \right\rangle.
    \]
    Here the generators \(c_j\) correspond to loops around orbifold points, while the generators \(d_k\) correspond to loops around logarithmic points.

    \begin{definition}
    \label{def:hyperbolic-log-orbi}
    Let \(\mathcal C\) be a connected log--orbi curve of signature
    \[
    (g;m_1,\ldots,m_r;s).
    \]
    Its orbifold Euler characteristic is
    \[
    \chi_{\mathrm{orb}}(\mathcal C)
    :=
    2-2g-s-\sum_{j=1}^r\left(1-\frac1{m_j}\right).
    \]
    Equivalently, this is the Euler characteristic attached to the standard
    presentation of the discrete orbifold group
    \(\pi_1^{\mathrm{orb}}(\mathcal C,c)\).

    We say that \(\mathcal C\) is
    \emph{hyperbolic} if
    \(
    \chi_{\mathrm{orb}}(\mathcal C)<0
    \).
    We denote by \(\mathbf{HypLO}\) the category of hyperbolic log--orbi curves
    with finite \'etale morphisms.
    \end{definition}

    This definition is independent of the chosen presentation, since the signature is determined by the log--orbi structure.  
    Later, when the log--orbi canonical bundle is introduced, the condition will be equivalently written as
    \[
    \deg\omega_{\mathcal C}>0.
    \]
    For the present section, the group-theoretic formulation is the one needed: hyperbolicity means that the discrete group is of Fuchsian lattice type. 

    \begin{theorem}[Analytic--\'etale comparison]
    \label{thm:analytic-etale-comparison}
    Let \(\mathcal C\) be a connected complex log--orbi curve.  Choose a
    geometric base point \(\bar c\to C^\circ\), a corresponding analytic base
    point \(c\), and a lift of \(c\) to the universal orbifold cover.  Then
    \[
    \widehat{\pi_1^{\mathrm{orb}}(\mathcal C,c)}
    \simeq
    \pi_1^{\et}(\mathcal C,\bar c).
    \]
    Equivalently, finite \'etale covers of \(\mathcal C\) are classified by
    finite continuous sets for the profinite completion of the analytic
    orbifold fundamental group.
    \end{theorem}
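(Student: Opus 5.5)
The plan is to reduce the statement to the classical comparison for the punctured nonstacky locus, via Riemann existence, and then to impose the orbifold conditions on both sides.

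\emph{Reduction to the open orbifold.} By the proof of Proposition~\ref{prop:FEt-galois}, restriction identifies \(\mathbf{FEt}(\mathcal C)\) with representable finite \'etale covers of \(\mathcal C^\circ\). Removing the stacky points as well, such covers become finite \'etale covers of the ordinary punctured curve \(C^\circ\) with a local condition at each \(x_j\in D_{\mathrm{orb}}\). By the chart \([\Delta_w/\mu_{m_y}]\to[\Delta_z/\mu_{m_x}]\), representability forces every coarse ramification index \(e_y\) over \(x_j\) to divide \(m_j\). Conversely, a cover of \(C^\circ\) whose ramification indices over each \(x_j\) divide \(m_j\) extends uniquely to a representable finite \'etale cover of \(\mathcal C^\circ\). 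To see this, normalize \(\mathcal C\) in the function field, or glue in the quotient charts \([\Delta_w/\mu_{m_j/e}]\), which are determined by the local covering \(u\mapsto u^{e}\). Morphisms extend uniquely, by normality.

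\emph{Riemann existence on both sides.} By SGA1 Exp.~XII (GAGA for finite covers), finite \'etale covers of the algebraic curve \(C^\circ\) correspond to finite topological covers of \(C^{\circ,\mathrm{an}}\). These in turn correspond to finite continuous \(\widehat{\pi_1(C^\circ,c)}\)-sets. The local condition at \(x_j\) says that the small loop \(c_j\) acts with orbits of size dividing \(m_j\), that is, \(c_j^{m_j}\) acts trivially.

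On the analytic side, the van Kampen presentation identifies \(\pi_1^{\mathrm{orb}}(\mathcal C,c)\) with \(\pi_1(C^\circ,c)/\langle\!\langle c_j^{m_j}\rangle\!\rangle\). Finite sets for this quotient are therefore exactly the finite \(\pi_1(C^\circ,c)\)-sets satisfying the local condition. Finite quotients of a discrete group have the same finite sets as its profinite completion, and the same holds after the normal quotient by the \(c_j^{m_j}\). Hence
\[
\mathbf{FEt}(\mathcal C)\simeq\mathbf{FSet}_{\pi_1^{\mathrm{orb}}(\mathcal C,c)}^{\mathrm{fin}}\simeq\mathbf{FSet}^{\mathrm{cont}}_{\widehat{\pi_1^{\mathrm{orb}}(\mathcal C,c)}} .
\]

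\emph{Fiber functors and base points.} I will check that this equivalence carries \(F_{\bar c}\) to the forgetful functor. Here the compatible base points are used: the fiber over \(\bar c\) is identified with the analytic fiber over \(c\), and the lift \(\widetilde c\) fixes the identification of the deck group with based loops. Grothendieck's formalism then gives
\[
\pi_1^{\et}(\mathcal C,\bar c)=\Aut(F_{\bar c})\simeq\widehat{\pi_1^{\mathrm{orb}}(\mathcal C,c)} .
\]

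\emph{Main obstacle.} I expect the hardest step to be the extension and representability argument at the orbifold points. One must verify that covers with ramification dividing \(m_j\), and only those, extend as \emph{representable} covers of the stack, and that the log points impose no condition. At log points any index is Kummer \'etale, so the loops \(d_k\) are unconstrained, as in the presentation. I would settle this by a local computation on \([\Delta/\mu_m]\). Its connected representable finite \'etale covers correspond to subgroups of \(\mu_m\), namely \([\Delta/\mu_{m/e}]\) for \(e\mid m\), which matches the local orbifold group \(\mathbb Z/m\). This is the analytic form of Behrend--Noohi's local description.
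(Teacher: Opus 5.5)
Your proposal is correct, but it takes a different route from the paper.

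\textbf{The paper's route.} The paper stays on the open orbifold \(\mathcal C^\circ\) with its stacky points retained. It invokes the cited Riemann existence theorem for algebraic stacks, together with the classification of topological covering stacks. Representable finite \'etale covers of \(\mathcal C^\circ\) are thereby matched directly with finite covering stacks of the analytic orbifold, and these are classified by sets with an action of the deck group of the universal orbifold cover. The based isomorphism is then immediate from Definition~\ref{def:discrete-orbifold-fundamental-group} and the chosen lift \(\widetilde c\).

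\textbf{Your route.} You delete the stacky points as well and use only classical Riemann existence (SGA1 XII) for the scheme \(C^\circ\). You then characterize the covers of \(C^\circ\) that extend to representable finite \'etale covers of \(\mathcal C^\circ\) by the Abhyankar-type condition \(e_y\mid m_j\), equivalently that \(c_j^{m_j}\) acts trivially on the fiber. Finally you reach \(\pi_1^{\mathrm{orb}}\) through the van Kampen description
\[
\pi_1^{\mathrm{orb}}(\mathcal C,c)\cong\pi_1(C^\circ,c)/\langle\!\langle c_j^{m_j}\rangle\!\rangle .
\]

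\textbf{What each buys.} Your route is more elementary and avoids stack-theoretic Riemann existence entirely. It also produces the explicit dictionary that finite \'etale covers of \(\mathcal C\) are the covers of \(C^\circ\) whose ramification over each \(x_j\) divides \(m_j\). That is exactly the mechanism exploited later in Proposition~\ref{prop:orbifold-etale-resolution}. The paper's route is shorter and not specific to curves. It also never needs the presentation of \(\pi_1^{\mathrm{orb}}\), which the paper states but does not prove.

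\textbf{One point to make explicit.} You should use van Kampen in its based form. The quotient map \(\pi_1(C^\circ,c)\to\pi_1^{\mathrm{orb}}(\mathcal C,c)\) should be the one obtained by lifting loops from \(\widetilde c\) to the universal orbifold cover, with kernel the normal closure of the \(c_j^{m_j}\). An abstract isomorphism of presentations gives an isomorphism of profinite groups, but not the one attached to the chosen lift. Your final paragraph indicates you intend this, so it is a matter of stating it rather than a gap.
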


    \begin{proof}
    By the restriction equivalence in the proof of
    Proposition~\ref{prop:FEt-galois}, the left-hand covering category is the
    category of representable finite \'etale covers of
    \(\mathcal C^\circ\).  Riemann existence for algebraic stacks identifies
    this category with finite topological covering stacks of the associated
    analytic orbifold, and topological covering stacks are classified by sets
    with an action of its discrete fundamental group
    \cite[Theorems~18.19 and~20.4, and Corollary~20.5]{Behr05Foundationoftopstack}.
    The comparison preserves the fiber over the chosen analytic/geometric
    point.  Taking automorphisms of this fiber functor gives the displayed
    isomorphism.  A different lift, or a different connecting path between
    base points, changes it by an inner automorphism.
    \end{proof}

    Thus \(\pi_1^{\mathrm{orb}}(\mathcal C,c)\) is the group used for Betti representations, while \(\pi_1^{\et}(\mathcal C,\bar c)\) is the group used for finite-cover arguments, Galois descent, and inverse-limit constructions.

    \begin{proposition}
    \label{prop:pi1-limit-galois}
    Let \(\mathcal C\) be a connected log--orbi curve and let \(\bar c\to C^\circ\) be a geometric base point.  
    Then connected finite \'etale Galois covers of \(\mathcal C\) form a cofinal system among connected finite \'etale covers, and
    \[
    \pi_1^{\et}(\mathcal C,\bar c)
    \simeq
    \varprojlim_{\mathcal C'\to \mathcal C}
    \Aut_{\mathcal C}(\mathcal C'),
    \]
    where the inverse limit ranges over connected finite \'etale Galois covers \(\mathcal C'\to\mathcal C\).
    \end{proposition}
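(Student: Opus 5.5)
The plan is to use Proposition~\ref{prop:FEt-galois}, which identifies \(\mathbf{FEt}(\mathcal C)\) with \(\mathbf{FSet}^{\mathrm{cont}}_{\pi}\) for \(\pi:=\pi_1^{\et}(\mathcal C,\bar c)\). The whole statement should then reduce to standard facts about profinite groups and the Galois formalism of \cite[Exp.~V]{SGA11971}.

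\emph{Galois objects.} Under this equivalence, a connected cover \(\mathcal C'\to\mathcal C\) corresponds to a transitive continuous \(\pi\)-set \(\pi/H\), where \(H\) is an open subgroup. The cover is Galois exactly when \(H\) is normal. In that case \(\Aut_{\mathcal C}(\mathcal C')\cong\Aut_\pi(\pi/H)\cong\pi/H\), and the identification is fixed by the chosen fiber functor \(F_{\bar c}\).

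\emph{Cofinality.} Given an open \(H\), its normal core \(N=\bigcap_{g\in\pi/H}gHg^{-1}\) is a finite intersection of open subgroups, hence open and normal. The projection \(\pi/N\to\pi/H\) is a morphism of \(\pi\)-sets. Translating back gives a connected Galois cover dominating \(\mathcal C'\). Geometrically, one may take the connected component of the fiber product \(\mathcal C'\times_{\mathcal C}\cdots\times_{\mathcal C}\mathcal C'\) containing a point of \(F_{\bar c}\). This fiber product is again a finite étale cover in the sense of Definition~\ref{def:etale-morphism-log-orbi}: by the restriction equivalence in the proof of Proposition~\ref{prop:FEt-galois} one can compute it over \(\mathcal C^\circ\) and then normalize.

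\emph{The limit.} Pointed connected Galois covers, with morphisms respecting the chosen points over \(\bar c\), form a cofiltered system indexed by open normal subgroups \(N\), ordered by reverse inclusion. The transition maps are the surjections \(\pi/N\to\pi/N'\). Since \(\pi\) is profinite, its open normal subgroups form a neighbourhood basis of the identity. The canonical map \(\pi\to\varprojlim\pi/N\) is therefore a homeomorphic isomorphism. Concretely, injectivity follows because \(\bigcap N=1\), and surjectivity from compactness.

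\emph{Main obstacle.} The delicate point is making the transition maps \(\Aut_{\mathcal C}(\mathcal C'')\to\Aut_{\mathcal C}(\mathcal C')\) canonical. For a Galois object \(\mathcal C'\), evaluating at a point \(p'\in F_{\bar c}(\mathcal C')\) gives a bijection \(\Aut\cong F_{\bar c}(\mathcal C')\). If the points are chosen compatibly along dominating maps, then each automorphism of \(\mathcal C''\) descends uniquely to one of \(\mathcal C'\). Without base points this descent holds only up to inner automorphism. I would therefore index the limit by pointed Galois covers, which is what the chosen \(\bar c\) provides.
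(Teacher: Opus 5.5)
Your proposal is correct and takes essentially the same approach as the paper, whose proof simply invokes the standard inverse-limit description of the fundamental group of the Galois category in Proposition~\ref{prop:FEt-galois}, with connected Galois covers corresponding to quotients \(\pi/N\) by open normal subgroups. Your normal-core argument for cofinality, and your indexing by pointed Galois covers to make the transition maps \(\Aut_{\mathcal C}(\mathcal C'')\to\Aut_{\mathcal C}(\mathcal C')\) canonical, spell out details the paper leaves implicit.
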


    \begin{proof}
    This is the standard inverse-limit description of the fundamental group of
    the Galois category in Proposition~\ref{prop:FEt-galois}: connected Galois
    covers correspond to quotients by open normal subgroups.
    \end{proof}

\subsection{Fuchsian lattices and the compactified quotient functor}\label{subsec:fuchsian-lattices}

    We conclude this section by fixing the analytic category which will be compared
    with hyperbolic log--orbi curves.  We use standard facts about Fuchsian groups
    from \cite{Beardon1995TheGO,Katok1992FuchsianGroups}.

    A \emph{Fuchsian group} is a discrete subgroup of \(\PSL_2(\mathbb R)\).
    A cofinite Fuchsian group of signature \((g;m_1,\ldots,m_r;s)\) has the
    presentation and orbifold Euler characteristic recorded for the
    corresponding log--orbi group in
    Subsection~\ref{subsec:orbifold-fundamental-groups}; see also
    \cite[\S2, p.~228]{Faltings83RealprojstronRiemannsur}.
    Writing this number as \(\chi(\Gamma)\), one has
    \(\chi(\Gamma)<0\), and the hyperbolic
    area is proportional to \(-\chi(\Gamma)\); see
    \cite{Katok1992FuchsianGroups}.  
    In this paper, a \emph{Fuchsian lattice} means a cofinite Fuchsian subgroup
    \[
    \Gamma\subset \PSL_2(\mathbb R)
    \]
    of negative orbifold Euler characteristic.

    \begin{definition}[Transporter category of Fuchsian lattices]
    \label{def:FL}
    Let \(\mathbf{FL}\) be the category whose objects are Fuchsian lattices
    \(
    \Gamma\subset \PSL_2(\mathbb R)
    \).
    For two objects, set
    \[
    \operatorname{Trans}(\Gamma_1,\Gamma_2)
    :=
    \left\{
    g\in\PSL_2(\mathbb R)
    \ \middle|\
    g\Gamma_1g^{-1}\subset\Gamma_2
    \text{ with finite index}
    \right\}
    \]
    and
    \[
    \operatorname{Hom}_{\mathbf{FL}}(\Gamma_1,\Gamma_2)
    :=
    \Gamma_2\backslash\operatorname{Trans}(\Gamma_1,\Gamma_2).
    \]
    We call \([g]=\Gamma_2g\) a \emph{transporter-coset morphism} and
    \(g\) a representative of it.
    If \([g]:\Gamma_1\to\Gamma_2\) and
    \([h]:\Gamma_2\to\Gamma_3\), define
    \[
    [h]\circ[g]:=[hg],
    \qquad
    \operatorname{id}_\Gamma=[1].
    \]
    \end{definition}

    Thus morphisms record finite covering maps while retaining the correct
    automorphisms of fixed quotient orbifolds; in particular
    \(\operatorname{Aut}_{\mathbf{FL}}(\Gamma)
    =\Gamma\backslash N_{\PSL_2(\mathbb R)}(\Gamma)\).  We do not include
    orientation-reversing transformations.
    Given a Fuchsian lattice \(\Gamma\subset \PSL_2(\mathbb R)\), the quotient
    \(
    \Gamma\backslash\mathbb H
    \)
    is a finite-area hyperbolic orbifold.  Its elliptic points give the orbifold
    locus, and its cusps give logarithmic points after compactification.  Hence the
    cusp compactification
    \[
    (\Gamma\backslash\mathbb H)^c
    \]
    is a hyperbolic log--orbi curve.
    A morphism \([g]:\Gamma_1\to\Gamma_2\) induces
    \[
    \Gamma_1\backslash\mathbb H\longrightarrow
    \Gamma_2\backslash\mathbb H,
    \qquad
    [z]\longmapsto[gz].
    \]
    The left-coset ambiguity is a target deck transformation, so the map is
    well defined by \([g]\).  It is a finite orbifold covering on the open part
    and extends uniquely across each cusp by the local Kummer chart.  Hence the
    compactified quotient construction defines a functor
    \[
    Q_c\colon \mathbf{FL}\longrightarrow \mathbf{HypLO},
    \qquad
    \Gamma\longmapsto(\Gamma\backslash\mathbb H)^c.
    \]
    The categorical uniformization theorem will construct a quasi-inverse to
    \(Q_c\) by recovering the lattice \(\Gamma\) from a hyperbolic log--orbi curve
    as the Betti realization of its canonical maximal \(\PSL_2\)-object.

\section{Log--orbi objects, parahoric bookkeeping, and the canonical
\texorpdfstring{\(\PSL_2\)}{PSL\_2}-object}
\label{sec:parahoric-canonical-psl2}

    The differential objects in this paper live intrinsically on log--orbi
    curves.  Parahoric structures serve as a tensor-compatible coarse-curve
    description of their local data, using rational cocharacters and
    Moy--Prasad filtrations
    \cite{BalaSesha15ModuliofparhorictorsoronRiemannsurface,
    Wilson2010TannakianParahoricBT}.

\subsection{Local types, parahoric bundles, and degrees}
\label{subsec:parahoric-bundles}

    Let \(G\) be a connected complex reductive group, let \(T\subset G\) be a
    maximal torus, and let \(W\) be the Weyl group.  We write
    \[
    X^*(T)=\Hom(T,\mathbb G_m),
    \qquad
    X_*(T)=\Hom(\mathbb G_m,T),
    \]
    with pairing
    \[
    \langle\ ,\ \rangle:X^*(T)\times X_*(T)\to\mathbb Z.
    \]
    Let \(\mathfrak g=\Lie(G)\).

    \subsubsection{Local types and model filtrations}

    A local type at a marked point \(x\) is an element
    \[
    \theta_x \in X_*(T)_\mathbb Q/W, \qquad
    X_*(T)_\mathbb Q := X_*(T)\otimes_\mathbb Z \mathbb Q.
    \]
    Although formulas are written after choosing a dominant representative
    \(\theta_x\in X_*(T)_\mathbb Q\), the associated filtered tensor functor
    depends only on the class of \(\theta_x\) in \(X_*(T)_\mathbb Q/W\).

    The adjoint action of \(\theta_x\) gives a \(\mathbb Q\)-grading
    \[
    \mathfrak g=
    \bigoplus_{r\in\mathbb Q}\mathfrak g_r(\theta_x),
    \qquad
    \mathfrak g_r(\theta_x)
    =
    \{Y\in \mathfrak g\mid
    \mathrm{Ad}(\theta_x(t))(Y)=t^rY\}.
    \]
    The associated Moy--Prasad filtration is
    \[
    \mathfrak g_{\ge a}(\theta_x)
    :=
    \bigoplus_{r\ge a}\mathfrak g_r(\theta_x),
    \qquad
    \mathfrak g_{>a}(\theta_x)
    :=
    \bigoplus_{r>a}\mathfrak g_r(\theta_x).
    \]
    In particular,
    \[
    \mathfrak p_{\theta_x}:=\mathfrak g_{\ge0}(\theta_x)
    \]
    is a parabolic Lie algebra, and
    \[
    \mathfrak g_{>0}(\theta_x)
    \]
    is its positive Moy--Prasad part.

    For an algebraic representation
    \(
    \rho:G\to\GL(V),
    \)
    write
    \[
    V=\bigoplus_{\chi\in X^*(T)}V_\chi
    \]
    for the \(T\)-weight decomposition.  The local type \(\theta_x\) induces the
    model filtration
    \[
    F^{\theta_x}_{\ge a}V
    :=
    \bigoplus_{\langle\chi,\theta_x\rangle\ge a}V_\chi.
    \]
    All filtrations in this paper are written in decreasing Moy--Prasad
    notation \(F_{\ge a}\); with this convention positive Moy--Prasad degree
    strictly raises the filtration.
    The Moy--Prasad filtration and the model filtration are compatible in the
    sense that
    \[
    d\rho\bigl(\mathfrak g_{\ge b}(\theta_x)\bigr)
    \subset
    \left\{
    A\in\End(V)
    \ \middle|\
    A\bigl(F^{\theta_x}_{\ge a}V\bigr)
    \subset
    F^{\theta_x}_{\ge a+b}V
    \text{ for all }a
    \right\}.
    \]
    Thus \(\mathfrak g_{\ge0}(\theta_x)\) acts by filtration-preserving
    endomorphisms, while \(\mathfrak g_{>0}(\theta_x)\) acts by strictly
    filtration-raising endomorphisms.

    We write \(\mathbf{ParVect}(X,D)\) for the rigid exact tensor category of
    rational periodic parabolic vector bundles on \((X,D)\), with the
    parabolic tensor product.  The periodic indexing retains the integral
    lattice shifts that disappear after normalizing the weights to a
    fundamental interval.

    Fix a local parameter \(z\) at \(x\) and write
    \(\widehat{\mathcal O}_x\simeq\mathbb C[[z]]\).  For a chosen
    representative of \(\theta_x\), the full periodic lattice model is
    \[
    \Lambda^{\theta_x}_{\ge a}(V)
    :=
    \bigoplus_{\chi}
    z^{\lceil a-\langle\chi,\theta_x\rangle\rceil}
    \widehat{\mathcal O}_x\otimes V_\chi,
    \qquad a\in\mathbb Q.
    \]
    It satisfies
    \(\Lambda^{\theta_x}_{\ge a+1}(V)
    =z\Lambda^{\theta_x}_{\ge a}(V)\).  Choosing a fundamental interval
    and a reference lattice extracts the normalized filtered fiber; the
    reference lattice records the integral shifts.

    \begin{definition}[Model parahoric datum at a point]
    \label{def:parahoric-structure-point}
    Let \(x\in D\).  A periodic parabolic tensor functor has local type
    \(\theta_x\) if, after choosing a dominant representative, its completed
    periodic lattice datum is equipped with a facetwise enhancement,
    fpqc-locally tensor-isomorphic to the standard Wilson system over the facet
    defining \(\mathcal G_{\theta_x}\), whose \(\theta_x\)-component is
    \(V\mapsto\Lambda^{\theta_x}_{\ge\bullet}(V)\).  The enhancement retains
    all inclusion, periodicity, functoriality, unit, and tensor
    compatibilities.
    After choosing a parabolic weight window, this local isomorphism induces
    the corresponding normalized filtered fiber functor.  The periodic datum,
    rather than only the normalized filtered fiber, retains the integral part
    of \(\theta_x\).  Morphisms preserve the entire facetwise enhancement;
    no trivialization of it is part of the datum.
    \end{definition}

    \begin{example}[The case \(G=\GL_n\)]
    \label{ex:parahoric-GLn}
    For \(G=\GL_n\), a local type is represented by
    \[
    \theta_x=(\alpha_1,\ldots,\alpha_n)\in\mathbb Q^n/S_n.
    \]
    After choosing a dominant representative, assume
    \[
    \alpha_1\ge\alpha_2\ge\cdots\ge\alpha_n.
    \]
    For the standard representation \(V=\mathbb C^n\), the model filtration is
    \[
    F^{\theta_x}_{\ge a}V
    =
    \bigoplus_{\alpha_i\ge a}\mathbb C e_i.
    \]
    Hence \(\theta_x\) recovers the usual weighted flag of a parabolic vector
    bundle.

    We do not impose the normalization \(0\le \alpha_i<1\). 
    From the parabolic-sheaf viewpoint of Iyer--Simpson \cite{IyerSimpson07Arelationbetweentheparabolic}, the intrinsic object is a periodic system of local lattices indexed by rational weights; choosing a normalized window changes the local lattice by elementary modification.
    Thus the normalized parabolic weights are obtained only after choosing a fundamental window for the periodic lattice system, while the parahoric local type itself is the unnormalized rational cocharacter class.
    \end{example}

    \begin{remark}
    If the local type
    \(
    \theta_x\in X_*(T)_{\mathbb Q}/W
    \)
    is represented by an \textbf{integral} cocharacter, then its fractional parahoric type is trivial.  
    Equivalently, after normalizing weights modulo \(\mathbb Z\), all associated parabolic weights are zero, hence the parabolic structure is trivial.  
    In Yokogawa's language, the parabolic structure is trivial if all parabolic weights are zero; see \cite[Def.~1.1]{Yokogawa1995Infinitesimal}.    
    
    \end{remark}

    \begin{definition}
    Let \(G\) and \(H\) be connected complex reductive groups, with maximal
    tori \(T_G\subset G\) and \(T_H\subset H\).  Let
    \[
    \theta_G=(\theta_{G,x})_{x\in D},
    \qquad
    \theta_H=(\theta_{H,x})_{x\in D}
    \]
    be systems of rational parahoric local types, where
    \[
    \theta_{G,x}\in X_*(T_G)_{\mathbb Q}/W_G,
    \qquad
    \theta_{H,x}\in X_*(T_H)_{\mathbb Q}/W_H.
    \]
    Their product local type is
    \[
    \theta_G\boxtimes\theta_H
    :=
    (\theta_{G,x},\theta_{H,x})_{x\in D}
    \in
    X_*(T_G\times T_H)_{\mathbb Q}/(W_G\times W_H).
    \]

    If \(P\) is a parahoric \(G\)-bundle of type \(\theta_G\) and \(Q\) is a
    parahoric \(H\)-bundle of type \(\theta_H\), their external parahoric
    product is the torsor
    \[
    P\boxtimes_{\mathrm{par}} Q:=P\times_X Q
    \]
    under the product parahoric group scheme, equipped with local type
    \(\theta_G\boxtimes\theta_H\).  Its periodic parabolic tensor functor is
    \[
    \omega^{\mathrm{par}}_{P\boxtimes Q}
    =
    \omega^{\mathrm{par}}_{P}
    \boxtimes
    \omega^{\mathrm{par}}_{Q}.
    \]
    \end{definition}

    For \(V\in \Rep_{\mathbb C}(G)\) and \(W\in \Rep_{\mathbb C}(H)\), the linear operations on the associated parahoric vector bundles are obtained by applying the standard rigid tensor operations in \(\mathrm{Fil}_{\mathbb Q}(\mathrm{Vect}_{\mathbb C})\). 
    Explicitly, the induced decreasing filtrations at \(x\in D\) are given by \[F^{(x)}_{\ge c}(E\oplus_{\mathrm{par}} F)_x = F^{(x)}_{\ge c} E_x \oplus F^{(x)}_{\ge c} F_x\], the convolution filtration \[F^{(x)}_{\ge c}(E\otimes_{\mathrm{par}} F)_x = \sum_{a+b \ge c} F^{(x)}_{\ge a} E_x \otimes F^{(x)}_{\ge b} F_x,\] the dual filtration \(F^{(x)}_{\ge c}(E^\vee_{\mathrm{par}})_x = \{ \varphi \in E_x^\vee \mid \varphi(F^{(x)}_{>-c} E_x) = 0 \}\), and analogously for \(\mathcal Hom_{\mathrm{par}}\).

    \begin{proposition}
    Parahoric vector bundles with decreasing Moy--Prasad filtrations form a
    rigid exact tensor category under
    \[
    \oplus,\qquad
    \otimes_{\mathrm{par}},\qquad
    (\cdot)^\vee_{\mathrm{par}},\qquad
    \mathcal Hom_{\mathrm{par}}.
    \]
    For a principal parahoric \(G\)-bundle \(P\), the associated bundle functor
    \[
    \omega_P:\Rep_{\mathbb C}(G)\longrightarrow \mathbf{ParVect}(X,D),
    \qquad
    V\longmapsto P(V),
    \]
    is exact, faithful, and symmetric monoidal.  In particular,
    \[
    P(V\oplus W)\simeq P(V)\oplus_{\mathrm{par}}P(W),
    \]
    \[
    P(V\otimes W)\simeq P(V)\otimes_{\mathrm{par}}P(W),
    \]
    \[
    P(V^\vee)\simeq P(V)^\vee_{\mathrm{par}},
    \]
    and
    \[
    P(\mathcal Hom(V,W))
    \simeq
    \mathcal Hom_{\mathrm{par}}(P(V),P(W)).
    \]
    \end{proposition}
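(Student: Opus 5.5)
The plan is to reduce every claim to a pointwise statement at a marked point \(x\in D\), working in the filtered-vector-space category \(\mathrm{Fil}_{\mathbb Q}(\mathrm{Vect}_{\mathbb C})\) for the normalized fibers and in the category of periodic lattice systems \(\Lambda_{\ge\bullet}\) for the completed data. Away from \(D\) everything is ordinary vector bundles, where the statements are standard. So the first step is to check that the displayed operations \(\oplus\), \(\otimes_{\mathrm{par}}\) (the convolution filtration), \((\cdot)^\vee_{\mathrm{par}}\) and \(\mathcal Hom_{\mathrm{par}}\) are well defined on periodic systems. For \(\otimes_{\mathrm{par}}\) we take the lattice \(\sum_{a+b\ge c}\Lambda_{\ge a}(E)\otimes\Lambda_{\ge b}(F)\). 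For the dual we take \(\{\varphi:\varphi(\Lambda_{>-c}(E))\subset z\widehat{\mathcal O}_x\}\), or more precisely the version with the correct integral shift. In each case we verify periodicity under \(z\). Rigidity comes from exhibiting evaluation and coevaluation maps as filtered morphisms of degree \(0\), since evaluation sends \(F_{\ge -a}E^\vee\otimes F_{\ge a}E\) into \(F_{\ge 0}\mathcal O\). Exactness is taken in the sense of strict filtered exact sequences; because \(\mathbb Q\)-filtrations on finite-dimensional spaces split, exactness of all operations can be checked after choosing splittings, where it is obvious.

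The second step uses the model. By Definition~\ref{def:parahoric-structure-point}, \(P\) is fpqc-locally at \(x\) tensor-isomorphic to the standard Wilson system, whose component for \(V\) is \(\Lambda^{\theta_x}_{\ge\bullet}(V)=\bigoplus_\chi z^{\lceil a-\langle\chi,\theta_x\rangle\rceil}\widehat{\mathcal O}_x\otimes V_\chi\). It therefore suffices to prove the four isomorphisms for the model functor \(V\mapsto\Lambda^{\theta_x}_{\ge\bullet}(V)\), naturally in \(V\) and compatibly with the gluing data; descent then gives them for \(P(V)\). For the model, all four reduce to weight-space bookkeeping. The weights of \(V\otimes W\) are the sums \(\chi+\psi\), those of \(V^\vee\) are \(-\chi\), and \(\langle\chi,\theta_x\rangle\) is additive. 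The convolution formula then matches the model on \(V\otimes W\) once one checks the ceiling identity
\[
\min_{a+b\ge c}\bigl(\lceil a-\alpha\rceil+\lceil b-\beta\rceil\bigr)=\lceil c-\alpha-\beta\rceil,
\]
which holds because \(a,b\) range over \(\mathbb Q\): take \(a-\alpha\in\mathbb Z\). The dual is handled in the same way, and \(\mathcal Hom(V,W)=V^\vee\otimes W\) follows from the previous two. Direct sums are immediate.

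The third step establishes exactness and faithfulness of \(\omega_P\). Exactness holds because \(\Rep_{\mathbb C}(G)\) is semisimple, so every exact sequence splits and additivity suffices. Faithfulness holds because over the generic point \(\omega_P\) is the ordinary associated-bundle functor of a \(G\)-torsor, which is faithful, and \(\mathbf{ParVect}\) embeds into vector bundles on \(X\setminus D\) on \(\Hom\)-sets, since parabolic morphisms are determined generically. Symmetric monoidality is then the natural isomorphism from the second step, and its compatibility with the associativity, unit and symmetry constraints is inherited from \(\Rep_{\mathbb C}(G)\) via the generic restriction, which is faithful.

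I expect the main obstacle to be the descent in the second step. The local isomorphism to the Wilson system is only fpqc-local and is not part of the datum, so one must check that the isomorphism \(\Lambda(V\otimes W)\simeq\Lambda(V)\otimes_{\mathrm{par}}\Lambda(W)\) is equivariant for automorphisms of the standard system, that is, for the parahoric group \(\mathcal G_{\theta_x}(\widehat{\mathcal O}_x)\). I would first try to prove this by observing that the isomorphism is the identity on generic fibers \(V\otimes W\otimes\widehat K_x\). Any automorphism of the standard system acts diagonally there and preserves the lattices on both sides, so equivariance is automatic. This reduces everything to the lattice equality established in the second step.
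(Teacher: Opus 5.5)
Your proof is correct, but it does not follow the paper, because the paper gives no separate argument for this proposition. The paper treats it as essentially definitional. The rigid exact tensor structure on \(\mathbf{ParVect}(X,D)\) is taken as standard; the root-stack/parabolic tensor equivalence is cited just afterwards. The operations on associated objects are declared to come from the external product \(P\boxtimes_{\mathrm{par}}Q\) by pushing along the direct-sum, tensor, dual and Hom representations. The four isomorphisms are therefore formal at the principal level, and the \(\mathrm{Fil}_{\mathbb Q}(\mathrm{Vect}_{\mathbb C})\) convolution formulas are simply asserted to be their associated-vector realization.

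You instead take the explicit lattice formulas as the definition of the operations on periodic systems. You verify the isomorphisms on Wilson's model by weight-space bookkeeping and the identity \(\min_{a+b\ge c}(\lceil a-\alpha\rceil+\lceil b-\beta\rceil)=\lceil c-\alpha-\beta\rceil\). You then transfer them to \(P\) by observing that they are equalities of lattices inside a common generic fiber, hence automatically invariant under tensor automorphisms of the standard system. That is the right way to dispose of the descent issue you flag.

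The two routes buy different things:
\begin{itemize}
\item Your route genuinely checks the comparison the paper only asserts. It does so on the full periodic lattice system, which the paper itself stresses is where the integral shifts live, whereas the paper records only filtered-fiber formulas.
\item The paper's route buys brevity and makes the principal-level operation \(\boxtimes\) primary.
\end{itemize}

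One point to tighten: for rigidity you exhibit only the evaluation map. Coevaluation is handled the same way on an adapted local frame, since each \(e_i\otimes e_i^\vee\) has weight \(0\). The triangle identities, like your coherence checks, hold generically and hence everywhere, because generic restriction is faithful on \(\mathbf{ParVect}\).
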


    The fundamental operation for principal parahoric objects is the product
    \[
    (G,\theta_G)\boxtimes(H,\theta_H)
    =
    (G\times H,\theta_G\boxtimes\theta_H).
    \]
    The usual linear operations on associated parahoric vector bundles are then
    obtained by applying the direct-sum, tensor, dual, and Hom representations.
    Thus the tensor formalism is defined at the level of local types and
    principal objects, while the familiar convolution formulas are its
    associated-vector realization.

\subsubsection{Parahoric \texorpdfstring{\(G\)}{G}-bundles}

    Let \(X\) be a smooth projective curve and let \(D\subset X\) be a reduced
    divisor.  Fix local types
    \[
    \mathbf{\theta}=(\theta_x)_{x\in D},
    \qquad
    \theta_x\in X_*(T)_{\mathbb Q}/W.
    \]
    Choose positive integers \(r_x\) clearing the denominators of
    \(\theta_x\), and write \(\mathbf r=(r_x)_{x\in D}\).  We denote by
    \[
    \mathbf{ParVect}_{\mathbf r}(X,D)
    \]
    the full rigid exact tensor category of periodic parabolic vector bundles
    whose jumps at \(x\) lie in \(r_x^{-1}\mathbb Z\), equipped with the
    parabolic tensor product.

    \begin{definition}
    \label{def:parahoric-G-bundle}
    Let \(\mathcal G_\theta\) be the Bruhat--Tits group scheme on \(X\)
    determined by the local types \(\theta_x\).  A \emph{parahoric
    \(G\)-bundle of type \(\theta\)} on \((X,D)\) is a
    \(\mathcal G_\theta\)-torsor on \(X\).
    \end{definition}

    For rational types, choosing a normalized parabolic window identifies
    \(\mathbf{ParVect}_{\mathbf r}(X,D)\) with vector bundles on the root stack
    \[
    \mathfrak X_{\mathbf r}
    =X\bigl[\sqrt[r_x]{x}\mid x\in D\bigr].
    \]
    This tensor equivalence is the root-stack/parabolic correspondence
    \cite[Theorem~2.12]{BorneAmine23paraconneandstackofroots}; its
    principal tensor-functor form is
    \cite[Proposition~6.1]{BiswasMajumderWong12RootStacks}.
    After choosing a representative of \(\theta_x\), the residual gerbe
    \(i_x:B\mu_{r_x}\hookrightarrow\mathfrak X_{\mathbf r}\) carries the
    inertia homomorphism
    \[
    \tau_{\theta_x}:\mu_{r_x}\longrightarrow T\subset G,
    \qquad
    \tau_{\theta_x}(\zeta)=(r_x\theta_x)(\zeta),
    \]
    whose \(G\)-conjugacy class is independent of the representative and
    records the fractional part of the local type.  The full unnormalized type
    is the periodic lattice model of Definition~\ref{def:parahoric-structure-point};
    this additional datum retains integral lattice shifts.  We use this
    periodic parabolic formulation below.  Wilson characterizes a pointwise
    lattice model in Theorem~3.7.2, defines the automorphisms of the full
    facetwise system in Definition~4.1.1, proves representability and
    constructs the parahoric comparison morphism in Theorem~4.2.1, and proves
    that it is an isomorphism in the present residue-characteristic-zero
    setting in Theorem~5.2.2
    \cite{Wilson2010TannakianParahoricBT}.

    \begin{proposition}
    \label{prop:tannakian-parahoric-bundles}
    The groupoid of parahoric \(G\)-bundles of type \(\theta\) on \((X,D)\)
    is equivalent to the groupoid of
    exact faithful \(\mathbb C\)-linear strong symmetric monoidal functors
    \[
    \omega^{\mathrm{par}}:
    \Rep_{\mathbb C}(G)
    \longrightarrow
    \mathbf{ParVect}_{\mathbf r}(X,D)
    \]
    equipped at every \(x\in D\) with the facetwise local enhancement of
    Definition~\ref{def:parahoric-structure-point}.  The monoidal structure on
    the target is the parabolic tensor product, and arrows on the right are
    tensor natural isomorphisms preserving these enhancements.
    \end{proposition}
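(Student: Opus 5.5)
The plan is to reduce the statement to two inputs the paper has already cited: the root-stack/parabolic tensor equivalence of Borne--Amine and Biswas--Majumder--Wong, and Wilson's Tannakian description of Bruhat--Tits group schemes. Then we glue the local (periodic) enhancement to the global torsor.

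\emph{Step 1: the forward functor.} Given a \(\mathcal G_\theta\)-torsor \(P\), we define \(\omega^{\mathrm{par}}_P(V)\) as follows. Away from \(D\) it is the associated bundle \(P\times^{G}V\). At each \(x\in D\) it is the periodic lattice system obtained by twisting Wilson's standard system \(V\mapsto\Lambda^{\theta_x}_{\ge\bullet}(V)\) by the restriction of \(P\) to \(\operatorname{Spec}\widehat{\mathcal O}_x\). This uses that \(\mathcal G_\theta(\widehat{\mathcal O}_x)\) is, by Wilson's Definition~4.1.1 and Theorem~4.2.1, the automorphism group of the full facetwise system. The Beauville--Laszlo gluing of the generic and formal pieces gives a periodic parabolic bundle with jumps in \(r_x^{-1}\mathbb Z\). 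Exactness, faithfulness and the monoidal structure are checked fibrewise and on the local models, as in the preceding proposition on associated parahoric bundles. The enhancement is tautological, since fpqc-locally (indeed étale-locally, where \(P\) is trivial) it is the standard system.

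\emph{Step 2: the inverse functor.} Given \(\omega^{\mathrm{par}}\) with enhancements, consider the sheaf of enhancement-preserving tensor isomorphisms from the standard functor \(V\mapsto\mathcal O_X\otimes V\), extended near \(x\) by the Wilson system, to \(\omega^{\mathrm{par}}\). This sheaf is acted on by \(\mathcal G_\theta\). Over \(X\setminus D\) it is a \(G\)-torsor by the classical Tannakian statement (Deligne--Milne / Nori). Near \(x\) the sheaf is a torsor under the group scheme of automorphisms of the standard Wilson system, because the enhancement is fpqc-locally isomorphic to the standard one. By Wilson's Theorem~5.2.2, in residue characteristic zero that automorphism group scheme is \(\mathcal G_{\theta}\) over \(\widehat{\mathcal O}_x\). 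The torsor is therefore an fpqc \(\mathcal G_\theta\)-torsor; it is smooth, hence étale-locally trivial. The generic and local pieces glue, since they agree on punctured formal discs, again by Beauville--Laszlo.

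\emph{Step 3: the quasi-inverse.} Check the two composites are naturally isomorphic and that morphisms match. A tensor isomorphism preserving enhancements induces a map of Isom-sheaves, hence of torsors, and conversely. Independence of the chosen dominant representative of \(\theta_x\) follows because Weyl conjugation transports both the Wilson system and \(\mathcal G_\theta\). Alternatively, one can pass through the root stack \(\mathfrak X_{\mathbf r}\), where principal \(G\)-bundles with inertia \(\tau_{\theta_x}\) correspond to such functors. I prefer the periodic formulation, since the root stack loses the integral part of \(\theta_x\).

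The main obstacle is Step 2 at the marked points. One must show that \emph{fpqc-local} isomorphism with the standard facetwise system yields a \(\mathcal G_\theta\)-torsor in the étale/Zariski sense, with all periodicity and inclusion compatibilities kept. The naive automorphism group of a single lattice chain is larger than the parahoric. I would handle this by citing Wilson's representability and comparison theorems for the full facetwise system, then using smoothness of \(\mathcal G_\theta\) to descend triviality from fpqc to étale.
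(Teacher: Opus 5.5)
Your proposal is correct and follows essentially the same route as the paper. Both arguments use relative Tannakian reconstruction on $X\setminus D$, the tensor-isomorphism sheaf from Wilson's standard facetwise system as a torsor under its automorphism group (identified with $\mathcal G_{\theta}$ via Wilson's comparison theorems), and gluing along the punctured formal discs. The only difference is cosmetic: you glue by Beauville--Laszlo, whereas the paper invokes effective fpqc descent for affine torsors along $U\amalg\coprod_{x\in D}\widehat X_x\to X$. The two are interchangeable here, and Beauville--Laszlo is the form of that descent that needs only the identification over the punctured discs.
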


    \begin{proof}
    A \(\mathcal G_\theta\)-torsor gives the displayed associated-bundle
    functor and its standard facetwise lattice systems.  Conversely, on
    \(U=X\setminus D\), relative Tannakian reconstruction gives a
    \(G\)-torsor.  On
    \(\widehat X_x=\operatorname{Spec}\widehat{\mathcal O}_{X,x}\), the
    tensor-isomorphism sheaf from the standard Wilson facetwise system to the
    prescribed enhancement is a torsor under its tensor automorphism group,
    which Wilson identifies with \(\mathcal G_{\theta_x}\).  Over the
    punctured formal disc it reconstructs the same generic tensor functor as
    the torsor on \(U\).  For the fpqc cover
    \(U\amalg\coprod_{x\in D}\widehat X_x\to X\), these generic
    identifications supply the descent cocycle; effective descent for affine
    torsors then glues the local objects to a
    \(\mathcal G_\theta\)-torsor on \(X\)
    \cite[Tags 04US and 0245]{StacksProject}.  The associated-lattice
    construction and this reconstruction are inverse on objects and arrows.
    \end{proof}

    Let \(E\) be a parahoric \(G\)-bundle of type \(\theta\). For any representation \(V\in\Rep_{\mathbb C}(G)\), we write
    \[
    F^{(x)}_{\ge a}E(V)_x
    \]
    for the induced filtration at \(x\), and
    \[
    \mathrm{gr}_a^{(x)}E(V)_x
    :=
    F^{(x)}_{\ge a}E(V)_x/F^{(x)}_{>a}E(V)_x.
    \]
    Following the standard Mehta--Seshadri convention, the associated
    parahoric vector degree is defined as
    \[
    \operatorname{pdeg}_\theta E(V)
    =
    \deg E(V)
    +
    \sum_{x\in D}\sum_a
    a\cdot \dim_{\mathbb C}\mathrm{gr}_a^{(x)}E(V)_x.
    \]
    This is the degree of the associated parahoric vector bundle \(E(V)\).
    It is functorial in \(V\), but it is not a representation-independent
    scalar degree of the principal \(G\)-bundle itself.  The intrinsic
    principal degree used in the stability condition below is attached, as in
    Ramanathan's formulation, to a parahoric reduction and a character.

    \begin{remark}
    The preceding triviality refers only to the fractional parahoric, or stacky,
    structure.  In the unnormalized Moy--Prasad convention used here, an
    integral cocharacter may still contribute an integral lattice shift to the
    preceding degree formula.  A normalized parabolic window absorbs this shift
    by elementary modification: the normalized parabolic structure is trivial,
    while the unnormalized parahoric description remembers the lattice shift.
    \end{remark}

    \begin{definition}[Principal parahoric degree]
    \label{def:principal-parahoric-degree}
    Let \(Q\subset G\) be a parabolic subgroup, let \(E_Q\) be a parahoric
    reduction of \(E\) to \(Q\), and let \(\chi\in X^*(Q)\).  The associated
    parabolic line bundle
    \[
    (E_Q)_\chi:=E_Q\times^Q\mathbb C_\chi
    \]
    carries the filtered, equivalently parabolic, line structure induced from
    the \(Q\)-reduction.  Its parahoric degree is
    \[
    \operatorname{pdeg}_\theta(E_Q,\chi)
    =
    \deg((E_Q)_\chi)
    +
    \sum_{x\in D}a_x(\chi),
    \]
    where \(\deg((E_Q)_\chi)\) is the ordinary degree of the underlying line
    in the chosen parabolic window and \(a_x(\chi)\) is the unique jump of its
    filtered fiber at \(x\).  After choosing a representative of \(\theta_x\)
    and a local splitting compatible with the reduction and this window,
    \[
    a_x(\chi)=\langle\chi,\theta_x\rangle.
    \]
    This extends Ramanathan's degree of a reduction to the parahoric setting
    \cite{RamananRamanathan84InstabilityFlag,
    BalaSesha15ModuliofparhorictorsoronRiemannsurface,
    BalaBiswasPandy17Conneonparahorictorovercurves}.
    \end{definition}

    \begin{proposition}[Intrinsic nature of parahoric degree]
    \label{prop:principal-parahoric-degree-independent}
    The number \(\operatorname{pdeg}_\theta(E_Q,\chi)\) is independent of the
    parabolic window, of the choice of local splitting compatible with the
    reduction, and of any auxiliary faithful representation used to realize
    the parahoric bundle.
    If \(\rho:G\to GL(V)\) is a representation and
    \(\chi_\rho=\det\circ\rho\), then, for \(Q=G\),
    \[
    \operatorname{pdeg}_\theta E(V)
    =
    \operatorname{pdeg}_\theta(E,\chi_\rho).
    \]
    Thus the numerical associated vector degree is determined by the
    determinant character of \(\rho\).
    \end{proposition}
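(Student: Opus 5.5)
The proof reduces everything to the parahoric line bundle \((E_Q)_\chi\) and its local jumps \(a_x(\chi)\).

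\textbf{Step 1 (the number is intrinsic to the line).} The parahoric reduction \(E_Q\), together with \(\chi\in X^*(Q)\), gives a \(\mathcal Q_\theta\)-torsor. The character \(\chi\) factors through \(Q\to Q/[Q,Q]\to\mathbb G_m\), and pushing out along it yields a parahoric \(\mathbb G_m\)-bundle. By Proposition~\ref{prop:tannakian-parahoric-bundles} applied to the torus, this is equivalent to a periodic parabolic line bundle \(L_\bullet=(L_a)_{a\in\mathbb Q}\), with \(L_{a+1}=L_a(-x)\) locally at each \(x\).

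For such a periodic object I would use the intrinsic formula
\[
\operatorname{pdeg}L_\bullet=\deg L_a+\sum_{x\in D}\bigl(\text{jump of }L_\bullet\text{ at }x\text{ relative to }L_a\bigr).
\]
Changing the window from \(L_a\) to \(L_{a'}\) performs elementary modifications at the points of \(D\). Each modification changes \(\deg\) by \(-1\) and raises the jump by \(+1\), so the two changes cancel. This shows independence of the window.

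\textbf{Step 2 (the splitting and the jump formula).} To identify the jump, choose a local splitting compatible with the reduction. Locally at \(x\) this means a trivialization of \(E_Q\) over \(\widehat{\mathcal O}_x\) matching the Wilson model for a representative of \(\theta_x\) lying in \(X_*(T)_{\mathbb Q}\) with \(T\subset Q\). By the lattice model \(\Lambda^{\theta_x}_{\ge a}\), the line \(\mathbb C_\chi\) has its jump at \(\langle\chi,\theta_x\rangle\).

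Two different compatible splittings differ by a section of the parahoric group \(\mathcal Q_{\theta_x}\). Such a section acts on \(\mathbb C_\chi\) through \(\chi\), giving a unit of \(\widehat{\mathcal O}_x\), because the parahoric's image in \(\mathbb G_m\) is \(\widehat{\mathcal O}_x^\times\). Hence the lattice and its jump are unchanged.

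Replacing the representative by a \(W_Q\)-conjugate, or by an integral shift \(\theta_x+\lambda\), is more delicate. The \(W_Q\)-conjugate fixes \(\langle\chi,\cdot\rangle\), since \(\chi\) is a character of \(Q\). The integral shift changes the jump by \(\langle\chi,\lambda\rangle\in\mathbb Z\) and the underlying line's degree by the opposite amount, exactly as in Step 1. I expect this bookkeeping, namely that rechoosing the representative is absorbed as an elementary modification of the window, to be the main obstacle. I would handle it by working throughout with the full periodic datum of Definition~\ref{def:parahoric-structure-point} rather than with normalized fibers.

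\textbf{Step 3 (no auxiliary representation, and the determinant formula).} The construction in Steps 1 and 2 uses only \(E_Q\) and \(\chi\), so no auxiliary faithful representation enters.

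For the identity with \(Q=G\), take \(\det\) of the associated parabolic bundle \(E(V)\). Parabolic determinants satisfy
\[
\operatorname{pdeg}\det E(V)=\operatorname{pdeg}E(V),
\]
because
\[
\deg\det=\deg
\quad\text{and}\quad
\text{jump of }\det=\sum_a a\dim\mathrm{gr}_a^{(x)}.
\]
This follows from the convolution filtration on \(\Lambda^{\dim V}\). On the other hand, tensor-functoriality of \(\omega_E\) gives
\[
\det E(V)\simeq E(\det V)=E_{\chi_\rho}.
\]
Combining the two yields \(\operatorname{pdeg}_\theta E(V)=\operatorname{pdeg}_\theta(E,\chi_\rho)\). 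As a check, the jump of \(E_{\chi_\rho}\) at \(x\) is
\[
\langle\chi_\rho,\theta_x\rangle=\sum_\chi\langle\chi,\theta_x\rangle\dim V_\chi,
\]
which matches the local contribution read off from the model filtration \(F^{\theta_x}_{\ge a}V\).
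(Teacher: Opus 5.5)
Your proposal is correct and follows essentially the same route as the paper. The paper likewise reduces to the filtered line \((E_Q)_\chi\) and observes that its single jump is intrinsic and unaffected by a compatible change of splitting, which alters the basis only by a unit. It then cancels the degree and jump changes under a window shift by elementary modification, notes that tensoriality makes any auxiliary faithful realization irrelevant, and derives the determinant identity from \(\det_{\mathrm{par}}E(V)\simeq E_{\chi_\rho}\) with local weight \(\sum_a a\dim\operatorname{gr}^{(x)}_a\). Your additional bookkeeping for \(W_Q\)-conjugate and integrally shifted representatives goes slightly beyond what the paper's proof addresses, but it is consistent with it.
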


    \begin{proof}
    The reduction \(E_Q\) and the character \(\chi\) canonically determine
    the parabolic line \((E_Q)_\chi\).  Applying the parabolic
    associated-bundle construction to the \(Q\)-reduction, the
    one-dimensional \(Q\)-representation \(\mathbb C_\chi\) receives a
    filtered line at each
    \(x\in D\).  In a filtered line there is only one nonzero graded piece,
    so its jump \(a_x(\chi)\) is
    intrinsic.  Changing a compatible local splitting changes a local basis of
    this line by a nonzero scalar and does not change the filtration jump.
    Tensoriality gives the same filtered line for any auxiliary faithful
    realization.  For a fixed window the ordinary degree of
    \((E_Q)_\chi\) is intrinsic; changing the window performs an elementary
    modification, and the change in ordinary degree is the negative of the
    corresponding change in the local jump.  Hence their sum is independent
    of all the stated choices.

    For the determinant comparison, the parabolic tensor determinant satisfies
    \(\det_{\mathrm{par}}E(V)\simeq E_{\chi_\rho}\), and its filtration has
    local weight
    \[
    \sum_a a\,\dim\operatorname{gr}^{(x)}_aE(V)_x.
    \]
    The two degree formulas are therefore identical.
    \end{proof}

    Later, when stability is formulated using strictly antidominant characters \(\chi\), the semistability inequality will require
    \[
    \operatorname{pdeg}_{\theta}(E_Q,\chi)\ge 0.
    \]
    This is the inverse-character form of the convention in
    \cite[\S6.3, Definition~6.3.4, pp.~31--32]
    {BalaSesha15ModuliofparhorictorsoronRiemannsurface}: if
    \(\chi_{\mathrm{BS}}=\chi^{-1}\) is their dominant character, then
    \((E_Q)_\chi\simeq((E_Q)_{\chi_{\mathrm{BS}}})^\vee\) and
    \(\operatorname{pdeg}_\theta(E_Q,\chi)
    =-\operatorname{pdeg}_\theta(E_Q,\chi_{\mathrm{BS}})\).
    Thus their \(\le0\) inequality is precisely the above \(\ge0\)
    inequality.  This fixes the sign convention for parahoric degree and
    stability used throughout the paper.

\subsection{Log--orbi \texorpdfstring{\(\lambda\)}{lambda}-connections and the auxiliary parahoric dictionary}
\label{subsec:parahoric-lambda-log-orbi}

    Let \(\mathcal C\) be a log--orbi curve.  A principal
    \(G\)-\(\lambda\)-connection is defined intrinsically on its
    lisse--\'etale site, with logarithmic cotangent sheaf
    \[
    \Omega^1_{\mathcal C,\log}:=\Omega^1_{\mathcal C}(\log D_{\log}),
    \]
    \cite{laumonBailly00champsalg,Olsson2016AlgebraicSpacesandStacks}.  On an
    orbifold chart \([\Delta/\mu_m]\) it is a regular
    \(\mu_m\)-equivariant object on \(\Delta\); at a logarithmic point it may
    have a pole generated by \(dz/z\).  Tensorially, for every
    algebraic representation \(\rho:G\to \GL(V)\), the associated bundle
    \(P_\rho=P\times^G V\) is equipped functorially with a logarithmic
    \(\lambda\)-connection
    \[
    \nabla^{\lambda,\rho}:P_\rho
    \longrightarrow
    P_\rho\otimes\Omega^1_{\mathcal C,\log},
    \]
    satisfying the \(\lambda\)-Leibniz rule and compatible with tensor products,
    duals, and morphisms in \(\Rep_{\mathbb C}(G)\).  Equivalently, this is a
    \(\lambda\)-splitting of the logarithmic Atiyah sequence
    \[
    0\longrightarrow \operatorname{ad}(P)
    \longrightarrow \operatorname{At}_{\log}(P)
    \longrightarrow T_{\mathcal C}(-\log D_{\log})
    \longrightarrow 0.
    \]
    For \(\lambda=1\) this is a logarithmic principal connection, and for
    \(\lambda=0\) it is a principal Higgs field.  Since all curves considered
    here are one-dimensional, integrability is automatic.

\subsubsection{Principal parahoric \texorpdfstring{\(\lambda\)}{lambda}-connections}

    Let \((X,D)\) be a smooth pointed curve and let
    \[
    \theta=(\theta_x)_{x\in D},
    \qquad
    \theta_x\in X_*(T)_{\mathbb Q}/W,
    \]
    be a system of rational local types.
    Let \(\mathbf{\Lambda}^{\lambda}_{\mathrm{par}}(X,D)\) denote the
    ambient exact tensor category of periodic parabolic vector bundles of
    arbitrary rational local type equipped with filtration-preserving
    logarithmic \(\lambda\)-connections.

    \begin{definition}
    \label{def:parahoric-lambda-connection}
    Let \(E\) be a parahoric \(G\)-bundle of type \(\theta\) on \((X,D)\).
    A \emph{parahoric \(G\)-\(\lambda\)-connection} on \(E\) is a tensor-compatible family of logarithmic \(\lambda\)-connections
    \[
    \nabla^{\lambda,V}\colon
    E(V)\longrightarrow E(V)\otimes\Omega_X^1(\log D),
    \qquad
    V\in\Rep_{\mathbb C}(G),
    \]
    such that, for every \(x\in D\), the residue preserves the parahoric filtration:
    \[
    \Res_x(\nabla^{\lambda,V})
    \bigl(F^{(x)}_{\ge a}E(V)_x\bigr)
    \subset
    F^{(x)}_{\ge a}E(V)_x
    \qquad
    \text{for all }a\in\mathbb Q.
    \]
    Here \(E(V)\) is the associated periodic parabolic vector bundle and
    \(F^{(x)}_{\ge a}E(V)_x\) is its decreasing filtration in a local
    parabolic window.
    Equivalently, the assignment
    \[
    V\longmapsto (E(V),\nabla^{\lambda,V})
    \]
    is an exact faithful strong symmetric monoidal functor
    \[
    \Rep_{\mathbb C}(G)
    \longrightarrow
    \mathbf{\Lambda}^{\lambda}_{\mathrm{par}}(X,D)
    \]
    whose underlying periodic parabolic tensor functor has local type
    \(\theta\).
    \end{definition}
    We next define the auxiliary coarse-curve residue conditions.
    \begin{definition}[Adjustedness and logarithmic strata]
    \label{def:adjusted-algebraic-log}
    Let \((E,\nabla^\lambda)\) be a parahoric
    \(G\)-\(\lambda\)-connection of local type \(\theta_x\) at \(x\).
    \begin{enumerate}
    \item The connection is \emph{adjusted at \(x\)} if, after choosing a
    representative \(\widetilde\theta_x\) of \(\theta_x\) and a compatible
    local parahoric trivialization, for every algebraic representation
    \(\rho:G\to\GL(V)\) there is an endomorphism \(N_{V,x}\) such that
    \[
    \operatorname{Res}_x(\nabla^{\lambda,V})
    =
    \lambda\,d\rho_V(\widetilde\theta_x)+N_{V,x},
    \qquad
    N_{V,x}\bigl(F^{(x)}_{\ge a}E(V)_x\bigr)
    \subset
    F^{(x)}_{>a}E(V)_x
    \quad(a\in\mathbb Q).
    \]
    \item Suppose that \(x\) is an ordinary logarithmic point and that the
    connection is adjusted there.
    \begin{enumerate}
    \item If \(\lambda=0\), it is of \emph{algebraic type} at \(x\) if
    \(N_{V,x}=0\) for every \(V\), and of \emph{log type} at \(x\) if
    \(N_{V,x}\ne0\) for some \(V\).
    \item If \(\lambda\ne0\), let \(T_{x,u}\) be the unipotent factor of any
    representative of the local monodromy conjugacy class of the normalized
    flat germ \(\lambda^{-1}\nabla^\lambda\).  The connection is of
    \emph{algebraic type} at \(x\) if \(T_{x,u}=1\), and of
    \emph{log type} at \(x\) if \(T_{x,u}\ne1\).
    \end{enumerate}
    \end{enumerate}
    \end{definition}

    The adjusted residue condition is equivalently
    \[
    \operatorname{Res}_x(\nabla^{\lambda,V})
    \big|_{\operatorname{gr}^{(x)}_aE(V)_x}
    =
    \lambda a\cdot\operatorname{id}
    \]
    on every associated graded piece.  After choosing a representative
    \(\widetilde\theta_x\) of \(\theta_x\) and a compatible local parahoric
    trivialization, its principal shorthand is
    \[
    \operatorname{Res}_x(\nabla^\lambda)
    =
    \lambda\widetilde\theta_x+N_x,
    \qquad
    N_x\in\mathfrak g_{>0}(\widetilde\theta_x),
    \]
    with \(N_{V,x}=d\rho_V(N_x)\).  Thus adjustedness includes both the zero
    and nonzero positive Moy--Prasad terms; for \(\lambda=0\), these are
    precisely the algebraic-type and log-type strata.

    For \(G=\GL_n\), adjustedness is precisely the strongly parabolic
    \(\lambda\)-condition of
    \cite[Definition~4.3 and Section~4.6]
    {BorneAmine23paraconneandstackofroots}, after translating the filtration
    and residue-sign conventions.

    The usual direct-sum, tensor-product, dual, and internal-Hom connection
    formulas preserve the strictly filtration-raising term.  Hence adjusted
    objects form a rigid tensor category; the algebraic stratum is preserved,
    whereas the complementary log stratum is not a tensor subcategory.

    \begin{proposition}\label{prop:tannakian-parahoric-connections}
    Let \(\mathbf{\Lambda}^\lambda_{\mathrm{adj}}(X,D)\) be the rigid
    tensor category of adjusted periodic parabolic vector
    \(\lambda\)-connections of arbitrary rational local type.  Adjusted
    parahoric principal \(G\)-\(\lambda\)-connections of type \(\theta\) on
    \((X,D)\) are canonically equivalent to exact faithful
    \(\mathbb C\)-linear strong symmetric monoidal functors
    \[
    \Rep_{\mathbb C}(G)
    \longrightarrow
    \mathbf{\Lambda}^\lambda_{\mathrm{adj}}(X,D)
    \]
    whose underlying periodic parabolic tensor functor has local type
    \(\theta\).
    Under this equivalence, the subgroupoid cut out by algebraic type at the
    ordinary logarithmic points corresponds to tensor functors whose
    associated object in one, equivalently every, faithful representation is
    of algebraic type there.  Log type is the complementary adjusted stratum
    at those points and is not imposed by requiring a tensor functor to land
    in a vector-valued log-type subcategory.
    \end{proposition}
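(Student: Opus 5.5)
The plan is to reduce to Proposition~\ref{prop:tannakian-parahoric-bundles} and then add the \(\lambda\)-connection on top. Given an adjusted parahoric \(G\)-\(\lambda\)-connection \((E,\nabla^\lambda)\) of type \(\theta\), Definition~\ref{def:parahoric-lambda-connection} already produces the functor \(V\mapsto(E(V),\nabla^{\lambda,V})\). This functor is exact, faithful and strong symmetric monoidal. Adjustedness in the principal shorthand \(\operatorname{Res}_x(\nabla^\lambda)=\lambda\widetilde\theta_x+N_x\) with \(N_x\in\mathfrak g_{>0}(\widetilde\theta_x)\) gives \(N_{V,x}=d\rho_V(N_x)\). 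By the Moy--Prasad compatibility of Subsection~\ref{subsec:parahoric-bundles}, each \(N_{V,x}\) is strictly filtration-raising, so the functor lands in \(\mathbf{\Lambda}^\lambda_{\mathrm{adj}}(X,D)\).

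For the converse, forget the connections and apply Proposition~\ref{prop:tannakian-parahoric-bundles} to the underlying periodic parabolic tensor functor; this yields a \(\mathcal G_\theta\)-torsor \(E\) with \(E(V)\) identified with the given bundles. The tensor-compatible family \(\nabla^{\lambda,V}\) is then a \(\lambda\)-splitting of the logarithmic Atiyah sequence of \(E\) in the Tannakian sense. To see this, work locally: after a local trivialization, the differences \(\nabla^{\lambda,V}-\lambda d\) form a tensor derivation of the fiber functor with values in \(\Omega^1(\log D)\). A tensor derivation of the forgetful functor on \(\Rep_{\mathbb C}(G)\) is given by an element of \(\mathfrak g\), because \(\mathfrak g\) is the Lie algebra of \(\underline{\Aut}^\otimes\). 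This gives a \(\mathfrak g\)-valued logarithmic form, and the local forms glue to a principal \(\lambda\)-connection on \(U=X\setminus D\). At \(x\in D\) the same argument with the Wilson system, whose tensor automorphisms form \(\mathcal G_{\theta_x}\), shows that the connection extends as a parahoric one.

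The main obstacle is the residue. For each \(V\), the residue has the form \(\lambda d\rho_V(\widetilde\theta_x)+N_{V,x}\), and these \(N_{V,x}\) must come from a single \(N_x\in\mathfrak g_{>0}(\widetilde\theta_x)\). Since \(\operatorname{Res}_x\) and \(\lambda\,d\rho_V(\widetilde\theta_x)\) are both tensor derivations of the fiber functor (Leibniz compatibility of residues with \(\otimes\)), their difference \((N_{V,x})_V\) is also a tensor derivation. Hence \(N_{V,x}=d\rho_V(N_x)\) for a unique \(N_x\in\mathfrak g\). It remains to show \(N_x\in\mathfrak g_{>0}\). Evaluate in the adjoint representation, or in a faithful \(V\), on the \(T\)-weight decomposition: strict raising of \(F^{\theta_x}_{\ge a}V\) for all \(a\) forces every component of \(N_x\) in \(\mathfrak g_r(\widetilde\theta_x)\) with \(r\le0\) to vanish. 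For this I would use a faithful \(V\) whose weights separate the roots, for instance \(V\oplus V^\vee\) tensored enough. The inverse constructions agree on objects and arrows because morphisms on both sides are tensor isomorphisms preserving enhancements and connections.

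For the algebraic-type statement, first take \(\lambda=0\). Then \(N_{V,x}=d\rho_V(N_x)\), and for \(V\) faithful, \(d\rho_V\) is injective, so \(N_{V,x}=0\) for one faithful \(V\) if and only if \(N_x=0\), if and only if \(N_{V,x}=0\) for all \(V\). For \(\lambda\neq0\), the unipotent factor \(T_{x,u}\) of the principal monodromy maps to \(\rho_V(T_{x,u})\), and a faithful \(\rho_V\) detects triviality. Log type, defined as ``some \(V\) has \(N_{V,x}\neq0\)'', is therefore the complement. It is not a tensor subcategory, because it is not closed under \(\otimes\) with the unit or under direct sums with trivial objects, so it cannot be imposed through the target category.
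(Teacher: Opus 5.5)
Your route is the paper's. The forward direction goes through associated bundles. For the converse you use Proposition~\ref{prop:tannakian-parahoric-bundles} for the underlying torsor and read the tensor-compatible family as a $\lambda$-splitting of the logarithmic Atiyah sequence; your tensor-derivation argument fills in a step the paper states in one sentence. Algebraic type is then detected in a faithful representation, through $N_x$ at $\lambda=0$ and $T_{x,u}$ at $\lambda\neq0$.

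What you call the main obstacle is not needed. Definition~\ref{def:adjusted-algebraic-log} imposes adjustedness representation by representation, so once the torsor and the connection are reconstructed there is nothing to assemble; $N_x\in\mathfrak g_{>0}(\widetilde\theta_x)$ is only the principal shorthand. If you want it anyway, you do not need weights that separate the roots. The map $d\rho_V$ carries $\mathfrak g_r(\widetilde\theta_x)$ into the degree-$r$ part of $\End(V)$, so strict raising forces $d\rho_V(N_r)=0$ for every $r\le0$, and a single faithful $V$ then gives $N_r=0$. Be aware that treating $\Res_x$ as a tensor derivation of $V\mapsto E(V)_x$ presupposes the paper's unnormalized identification of $(E(V)_x,F^{(x)}_{\ge\bullet})$ with $(V,F^{\widetilde\theta_x}_{\ge\bullet})$. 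Fibers in a fixed normalized window are not monoidal once jumps wrap around.

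The genuine error is in your last paragraph. You say log type is not closed under tensoring with the unit or under direct sums with trivial objects, but both operations preserve it. For $\lambda=0$ one has $N_{A\otimes\mathbf 1}=N_A$, and $N_{A\oplus B}=N_A\oplus N_B\neq0$ whenever $N_A\neq0$. For $\lambda\neq0$ the same holds for the unipotent factor.

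The correct reason is the paper's: the unit is of algebraic type. Hence the log-type locus contains no unit object. It is also not closed under direct summands, since the unit is a summand of $A\otimes A^\vee$. Because every tensor functor sends the trivial representation to the unit, no tensor functor out of $\Rep_{\mathbb C}(G)$ can land in that locus at all. That is why log type has to be tested on a faithful representation.

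Separately, for $\lambda\neq0$ log type is defined by $T_{x,u}\neq1$, not by $N_{V,x}\neq0$. A nonzero strictly raising $N_{V,x}$ can coexist with semisimple monodromy: with jumps $\tfrac12$ and $0$ the residue has distinct eigenvalues $\lambda/2$ and $0$. So your description of the complement as ``some $V$ has $N_{V,x}\neq0$'' is valid only at $\lambda=0$.
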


    \begin{proof}
    A parahoric principal \(G\)-\(\lambda\)-connection gives, by associated
    parabolic bundles, an exact tensor functor
    \[
    V\longmapsto (E(V),\nabla^{\lambda,V})
    \]
    to the ambient varying-type category.  Its local type is \(\theta\) in the
    sense of the underlying periodic parabolic functor.  Conversely, forgetting
    the \(\lambda\)-connections and applying
    Proposition~\ref{prop:tannakian-parahoric-bundles} reconstructs the
    underlying parahoric principal \(G\)-bundle.
    The compatible family of logarithmic \(\lambda\)-connections on all associated bundles then reconstructs the principal \(\lambda\)-connection, equivalently the \(\lambda\)-splitting of the logarithmic Atiyah sequence.

    The residue condition is tensorial: after choosing a representative
    \(\widetilde\theta_x\) of \(\theta_x\), adjustedness is exactly the
    condition that every algebraic representation of \(G\) satisfy
    \[
    \operatorname{Res}_x(\nabla^{\lambda,V})
    =
    \lambda\,d\rho_V(\widetilde\theta_x)+N_{V,x},
    \qquad
    N_{V,x}(F_{\ge a})\subset F_{>a}.
    \]
    Thus adjustedness is checked representation by representation.  The
    algebraic condition at an ordinary logarithmic point is likewise
    tensorial.  At \(\lambda=0\), a faithful differential detects whether
    \(N_x\) vanishes; at \(\lambda\ne0\), a faithful representation detects
    whether the unipotent local monodromy is trivial.  Log type is the
    complementary intrinsic condition there; it does not define a
    vector-valued tensor subcategory in which the tensor functor must land.
    \end{proof}

\subsubsection{Functoriality}

    We establish the standard functorialities in the form needed later. 
    The filtered-functor and vector root-stack background is developed in
    \cite{Ziegler2015FilteredFiberFunctors,BorneAmine23paraconneandstackofroots}.
    The principal compatibilities below are checked after applying algebraic
    representations.
    Because the underlying parahoric bundles and their logarithmic \(\lambda\)-connections transform compatibly, we state these properties unified for the full differential objects.

    \begin{proposition}
    \label{prop:parahoric-lambda-tensor-operations}
    The category
    \(\mathbf{\Lambda}^\lambda_{\mathrm{adj}}(X,D)\) is rigid tensor, with
    filtrations given by the standard parabolic tensor rules.  At ordinary
    logarithmic points, its algebraic-type sector is a tensor subcategory.
    Fixed-type pieces and the complementary log-type locus are not tensor
    subcategories, so membership in the log-type locus must be checked after
    each operation.  At the principal level, the corresponding operation is
    the external product \(\boxtimes\), not an internal tensor product for a
    fixed structure group.
    \end{proposition}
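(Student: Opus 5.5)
The plan is to treat the four assertions of Proposition~\ref{prop:parahoric-lambda-tensor-operations} separately, checking everything on associated vector objects, as the paragraph before the statement prescribes.

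\emph{Rigid tensor structure.} I would equip periodic parabolic bundles with $\oplus$, the convolution tensor product, the dual filtration and $\mathcal Hom_{\mathrm{par}}$ as displayed earlier, and logarithmic $\lambda$-connections with the usual Leibniz formulas. The step requiring care is that the connection formulas preserve adjustedness. On graded pieces adjustedness means $\operatorname{Res}_x|_{\mathrm{gr}_a}=\lambda a\cdot\mathrm{id}$. For a tensor product, $\mathrm{gr}_c(E\otimes F)=\bigoplus_{a+b=c}\mathrm{gr}_aE\otimes\mathrm{gr}_bF$, and the residue $\operatorname{Res}\otimes1+1\otimes\operatorname{Res}$ acts there by $\lambda(a+b)=\lambda c$. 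Its filtration-raising part $N_E\otimes1+1\otimes N_F$ maps $F_{\ge a}\otimes F_{\ge b}$ into $F_{>a}\otimes F_{\ge b}+F_{\ge a}\otimes F_{>b}\subset F_{>c}$.

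For the dual, $\mathrm{gr}_c(E^\vee)=(\mathrm{gr}_{-c}E)^\vee$ and the residue is $-\operatorname{Res}^t$, so it acts by $\lambda c$. The transpose of a strictly raising map is strictly raising for the dual filtration. The case of $\mathcal Hom$ follows by combining the tensor and dual cases. Direct sums are immediate. The unit is $\mathcal O$ with trivial filtration in degree $0$ and $d$ as connection, and the evaluation and coevaluation maps are filtered and horizontal. Rigidity is then inherited from the underlying periodic parabolic category.

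\emph{Algebraic-type sector.} At an ordinary logarithmic point with $\lambda=0$, the condition $N=0$ is visibly preserved, since each formula above is built linearly from $N_E$ and $N_F$. For $\lambda\neq0$ I would use local monodromy. The monodromy of a tensor product, dual or sum is the corresponding tensor operation on monodromies, and the Jordan decomposition is compatible with these operations. So trivial unipotent factors on $E$ and $F$ give a trivial unipotent factor on the result. Subobjects and quotients also stay inside the sector.

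\emph{Failure for fixed types and for log type.} Here I only need counterexamples. Fixed local types are not closed under $\otimes$ or dual: types $\alpha$ and $\beta$ produce the type $\alpha+\beta$, and the dual has type $-\alpha$. For the log-type locus, I would take a log-type object $E$ with $N\neq0$ and form $E\otimes E^\vee\supset\mathcal O$. The summand $\mathcal O$ (the trace part) is of algebraic type. If the result should stay closed under the exact structure, I could instead take $\det E$ of a rank-two object with nilpotent $N$, which has $N=0$. Either way, membership in the log-type locus must be checked after each operation.

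\emph{Principal level.} The final claim is essentially definitional. Given principal objects for $G$ and $H$, the fibre product $P\times_XQ$ with the product connection is a $G\times H$-object of type $\theta_G\boxtimes\theta_H$. Restricting it along $V\boxtimes W$ recovers $E(V)\otimes F(W)$. There is no internal tensor product for a fixed group $G$ because a type $\theta$ is not closed under addition.

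The main obstacle is bookkeeping in the periodic, unnormalized convention. The dual filtration uses $F_{>-c}$, and integral lattice shifts must match the residue shift $\lambda a$. I would verify this first in the rank-one periodic model $z^{\lceil a-\alpha\rceil}\widehat{\mathcal O}$, and then extend to general objects via the weight decomposition of $\Lambda^{\theta_x}_{\ge\bullet}$.
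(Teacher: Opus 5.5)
Your proposal is correct and follows essentially the paper's route. Residues add under $\otimes$, and the strictly filtration-raising parts raise the convolution filtration, so adjustedness is preserved. Algebraic type survives because vanishing Higgs residue terms and trivial unipotent monodromy factors are stable under the rigid operations. The log-type locus is not a tensor subcategory because the unit is algebraic; your trace summand $\mathcal O\subset E\otimes E^\vee$ is exactly this observation.

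You also add checks the paper leaves implicit: duals and internal Homs, evaluation and coevaluation, the fixed-type counterexample, and the periodic bookkeeping. Two small touch-ups: the unit object carries the $\lambda$-connection $\lambda\,d$ rather than $d$. And the absence of an internal product for a fixed $G$ is better explained by multiplication $G\times G\to G$ not being a homomorphism for nonabelian $G$, rather than by a property of local types.
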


    \begin{proof}
    For tensor products, the underlying filtrations follow the convolution rule, and the residues add:
    \[
    \Res_x(\nabla^{\lambda,V\otimes W}) = \Res_x(\nabla^{\lambda,V})\otimes\Id + \Id\otimes\Res_x(\nabla^{\lambda,W}).
    \]
    The positive Moy--Prasad parts strictly raise the convolution filtration.
    Thus adjustedness is preserved.  At ordinary logarithmic points,
    algebraic type is preserved because zero Higgs residue terms and trivial
    unipotent monodromy factors are preserved by the rigid tensor operations.
    Since the tensor unit is algebraic, the complementary log-type stratum is
    not a tensor subcategory.
    \end{proof}

    \begin{proposition}
    \label{prop:functoriality-parahoric-lambda}
    Let \(\varphi\colon G\to H\) be a homomorphism of connected complex
    reductive groups, and let \((E,\nabla^\lambda)\) be an adjusted parahoric
    \(G\)-\(\lambda\)-connection of type \(\theta\).  The Tannakian pushout
    \(E_H:=\varphi_*E\) has type \(\varphi_*\theta\), where
    \(\varphi_*\theta_x\) is the \(H\)-conjugacy class of the composite
    cocharacter.  For every \(W\in\Rep_{\mathbb C}(H)\),
    \[
    \operatorname{pdeg}_{\varphi_*\theta}E_H(W)
    =
    \operatorname{pdeg}_\theta E(\varphi^*W).
    \]
    After choosing a representative \(\widetilde\theta_x\) of \(\theta_x\)
    and a compatible local parahoric trivialization, the residue in the
    associated representation is
    \[
    \Res_x(\nabla^{\lambda,W}_{E_H})
    =
    \lambda\,d(\rho_W\circ\varphi)(\widetilde\theta_x)
    +d(\rho_W\circ\varphi)(N_x).
    \]
    In principal shorthand after the same choice,
    \[
    \lambda\widetilde\theta_x+N_x
    \longmapsto
    \lambda\varphi_*(\widetilde\theta_x)+d\varphi(N_x).
    \]
    Adjustedness and algebraic type at ordinary logarithmic points are
    preserved.  For \(\lambda=0\), log type is preserved at such a point
    precisely when \(d\varphi(N_x)\ne0\).  For \(\lambda\ne0\), it is
    preserved precisely when the unipotent local monodromy remains nontrivial
    after applying \(\varphi\).  Hence log type is preserved by a faithful
    representation or a closed immersion, but not by an arbitrary
    homomorphism.
    \end{proposition}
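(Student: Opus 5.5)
The pushout \(E_H=\varphi_*E\) is defined Tannakianly: compose the associated-bundle functor \(\omega_E\colon\Rep_{\mathbb C}(G)\to\mathbf\Lambda^\lambda_{\mathrm{adj}}(X,D)\) of Proposition~\ref{prop:tannakian-parahoric-connections} with restriction \(\varphi^*\colon\Rep_{\mathbb C}(H)\to\Rep_{\mathbb C}(G)\). The plan is to check that this composite satisfies the hypotheses of Propositions~\ref{prop:tannakian-parahoric-bundles} and~\ref{prop:tannakian-parahoric-connections} for \(H\) and the type \(\varphi_*\theta\), and then to read off the formulas. Exactness and monoidality are inherited, since \(\varphi^*\) is an exact tensor functor. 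Faithfulness onto a fiber functor is automatic, because \(\varphi^*\) of a nonzero representation is nonzero.

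\textbf{Local type.} Choose a maximal torus \(T_H\supset\varphi(T)\). The weights of \(\varphi^*W\) on \(T\) are the pullbacks \(\chi\circ\varphi|_T\) of the \(T_H\)-weights \(\chi\) of \(W\), and
\[
\langle\chi\circ\varphi,\widetilde\theta_x\rangle=\langle\chi,\varphi\circ\widetilde\theta_x\rangle .
\]
Hence \(\Lambda^{\widetilde\theta_x}_{\ge a}(\varphi^*W)=\Lambda^{\varphi_*\widetilde\theta_x}_{\ge a}(W)\) as periodic lattice systems, compatibly with tensor operations. Composing the Wilson facetwise enhancement along \(\varphi\) yields an enhancement of type \(\varphi_*\theta_x\). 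The \(H\)-conjugacy class is independent of the representative, since \(W\)-conjugation in \(G\) maps to conjugation in \(H\). I expect this to be the main obstacle. The difficulty is verifying that the full facetwise enhancement of Definition~\ref{def:parahoric-structure-point}, not merely the \(\theta_x\)-component, is carried to the standard system for the \(H\)-facet of \(\varphi_*\theta_x\). I would first try to use the functoriality of Wilson's system under homomorphisms mapping facet to facet, that is, the natural morphism \(\mathcal G_{\theta_x}\to\mathcal H_{\varphi_*\theta_x}\) of Bruhat--Tits group schemes, and then apply the comparison theorem \cite{Wilson2010TannakianParahoricBT}.

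\textbf{Degree.} The degree identity is immediate from the construction, because \(E_H(W)=E(\varphi^*W)\) as periodic parabolic bundles. Alternatively, it follows from Proposition~\ref{prop:principal-parahoric-degree-independent} via \(\det\circ\rho_W\circ\varphi\).

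\textbf{Residue.} The residue formula is tautological: \(\nabla^{\lambda,W}_{E_H}=\nabla^{\lambda,\varphi^*W}\). In a compatible trivialization, \(\Res_x=d\rho_{\varphi^*W}(\lambda\widetilde\theta_x+N_x)\), which equals \(\lambda\,d(\rho_W\circ\varphi)(\widetilde\theta_x)+d(\rho_W\circ\varphi)(N_x)\). For adjustedness, \(d\varphi\) maps \(\mathfrak g_r(\widetilde\theta_x)\) into \(\mathfrak h_r(\varphi_*\widetilde\theta_x)\) by equivariance, so \(d\varphi(N_x)\in\mathfrak h_{>0}(\varphi_*\widetilde\theta_x)\).

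\textbf{Logarithmic strata.} Algebraic type is preserved: \(N_x=0\) implies \(d\varphi(N_x)=0\), and trivial unipotent monodromy \(T_{x,u}=1\) implies \(\varphi(T_{x,u})=1\), since \(\varphi\) preserves Jordan decompositions. At \(\lambda=0\), log type for \(E_H\) means by definition \(d\varphi(N_x)\neq0\), detected in a faithful representation of \(H\). At \(\lambda\neq0\), the monodromy of \(E_H\) is \(\varphi\) of that of \(E\) by Riemann--Hilbert functoriality, so the criterion is \(\varphi(T_{x,u})\neq1\). If \(\varphi\) is a closed immersion or faithful, then \(d\varphi\) and \(\varphi\) are injective, so log type is preserved. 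For a general homomorphism, the counterexample is \(\varphi\) trivial, or \(N_x\in\ker d\varphi\).
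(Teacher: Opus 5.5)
Your proposal is correct and follows essentially the paper's route. The paper's proof likewise rests on three points: the canonical identification \(E_H(W)\simeq E(\varphi^*W)\), which gives the degree formula; the inclusion \(d\varphi(\mathfrak g_{>0}(\widetilde\theta_x))\subset\mathfrak h_{>0}(\varphi_*\widetilde\theta_x)\), which gives the residue formula and adjustedness; and functoriality of local monodromy for the \(\lambda\ne0\) strata.

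The paper does not separately verify the facetwise enhancement that you flag as the main obstacle. Your suggested remedy is the right way to make that step rigorous: push the torsor forward along the natural morphism \(\mathcal G_{\theta_x}\to\mathcal H_{\varphi_*\theta_x}\) of Bruhat--Tits group schemes, then apply Wilson's comparison. This is needed because composing the \(G\)-facet system with \(\varphi\) alone need not yield the standard system over the \(H\)-facet of \(\varphi_*\theta_x\).
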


    \begin{proof}
    For \(W\in\Rep_{\mathbb C}(H)\), the periodic parabolic bundles
    \(E_H(W)\) and \(E(\varphi^*W)\) are canonically isomorphic, which gives
    the degree formula.  Moreover,
    \[
    d\varphi\bigl(\mathfrak g_{>0}(\widetilde\theta_x)\bigr)
    \subset
    \mathfrak h_{>0}(\varphi_*(\widetilde\theta_x)).
    \]
    This inclusion gives the residue formula and adjustedness.  Functoriality
    of local monodromy gives the stated algebraic/log qualifications when
    \(\lambda\ne0\); the case \(\lambda=0\) follows from the displayed
    positive term.
    \end{proof}

    \begin{proposition}
    \label{prop:parahoric-lambda-finite-pullback}
    Let \(f\colon(Y,D_Y)\to(X,D)\) be a finite ramified morphism of pointed
    smooth curves with \(D_Y=(f^{-1}D)_{\mathrm{red}}\), equipped with the
    induced parahoric pullback.  Let \((E,\nabla^\lambda)\) be an adjusted
    parahoric \(G\)-\(\lambda\)-connection of type \(\theta\).  If \(y\) lies
    over \(x\) with ramification index \(e_y\), then, for every
    \(V\in\Rep_{\mathbb C}(G)\),
    \[
    (f^*\theta)_y=e_y\theta_x,
    \qquad
    \Res_y(f^*\nabla^{\lambda,V})
    =e_y\Res_x(\nabla^{\lambda,V}).
    \]
    After choosing a representative \(\widetilde\theta_x\) and a compatible
    local parahoric trivialization, the principal shorthand is
    \[
    \lambda\widetilde\theta_x+N_x
    \longmapsto
    \lambda(e_y\widetilde\theta_x)+e_yN_x.
    \]
    Moreover,
    \[
    \operatorname{pdeg}_{f^*\theta}\bigl(f^*E(V)\bigr)
    =\deg(f)\,\operatorname{pdeg}_{\theta}E(V).
    \]
    Pullback preserves adjustedness and the algebraic/log stratification at
    ordinary logarithmic points.
    \end{proposition}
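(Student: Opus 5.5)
The plan is to work entirely in local coordinates near a point \(y\in D_Y\) over \(x\in D\), using \(z=u^{e_y}\) for local parameters \(z\) at \(x\) and \(u\) at \(y\), and then to globalize. By Proposition~\ref{prop:tannakian-parahoric-connections} and Proposition~\ref{prop:tannakian-parahoric-bundles}, every assertion can be checked representation by representation, compatibly with the tensor structure.

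\emph{Local type.} Fix \(V\) with weight decomposition \(V=\bigoplus_\chi V_\chi\). The periodic lattice model is
\[
\Lambda^{\theta_x}_{\ge a}(V)=\bigoplus_\chi z^{\lceil a-\langle\chi,\theta_x\rangle\rceil}\widehat{\mathcal O}_x\otimes V_\chi .
\]
The induced parahoric pullback is the periodic system obtained by pulling back along \(\widehat{\mathcal O}_x\to\widehat{\mathcal O}_y\) and then saturating in the rational index. Substituting \(z=u^{e_y}\), a jump of \(z\)-order at \(\langle\chi,\theta_x\rangle\) becomes a jump of \(u\)-order at \(e_y\langle\chi,\theta_x\rangle\). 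Rescaling the index \(a\mapsto e_ya\), the pulled-back system is therefore \(\Lambda^{e_y\theta_x}_{\ge\bullet}(V)\) in the variable \(u\).

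This identification is tensor-compatible, since it is the pullback of a Wilson facetwise system. Hence it gives the enhancement of type \((f^*\theta)_y=e_y\theta_x\). Equivalently, on root stacks the inertia map \(\tau_{\theta_x}\) is precomposed with the \(e_y\)-th power map.

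\emph{Residues.} From \(dz/z=e_y\,du/u\), any local logarithmic \(\lambda\)-connection \(\nabla=\lambda d+A\,dz/z\) pulls back to \(\lambda d+e_yA\,du/u\). Taking fibres gives
\[
\Res_y(f^*\nabla^{\lambda,V})=e_y\Res_x(\nabla^{\lambda,V}).
\]
Write \(\Res_x=\lambda\,d\rho_V(\widetilde\theta_x)+N_{V,x}\). Then
\[
e_y\Res_x=\lambda\,d\rho_V(e_y\widetilde\theta_x)+e_yN_{V,x}.
\]
We have \(\mathfrak g_{>0}(e_y\widetilde\theta_x)=\mathfrak g_{>0}(\widetilde\theta_x)\), because the grading is only rescaled by \(e_y>0\), and the filtration index is rescaled compatibly. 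So \(e_yN_x\) is still strictly filtration-raising, and adjustedness is preserved.

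\emph{Strata.} For \(\lambda=0\), the residue term satisfies \(e_yN_x=0\iff N_x=0\). For \(\lambda\neq0\), the local monodromy at \(y\) is the \(e_y\)-th power of that at \(x\). The unipotent factor of \(T^{e_y}\) is \(T_u^{e_y}\), and this is trivial if and only if \(T_u=1\), since a nontrivial unipotent element has infinite order. So algebraic type and log type are preserved.

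\emph{Degree.} On the open part \(f^*E(V)\) is the ordinary pullback. For the parabolic degree, I would use Proposition~\ref{prop:principal-parahoric-degree-independent} to reduce to the determinant parabolic line bundle \(L=\det_{\mathrm{par}}E(V)\), with weight \(w_x=\sum_a a\dim\mathrm{gr}_a\) at each \(x\in D\). For a parabolic line bundle, \(f^*L\) has weight \(e_yw_x\) at \(y\) and ordinary degree \(\deg f\cdot\deg L\). This holds because the periodic-lattice description is window-independent, so no elementary-modification correction is needed. Summing gives
\[
\deg(f)\deg L+\sum_x\sum_{y\mapsto x}e_yw_x=\deg(f)\Bigl(\deg L+\sum_xw_x\Bigr),
\]
using \(\sum_{y\mapsto x}e_y=\deg f\).

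\emph{Main obstacle.} I expect the hardest step to be the first: checking carefully that the saturated pullback of the periodic lattice system is exactly the standard Wilson system for \(e_y\theta_x\), including integral shifts and all tensor compatibilities, rather than only checking this after normalizing to a window. To do this I would compute the pullback directly with the ceiling formula and check that it is invariant under \(a\mapsto a+1\), which corresponds to multiplication by \(u^{e_y}\).
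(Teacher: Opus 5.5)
Your proposal is correct. For the residue formula, adjustedness, and the algebraic/log strata it is the paper's argument: \(dz/z=e_y\,du/u\); \(\mathfrak g_{>0}(e_y\widetilde\theta_x)=\mathfrak g_{>0}(\widetilde\theta_x)\) since \(e_y>0\); and \(T_y=T_x^{e_y}\) with unipotent factor \(T_{x,u}^{e_y}\).

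The difference lies in the type and degree formulas. The paper cites Alfaya--Biswas for the residue and degree formulas and treats \((f^*\theta)_y=e_y\theta_x\) as immediate from the pulled-back periodic filtration. You instead derive the type from the ceiling formula. For the degree you pass, via Proposition~\ref{prop:principal-parahoric-degree-independent}, to the parabolic determinant line, where there is a single jump and \(\sum_{y\mapsto x}e_y=\deg f\) finishes the computation. Your route is self-contained and shows why no elementary-modification term appears in the unnormalized convention: for a line, the reference lattice \(f^*L\) is itself a member of the pulled-back system, with jump \(e_yw_x\). The citation is shorter and also covers the normalized-window convention, in which the underlying bundle genuinely changes under pullback.

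One correction concerns the step you flag as the main obstacle. Invariance under \(a\mapsto a+1\), i.e.\ under multiplication by \(u^{e_y}\), is only \(e_y\)-periodicity in the rescaled index \(b=e_ya\). The naive pullback \(b\mapsto f^*\Lambda^{\theta_x}_{\ge b/e_y}(V)\) already has this property, but it is not \(\Lambda^{e_y\theta_x}_{\ge\bullet}(V)\) in general. Its exponent on \(V_\chi\) is \(e_y\lceil a-c_\chi\rceil\) rather than \(\lceil e_y(a-c_\chi)\rceil\), where \(c_\chi=\langle\chi,\widetilde\theta_x\rangle\). What saturation must produce is \(\Lambda'_{\ge b+1}=u\Lambda'_{\ge b}\).

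To get this, define \(\Lambda'_{\ge b}:=\sum_a u^{\max(0,\lceil b-e_ya\rceil)}f^*\Lambda^{\theta_x}_{\ge a}(V)\). Its exponent on \(V_\chi\) is \(\min_a\bigl(e_y\lceil a-c_\chi\rceil+\max(0,\lceil b-e_ya\rceil)\bigr)=\lceil b-e_yc_\chi\rceil\). Indeed, the quantity is at least \(\max\bigl(e_y(a-c_\chi),\,b-e_yc_\chi\bigr)\), and the bound is attained at \(a=c_\chi+n\) with \(n\in\mathbb Z\) and \(e_yn\le b-e_yc_\chi\). This is exactly the model for \(e_y\theta_x\), hence tensor compatible.

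Correspondingly, \(\Res_y=e_y\Res_x\) holds literally in the frame coming from \(V\), which is a naive pullback. That is the sense in which \(\lambda\widetilde\theta_x+N_x\mapsto\lambda e_y\widetilde\theta_x+e_yN_x\) is to be read.
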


    \begin{proof}
    The logarithmic differential \(dz/z\) pulls back to \(e_y\,dw/w\), giving
    the residue formula
    \cite[Section~5.2]{AlfayaBiswas23pullbackanddirectimofparconandparHiggs}.
    The parahoric degree formula is
    \cite[Lemma~3.2(1)]{AlfayaBiswas23pullbackanddirectimofparconandparHiggs}.
    Since \(e_y>0\) and
    \(e_yN_x\in\mathfrak g_{>0}(e_y\widetilde\theta_x)\), adjustedness is
    preserved.  For \(\lambda=0\), multiplication by \(e_y\) preserves
    vanishing and nonvanishing of the positive term.  For \(\lambda\ne0\),
    normalized local monodromy satisfies \(T_y=T_x^{e_y}\); if its unipotent
    factor is \(\exp(M_x)\), then its pullback is \(\exp(e_yM_x)\), which is
    trivial exactly when \(M_x=0\).
    \end{proof}

    \begin{proposition}
    \label{prop:parahoric-lambda-galois-descent}
    Let \(p\colon\widetilde{\mathcal X}\to\mathcal X\) be a finite Galois
    cover of the corresponding log--orbi or root-stack curves, with Galois
    group \(\Delta\).  An adjusted parahoric \(G\)-\(\lambda\)-connection
    \(A\) of type \(p^*\theta\) on the cover descends effectively if it is
    equipped with connection-preserving isomorphisms
    \[
    \epsilon_\delta\colon\delta^*A\xrightarrow{\sim}A,
    \qquad \delta\in\Delta,
    \]
    satisfying
    \[
    \epsilon_{\delta\delta'}
    =
    \epsilon_{\delta'}\circ\delta'^*\epsilon_\delta
    \]
    and preserving the full facetwise periodic enhancements.  Conversely,
    pullback equips every adjusted object of type \(\theta\) on the base with
    such descent data.  This equivalence preserves the algebraic/log
    stratification at ordinary logarithmic points.
    \end{proposition}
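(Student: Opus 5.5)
The plan is to reduce the principal statement to descent of associated vector objects, where effective Galois descent is standard, and then reassemble the principal object Tannakianly. By Proposition~\ref{prop:tannakian-parahoric-connections}, an adjusted parahoric \(G\)-\(\lambda\)-connection \(A\) of type \(p^*\theta\) on \(\widetilde{\mathcal X}\) is the same as an exact faithful strong symmetric monoidal functor
\(\omega_A\colon\Rep_{\mathbb C}(G)\to\mathbf{\Lambda}^\lambda_{\mathrm{adj}}(\widetilde{\mathcal X})\)
whose periodic parabolic type is \(p^*\theta\). Applying each \(\epsilon_\delta\) to associated representations gives tensor-natural isomorphisms \(\epsilon_{\delta,V}\colon\delta^*\omega_A(V)\xrightarrow{\sim}\omega_A(V)\). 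The cocycle identity holds for each \(V\) because it holds principally. These isomorphisms preserve the \(\lambda\)-connections and the facetwise periodic enhancements.

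\emph{Step 1: vector descent.} For a single \(V\), the pair \((\omega_A(V),\{\epsilon_{\delta,V}\})\) is a \(\Delta\)-equivariant periodic parabolic bundle with an equivariant logarithmic \(\lambda\)-connection.
\begin{itemize}
\item Away from the branch locus, \(p\) is finite étale Galois. Ordinary fpqc (Galois) descent for quasi-coherent sheaves produces a bundle with \(\lambda\)-connection on the base, since the connection is \(\Delta\)-invariant.
\item At a branch point, work on the formal disc \(w\mapsto z=w^{e}\) with the stabilizer \(\mu_e\subset\Delta\). By the root-stack/parabolic correspondence \cite[Theorem~2.12]{BorneAmine23paraconneandstackofroots}, the descended object is the \(\mu_e\)-invariant periodic lattice system. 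Its jumps are those of \(\theta_x\), because \((p^*\theta)_y=e_y\theta_x\) by Proposition~\ref{prop:parahoric-lambda-finite-pullback}.
\item The residue formula \(\Res_y=e_y\Res_x\) from the same proposition shows that the descended residue is \(\lambda\widetilde\theta_x+N_x\), with \(N_x\) still strictly filtration-raising. So adjustedness descends.
\end{itemize}
Descent of morphisms is fully faithful (invariants of Hom), so the resulting assignment \(V\mapsto\omega(V)\) is a functor. Because descent is compatible with \(\otimes_{\mathrm{par}}\), duals and the unit, \(\omega\) inherits an exact faithful strong symmetric monoidal structure.

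\emph{Step 2: principal reassembly.} Apply Proposition~\ref{prop:tannakian-parahoric-connections} on the base to \(\omega\). This requires checking that the facetwise enhancement of Definition~\ref{def:parahoric-structure-point} descends. I expect this to be the main obstacle. The approach I would try first is to note that the sheaf of enhancement-preserving tensor isomorphisms to the standard Wilson system is a \(\mathcal G_{\theta_x}\)-torsor upstairs, equipped with \(\Delta\)-descent data. Effective descent for affine torsors \cite[Tags 04US and 0245]{StacksProject} then gives a \(\mathcal G_{\theta_x}\)-torsor downstairs. The delicate point is that the stabilizer \(\mu_e\) acts on the completed lattices through the rotation \(w\mapsto\zeta w\). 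The enhancement upstairs is of type \(e\theta_x\), so its invariants must be identified with the Wilson model of type \(\theta_x\). I would verify this on the standard model directly, using the explicit lattices \(\Lambda^{\theta_x}_{\ge a}\): the \(\mu_e\)-invariants of \(w^{\lceil ea-\langle\chi,e\theta_x\rangle\rceil}\mathbb C[[w]]\) equal \(z^{\lceil a-\langle\chi,\theta_x\rangle\rceil}\mathbb C[[z]]\). This uses that \(\mu_e\) acts trivially on \(V_\chi\) after choosing the equivariant trivialization.

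\emph{Step 3: converse and strata.} The converse is formal: \(p^*\) of a base object carries canonical isomorphisms \(\delta^*p^*B=(p\delta)^*B=p^*B\), and these satisfy the cocycle identity automatically. That pullback and descent are mutually quasi-inverse follows from the vector case and Tannakian full faithfulness. Finally, Proposition~\ref{prop:parahoric-lambda-finite-pullback} shows that the algebraic/log stratification is preserved under pullback. At \(\lambda=0\) this holds because \(N_y=e_yN_x\), and at \(\lambda\neq0\) because \(T_y=T_x^{e_y}\). Since the stratum is detected upstairs, descent preserves it as well.
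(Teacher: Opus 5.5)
Your overall architecture is a legitimate alternative to the paper's: pass to associated vector objects, descend each one, reassemble with Proposition~\ref{prop:tannakian-parahoric-connections}, and read off the strata from \(N\mapsto e_yN\) and \(T\mapsto T^{e_y}\). Your Step~3 is essentially what the paper does. The paper, however, never descends along the ramified coarse map. It regards \(p\) as a cover of the log--orbi/root-stack curves, where it is (Kummer) \'etale. The principal torsor, the \(\lambda\)-splitting of the Atiyah sequence, and the facetwise enhancements (as tensor-compatible quasi-coherent lattice data) then descend by plain fpqc descent, and the paper simply declares the Leibniz, residue and local-type conditions fpqc-local.

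Your Steps~1--2 descend along \(w\mapsto z=w^e\), and there the argument has a genuine gap. At a ramification point a \(\Delta\)-equivariant structure is not fpqc descent data for \(p\), since \(\widetilde X\times_X\widetilde X\) is not \(\Delta\times\widetilde X\) there. So the fpqc torsor descent you invoke in Step~2 does not apply. Moreover, the sheaf of enhancement isomorphisms upstairs is a torsor under the type-\(e\theta_x\) group scheme on the \(w\)-disc, not under \(\mathcal G_{\theta_x}\). Identifying the group scheme of its invariant descent with \(\mathcal G_{\theta_x}\) is a Balaji--Seshadri-type statement, and it depends on how the stabilizer \(\mu_e\) acts.

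That dependence is the real problem. The type of the descended object is not determined by \((p^*\theta)_y=e_y\theta_x\). So the sentence ``its jumps are those of \(\theta_x\)'' in Step~1 is a non sequitur. Likewise, your assumption that \(\mu_e\) acts trivially on \(V_\chi\) after choosing an equivariant trivialization is not implied by the hypotheses.

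Here is a counterexample. Take \(G=\mathbb G_m\), \(\theta_x=0\), \(A=\mathbb C[[w]]s\) with \(\nabla s=0\), and \(\zeta\cdot(f(w)s)=\zeta^{k}f(\zeta^{-1}w)s\) with \(0<k<e\).
\begin{itemize}
\item This action is a cocycle, commutes with \(\nabla\), and preserves every lattice \(w^n\mathbb C[[w]]s\). Hence it preserves the whole periodic enhancement of type \(0=e\theta_x\).
\item Its invariants are \(\mathbb C[[z]]\sigma\) with \(\sigma=w^{k}s\), and the descended lattices are \(z^{\lceil a-k/e\rceil}\mathbb C[[z]]\sigma\).
\item One computes \(\nabla\sigma=\lambda\frac{k}{e}\frac{dz}{z}\sigma\). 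So the descended object is adjusted of type \(k/e\), not \(0\).
\end{itemize}
Twisting the canonical descent datum of a trivial global object by a nontrivial character of \(\Delta\) gives a global instance.

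So the forward direction holds only for descent data whose stabilizer action is locally the canonical one on the pulled-back standard model, i.e.\ has isotropy class \([\tau_{\theta_x}]\). Once you add that as a hypothesis, your ceiling computation finishes Step~2, using \(\lceil\lceil e(a-\langle\chi,\theta_x\rangle)\rceil/e\rceil=\lceil a-\langle\chi,\theta_x\rangle\rceil\). The paper's claim that the local-type condition is fpqc-local tacitly needs the same restriction, because the same character twist works on the stacky chart when \(e_y>1\). You should therefore state this hypothesis explicitly rather than derive it from a choice of trivialization.
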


    \begin{proof}
    Fpqc descent for torsors and their tensor-compatible connection morphisms
    is effective on the indicated covers.  The full facetwise periodic
    enhancements are tensor-compatible quasi-coherent lattice data and hence
    descend with their cocycle; the Leibniz, \(\lambda\)-, residue, and local
    type conditions are fpqc-local
    \cite[Tags 04US and 03O6]{StacksProject}.  At \(\lambda=0\), faithful base
    change detects vanishing and nonvanishing of the positive term.  At
    \(\lambda\ne0\), if \(y\) lies over \(x\) and \(e_y\) denotes the coarse
    ramification index, a local unipotent factor \(\exp(M_x)\) pulls back to
    \(\exp(e_yM_x)\), which is trivial exactly when \(M_x=0\).  Thus the
    algebraic/log stratification descends as well.
    \end{proof}

\subsubsection{The log--orbi/parahoric dictionary}

    We first specify the two groupoids that occur in the coarse-curve
    translation.  Let \(G\) be a connected complex reductive group, let
    \(T\subset G\) be a maximal torus with Weyl group \(W\), and fix
    \(\lambda\in\mathbb C\).  Write
    \[
    \mathcal C=(C,D_{\mathrm{orb}}^{\mathbf m},D_{\log}),
    \qquad D:=D_{\mathrm{orb}}\cup D_{\log},
    \]
    and fix an admissible system of rational local types
    \[
    \theta=(\theta_x)_{x\in D},
    \qquad \theta_x\in X_*(T)_{\mathbb Q}/W.
    \]
    Here admissibility means compatibility with the orbifold orders: for each
    \(x\in D_{\mathrm{orb}}\), there exists (equivalently, every)
    representative \(\widetilde\theta_x\in X_*(T)_{\mathbb Q}\) such that
    \(m_x\widetilde\theta_x\in X_*(T)\).  Define
    \[
    \tau_{\widetilde\theta_x}\colon\mu_{m_x}\longrightarrow G,
    \qquad
    \tau_{\widetilde\theta_x}(\zeta)
    :=(m_x\widetilde\theta_x)(\zeta),
    \]
    and let \([\tau_{\theta_x}]\) be its \(G\)-conjugacy class, which is
    independent of \(\widetilde\theta_x\).

    Let \(\mathscr L_G^\lambda(\mathcal C,D;\theta)\) be the groupoid of
    principal logarithmic \(G\)-\(\lambda\)-connections on \(\mathcal C\)
    equipped, in every associated representation and compatibly with tensor
    operations, with the facetwise periodic local enhancements of full type
    \(\theta\) from Definition~\ref{def:parahoric-structure-point}, subject to
    the following conditions:
    \begin{enumerate}
    \item at \(x\in D_{\mathrm{orb}}\), the connection is regular on an
    \'etale-local root-stack chart and its inertia action has type
    \([\tau_{\theta_x}]\); we call this the \emph{orbifold algebraic-type}
    condition.  In every representation, the normalized filtered object
    underlying the chosen periodic enhancement is required to be the
    parabolic object induced from this equivariant germ by the root-stack
    correspondence, with its induced \(\lambda\)-connection;
    \item at \(x\in D_{\log}\), after choosing a representative
    \(\widetilde\theta_x\) and a compatible local parahoric trivialization,
    for every algebraic representation \(\rho_V\colon G\to\GL(V)\) there is
    an endomorphism \(N_{V,x}\in\End(E(V)_x)\) such that
    \[
    \operatorname{Res}_x(\nabla^{\lambda,V})
    =\lambda\,d\rho_V(\widetilde\theta_x)+N_{V,x},
    \qquad
    N_{V,x}\bigl(F^{(x)}_{\ge a}E(V)_x\bigr)
    \subset F^{(x)}_{>a}E(V)_x
    \quad\text{for every }a\in\mathbb Q.
    \]
    \end{enumerate}
    Let \(\mathscr P_G^\lambda(C,D;\theta)\) be the groupoid of adjusted
    parahoric \(G\)-\(\lambda\)-connections of type \(\theta\) on
    \((C,D)\).  Arrows in both groupoids are connection-preserving
    isomorphisms of principal bundles that preserve the full periodic local
    data.

    More generally, for an auxiliary reduced divisor
    \(D^{\mathrm{aux}}\supset D\), we use the same definitions for
    \(\mathscr L_G^\lambda(\mathcal C,D^{\mathrm{aux}};\theta)\) and
    \(\mathscr P_G^\lambda(C,D^{\mathrm{aux}};\theta)\).  At a point
    \(x\in D^{\mathrm{aux}}\setminus D\), the type \(\theta_x\) is required to
    be integral, the principal connection is regular, and the periodic
    enhancement is required to correspond to its coarse filtered lattice
    under the order-one root-stack/parabolic construction.  Thus such points
    carry trivial inertia but retain their integral lattice shifts.  This
    convention is used below for pullback along finite \'etale morphisms.

    \begin{proposition}[Log--orbi/parahoric dictionary]
    \label{prop:log-orbi-parahoric-dictionary}
    The representationwise coarse-model construction induces an equivalence
    of groupoids
    \[
    \operatorname{Coarse}_{\theta}\colon
    \mathscr L_G^\lambda(\mathcal C,D;\theta)
    \xrightarrow{\ \sim\ }
    \mathscr P_G^\lambda(C,D;\theta).
    \]
    \end{proposition}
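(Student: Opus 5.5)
The plan is to construct \(\operatorname{Coarse}_\theta\) representation by representation, using the vector root-stack/parabolic correspondence, and then promote it to principal objects via Tannakian reconstruction.

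\textbf{Step 1: reduction to tensor functors.} Using Proposition~\ref{prop:tannakian-parahoric-connections}, I will identify \(\mathscr P_G^\lambda(C,D;\theta)\) with exact faithful strong symmetric monoidal functors \(\Rep_{\mathbb C}(G)\to\mathbf{\Lambda}^\lambda_{\mathrm{adj}}(C,D)\) of local type \(\theta\). On the log--orbi side, a principal \(\lambda\)-connection on \(\mathcal C\) is likewise the same as a compatible family of logarithmic \(\lambda\)-connections on the associated bundles \(P_\rho\), together with the tensor-compatible periodic enhancements. It therefore suffices to construct a tensor equivalence between the relevant vector categories that respects local types, and then check that the three local conditions match.

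\textbf{Step 2: the vector equivalence, localized at the special points.} Over \(C^\circ\), together with the nonstacky part of \(\mathcal C\), both sides are simply logarithmic \(\lambda\)-connections, and the identification is the identity.

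\emph{Orbifold points.} Near \(x\in D_{\mathrm{orb}}\), \(\mathcal C\) is étale-locally \([\Delta/\mu_{m_x}]\), which is the root stack \(C[\sqrt[m_x]{x}]\) locally. By admissibility, \(m_x\widetilde\theta_x\) is integral, so the jumps of every \(E(V)\) lie in \(m_x^{-1}\mathbb Z\). I will apply the root-stack/parabolic tensor equivalence \cite[Theorem~2.12]{BorneAmine23paraconneandstackofroots} to the underlying bundles, and its \(\lambda\)-connection version \cite[Definition~4.3 and Section~4.6]{BorneAmine23paraconneandstackofroots} to the connections. This version sends regular \(\mu_{m_x}\)-equivariant \(\lambda\)-connections to strongly parabolic (that is, adjusted) ones, with residue \(\lambda a\) on \(\operatorname{gr}_a\). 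The inertia character decomposition of the fiber at the gerbe corresponds to the graded pieces, so inertia type \([\tau_{\theta_x}]\) corresponds to fractional type \(\theta_x\bmod X_*(T)\). The integral part of \(\theta_x\) is carried by the periodic enhancement on both sides, since condition (1) defines it as the one induced by the root-stack correspondence.

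\emph{Logarithmic points.} At \(x\in D_{\log}\) the stack is nonstacky, and condition (2) is literally the adjustedness of Definition~\ref{def:adjusted-algebraic-log}.

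The two constructions glue along the punctured discs, because both are the identity away from \(D\).

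\textbf{Step 3: principal upgrade.} The vector equivalence is compatible with \(\oplus\), \(\otimes_{\mathrm{par}}\) and duals, which is the tensor compatibility of the root-stack correspondence \cite[Proposition~6.1]{BiswasMajumderWong12RootStacks}. Composing a tensor functor with it therefore gives a tensor functor, and local type is preserved because it is read off inertia/graded data pointwise. Full faithfulness on arrows follows from faithfulness of the vector equivalence together with the fact that arrows are tensor natural isomorphisms that preserve the enhancements. Essential surjectivity follows by reconstructing the principal object on \(\mathcal C\) from the transported tensor functor, by relative Tannakian reconstruction on the stack, exactly as in Proposition~\ref{prop:tannakian-parahoric-bundles}.

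\textbf{Main obstacle.} I expect the hardest part to be the \(\lambda\)-connection part of Step 2 at orbifold points, with signs and window conventions. The point to check is that, under pushforward from the chart, with \(z=w^{m}\) and \(dw/w=m^{-1}dz/z\), a regular equivariant \(\lambda\)-connection on the isotypic piece \(w^{k}\mathcal O\) acquires residue exactly \(\lambda k/m\) in the paper's decreasing Moy--Prasad normalization. It must also be verified that the nilpotent part lands in \(F_{>a}\) rather than merely \(F_{\ge a}\). I would first do the explicit rank-one computation on the model \([\Delta/\mu_m]\) and then extend it by tensoriality. Regularity on the chart forces the off-diagonal terms to carry a positive power of \(w\), which gives the strict raising. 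Conversely, strong parabolicity is exactly what allows the connection to extend regularly on the chart.
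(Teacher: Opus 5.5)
Your proposal is correct and follows essentially the same route as the paper. The ingredients are the same: the Borne--Amine root-stack/parabolic tensor equivalence between regular equivariant and strongly parabolic \(\lambda\)-connections at orbifold points (the equivalence you need is their Theorem~4.31, not Definition~4.3), adjustedness read off directly from condition (2) at logarithmic points, the identity functor away from \(D\), and a principal upgrade through Proposition~\ref{prop:tannakian-parahoric-connections}, with units and counits supplied by Tannakian full faithfulness. The only difference is ordering: you glue the vector equivalences first and reconstruct the principal object globally on \(\mathcal C\), whereas the paper builds local principal functors and then glues them by effective descent of torsors and of the logarithmic Atiyah splitting; this does not matter, and your explicit rank-one residue check (residue \(\lambda k/m\), with strict raising) correctly verifies what the paper takes from the cited equivalence.
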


    \begin{proof}
    Fix \(x\in D_{\mathrm{orb}}\), put \(m=m_x\), and choose an \'etale
    neighborhood \(U=\operatorname{Spec}A\to C\), a point \(u\in U\) above
    \(x\), and a parameter \(z\) cutting out \(u\).  The corresponding
    algebraic root-stack chart is
    \[
    \left[
    \operatorname{Spec}\bigl(A[w]/(w^m-z)\bigr)/\mu_m
    \right],
    \qquad \zeta\cdot w=\zeta w.
    \]
    For each \(V\in\Rep_{\mathbb C}(G)\), the root-stack/parabolic
    correspondence gives a natural tensor equivalence between regular
    equivariant vector \(\lambda\)-connections on this chart and strongly
    parabolic vector \(\lambda\)-connections on \((U,u)\)
    \cite[Theorem~4.31 and Section~4.6]
    {BorneAmine23paraconneandstackofroots}.  In the decreasing-filtration
    convention used here, the coarse residue acts on
    \(\operatorname{gr}^{(u)}_aE_U(V)_u\) as
    \(\lambda a\operatorname{id}\).
    Consequently, after choosing \(\widetilde\theta_x\), it has the form
    \[
    \operatorname{Res}_u(\nabla_U^{\lambda,V})
    =\lambda\,d\rho_V(\widetilde\theta_x)+N_{V,u},
    \qquad
    N_{V,u}\bigl(F^{(u)}_{\ge a}E_U(V)_u\bigr)
    \subset F^{(u)}_{>a}E_U(V)_u
    \quad\text{for every }a\in\mathbb Q.
    \]
    The inertia action \([\tau_{\theta_x}]\) records the fractional part of
    \(\theta_x\), while the prescribed periodic enhancement retains its
    integral lattice shifts.  Hence the coarse object has full type
    \(\theta_x\), and the displayed condition places no vanishing requirement
    on \(N_{V,u}\), equivalently on the residue term at \(x\).

    Now let \(x\in D_{\log}\) and choose a pointed \'etale neighborhood
    \((U,u)\to(C,x)\).  The stack chart is the ordinary pointed neighborhood
    \((U,u)\), and the coarse-model functor retains the specified
    \(\theta_x\)-periodic filtered extension.  The residue formula in the
    definition of \(\mathscr L_G^\lambda(\mathcal C,D;\theta)\) is therefore
    exactly the adjustedness condition of
    Definition~\ref{def:adjusted-algebraic-log}; it is not a consequence of
    the existence of the logarithmic generator alone.  The assertions about
    the algebraic and log strata now follow directly from that definition.

    The local vector equivalences are natural in \(V\), exact strong
    symmetric monoidal, and compatible with restriction.  They therefore
    carry the exact faithful tensor functor associated with a principal
    object on \(\mathcal C\) to a functor with values in
    \(\mathbf{\Lambda}^\lambda_{\mathrm{adj}}(C,D)\).
    Proposition~\ref{prop:tannakian-parahoric-connections} reconstructs from
    it a parahoric principal object and reconstructs tensor-natural arrows.
    Applying the inverse vector equivalence in every representation gives the
    inverse principal functor; Tannakian full faithfulness supplies the unit
    and counit isomorphisms.

    On \(C\setminus D\), both local functors are the identity on ordinary
    principal \(G\)-\(\lambda\)-connections.  Their restrictions, inverses,
    units, and counits therefore agree on the punctured overlaps.  Principal
    torsors descend effectively, and a logarithmic \(\lambda\)-connection is
    an \(\mathcal O\)-linear \(\lambda\)-splitting of the logarithmic Atiyah
    sequence; the splitting morphism descends as quasi-coherent data, while
    the Leibniz and residue identities are fpqc-local equalities
    \cite[Tags 04US and 03O6]{StacksProject}.  Thus the local functors and
    their natural transformations glue to the asserted global equivalence.

    \end{proof}

    \begin{corollary}
    \label{cor:log-orbi-parahoric-local-behavior}
    Under the equivalence of
    Proposition~\ref{prop:log-orbi-parahoric-dictionary}, the following hold.
    \begin{enumerate}
    \item If \(x\in D_{\mathrm{orb}}\), the associated coarse connection is
    strongly parabolic in every representation.  The term \(N_{V,x}\) in its
    adjusted residue decomposition is strictly filtration-raising and is not
    required to vanish.  When \(\lambda=0\), regularity on the orbifold chart
    does not force the coarse Higgs residue to vanish.
    \item If \(x\in D_{\log}\), the equivalence identifies the full adjusted
    loci and the algebraic/log stratifications of
    Definition~\ref{def:adjusted-algebraic-log}.  For \(\lambda=0\), the two
    strata are defined by \(N_{V,x}=0\) for every \(V\) and
    \(N_{V,x}\ne0\) for some \(V\), respectively; for \(\lambda\ne0\), they
    are defined by trivial and nontrivial unipotent local monodromy of the
    normalized flat germ \(\lambda^{-1}\nabla^\lambda\).  In particular, when
    \(\lambda=0\), a logarithmic point may have a nonzero nilpotent Higgs
    residue.
    \end{enumerate}
    \end{corollary}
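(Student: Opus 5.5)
The plan is to deduce the corollary directly from the two local steps in the proof of Proposition~\ref{prop:log-orbi-parahoric-dictionary}, treating orbifold and logarithmic points separately. Since both conditions are checked representation by representation and are tensorial by Proposition~\ref{prop:tannakian-parahoric-connections}, it suffices to work with a single \(V\in\Rep_{\mathbb C}(G)\) and an \'etale chart \((U,u)\to(C,x)\).

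\emph{Orbifold points.} For \(x\in D_{\mathrm{orb}}\), the root-stack/parabolic correspondence of \cite[Theorem~4.31 and Section~4.6]{BorneAmine23paraconneandstackofroots} sends the regular \(\mu_m\)-equivariant germ to a strongly parabolic vector \(\lambda\)-connection on \((U,u)\). In our decreasing convention its residue acts by \(\lambda a\) on \(\operatorname{gr}^{(u)}_a\). Subtracting \(\lambda\,d\rho_V(\widetilde\theta_x)\) then gives an \(N_{V,x}\) that vanishes on every graded piece, hence is strictly filtration-raising. To show that \(N_{V,x}\) need not vanish, I will work the explicit local example on \([\Delta_w/\mu_m]\): the Higgs field \(\begin{pmatrix}0&0\\1&0\end{pmatrix}\) on \(\Theta\oplus\Theta^{-1}\), with \(\mu_m\)-weights chosen so that it is invariant. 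Pushing forward via \(t=w^m\) produces the off-diagonal entry \(w\,dw\sim dt/t\cdot w^{m}/m\) in an invariant frame. This is a nonzero logarithmic residue mapping one graded piece strictly upward. This computation is the main point where care is needed: the eigenlines must be tracked so that the jump difference is \(1-1/m\), and the pushed-forward coefficient must have exact order \(t^0\) against \(dt/t\). Equivariant regularity alone forces vanishing only of the components of \(N\) that fail to raise the filtration, not of \(N\) itself.

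\emph{Logarithmic points.} For \(x\in D_{\log}\), the stack chart is the coarse chart and the coarse-model functor is the identity on the filtered extension. The residue condition defining \(\mathscr L_G^\lambda\) is therefore literally the adjustedness of Definition~\ref{def:adjusted-algebraic-log}, so the full adjusted loci coincide. The strata are defined intrinsically, by the vanishing of \(N_{V,x}\) when \(\lambda=0\) and by triviality of the unipotent monodromy factor when \(\lambda\ne0\). Both notions are transported unchanged by an identity functor, so the stratifications are identified as well.

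For the final assertion, I will exhibit the \(\lambda=0\) model \(\mathcal O\oplus\mathcal O\) with Higgs field \(\begin{pmatrix}0&0\\1&0\end{pmatrix}dz/z\). This has integral type \(\theta_x=\varpi^\vee\)-type weights \((1/2,-1/2)\) up to shift, and a nonzero residue lying in \(\mathfrak g_{>0}\). It is therefore a log-type point with nonzero nilpotent residue.
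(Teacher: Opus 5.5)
Your overall route is the paper's: you read the corollary off from the two local steps of the proof of Proposition~\ref{prop:log-orbi-parahoric-dictionary}. At orbifold points that step is the root-stack residue calculation; at logarithmic points it is the identity pointed chart together with Definition~\ref{def:adjusted-algebraic-log}. The paper's proof stops there and takes its explicit nonvanishing example from the proof of Theorem~\ref{thm:canonical-maximal-psl2-higgs}.

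\textbf{The orbifold example.} The problem is the explicit computation you add, which you single out as the crux. As written it fails. From \(t=w^m\) one has \(dw=\frac{w}{m}\frac{dt}{t}\), so \(w\,dw=\frac{w^2}{m}\frac{dt}{t}\), not \(\frac{w^m}{m}\frac{dt}{t}\). Your right-hand side equals \(\frac{t}{m}\frac{dt}{t}=\frac{dt}{m}=w^{m-1}dw\), whose coefficient vanishes at \(t=0\). Taken literally, your example therefore exhibits a \emph{zero} coarse residue and proves nothing.

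This is exactly what happens in a \(\GL_2\)-model whose target line \(e_2\) is invariant. There the minimal invariant frame is \((w^{m-1}e_1,e_2)\), and the residue vanishes. So choosing weights ``so that it is invariant'' does not by itself give nonvanishing; the equivariant structure, equivalently the lattice window, decides.

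The correct computation is the paper's, carried out in the adjoint representation. Let \(F\) be the lower-left root vector, on which inertia acts by \(\zeta^{-1}\), so that \(F\,dw\) is invariant. The invariant generator of that isotypic line is \(\widetilde F=wF\), and
\[
F\,dw=\widetilde F\,\frac{dw}{w}=\frac1m\,\widetilde F\,\frac{dt}{t}.
\]
Hence the coarse residue is \(\frac1m\widetilde F\neq0\), and it is strictly filtration-raising.

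There is a further point about the model itself. For even \(m\) there is no \(\mu_m\)-equivariant \(\Theta\) with \(\Theta^{\otimes2}\simeq\omega\) on \([\Delta_w/\mu_m]\). The reason is that the inertia character on the fiber of \(\omega\) generates \(\mathbb Z/m\), and a generator is not twice anything. So the \(\SL_2\)-model on \(\Theta\oplus\Theta^{-1}\) that you place on the stack chart exists only for odd \(m\). The example should instead be run on the \(\PSL_2\)-object, through its adjoint representation, or on a correctly weighted \(\GL_2\)-model.

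\textbf{The logarithmic example.} Here there is a smaller sign issue. Put weights \((1/2,-1/2)\) on \((e_1,e_2)\) in the decreasing convention. Then the residue \(e_1\mapsto e_2\) sends the weight-\(\tfrac12\) line to the weight-\(-\tfrac12\) line, so it lies in \(\mathfrak g_{<0}\), not \(\mathfrak g_{>0}\). You need the Weyl-conjugate representative, with weights \((-1/2,1/2)\); this is the paper's ``Weyl-compatible choice of representative.'' Also, \(\varpi^\vee\) is integral only for \(\PSL_2\); on the standard \(\SL_2\)-representation these weights are genuinely half-integral, so ``integral type'' must refer to the adjoint realization.

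With these corrections your proposal coincides with the paper's argument.
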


    \begin{proof}
    The first assertion is the root-stack residue calculation in the proof of
    Proposition~\ref{prop:log-orbi-parahoric-dictionary}.  The second follows
    from the ordinary pointed chart there and
    Definition~\ref{def:adjusted-algebraic-log}.
    \end{proof}

    \begin{corollary}
    \label{cor:log-orbi-parahoric-functoriality}
    Coarse formation commutes with associated representations and their rigid
    tensor operations, and it is compatible with external products.  For a
    homomorphism \(\varphi\colon G\to H\) of connected complex reductive
    groups, there is a canonical natural isomorphism
    \[
    \operatorname{Coarse}_{\varphi_*\theta}\bigl(\varphi_*(-)\bigr)
    \simeq
    \varphi_*\bigl(\operatorname{Coarse}_{\theta}(-)\bigr).
    \]
    If \(f\colon\mathcal C'\to\mathcal C\) is finite \'etale with coarse map
    \(\bar f\colon C'\to C\), set
    \[
    D_f:=\bigl(\bar f^{-1}D\bigr)_{\mathrm{red}},
    \qquad
    (f^*\theta)_y:=e_y\theta_{\bar f(y)}
    \quad(y\in D_f),
    \]
    where \(e_y\) is the coarse ramification index.  Pullback, indexed by
    \(D_f\) also at order-one preimages of orbifold points, satisfies the
    canonical natural isomorphism
    \[
    \operatorname{Coarse}_{f^*\theta}\bigl(f^*(-)\bigr)
    \simeq
    \bar f^*\bigl(\operatorname{Coarse}_{\theta}(-)\bigr).
    \]
    If \(p\colon\widetilde{\mathcal C}\to\mathcal C\) is finite Galois, the
    equivalence identifies descent data of type \(p^*\theta\), indexed by
    \(D_p=(\bar p^{-1}D)_{\mathrm{red}}\), on the two sides.  Preservation of
    the log-type stratum under change of structure group is subject to the
    nonannihilation conditions in
    Proposition~\ref{prop:functoriality-parahoric-lambda}.
    \end{corollary}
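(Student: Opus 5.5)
The statement to prove is Corollary~\ref{cor:log-orbi-parahoric-functoriality}. The plan is to reduce every assertion to the representationwise construction of \(\operatorname{Coarse}_\theta\) in the proof of Proposition~\ref{prop:log-orbi-parahoric-dictionary}, and then to invoke the Tannakian reconstruction of Proposition~\ref{prop:tannakian-parahoric-connections}. That reconstruction promotes canonical tensor-natural isomorphisms of vector-valued functors to isomorphisms of principal objects.

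\textbf{Step 1: tensor operations and change of group.} \(\operatorname{Coarse}_\theta\) is defined by applying, for each \(V\in\Rep_{\mathbb C}(G)\), the root-stack/parabolic equivalence of \cite{BorneAmine23paraconneandstackofroots}. That equivalence is exact and strong symmetric monoidal, so compatibility with rigid tensor operations holds by construction. For external products, I would use that \(\Rep(G\times H)\) is generated under tensor operations by \(V\boxtimes W\), and that the local types add as \(\theta_G\boxtimes\theta_H\).

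For \(\varphi\colon G\to H\), both sides of the first displayed isomorphism evaluate on \(W\in\Rep(H)\) to the coarse model of the vector \(\lambda\)-connection attached to \(\varphi^*W\). So the isomorphism is the identity on associated vector objects, and it is automatically tensor-natural. Proposition~\ref{prop:functoriality-parahoric-lambda} shows that the type is \(\varphi_*\theta\) and that adjustedness is preserved. The orbifold inertia class is carried to \([\varphi\circ\tau_{\theta_x}]\), which matches \(\varphi_*\theta_x\). Reconstruction then gives the principal isomorphism.

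\textbf{Step 2: finite étale pullback.} I work locally on the coarse curve, with the charts from the discussion after Definition~\ref{def:etale-morphism-log-orbi}.

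At an orbifold point the chart is \([\Delta_w/\mu_{m_y}]\to[\Delta_w/\mu_{m_x}]\), given by restricting the equivariance along \(\mu_{m_y}\hookrightarrow\mu_{m_x}\), with coarse map \(t=u^{e_y}\). I would check in each representation that the parabolic object induced from the restricted equivariant germ is \(\bar f^*\) of the original parabolic object. The weight \(a\) becomes \(e_ya\), and this is the parabolic pullback of \cite{AlfayaBiswas23pullbackanddirectimofparconandparHiggs}. This gives \((f^*\theta)_y=e_y\theta_x\). The inertia type is also consistent: \(m_y(e_y\theta_x)=m_x\theta_x\) restricted to \(\mu_{m_y}\).

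When \(m_y=1\), the type \(e_y\theta_x=m_x\theta_x\) is integral. This is precisely why the auxiliary divisor \(D_f\) convention is needed: it keeps the integral lattice shift, and it matches the order-one root-stack convention introduced before Proposition~\ref{prop:log-orbi-parahoric-dictionary}.

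At logarithmic points the Kummer chart \(t=u^{e_y}\) and Proposition~\ref{prop:parahoric-lambda-finite-pullback} give the residue scaling and preservation of adjustedness and strata. Away from \(D\) both functors are the identity. The local isomorphisms are canonical, so they agree on overlaps and glue by the descent argument already used in the dictionary proof.

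\textbf{Step 3: Galois descent and log type.} For Galois \(p\), I apply Step~2 to every \(\delta\in\Delta\). Naturality of the pullback isomorphism, together with uniqueness of the canonical identifications, transports cocycles \(\epsilon_\delta\) on the orbifold side to cocycles on the coarse side, which is the setting of Proposition~\ref{prop:parahoric-lambda-galois-descent}.

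The final sentence of the corollary on log type is just Proposition~\ref{prop:functoriality-parahoric-lambda} combined with Step~1.

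\textbf{Main obstacle.} I expect the hardest point to be the orbifold pullback in Step~2: showing that the full periodic enhancement, and not only the normalized filtered fiber, pulls back to type \(e_y\theta_x\) canonically and compatibly with composition of étale maps. My first approach would be to compare the lattice models \(\Lambda^{\theta_x}_{\ge a}\) directly under \(z\mapsto u^{e_y}\). In these models the relevant exponent satisfies \(\lceil e_ya-e_y\langle\chi,\theta_x\rangle\rceil\) with the periodicity rescaled by \(e_y\), so I would verify the coherence by explicit local computation in the Wilson system.
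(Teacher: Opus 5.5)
Your proposal is correct and follows essentially the paper's route. \(\operatorname{Coarse}_\theta\) is the representationwise root-stack/parabolic equivalence, so after Tannakian reconstruction each compatibility reduces to Propositions~\ref{prop:parahoric-lambda-tensor-operations}, \ref{prop:functoriality-parahoric-lambda}, \ref{prop:parahoric-lambda-finite-pullback} and~\ref{prop:parahoric-lambda-galois-descent}, and the \(D_f\) convention retains the integral type \(m_x\theta_x\) at order-one preimages exactly as you describe.

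The composition coherence you single out as the main obstacle needs no Wilson-system computation in the paper. The vector construction is defined by equivariant pullback and descent, so it inherits identity and composition coherence from pullback itself. If you do compute locally, use the saturated parabolic pullback: the naive pullback of \(\Lambda^{\theta_x}_{\ge a}\) has exponents \(e_y\lceil a-\langle\chi,\theta_x\rangle\rceil\), which in general differ from \(\lceil e_ya-e_y\langle\chi,\theta_x\rangle\rceil\).
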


    \begin{proof}
    The vector root-stack construction is defined by equivariant pullback and
    descent, so it commutes with associated representations, external
    products, structure-group change, and base change.  For finite \'etale
    pullback, retaining all points of \(D_f\) records the integral type at
    order-one preimages; the order-one local construction above is the
    \(m=1\) case of the same vector correspondence.  The four compatibilities
    follow, respectively, from
    Propositions~\ref{prop:parahoric-lambda-tensor-operations},~\ref{prop:functoriality-parahoric-lambda},~\ref{prop:parahoric-lambda-finite-pullback}, and~\ref{prop:parahoric-lambda-galois-descent}.
    The identity and composition coherence of the resulting natural
    isomorphisms is inherited from equivariant pullback.
    \end{proof}

\subsection{The canonical maximal \texorpdfstring{\(\PSL_2\)}{PSL2}-Higgs object}\label{subsec:canonical-maximal-psl2}

    We now construct the distinguished Dolbeault object which will later be transported, by non-abelian Hodge theory and Riemann--Hilbert, to the uniformizing representation. Working with \(\PSL_2\), rather than \(\SL_2\), is essential: the usual square-root construction produces half-integral local weights on the \(\SL_2\)-side, while after pushout to \(\PSL_2\) the orbifold denominators match the given log--orbi structure and the isomorphism class is independent of the square-root choices.

    Let \(\mathcal C=(C,D_{\mathrm{orb}}^{\mathbf m},D_{\log})\) be a hyperbolic log--orbi curve. Its log--orbi canonical bundle is
    \[
    \omega_{\mathcal C}:=\Omega^1_{\mathcal C}(\log D_{\log}).
    \]
    For \(x\in D_{\mathrm{orb}}\), put
    \[
    m_x:=\lvert\operatorname{Aut}_{\mathcal C}(x)\rvert.
    \]
    On the coarse curve \(C\), this line is represented by the rational divisor
    \[
    K_C^{\mathrm{log\text{-}orb}}
    =
    K_C
    +
    \sum_{x\in D_{\mathrm{orb}}}
    \left(1-\frac1{m_x}\right)x
    +
    \sum_{y\in D_{\log}}y.
    \]
    This is the hybrid canonical divisor which records both stacky orbifold contributions and logarithmic boundary contributions. This combines the usual canonical divisor formula for stacky curves with the ordinary logarithmic contribution along \(D_{\log}\) \cite{ThurstonGTMTheGeometryandTopologyofThreeManifolds, JohnDavid22thecanonicalringofastackycurve}. We write
    \[
    \kappa_x
    =
    \begin{cases}
    1-\dfrac1{m_x}, & x\in D_{\mathrm{orb}},\\[6pt]
    1, & x\in D_{\log},
    \end{cases}
    \]
    for the local coefficient of \(\omega_{\mathcal C}\) at a marked point \(x\). Thus \(\kappa_x\) is the local coefficient of the log--orbi canonical bundle at \(x\), before any square root is chosen. We denote the associated canonical \(\PSL_2\)-local type by
    \[
    \theta_{\mathcal C,x} := \kappa_x\varpi^\vee, \qquad \theta_{\mathcal C} := (\theta_{\mathcal C,x})_{x\in D_{\mathrm{orb}}\cup D_{\log}}.
    \]

    \begin{theorem}
    \label{thm:canonical-maximal-psl2-higgs}
    There exists a canonical principal \(\PSL_2\)-Higgs object
    \[
    (\mathcal U_{\mathcal C},\vartheta_{\mathcal C})
    \]
    on \(\mathcal C\), of canonical local type
    \[
    \theta_{\mathcal C,x}=\kappa_x\varpi^\vee.
    \]
    It is characterized \'etale-locally as the pushout along \(\SL_2\longrightarrow \PSL_2\) of
    \[
    \left(
    \Theta\oplus\Theta^{-1},
    \begin{pmatrix}
    0&0\\
    1&0
    \end{pmatrix}
    \right),
    \qquad
    \Theta^{\otimes2}\simeq\omega_{\mathcal C}.
    \]
    It is independent of all local choices of \(\Theta\). At an orbifold
    point of order \(m_x\), its fractional local type has denominator
    \(m_x\); the intrinsic Higgs field is regular on the stack chart, although
    its coarse strongly parabolic residue is nonzero.  At a logarithmic point,
    its fractional \(\PSL_2\)-type is trivial and its logarithmic nilpotent
    residue is nonzero.
    \end{theorem}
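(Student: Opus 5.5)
The plan is to construct \((\mathcal U_{\mathcal C},\vartheta_{\mathcal C})\) by étale descent, working intrinsically on the stack \(\mathcal C\) rather than on the coarse curve.

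\textbf{Step 1: local models.} Choose an étale (or analytic orbifold) cover \(\{V_i\to\mathcal C\}\) on which \(\omega_{\mathcal C}\) admits square roots \(\Theta_i\) with \(\Theta_i^{\otimes2}\simeq\omega_{\mathcal C}|_{V_i}\). This is possible since \(\omega_{\mathcal C}\) is a line bundle and locally trivial, including on the charts \([\Delta/\mu_m]\) after passing to \(\Delta\) with its \(\mu_m\)-linearization. On each \(V_i\), form the \(\SL_2\)-Higgs bundle
\[
\bigl(\Theta_i\oplus\Theta_i^{-1},\ \phi_i\bigr),
\]
where \(\phi_i\colon\Theta_i\to\Theta_i^{-1}\otimes\omega_{\mathcal C}\simeq\Theta_i\) is the identity. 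Then push it out along \(\SL_2\to\PSL_2\).

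\textbf{Step 2: descent.} The key point is that the \(\PSL_2\)-object is canonically the pair consisting of the frame bundle of \(\operatorname{ad}=\mathcal{E}nd_0(\Theta\oplus\Theta^{-1})\) and the induced Higgs field. Concretely, the adjoint bundle is
\[
\Theta^{2}\oplus\mathcal O\oplus\Theta^{-2}\simeq\omega_{\mathcal C}\oplus\mathcal O\oplus\omega_{\mathcal C}^{-1},
\]
and the Higgs field \(\operatorname{ad}\phi\) is written entirely in terms of \(\omega_{\mathcal C}\). On overlaps, two square roots differ by a \(\mu_2\)-torsor, i.e.\ by \(\Theta_j\simeq\Theta_i\otimes L_{ij}\) with \(L_{ij}^2\) trivialized. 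Any isomorphism \(\Theta_i\to\Theta_j\otimes L\) compatible with the squaring identifications is unique up to \(\pm1\). The induced isomorphism of \(\PSL_2\)-objects is therefore unique: the sign is central and dies in \(\PSL_2\), and it intertwines the Higgs fields because \(\phi\) is determined by the squaring map.

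This uniqueness gives the cocycle condition on triple overlaps for free. Effective descent for \(\PSL_2\)-torsors with Higgs field (a section of \(\operatorname{ad}\otimes\omega\), i.e.\ quasi-coherent data) then glues the local objects. For the same reason, independence of the choice of \(\{\Theta_i\}\) follows by comparing on a common refinement; equivalently, the object is simply the \(\PSL_2\)-bundle associated to the rank-three bundle \(\omega\oplus\mathcal O\oplus\omega^{-1}\) with its standard quadratic form and nilpotent field. I would present this global adjoint description as the definition, and the square-root description as the local characterization.

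\textbf{Step 3: local types and residues.} By Proposition~\ref{prop:log-orbi-parahoric-dictionary} and Corollary~\ref{cor:log-orbi-parahoric-local-behavior}, the coarse data can be computed representation by representation; it suffices to treat the adjoint representation, which is faithful for \(\PSL_2\).

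\emph{Orbifold points.} At \(x\in D_{\mathrm{orb}}\), on the chart \([\Delta_w/\mu_m]\) with \(z=w^m\), we have \(\omega=\mathcal O\,dw\), and \(\mu_m\) acts on \(dw\) by the weight \(\zeta\). Hence \(\mu_m\) acts on \(\omega\oplus\mathcal O\oplus\omega^{-1}\) with weights \((1,0,-1)\) modulo \(m\). Under the root-stack correspondence, \(dw=\tfrac1m z^{1/m-1}dz\), so the parabolic weights on the coarse line \(\omega\) are governed by \(1-\tfrac1m\). This gives an inertia homomorphism of type \(\bigl[(m\kappa_x\varpi^\vee)|_{\mu_m}\bigr]\), and so type \(\kappa_x\varpi^\vee\), with denominator exactly \(m\). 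The denominator is exactly \(m\) because the adjoint weight \(\pm(1-\tfrac1m)\) has reduced denominator \(m\), whereas in \(\SL_2\) one would see \(\tfrac12(1-\tfrac1m)\).

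The Higgs field is regular on \(\Delta_w\), being the identity \(\omega\to\omega\). Its coarse residue is nonzero: it is the element of \(\mathfrak g_{>0}\) of degree \(\kappa_x\) that sends the graded piece \(\omega^{-1}\) to \(\mathcal O\) to \(\omega\). Nonvanishing follows because the identity is an isomorphism on the fiber at \(w=0\), and this passes to the strongly parabolic residue.

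\emph{Logarithmic points.} At \(x\in D_{\log}\), \(\omega=\mathcal O\,dz/z\) with coefficient \(\kappa_x=1\). The type \(\varpi^\vee\) is integral, so the fractional type is trivial. The residue of \(\phi\) along \(dz/z\) is the identity on fibers, so \(N_x\) is a nonzero nilpotent in \(\mathfrak g_1(\varpi^\vee)\subset\mathfrak g_{>0}\). This is the adjusted log-type stratum of Definition~\ref{def:adjusted-algebraic-log}.

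\textbf{Main obstacle.} I expect the hardest step to be the orbifold-point bookkeeping: verifying that the periodic enhancement induced by the root-stack correspondence has full unnormalized type exactly \(\kappa_x\varpi^\vee\), including the integral lattice shift, and not merely the right class modulo integral cocharacters. I would first compute it explicitly in the adjoint representation using the lattice model \(\Lambda^{\theta_x}_{\ge a}\). This means checking that the invariant pushforward of \(w^{k}\,dw\) gives the lattice \(z^{\lceil a-(1-1/m)\rceil}\), and then using Proposition~\ref{prop:tannakian-parahoric-bundles} to extend from the adjoint representation to all representations.
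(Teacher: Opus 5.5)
Your construction is the paper's: local square roots \(\Theta_i\), the tautological maximal \(\SL_2\)-models, pushout to \(\PSL_2\) killing the central \(\mu_2\)-twists \(L_{ij}=\Theta_j\otimes\Theta_i^{-1}\), and effective descent. Your global model is a genuine improvement in packaging. Writing \(\omega=\omega_{\mathcal C}\), this is the \(\mathrm{SO}_3\simeq\PSL_2\)-bundle \(\omega\oplus\mathcal O\oplus\omega^{-1}\) with its hyperbolic form and \(\operatorname{ad}\vartheta\). With it, canonicity, the cocycle condition, and compatibility with finite \'etale pullback (via \(f^*\omega\simeq\omega\)) become tautological, and the inertia weights \((1,0,-1)\) can be read off directly. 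The price is a one-line check that \(\mathcal{E}nd_0(\Theta\oplus\Theta^{-1})\simeq\Theta^{2}\oplus\mathcal O\oplus\Theta^{-2}\), with its trace form, recovers the local pushout characterization. Your denominator argument is fine: the root generates \(X^*(T_{\mathrm{ad}})\), so denominators are read off in the adjoint representation. The log-point computation is also fine. Your plan for checking the integral shift is more careful than the paper, which simply asserts the type.

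The step that fails as written is the orbifold residue. First, \(\operatorname{ad}\vartheta\) goes the other way from what you describe. It maps the \(\omega\)-summand to \(\mathcal O\otimes\omega\) and \(\mathcal O\) to \(\omega^{-1}\otimes\omega\), and it kills \(\omega^{-1}\). Second, nonvanishing at \(w=0\) does not simply ``pass'' to the coarse residue, because changing to the coarse logarithmic frame costs a factor of \(w\). Write \(\operatorname{ad}\vartheta=F\,dw\). Then \(F\,dw=\tfrac1m (wF)\,\tfrac{dz}{z}\), so the coarse residue is the action of \(\tfrac1m wF\) on the invariant generators \(w^{m-1}dw,\ 1,\ w\,(dw)^{-1}\) of the coarse lattice.
\begin{itemize}
\item It sends \(w^{m-1}dw\) to a multiple of \(w^m\cdot 1=z\cdot 1\), which vanishes in the coarse fiber. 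So exactly the ``identity \(\omega\to\omega\)'' component you invoke for nonvanishing dies.
\item It sends \(1\) to a nonzero multiple of the generator \(w\,(dw)^{-1}\).
\end{itemize}
Hence the coarse residue is the rank-one map from the \(\mathcal O\)-line, of normalized weight \(0\), to the \(\omega^{-1}\)-line, of normalized weight \(1/m\). It is nonzero and strictly filtration-raising, but it is not the chain \(\omega^{-1}\to\mathcal O\to\omega\) you describe. Replace your justification with this explicit invariant-section computation; it is what the paper's identity \(F\,dw=\tfrac1m\widetilde F\,\tfrac{dz}{z}\), with \(\widetilde F=wF\), encodes. Contrast this with the logarithmic point, where \(F\,dz/z\) carries no such factor and the full regular nilpotent survives as the residue.
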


    \begin{proof}
    Let \(T_{\SL}\subset \SL_2\) be the diagonal torus with standard coroot
    \[
    \alpha^\vee(t) = \begin{pmatrix} t & 0 \\ 0 & t^{-1} \end{pmatrix},
    \]
    and let \(T_{\mathrm{ad}}\subset \PSL_2\) be its image. Inside the common rational cocharacter space, we have
    \[
    X_*(T_{\SL})=\mathbb Z\alpha^\vee, \qquad X_*(T_{\mathrm{ad}})=\mathbb Z\varpi^\vee, \qquad \alpha^\vee=2\varpi^\vee.
    \]
    Choose an \'etale cover \(\{U_i\to\mathcal C\}\) on which
    \(\omega_{\mathcal C}\) admits square roots
    \(\Theta_i^{\otimes2}\simeq\omega_{\mathcal C}|_{U_i}\).  On \(U_i\),
    form the local maximal \(\SL_2\)-Higgs object
    \[
    (E_i,\vartheta_i)
    =
    \left(
    \Theta_i\oplus\Theta_i^{-1},
    \begin{pmatrix}0&0\\1&0\end{pmatrix}
    \right),
    \]
    where the lower-left entry is the tautological isomorphism
    \(\Theta_i\simeq\Theta_i^{-1}\otimes\omega_{\mathcal C}\).  Since the
    local coefficient of \(\omega_{\mathcal C}\) at \(x\) is \(\kappa_x\),
    the local types before and after pushout are
    \[
    \theta_{\SL,x}=\frac{\kappa_x}{2}\alpha^\vee,
    \qquad
    \alpha^\vee=2\varpi^\vee,
    \qquad
    \theta_{\PSL,x}=\kappa_x\varpi^\vee.
    \]

    If \(x\in D_{\mathrm{orb}}\), then
    \[
    \theta_{\PSL,x}
    =\left(1-\frac{1}{m_x}\right)\varpi^\vee,
    \]
    whose fractional class has exact denominator \(m_x\), since
    \(\varpi^\vee\) is primitive and \(\gcd(m_x-1,m_x)=1\).  If
    \(x\in D_{\log}\), then \(\theta_{\PSL,x}=\varpi^\vee\) is integral, so
    its fractional class is trivial, whereas
    \[
    \mathfrak g_{>0}(\varpi^\vee)=\mathfrak g_1(\varpi^\vee)\neq0.
    \]

    The full local parameter is equally explicit.  On an orbifold chart
    \([\Delta_w/\mu_m]\), with \(z=w^m\), choose a projective root vector
    \(F\) on which inertia acts by \(\zeta^{-1}\), and put
    \(\widetilde F=wF\).  Then \(\widetilde F\) is invariant and
    \[
    F\,dw=\frac1m\,\widetilde F\,\frac{dz}{z}.
    \]
    Thus the stack Higgs field is regular, while the class of
    \(m^{-1}\widetilde F\) in the coarse parahoric fiber is a nonzero,
    strictly filtration-raising residue and hence has zero graded Levi image.
    At a logarithmic
    point the local field is \(F\,dz/z\), with nonzero nilpotent residue
    \(F\).  After a Weyl-compatible choice of representative these are the
    positive pieces associated with the displayed local types.

    On an overlap \(U_{ij}\), put
    \[
    L_{ij}:=\Theta_j\otimes\Theta_i^{-1},
    \qquad
    L_{ij}^{\otimes2}\simeq\mathcal O_{U_{ij}}.
    \]
    Thus the two local \(\SL_2\)-Higgs objects differ by the central
    \(\mu_2\)-twist defined by \(L_{ij}\).  Pushout along
    \(\SL_2\to\PSL_2\) kills this twist; the same applies to the discrepancy
    on triple overlaps.  The pushed-out local objects therefore descend to a
    global principal \(\PSL_2\)-Higgs object.  Changing the cover, square
    roots, or comparison maps changes only the intermediate central twists,
    so the descended object is independent of these choices up to canonical
    isomorphism.
    \end{proof}

    \begin{proposition}
    \label{prop:maximal-psl2-functoriality}
    The Higgs object \((\mathcal U_{\mathcal C},\vartheta_{\mathcal C})\) is maximal: \'etale-locally, for any choice of \(\Theta^{\otimes2}\simeq\omega_{\mathcal C}\), the Higgs field induces the isomorphism
    \[
    \Theta \xrightarrow{\sim} \Theta^{-1}\otimes\omega_{\mathcal C}.
    \]
    For any finite \'etale morphism \(f:\mathcal C_1\longrightarrow\mathcal C_2\) of hyperbolic log--orbi curves, there is a canonical isomorphism
    \[
    \alpha_f:
    f^*(\mathcal U_{\mathcal C_2},\vartheta_{\mathcal C_2})
    \xrightarrow{\sim}
    (\mathcal U_{\mathcal C_1},\vartheta_{\mathcal C_1}).
    \]
    Under the standard pullback identifications, these isomorphisms satisfy
    \[
    \alpha_{\mathrm{id}_{\mathcal C}}=\mathrm{id},
    \qquad
    \alpha_{g\circ f}=\alpha_f\circ f^*\alpha_g
    \]
    for every composable pair
    \(\mathcal C_1\xrightarrow f\mathcal C_2\xrightarrow g\mathcal C_3\).
    If \(f\) is Galois with group \(\Gamma\), the isomorphisms
    \[
    \alpha_\gamma:
    \gamma^*(\mathcal U_{\mathcal C_1},\vartheta_{\mathcal C_1})
    \xrightarrow{\sim}
    (\mathcal U_{\mathcal C_1},\vartheta_{\mathcal C_1}),
    \qquad \gamma\in\Gamma,
    \]
    form a Galois descent datum whose effective descent is
    \((\mathcal U_{\mathcal C_2},\vartheta_{\mathcal C_2})\).
    \end{proposition}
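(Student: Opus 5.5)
The plan is to derive all three assertions of Proposition~\ref{prop:maximal-psl2-functoriality} from the étale-local description in Theorem~\ref{thm:canonical-maximal-psl2-higgs}, together with the canonical isomorphism \(f^*\omega_{\mathcal C_2}\simeq\omega_{\mathcal C_1}\) recorded after Definition~\ref{def:etale-morphism-log-orbi}.

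\textbf{Maximality.} This is local. On any étale chart \(U\) with a chosen \(\Theta\), the lower-left entry of the model Higgs field is by construction the tautological map \(\Theta\to\Theta^{-1}\otimes\omega_{\mathcal C}\), which is an isomorphism. A different choice \(\Theta'=\Theta\otimes L\) with \(L^{\otimes2}\simeq\mathcal O\) changes this map by tensoring with \(L\) and the identification \(L\otimes L\simeq\mathcal O\). It therefore remains an isomorphism, and the statement is independent of the choice.

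\textbf{Construction of \(\alpha_f\).} Fix an étale cover \(\{U_i\}\) of \(\mathcal C_2\) with square roots \(\Theta_i\). Pull it back along \(f\) to an étale cover \(\{f^{-1}U_i\}\) of \(\mathcal C_1\), and take the square roots \(f^*\Theta_i\) of \(f^*\omega_{\mathcal C_2}\simeq\omega_{\mathcal C_1}\). With these choices the local \(\SL_2\)-models satisfy
\[
f^*(\Theta_i\oplus\Theta_i^{-1},\vartheta_i)=(f^*\Theta_i\oplus f^*\Theta_i^{-1},f^*\vartheta_i),
\]
and the right-hand side is literally a local model for \(\mathcal C_1\). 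The tautological map is preserved because the log-canonical pullback isomorphism is compatible with the chart description (\(dz/z\mapsto e\,dw/w\), and regular equivariant forms at orbifold points).

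The overlap twists \(L_{ij}\) pull back to \(f^*L_{ij}\), so after pushout to \(\PSL_2\) the local identifications glue to an isomorphism
\[
\alpha_f:f^*(\mathcal U_{\mathcal C_2},\vartheta_{\mathcal C_2})\xrightarrow{\sim}(\mathcal U_{\mathcal C_1},\vartheta_{\mathcal C_1}).
\]
Here the target is computed via the pulled-back cover. The independence statement in Theorem~\ref{thm:canonical-maximal-psl2-higgs} says that changing cover or roots yields canonically isomorphic descended objects, so \(\alpha_f\) does not depend on the choices.

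\textbf{Coherence.} The identities \(\alpha_{\mathrm{id}}=\mathrm{id}\) and \(\alpha_{g\circ f}=\alpha_f\circ f^*\alpha_g\) follow by computing everything on a cover of \(\mathcal C_3\) pulled back twice. On such a cover all three maps are induced by the canonical identifications
\[
(g f)^*\Theta_i\simeq f^*g^*\Theta_i,
\]
which satisfy the cocycle identity for pullback of sheaves. The identities then hold after pushout.

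\textbf{Main obstacle.} The step I expect to be hardest is showing that the comparison isomorphisms are truly canonical at the \(\PSL_2\) level, rather than only canonical up to the central \(\mu_2\)-ambiguity. The route I would try first is to argue that the automorphism group of \((\mathcal U,\vartheta)\) is trivial: an automorphism of \(\PSL_2\)-Higgs data must centralize a regular nilpotent \(\vartheta\), and the centralizer of a regular nilpotent in \(\PSL_2\) is unipotent. It must also preserve the grading induced by the maximal isomorphism (the \(\Theta\)/\(\Theta^{-1}\) splitting), and a grading-preserving unipotent element is trivial. Hence every locally chosen isomorphism is unique, and uniqueness forces all coherence and independence statements.

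\textbf{Galois descent.} For \(f\) Galois with group \(\Gamma\), put \(\alpha_\gamma:=\alpha_\gamma\) for \(\gamma\) viewed as an automorphism over \(\mathcal C_2\). The cocycle condition is the composition identity applied to \(\gamma\circ\gamma'\). Since \(f\circ\gamma=f\), the identity
\[
\alpha_f\circ\gamma^*\alpha_f^{-1}\;\text{ relates to }\;\alpha_\gamma
\]
(a consequence of \(\alpha_{f\circ\gamma}=\alpha_\gamma\circ\gamma^*\alpha_f\)) shows that \(\alpha_f\) identifies the canonical descent datum on \(f^*(\mathcal U_{\mathcal C_2},\vartheta_{\mathcal C_2})\) with \((\alpha_\gamma)\). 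The pulled-back object's canonical descent datum descends to \((\mathcal U_{\mathcal C_2},\vartheta_{\mathcal C_2})\) by effectivity in Proposition~\ref{prop:parahoric-lambda-galois-descent} and Corollary~\ref{cor:log-orbi-parahoric-functoriality}. Transferring along \(\alpha_f\) gives the same effective descent for \((\alpha_\gamma)\).
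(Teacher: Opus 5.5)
Your main line is the paper's proof. Maximality is the tautological lower-left entry. You get $\alpha_f$ by pulling back local square roots along the canonical isomorphism $f^*\omega_{\mathcal C_2}\simeq\omega_{\mathcal C_1}$, read coherence off the construction, and get descent from the cocycle identity plus effectivity. Your descent paragraph is fine once ``relates to'' is replaced by the identity $\alpha_\gamma=\alpha_f\circ(\gamma^*\alpha_f)^{-1}$, which follows from $\alpha_{f\circ\gamma}=\alpha_\gamma\circ\gamma^*\alpha_f$ and $f\circ\gamma=f$.

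The step that fails is the rigidity argument in your ``main obstacle'' paragraph. An automorphism of the Higgs object has no reason to preserve the splitting $\Theta\oplus\Theta^{-1}$; only the filtration $\Theta^{-1}=\ker\vartheta\subset\Theta\oplus\Theta^{-1}$ is intrinsic. Concretely, take $g=\begin{pmatrix}1&0\\ c&1\end{pmatrix}$ on any chart, with $c$ a local section of $\mathcal Hom(\Theta,\Theta^{-1})=\Theta^{-2}\simeq\omega_{\mathcal C}^{-1}$. It commutes with $\vartheta$ and is nontrivial in $\PSL_2$ whenever $c\neq0$. So the local centralizer sheaf is $\exp(\omega_{\mathcal C}^{-1}\cdot\vartheta)$, a copy of the additive sheaf $\omega_{\mathcal C}^{-1}$. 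Locally chosen isomorphisms are therefore far from unique, and ``uniqueness forces all coherence and independence statements'' cannot be run chart by chart.

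What survives is a global statement, and it needs a different justification:
\begin{enumerate}
\item The central $\mu_2$-twists act trivially on $\Theta^{-2}$, so the centralizer sheaf is globally $\omega_{\mathcal C}^{-1}$.
\item $H^0(\mathcal C,\omega_{\mathcal C}^{-1})=0$, because $\deg\omega_{\mathcal C}>0$ for hyperbolic $\mathcal C$.
\item Hence $\operatorname{Aut}(\mathcal U_{\mathcal C},\vartheta_{\mathcal C})$ is trivial, and any two global isomorphisms $f^*(\mathcal U_{\mathcal C_2},\vartheta_{\mathcal C_2})\simeq(\mathcal U_{\mathcal C_1},\vartheta_{\mathcal C_1})$ coincide.
\item Canonicity, the composition identity, and the cocycle condition are then automatic once one such isomorphism exists.
\end{enumerate}

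Corrected this way, your rigidity route is a legitimate alternative. It is short and makes all coherence formal, but it uses hyperbolicity. The paper does not need it: canonicity comes from the construction itself. After pushout the overlap twist $L_{ij}$ disappears canonically, since $\mathbb P(E\otimes L)\simeq\mathbb P(E)$. Equivalently, the adjoint model $\omega_{\mathcal C}\oplus\mathcal O\oplus\omega_{\mathcal C}^{-1}$ sees $\Theta$ only through $\Theta^{\otimes2}\simeq\omega_{\mathcal C}$. So the gluing maps and their pullbacks are canonical and compose correctly without any automorphism computation.
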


    \begin{proof}
    Maximality is the tautological lower-left isomorphism in the local
    \(\SL_2\)-model.  By Definition~\ref{def:etale-morphism-log-orbi}, finite
    \'etale pullback gives a canonical, composition-compatible isomorphism
    \(f^*\omega_{\mathcal C_2}\simeq\omega_{\mathcal C_1}\).  Hence local
    square roots and their maximal \(\SL_2\)-models pull back compatibly.
    The overlap comparisons and their independence after
    \(\SL_2\to\PSL_2\) are those of
    Theorem~\ref{thm:canonical-maximal-psl2-higgs}, and yield \(\alpha_f\).
    Their construction gives the identity and composition formulas.

    For a Galois cover, composition coherence is the cocycle condition for
    \((\alpha_\gamma)_{\gamma\in\Gamma}\).  Effective descent for principal
    torsors and their Higgs sections on the lisse--\'etale site then identifies
    the descended object with the one constructed on \(\mathcal C_2\)
    \cite[Tags 04US and 03O6]{StacksProject}.
    \end{proof}

\section{Tannakian realization for log--orbi curves}
\label{sec:tannakian-nah-rh}

    Throughout this section \(G\) is a connected complex reductive group.  We
    write a corresponding realization datum uniformly as
    \[
    \boldsymbol\eta
    =
    (\boldsymbol\theta,\boldsymbol\delta,\boldsymbol n)
    =
    \bigl((\theta_x,\delta_x,n_x)\bigr)_{x\in D}.
    \]
    Here \(\boldsymbol\theta\) and \(\boldsymbol\delta\) are the admissible
    rational full Dolbeault and de Rham local-parameter systems in the
    zero-Betti-weight slice, and \(\boldsymbol n\) is the corresponding system
    of full local monodromy conjugacy classes.  The Betti filtration is
    trivial and is not part of the object; in particular, trivial filtered
    structure does not mean trivial local monodromy.  In a residue formula,
    \(\theta_x^{\mathrm{coch}}\) or \(\delta_x^{\mathrm{coch}}\) denotes a
    representative of the cocharacter component of the corresponding full
    local datum.  For arbitrary \(G\), \(\boldsymbol\eta\) is part of the
    input: a log--orbi curve does not determine a canonical \(G\)-type.  Under
    a representation \(\rho\), \(\boldsymbol\theta\) and
    \(\boldsymbol\delta\) push forward in full, while
    \([T_x]\in n_x\) maps to \([\rho(T_x)]\).  The canonical datum enters only
    for the distinguished \(\PSL_2\)-object constructed in
    Theorem~\ref{thm:canonical-maximal-psl2-higgs}.

    The rigid exact tensor categories used below are the ambient
    vector-valued Dolbeault and de Rham categories in which the rank and
    rational local parameter are allowed to vary, together with the ordinary
    Betti category of local systems in which the local monodromy data
    vary.  A fixed-datum piece is
    a full subcategory of the corresponding ambient category, but is not
    itself a tensor category: tensor operations generally change the type.
    We first record the vector-valued NAH/RH input, then pass to principal
    objects by Tannakian reconstruction, and finally state the functorial
    principal realization theorem.

    \subsection{Vector-valued NAH/RH input for log--orbi curves}
    \label{subsec:vector-valued-input}

    We use Simpson's vector-valued tame correspondence on a punctured curve,
    together with its local residue transformation
    \cite[Main Theorem; Synopsis; \S5; Theorem~7; Remark]
    {Sim90harmobdonnoncomcurves}.  Its compatibility with the standard tensor
    operations is established in
    \cite[Proposition~3.1 and Theorem~2; Lemma~3.2; Proposition~3.3]
    {Sim90harmobdonnoncomcurves}.  For regular-singular
    Riemann--Hilbert we use
    \cite[Part~I, Theorem~2.17; Part~II, Theorem~5.9]{deligne1970}.

    Let \(\mathcal C\) be a log--orbi curve with coarse pointed curve
    \(
    (C,D)
    \),
    where \(D:=D_{\mathrm{orb}}\cup D_{\log}\).
    For rank \(r\), fix a compatible system
    \[
    \boldsymbol\eta
    =
    (\boldsymbol\theta,\boldsymbol\delta,\boldsymbol n)
    =
    \bigl((\theta_x,\delta_x,n_x)\bigr)_{x\in D}
    \]
    of full rational vector local data.  Thus \(\theta_x\) is the Dolbeault
    datum, \(\delta_x\) is its de Rham transform, and \(n_x\) is the resulting
    full local monodromy conjugacy class.  The first two include the weighted
    filtration and residue data, while \(\delta_x\) also retains the chosen
    regular-singular extension class.  Compatibility includes zero transformed
    Betti weight and the adjusted condition at ordinary logarithmic points.

    On a fine residue-diagram summand, write the Higgs filtration jump as
    \(\alpha\in\Q\) and its graded-residue eigenvalue as \(b+ci\), where
    \(b,c\in\R\).  Simpson's local table is
    \[
    \begin{array}{@{}lccc@{}}
    \toprule
    & \text{Dolbeault }(E,\vartheta)
    & \text{de Rham }(V,\nabla)
    & \text{Betti }L\\
    \midrule
    \text{filtration jump}
    & \alpha & \alpha-2b & \beta=-2b\\
    \text{residue/monodromy eigenvalue}
    & b+ci & \alpha+2ci & \exp(-2\pi i\alpha+4\pi c)\\
    \bottomrule
    \end{array}
    \]
    \cite[Synopsis; \S5]{Sim90harmobdonnoncomcurves}.  Adjustedness makes the
    induced Higgs residue on each associated-graded summand zero, so
    \(b+ci=0\), hence \(b=c=0\).  Thus \(\alpha\) is the remaining semisimple
    local label of \((\theta_x,\delta_x,n_x)\): the Dolbeault jump is
    \(\alpha\) with residue eigenvalue \(0\), the de Rham jump is \(\alpha\)
    with residue eigenvalue \(\alpha\), and the Betti monodromy eigenvalue is
    \(\exp(-2\pi i\alpha)\).  The nilpotent Jordan data are carried separately
    by Simpson's standard rank-two \(\SL_2\)-model, its symmetric powers,
    rank-one tensor products, and direct sums
    \cite[\S5]{Sim90harmobdonnoncomcurves}.
    For the compatible full data fixed here, choose
    \(\widetilde\theta_x\) so that, on a length-\(\ell\) block, the model
    nilpotent \(e_x\) has a Jordan chain \(e_xv_j=v_{j+1}\) in which \(v_j\)
    has periodic weight \(\alpha+j\).  Then \(e_x\) has
    \(\widetilde\theta_x\)-degree \(+1\), so
    \(e_x(F_{\ge a})\subset F_{\ge a+1}\subset F_{>a}\); symmetric powers,
    rank-one tensor products, and direct sums preserve this condition.
    Compatibility requires \(\delta_x\) to be the transform of this full
    periodic datum, including its resonant extension data, and \(n_x\) to be
    its full monodromy class; reduction modulo \(\mathbb Z\) identifies only
    the semisimple eigenvalue.  Thus the theorem concerns compatible adjusted
    triples, not independently prescribed extensions or lattices.

    We denote by
    \[
    \Higgs(\mathcal C,\boldsymbol\theta)_{\mathrm{poly},0},\qquad
    \MIC(\mathcal C,\boldsymbol\delta)_{\mathrm{ss}},\qquad
    \Loc(\mathcal C,\boldsymbol n)_{\mathrm{ss}}
    \]
    the full fixed-datum pieces of the corresponding ambient categories.  The
    last is the category of ordinary semisimple local systems with prescribed
    full local monodromy; no filtration is included.  After choosing
    \(c\in C^\circ\), let
    \[
    \Rep_r^{\mathrm{ss}}
    \bigl(\pi_1^{\mathrm{orb}}(\mathcal C,c),\boldsymbol n\bigr)
    \]
    denote the category of globally semisimple rank-\(r\) representations with
    local monodromy classes \(\boldsymbol n\), with intertwiners as morphisms.
    These fixed-datum pieces are not individually tensor categories.

    \begin{theorem}[Vector-valued parahoric NAH/RH input]
    \label{thm:vector-parahoric-nah-rh}
    Let \(\mathcal C\) be a log--orbi curve, choose \(c\in C^\circ\), and fix
    \(r\ge1\) and a compatible realization datum
    \[
    \boldsymbol\eta
    =
    (\boldsymbol\theta,\boldsymbol\delta,\boldsymbol n)
    =
    \bigl((\theta_x,\delta_x,n_x)\bigr)_{x\in D}
    \]
    as above.  Then there are natural equivalences of categories
    \[
    \Higgs(\mathcal C,\boldsymbol\theta)_{\mathrm{poly},0}
    \xrightarrow[\sim]{\ \mathrm{NAH}\ }
    \MIC(\mathcal C,\boldsymbol\delta)_{\mathrm{ss}}
    \xrightarrow[\sim]{\ \mathrm{RH}\ }
    \Loc(\mathcal C,\boldsymbol n)_{\mathrm{ss}}
    \xrightarrow[\sim]{\ \mathrm{Mon}_c\ }
    \Rep_r^{\mathrm{ss}}
    \bigl(\pi_1^{\mathrm{orb}}(\mathcal C,c),\boldsymbol n\bigr).
    \]
    At an orbifold point \(x\) of order \(m_x\), regular equivariance is
    automatic for objects on \(\mathcal C\), and \(T_x^{m_x}=1\) for
    \(T_x\in n_x\).  At an ordinary logarithmic point, the full adjusted datum
    and its corresponding monodromy class \(n_x\) are retained.
    \end{theorem}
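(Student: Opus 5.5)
The proof moves every log--orbi datum to the coarse pointed curve \((C,D)\) through the vector form of the log--orbi/parahoric dictionary, applies Simpson's tame correspondence and Deligne's regular-singular Riemann--Hilbert there, and then moves back.

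\emph{Step 1: transfer to the coarse curve.} Take \(G=\GL_r\) in Proposition~\ref{prop:log-orbi-parahoric-dictionary}, equivalently use the vector root-stack correspondence quoted in its proof. This identifies Higgs bundles (\(\lambda=0\)) and logarithmic connections (\(\lambda=1\)) on \(\mathcal C\) of full datum \(\boldsymbol\theta\), respectively \(\boldsymbol\delta\), with adjusted parabolic Higgs bundles, respectively connections, on \((C,D)\) of the same datum. The identification is natural and compatible with morphisms. Two further checks are needed:
\begin{itemize}
\item The identification preserves parabolic degree, so the condition \(\operatorname{pdeg}=0\) matches on both sides.
\item It preserves polystability, respectively semisimplicity. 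This holds because both conditions are tested on saturated subobjects, and those correspond under the root-stack equivalence, which is an equivalence of abelian categories.
\end{itemize}

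\emph{Step 2: NAH on the coarse curve.} Apply Simpson's tame correspondence on \((C,D)\). Polystable parabolic Higgs bundles of parabolic degree zero correspond to semisimple filtered regular local systems, and then to semisimple filtered regular connections. Simpson's local table transforms the residue data. Adjustedness forces the graded Higgs residue eigenvalues \(b+ci\) to vanish. Hence the Betti weight \(\beta\) is zero, the de Rham jump equals \(\alpha\), and the monodromy eigenvalue is \(\exp(-2\pi i\alpha)\). The nilpotent parts are governed by Simpson's \(\SL_2\)-model.

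By the compatibility hypothesis on \(\boldsymbol\eta\), the image has datum exactly \(\boldsymbol\delta\) and lands in the adjusted subcategory. Conversely, the inverse functor sends an object of datum \(\boldsymbol\delta\) to one of datum \(\boldsymbol\theta\). This gives the equivalence labelled \(\mathrm{NAH}\) in the statement. Naturality and full faithfulness come from Simpson's equivalence of categories restricted to full subcategories.

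\emph{Step 3: RH and monodromy.} Deligne's regular-singular Riemann--Hilbert identifies logarithmic connections on \(C\) with prescribed residue extension data with local systems on \(C^\circ\). Its inverse is the canonical Deligne extension determined by the datum \(\delta_x\), including the resonant data that the compatibility hypothesis records.

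At an orbifold point, the residue eigenvalues lie in \(m_x^{-1}\mathbb Z\) by admissibility. Hence \(T_x^{m_x}\) is unipotent. Since the connection is regular equivariant on the orbifold chart (Corollary~\ref{cor:log-orbi-parahoric-local-behavior}), the local monodromy is finite, so in fact \(T_x^{m_x}=1\). I would verify this directly: on the chart \(\Delta_w\) the connection extends regularly, so pulled back to the chart the monodromy is trivial, and the loop around \(x\) raised to the \(m_x\)th power lifts to a loop on the chart.

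Therefore the representation of \(\pi_1(C^\circ,c)\) factors through the quotient by the normal subgroup generated by the \(c_j^{m_j}\), which is \(\pi_1^{\mathrm{orb}}(\mathcal C,c)\) by the van Kampen presentation. Semisimplicity is unchanged under this passage to a quotient. \(\mathrm{Mon}_c\) is then the standard equivalence between local systems and representations. At logarithmic points the full class \(n_x\), including its unipotent factor, is recorded unchanged.

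\emph{Main obstacle.} The delicate point is matching exactly the fixed full data, rather than only data modulo \(\mathbb Z\), across Steps 2 and 3. Simpson's correspondence determines the filtered extension only through the weights, while Deligne extensions require a choice of eigenvalue window. I would handle this through the periodic lattice formalism: the full periodic datum determines every lattice in the family simultaneously. Both functors are compatible with shifting \(a\mapsto a+1\) together with multiplication by \(z\). So it suffices to check the correspondence in a single window, where Simpson's table and the compatibility hypothesis give it.
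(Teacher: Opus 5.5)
Your proposal is correct and follows essentially the same route as the paper: you restrict Simpson's tame correspondence to the fixed adjusted pieces through his local table, with adjustedness forcing $b=c=0$; you apply Deligne's regular-singular Riemann--Hilbert on $C^\circ$; and you use the log--orbi/parahoric dictionary to get $T_x^{m_x}=1$, so that the monodromy factors through $\pi_1^{\mathrm{orb}}(\mathcal C,c)$. The difference is only one of emphasis. You spell out the transfer of degree and polystability under the root-stack dictionary and give the chart-lifting argument for $T_x^{m_x}=1$, whereas the paper cites Simpson's Theorem~7 and the subsequent Remark to recover the full local monodromy class in the zero-weight case.
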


    \begin{proof}
    The preceding local calculation lets Simpson's Main Theorem restrict to
    the fixed \((\boldsymbol\theta,\boldsymbol\delta)\)-pieces.  His
    Theorem~7 identifies the associated graded pieces and residues; in the
    present zero-Betti-weight case, the following remark recovers the full
    local monodromy conjugacy class
    \cite[Theorem~7; Remark]{Sim90harmobdonnoncomcurves}.  Forgetting the
    filtered-local-system presentation then leaves the ordinary local system
    with full monodromy datum \(\boldsymbol n\).  On \(C^\circ\), ordinary
    regular-singular Riemann--Hilbert gives the underlying de Rham/local-system
    equivalence
    \cite[Part~I, Theorem~2.17; Part~II, Theorem~5.9]{deligne1970};
    the prescribed compatible data restrict it to the stated fixed-datum
    pieces.

    The dictionary of
    Proposition~\ref{prop:log-orbi-parahoric-dictionary} transports these fixed
    local data to \(\mathcal C\) and, at an orbifold point, gives
    \(T_x^{m_x}=1\) from regular equivariance.  Hence the monodromy
    representation of \(\pi_1(C^\circ,c)\) kills the orbifold relations and
    factors through \(\pi_1^{\mathrm{orb}}(\mathcal C,c)\).  Conversely, a
    representation of the orbifold group has the required finite inertia, and
    the same dictionary recovers the regular equivariant germ.  No finite-order
    relation is imposed at an ordinary logarithmic point.  Taking monodromy at
    \(c\) and its inverse local-system construction gives the last equivalence.
    \end{proof}

    \begin{proposition}
    \label{prop:vector-tensor-compatibility}
    As the compatible full realization data vary, the fixed-datum equivalences
    in
    Theorem~\ref{thm:vector-parahoric-nah-rh} assemble into rigid symmetric
    monoidal equivalences of the corresponding ambient varying-data
    categories.  Write
    \[
    \xi=(\boldsymbol\theta,\boldsymbol\delta,\boldsymbol n),
    \qquad
    \xi'=(\boldsymbol\theta',\boldsymbol\delta',\boldsymbol n')
    \]
    for compatible full realization data.  Direct sum, tensor product, dual,
    and internal Hom carry the induced compatible data
    \[
    \xi\oplus\xi',\qquad
    \xi\otimes\xi',\qquad
    \xi^\vee,\qquad
    \mathcal Hom(\xi,\xi'),
    \]
    and a semisimple direct summand carries its induced compatible summand
    datum.  The Dolbeault/de Rham entries are full periodic data, with the de
    Rham entry retaining the induced resonant extension data; the Betti entry
    is the induced full monodromy conjugacy datum, with no filtration.  These
    operations generally change \(\xi\), so no individual fixed-datum piece is
    asserted to be monoidal.  They preserve regular equivariance at orbifold
    points and adjustedness at ordinary logarithmic points.  Within each
    realization separately, the algebraic-type stratum is tensor-stable; its
    complementary log-type stratum is not a tensor subcategory and is not
    stable under arbitrary semisimple direct summands.
    \end{proposition}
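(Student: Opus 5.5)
The plan is to prove the tensor compatibility one realization at a time, on the coarse pointed curve \((C,D)\), and then to transport the conclusion back to \(\mathcal C\) using the dictionary of Proposition~\ref{prop:log-orbi-parahoric-dictionary} together with Corollary~\ref{cor:log-orbi-parahoric-functoriality}.

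\textbf{Step 1: Dolbeault--de Rham leg.} Forget the fixed data and work in the ambient categories of tame filtered regular Higgs bundles and filtered regular connections of degree zero. Simpson's Main Theorem gives an equivalence between these ambient categories, and his Proposition~3.1, Theorem~2, Lemma~3.2 and Proposition~3.3 show that it respects \(\oplus\), \(\otimes\), duals and internal Hom. The reason is that tame harmonic metrics are compatible with these operations: the tensor product of harmonic metrics is harmonic, and tameness and polystability of degree zero are preserved. Each fixed-datum equivalence of Theorem~\ref{thm:vector-parahoric-nah-rh} is the restriction of this ambient equivalence. So I only need to check that the local labels transform correctly under each operation.

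\textbf{Step 2: local bookkeeping.} On a fine residue summand the local labels are \((\alpha,0)\mapsto(\alpha,\alpha)\mapsto \exp(-2\pi i\alpha)\), and this assignment is additive in \(\alpha\).
\begin{itemize}
\item Convolution filtrations add jumps and residues add, so the transformed data of \(\xi\otimes\xi'\) are exactly \(\theta\otimes\theta'\), \(\delta\otimes\delta'\) and \(n\otimes n'\).
\item Duals negate jumps and residues.
\item Direct sums concatenate the data.
\item Internal Hom follows from the tensor and dual cases.
\item The nilpotent Jordan parts transform through symmetric powers and tensor products of Simpson's \(\SL_2\)-model, because the model nilpotent \(e_x\) has \(\widetilde\theta_x\)-degree \(+1\) in each factor; a Leibniz sum of such terms again has degree \(+1\). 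This same observation gives preservation of adjustedness.
\item Resonant extension data follow the tensor rule for periodic lattices \(\Lambda_{\ge\bullet}\).
\item On the Betti side, monodromy of a tensor product is the tensor product of monodromies, so \(n_x\) goes to its induced class.
\end{itemize}

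\textbf{Step 3: Riemann--Hilbert and monodromy.} Deligne's regular-singular Riemann--Hilbert correspondence and the monodromy functor \(\mathrm{Mon}_c\) are strictly tensor compatible on \(C^\circ\). The fixed regular-singular extensions are determined by the periodic data, so they are carried along as well.

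\textbf{Step 4: orbifold points and semisimple summands.}
\begin{itemize}
\item Regular equivariance at orbifold points is preserved because \(\mu_m\)-equivariant regular objects on a root-stack chart form a tensor category. Equivalently, the relations \(T_x^{m_x}=1\) survive tensor operations.
\item Polystability and semisimplicity pass to direct summands, so a summand inherits its induced datum.
\end{itemize}

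\textbf{Step 5: strata.} Proposition~\ref{prop:parahoric-lambda-tensor-operations} gives tensor stability of the algebraic-type stratum in each realization. For the failure statements it suffices to exhibit examples:
\begin{itemize}
\item \(E\otimes E^\vee\) with \(E\) of log type contains the trivial summand \(\mathcal O\), which is of algebraic type. So the log-type stratum is not stable under semisimple summands.
\item The tensor unit is of algebraic type, so the log-type stratum is not a tensor subcategory.
\end{itemize}

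\textbf{Main obstacle.} The delicate point is the claim in Step 2 that the full periodic and resonant data, not just their reductions modulo \(\mathbb Z\), transform compatibly under NAH. I would handle this by reducing to Simpson's explicit local models, which are closed under the rigid operations and are classified by Simpson's Theorem~7 together with the following remark. I would then check the transformation rule block by block on rank-one objects and symmetric powers of the \(\SL_2\)-model.
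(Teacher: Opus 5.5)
Your outline is correct and follows the paper's skeleton. The Dolbeault--de Rham leg uses Simpson's tensor compatibility, with internal Hom obtained via $V^\vee\otimes W$. The de Rham--Betti leg uses Deligne's Riemann--Hilbert on $C^\circ$. Regular equivariance is preserved because regular equivariant objects at orbifold points are closed under the operations. The log-type strata are handled by the tensor unit and a direct-summand example. Your summand example $E\otimes E^\vee\supset\mathcal O$ works: for nonzero nilpotent $N$ one has $\operatorname{ad}N\neq0$, so $E\otimes E^\vee$ is itself of log type, and the identity/trace splitting respects the Higgs field or connection. The paper uses the simpler observation that a log-type direct sum such as $E\oplus\mathcal O$ has an algebraic-type summand.

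The real difference is the step you call the main obstacle. The paper does no local computation for the full data. By definition, the compatible de Rham entry is the full transform of the Dolbeault datum and the Betti entry is the resulting monodromy class. So once your Steps~1 and~3 give monoidal equivalences of the ambient categories, the datum of $\mathrm{NAH}(E\otimes E')\simeq\mathrm{NAH}(E)\otimes\mathrm{NAH}(E')$ is automatically the induced one. The same holds for duals, Homs and summands.

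This formal route is shorter than your block-by-block check on Simpson's models, and it is also more complete, because those models only see graded residue data. For example, take a $\GL_2$ object at a logarithmic point with weights $0,\tfrac13$ and residue $N$ sending the weight-$0$ line onto the weight-$\tfrac13$ line. It is adjusted, and its full residue is nonzero. But it is not locally a direct sum of rank-one objects tensored with symmetric powers of the $\SL_2$-model: parahoric gauge only conjugates $N$ by a parabolic element, and $N$ links different fractional weights.

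The same example shows your adjustedness argument is too narrow as written, since this $N$ has degree $\tfrac13$, not $+1$. Replace it by the paper's general filtered computation. If $A_V(F_{\ge a})\subset F_{>a}$ and $A_W(F_{\ge b})\subset F_{>b}$, then $A_V\otimes\mathrm{id}+\mathrm{id}\otimes A_W$ maps $F^{V\otimes W}_{\ge c}$ into $F^{V\otimes W}_{>c}$. Likewise $A_{V^\vee}(\phi)=-\phi\circ A_V$ and $A_{\mathcal Hom(V,W)}(f)=A_W\circ f-f\circ A_V$ strictly raise the dual and Hom filtrations. With these two substitutions your proof goes through.
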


    \begin{proof}
    Simpson's realization functors are compatible with direct sums, tensor
    products, and duals; compatibility with internal Homs follows from
    \(\mathcal Hom(V,W)=V^\vee\otimes W\)
    \cite[Proposition~3.1 and Theorem~2; Lemma~3.2; Proposition~3.3]
    {Sim90harmobdonnoncomcurves}.  On \(C^\circ\), Riemann--Hilbert carries the
    same operations through its standard quasi-inverse functors
    \cite[Part~I, Theorem~2.17]{deligne1970}.  Their canonical comparison maps
    give rigid symmetric monoidal equivalences on the ambient varying-data
    categories.

    The full-data, adjustedness, and summand assertions use the filtered
    vector-space calculation specific to the present formulation.  The
    tensor-product filtration is
    \[
    F^{V\otimes W}_{\ge c}
    =
    \sum_{a+b\ge c}F^V_{\ge a}\otimes F^W_{\ge b}.
    \]
    At a logarithmic point, write \(A_V\) and \(A_W\) for the strictly
    filtration-raising residue remainders.  Then
    \(A_{V\otimes W}=A_V\otimes\operatorname{id}+
    \operatorname{id}\otimes A_W\) sends
    \(F^{V\otimes W}_{\ge c}\) into \(F^{V\otimes W}_{>c}\).  For duals and
    internal Homs,
    \[
    A_{V^\vee}(\phi)=-\phi\circ A_V,
    \qquad
    A_{\mathcal Hom(V,W)}(f)=A_W\circ f-f\circ A_V;
    \]
    the standard dual and internal-Hom filtrations show that both remainders
    strictly raise the decreasing filtration.  Direct sums and invariant
    filtered direct summands are immediate.  Thus adjustedness is preserved,
    while regular equivariant objects at orbifold points are closed under all
    these operations.

    The compatible de Rham entry is the induced full transform, including its
    resonant extension data, and the Betti entry is the induced full monodromy
    class.  This uses monoidal functoriality; it does not identify a strictly
    filtration-raising residue remainder with the logarithm of the Betti
    unipotent factor.
    By contrast, within each realization the tensor unit is algebraic, and a
    log-type direct sum may have an algebraic-type summand, which proves the
    stated qualifications for the log-type strata.
    \end{proof}
\subsection{Principal objects by Tannakian reconstruction}
\label{subsec:principal-tannakian-reconstruction}

    We reconstruct principal \(G\)-objects intrinsically from exact faithful
    tensor functors out of \(\Rep_{\mathbb C}(G)\).

    We write
    \[
    \Higgs(\mathcal C),\qquad \MIC(\mathcal C),\qquad \Loc(\mathcal C)
    \]
    for the ambient Dolbeault/de Rham vector categories and the ordinary Betti
    category of local systems.  We denote by
    \(\Higgs_G(\mathcal C,\boldsymbol\theta)\) the groupoid of principal
    log--orbi \(G\)-Higgs objects of Dolbeault type
    \(\boldsymbol\theta\), and by
    \(\MIC_G(\mathcal C,\boldsymbol\delta)\) the corresponding groupoid of
    principal log--orbi logarithmic \(G\)-connections of de Rham type
    \(\boldsymbol\delta\).  Their auxiliary coarse local types are
    understood through
    Proposition~\ref{prop:log-orbi-parahoric-dictionary}.
    If \(V\in\Rep_{\mathbb C}(G)\), with representation
    \(\rho_V:G\to\GL(V)\), set
    \[
    \boldsymbol\eta_V
    :=
    (\boldsymbol\theta_V,\boldsymbol\delta_V,\boldsymbol n_V)
    :=
    (\rho_{V,*}\boldsymbol\theta,
    \rho_{V,*}\boldsymbol\delta,
    \rho_{V,*}\boldsymbol n).
    \]
    Here the first two pushforwards retain the full weighted and graded local
    parameters.  On the Betti side,
    \(\boldsymbol n_V\) sends \([T_x]\) to \([\rho_V(T_x)]\); its
    filtration remains trivial.

    On the Betti side, we denote by
    \(
    \Rep_G\bigl(\pi_1^{\mathrm{orb}}(\mathcal C),\boldsymbol n\bigr)
    \)
    the groupoid of unbased ordinary \(G\)-local systems with local monodromy
    datum \(\boldsymbol n\), tested after every
    \(V\in\Rep_{\mathbb C}(G)\).
    Choosing a regular basepoint presents such an object by a representation
    \(\varrho:\pi_1^{\mathrm{orb}}(\mathcal C,c)\to G(\mathbb C)\), with a
    change of presentation acting by conjugation.
    No filtration is included in a Betti object.  The local monodromy
    conditions are imposed representation by representation.  At orbifold
    points, all associated local systems have
    finite semisimple monodromy of the prescribed orbifold type.  At
    logarithmic points, the local monodromy has prescribed semisimple
    part and an allowed unipotent factor.  Writing the Jordan decomposition as
    \(T_x=T_{x,s}T_{x,u}\), we set
    \[
    N_x^{\mathrm B}:=\log T_{x,u}.
    \]
    The nontrivial-unipotent stratum is \(N_x^{\mathrm B}\ne0\); it is visible
    in every faithful associated representation as in
    Proposition~\ref{prop:associated-representations}.  We do not identify
    \(N_x^{\mathrm B}\) with the strictly filtration-raising Dolbeault or de
    Rham residue term.

    The Dolbeault groupoid used in the realization theorem is the full
    polystable degree-zero subgroupoid
    \[
    \Higgs_G(\mathcal C,\boldsymbol\theta)_{\mathrm{poly},0}.
    \]
    Stability is measured by the parahoric Ramanathan condition using
    Definition~\ref{def:principal-parahoric-degree}.
    With the inverse-character sign convention fixed there, if \(P_Q\) is a parahoric reduction of \(P\) to a parabolic subgroup \(Q\subset G\), and if \(\chi\in X^*(Q)\) is a strictly antidominant character trivial on the center of \(G\), then semistability requires
    \[
    \operatorname{pdeg}_{\boldsymbol\theta}(P_Q,\chi)\ge 0
    \]
    for every Higgs-invariant reduction \(P_Q\). Stability means strict inequality for every nontrivial such reduction, and polystability means that equality cases are induced from compatible Levi reductions. 
    Degree zero means
    \(
    \operatorname{pdeg}_{\boldsymbol\theta}(P,\chi)=0
    \)
    for every \(\chi\in X^*(G)\).

    We use the standard principal-bundle stability theory of Ramanathan type, together with the corresponding analytic Einstein--Hermitian results for principal and parabolic principal Higgs bundles \cite{RamananRamanathan84InstabilityFlag, AnchoucheBiswas01EinsteinHermitian, BiswasStemmler11HermitianEinsteinParabolic}.
    On the de Rham side, we write
    \[
    \MIC_G(\mathcal C,\boldsymbol\delta)_{\mathrm{red}}
    \]
    for the full reductive subgroupoid. A parahoric logarithmic flat principal \(G\)-bundle is called reductive if its associated logarithmic flat vector bundle is semisimple for every algebraic representation of \(G\).
    On the Betti side, we write
    \[
    \Rep_G^{\mathrm{red}}
    \bigl(\pi_1^{\mathrm{orb}}(\mathcal C),\boldsymbol n\bigr)
    \]
    for the full subgroupoid of reductive representations \(\varrho\colon\pi_1^{\mathrm{orb}}(\mathcal C,c)\longrightarrow G(\mathbb C)\).
    The superscript refers only to global reductivity, not to local monodromy or
    to a filtered structure.  Here, reductivity means that for every algebraic
    representation \(\rho\colon G\longrightarrow \GL(V)\), the composition
    \(\rho\circ\varrho\) is semisimple as a representation of the global
    orbifold fundamental group.
    This global condition does not require the local monodromy at logarithmic
    points to be semisimple: the adjusted logarithmic sector allows unipotent
    cusp monodromy, and its nontrivial-unipotent stratum is detected by
    \(N_x^{\mathrm B}\ne0\).

    In practice, reductivity on both the de Rham and Betti sides can be tested using a single faithful representation:

    \begin{proposition}[Faithful representation criterion for reductivity]
    \label{prop:faithful-representation-reductivity}
    Let \(\iota\colon G\hookrightarrow \GL(W)\) be a faithful finite-dimensional algebraic representation. 
    \begin{enumerate}
        \item A Betti representation \(\varrho\colon\pi_1^{\mathrm{orb}}(\mathcal C,c)\longrightarrow G(\mathbb C)\) is reductive if and only if the composition \(\iota\circ\varrho\) is semisimple.
        \item A parahoric logarithmic flat principal \(G\)-bundle \((P,\nabla)\) is reductive if and only if its \(\iota\)-associated logarithmic flat vector bundle \((P_\iota,\nabla_\iota)\) is semisimple.
    \end{enumerate}
    \end{proposition}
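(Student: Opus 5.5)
The plan is to reduce both parts to the classical fact that every finite-dimensional representation of $\GL(W)$, and hence of $G$, lies in the tensor category generated by $W$. Concretely, I will use the standard Chevalley/Deligne--Milne statement: for a faithful representation $W$ of a reductive group $G$, every $V\in\Rep_{\mathbb C}(G)$ is a subquotient of a finite direct sum of objects $W^{\otimes a}\otimes (W^\vee)^{\otimes b}$. Since $G$ is reductive and we are in characteristic zero, $\Rep_{\mathbb C}(G)$ is semisimple. Therefore ``subquotient'' may be replaced by ``direct summand'', and this is the only place where reductivity of $G$ enters. The ``only if'' directions in both items are trivial, since $\iota$ is one of the representations tested in the definition of reductivity.

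For item (1), assume $\iota\circ\varrho$ is semisimple. By the reductive-group fact, $W^{\otimes a}\otimes(W^\vee)^{\otimes b}$ is semisimple as a representation of the group $\Gamma:=\pi_1^{\mathrm{orb}}(\mathcal C,c)$. This holds because, over $\mathbb C$, tensor products and duals of semisimple representations of an arbitrary group $\Gamma$ remain semisimple. To see this, note that the Zariski closure $H$ of $\iota\varrho(\Gamma)$ in $\GL(W)$ is a reductive group exactly when $W$ is $\Gamma$-semisimple (Chevalley). Semisimplicity of any $\Gamma$-module on which $\Gamma$ acts through $H$ is then equivalent to semisimplicity as an $H$-module, and the latter follows from reductivity of $H$ in characteristic zero. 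Finally, any $V$ is a $G$-equivariant, hence $\Gamma$-equivariant, direct summand of such a tensor construction, and a direct summand of a semisimple module is semisimple. This proves (1).

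For item (2), I will transport the question to the Betti side rather than redo the argument for flat bundles. On $C^\circ$, regular-singular Riemann--Hilbert, in the form of Theorem~\ref{thm:vector-parahoric-nah-rh} and Proposition~\ref{prop:vector-tensor-compatibility}, is a rigid symmetric monoidal equivalence on the ambient varying-data categories. It therefore preserves semisimplicity of objects and commutes with the associated-bundle functor $V\mapsto(P_V,\nabla_V)$. Hence $(P_V,\nabla_V)$ is semisimple if and only if its monodromy $\rho_V\circ\varrho$ is semisimple, where $\varrho$ is the monodromy of $(P,\nabla)$ at $c$. This reduces (2) to (1). Alternatively, the tensor-generation argument can be run directly inside the rigid tensor category of logarithmic flat bundles. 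In that form one needs that tensor products of semisimple log-flat objects are semisimple, which again follows via Riemann--Hilbert.

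I expect the main obstacle to be ensuring that the Riemann--Hilbert step identifies semisimplicity on the log--orbi/parahoric side with semisimplicity of the restriction to $C^\circ$. That is, I must check that flat subobjects of $(P_V,\nabla_V)$ on $\mathcal C$ correspond bijectively to $\Gamma$-stable subspaces. My plan here is to invoke the extension-uniqueness built into the fixed-datum equivalence: saturating a flat subbundle over $C^\circ$ gives a unique log--orbi extension, with the induced filtered and periodic data, and this is compatible with Proposition~\ref{prop:log-orbi-parahoric-dictionary}. With that in hand, the splitting of subobjects transfers in both directions.
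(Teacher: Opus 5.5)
Your proposal is correct and follows the paper's proof. The forward direction is by definition. For the converse, $\Rep_{\mathbb C}(G)$ is generated by $W$ and $W^\vee$ up to direct summands, which is where reductivity of $G$ enters, and semisimplicity is preserved by tensor operations and summands. You add justification the paper leaves implicit: the Zariski-closure (Chevalley) argument for closure under tensor products, and reducing part (2) to part (1) through Riemann--Hilbert, where the paper simply asserts the closure property for semisimple flat bundles.
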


    \begin{proof}
    In both cases, the forward direction is immediate by definition. Conversely, since \(G\) is a reductive group, every finite-dimensional algebraic representation of \(G\) occurs as a direct summand of a tensor construction in \(W\) and \(W^\vee\). Because tensor products, duals, and direct summands of semisimple local systems (respectively, semisimple flat vector bundles) remain semisimple, the semisimplicity of the \(\iota\)-associated object forces semisimplicity for all algebraic representations.
    \end{proof}

    We now present the Tannakian reconstruction statement in the three
    realizations simultaneously.

    \begin{proposition}
    \label{prop:tannakian-reconstruction-principal-realizations}
    Let \(\rho_V:G\to\GL(V)\) denote the representation associated with
    \(V\in\Rep_{\mathbb C}(G)\).  There are natural equivalences of groupoids
    \[
    \Higgs_G(\mathcal C,\boldsymbol\theta)
    \simeq
    \mathrm{Fun}_{\otimes,\boldsymbol\theta}^{\mathrm{ex,faith}}
    \bigl(
    \Rep_{\mathbb C}(G),
    \Higgs(\mathcal C)
    \bigr),
    \]
    \[
    \MIC_G(\mathcal C,\boldsymbol\delta)
    \simeq
    \mathrm{Fun}_{\otimes,\boldsymbol\delta}^{\mathrm{ex,faith}}
    \bigl(
    \Rep_{\mathbb C}(G),
    \MIC(\mathcal C)
    \bigr),
    \]
    and
    \[
    \Rep_G(\pi_1^{\mathrm{orb}}(\mathcal C),\boldsymbol n)
    \simeq
    \mathrm{Fun}_{\otimes,\boldsymbol n}^{\mathrm{ex,faith}}
    \bigl(
    \Rep_{\mathbb C}(G),
    \Loc(\mathcal C)
    \bigr).
    \]
    Here the first two targets are the ambient varying-type categories and the
    third is the ordinary local-system category.  The subscript means that the
    value at \(V\) has local parameter \(\boldsymbol\theta_V\) or
    \(\boldsymbol\delta_V\), or local monodromy datum
    \(\boldsymbol n_V\), respectively.  The right-hand sides have tensor
    natural isomorphisms as morphisms.  Evaluation at \(V\) gives the associated
    vector-valued Higgs bundle, logarithmic flat bundle, or local system.
    \end{proposition}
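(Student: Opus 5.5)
I would prove the three equivalences by one template, with the Tannakian reconstruction done relative to the base. In each case the forward functor sends a principal object \(P\) to the associated-bundle functor \(V\mapsto P\times^G V\), carrying the induced Higgs field, logarithmic connection, or local system. Exactness, faithfulness and strong symmetric monoidality come from the standard properties of associated bundles. The condition that the value at \(V\) has datum \(\boldsymbol\theta_V\), \(\boldsymbol\delta_V\) or \(\boldsymbol n_V\) holds by the definition of pushforward of local data, as in Proposition~\ref{prop:functoriality-parahoric-lambda}. Tensor natural isomorphisms of these functors come from isomorphisms of principal objects. What remains is to construct an inverse on objects and to prove full faithfulness.

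For the Dolbeault and de Rham cases I would reduce to results already proved. First, transport to the coarse curve via \(\operatorname{Coarse}_{\boldsymbol\theta}\) of Proposition~\ref{prop:log-orbi-parahoric-dictionary}; this is compatible with associated representations by Corollary~\ref{cor:log-orbi-parahoric-functoriality}. Given an exact faithful tensor functor \(\Rep_{\mathbb C}(G)\to\Higgs(\mathcal C)\) (resp. \(\MIC(\mathcal C)\)) with values of type \(\boldsymbol\theta_V\), compose with the representationwise coarse construction. This produces a functor into \(\mathbf{\Lambda}^\lambda_{\mathrm{adj}}(C,D)\) with \(\lambda=0\) (resp. \(\lambda=1\)) whose underlying periodic parabolic functor has local type \(\boldsymbol\theta\). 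Adjustedness holds at the logarithmic points because the datum is adjusted, and at the orbifold points by Corollary~\ref{cor:log-orbi-parahoric-local-behavior}. Proposition~\ref{prop:tannakian-parahoric-connections} then reconstructs an adjusted parahoric principal \(G\)-\(\lambda\)-connection, and the inverse dictionary returns a principal object on \(\mathcal C\). The full residue and extension parts of \(\boldsymbol\theta\) and \(\boldsymbol\delta\) beyond the cocharacter are conditions tested representation by representation, so they carry through unchanged. Full faithfulness follows from the corresponding statements in those propositions.

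For the Betti case I would argue directly. Fix \(c\in C^\circ\) and compose with the fiber functor \(L\mapsto L_c\). This gives a fiber functor \(\Rep_{\mathbb C}(G)\to\mathrm{Vect}_{\mathbb C}\) equipped with a tensor-compatible action of \(\pi_1^{\mathrm{orb}}(\mathcal C,c)\), using Theorem~\ref{thm:vector-parahoric-nah-rh} for the identification of local systems on \(\mathcal C\) with orbifold-group representations. By Deligne's theorem (neutral Tannakian reconstruction over \(\mathbb C\)), the fiber functor is isomorphic to the forgetful functor, and its tensor automorphism group is \(G\). The action therefore yields a homomorphism \(\varrho\colon\pi_1^{\mathrm{orb}}(\mathcal C,c)\to G(\mathbb C)\), well defined up to conjugation, which is exactly the unbased-object ambiguity. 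The local monodromy condition is imposed representation by representation on both sides, so it matches.

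The main obstacle is the de Rham/Dolbeault inverse at orbifold and logarithmic points. There one must check that the full periodic facetwise enhancement, not just the normalized filtered fiber, is reconstructed tensorially, so that integral lattice shifts and resonant extension data are recovered. I would handle this by invoking the facetwise clause of Proposition~\ref{prop:tannakian-parahoric-bundles} (Wilson's theorem), and then gluing with the generic torsor on \(C\setminus D\) as in that proof.
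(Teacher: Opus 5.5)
Your proposal is correct and is essentially the paper's argument made more explicit. Like you, the paper forgets the Higgs field or connection, reconstructs the underlying parahoric principal bundle via Proposition~\ref{prop:tannakian-parahoric-bundles} (which your detour through \(\operatorname{Coarse}_{\boldsymbol\theta}\) and Proposition~\ref{prop:tannakian-parahoric-connections} packages), rebuilds the principal field from the tensor-compatible family, and checks the local data representation by representation; it treats the Betti case as the standard identification of tensor functors into \(\Loc(\mathcal C)\) with \(G\)-local systems, i.e.\ representations up to conjugacy. One small correction: for the Betti monodromy identification, cite the ordinary orbifold monodromy (covering-space) equivalence for all local systems, not Theorem~\ref{thm:vector-parahoric-nah-rh}. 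That theorem covers only semisimple fixed-datum pieces, whereas \(\Rep_G(\pi_1^{\mathrm{orb}}(\mathcal C),\boldsymbol n)\) here imposes no semisimplicity.
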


    \begin{proof}
    For the Dolbeault realization, an exact faithful tensor functor
    \[
    F:\Rep_{\mathbb C}(G)\longrightarrow \Higgs(\mathcal C)
    \]
    has an underlying exact faithful tensor functor to parahoric vector bundles.
    By the Tannakian description of parahoric bundles, this reconstructs the
    underlying principal parahoric \(G\)-bundle.  The Higgs fields on all
    associated bundles are compatible with tensor products, duals, and
    morphisms in \(\Rep_{\mathbb C}(G)\), and therefore reconstruct a principal
    Higgs field.  The conditions
    \(\operatorname{type}(F(V))=\boldsymbol\theta_V\) recover the principal
    local type \(\boldsymbol\theta\).

    The de Rham case is identical, replacing Higgs fields by logarithmic
    connections.  Tensor compatibility of the associated logarithmic
    connections reconstructs the principal logarithmic connection, and the
    adjusted residue condition is checked representation by representation.
    Algebraic type is detected by any faithful member of the compatible tensor
    family.

    For the Betti realization, an exact faithful tensor functor
    \[
    \Rep_{\mathbb C}(G)\longrightarrow \Loc(\mathcal C)
    \]
    is the same as a \(G\)-local system on \(\mathcal C\), equivalently a
    representation
    \(
    \pi_1^{\mathrm{orb}}(\mathcal C,c)\longrightarrow G(\mathbb C),
    \)
    with local monodromy datum \(\boldsymbol n\) tested through
    \(\boldsymbol n_V\) in every algebraic representation.  No filtration
    is reconstructed on this side.  These
    constructions are inverse on objects and morphisms because isomorphisms
    of principal objects correspond to tensor natural isomorphisms of their
    associated functors.
    \end{proof}

    We finally record the compatibility with algebraic representations.  This will be used in the next subsection to state the principal realization theorem and its functoriality.

    \begin{proposition}
    \label{prop:associated-representations}
    Let
    \(
    \rho:G\longrightarrow \GL(V)
    \)
    be an algebraic representation.

    \begin{enumerate}
    \item If
    \(
    (P,\vartheta)\in \Higgs_G(\mathcal C,\boldsymbol\theta),
    \)
    then the associated Higgs bundle
    \[
    (P_\rho,\vartheta_\rho)
    :=
    (P\times^G V,\vartheta_\rho)
    \]
    is a vector-valued parahoric Higgs object of type
    \(\rho_*\boldsymbol\theta\).
    Adjustedness and algebraic type are preserved pointwise.  If the positive
    term at \(x\) is \(N_x\), the associated object is of log type precisely
    when
    \[
    d\rho(N_x)\ne0.
    \]

    \item If
    \(
    (P,\nabla)\in \MIC_G(\mathcal C,\boldsymbol\delta),
    \)
    then the associated logarithmic flat bundle
    \[
    (P_\rho,\nabla_\rho)
    :=
    (P\times^G V,\nabla_\rho)
    \]
    is a vector-valued parahoric de Rham object of type
    \(\rho_*\boldsymbol\delta\).
    Again, adjustedness and algebraic type are preserved pointwise.  If
    \(T_{x,u}\) is the unipotent factor of the local monodromy, log type is
    preserved precisely when
    \[
    \rho(T_{x,u})\ne1,
    \]
    equivalently when \(d\rho(\log T_{x,u})\ne0\).

    \item If
    \(
    \varrho\in
    \Rep_G(\pi_1^{\mathrm{orb}}(\mathcal C),\boldsymbol n),
    \)
    then
    \(
    \rho\circ\varrho
    \)
    is an ordinary vector-valued Betti object with induced local
    monodromy datum
    \(\rho_*\boldsymbol n\).
    If \(N_x^{\mathrm B}\) denotes the logarithm of the unipotent factor of the
    local monodromy, its image remains nonzero precisely when
    \(d\rho(N_x^{\mathrm B})\ne0\).
    \end{enumerate}

    In particular, in both the Higgs and de Rham realizations log type is
    preserved by every faithful representation, but need not be preserved by
    a nonfaithful one.  As \(\rho\) varies, these
    assignments are exact and symmetric monoidal into the corresponding
    ambient vector categories.
    \end{proposition}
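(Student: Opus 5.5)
The argument runs through the three realizations in order: construct the associated object, identify its local datum, and treat the log-type clause separately. The one common tool is functoriality of pushout along \(\rho\). By Proposition~\ref{prop:tannakian-reconstruction-principal-realizations}, each principal object is an exact faithful tensor functor out of \(\Rep_{\mathbb C}(G)\). The associated object \(P_\rho\) is its value at \(V\) with \(\rho_V=\rho\), so it automatically lies in the ambient vector category. Its local parameter is \(\boldsymbol\theta_V=\rho_*\boldsymbol\theta\) (respectively \(\boldsymbol\delta_V\), \(\boldsymbol n_V\)) by the very definition of the subscripts there. This settles the type assertions in all three items. The final sentence, that the assignment \(\rho\mapsto P_\rho\) is exact and symmetric monoidal into the ambient categories, is just a restatement that these functors are exact tensor functors.

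For adjustedness and algebraic type in item 1, I would apply Proposition~\ref{prop:functoriality-parahoric-lambda} with \(\lambda=0\) and \(\varphi=\rho\colon G\to\GL(V)\). After transporting through the dictionary of Proposition~\ref{prop:log-orbi-parahoric-dictionary} and Corollary~\ref{cor:log-orbi-parahoric-functoriality}, this gives the residue formula
\[
\widetilde\theta_x+N_x\longmapsto\rho_*\widetilde\theta_x+d\rho(N_x).
\]
The inclusion \(d\rho(\mathfrak g_{>0})\subset\mathfrak{gl}(V)_{>0}\) preserves adjustedness. The residue term vanishes whenever \(N_x=0\), so algebraic type is preserved. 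By Definition~\ref{def:adjusted-algebraic-log}, log type for the vector object means that the positive term is nonzero, and that term is exactly \(d\rho(N_x)\). Item 2 follows the same pattern with \(\lambda=1\). Here I would use that monodromy is functorial, so the local monodromy of \(P_\rho\) is \(\rho(T_x)\), with Jordan factors \(\rho(T_{x,s})\) and \(\rho(T_{x,u})\) because algebraic homomorphisms preserve Jordan decompositions. The unipotent factor is therefore \(\rho(T_{x,u})=\exp(d\rho(\log T_{x,u}))\), and on nilpotent elements \(\exp\) is injective, so \(\rho(T_{x,u})\ne1\) if and only if \(d\rho(\log T_{x,u})\ne0\). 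Item 3 is the same Jordan-decomposition argument applied directly to \(\rho\circ\varrho\), since \(N_x^{\mathrm B}\) maps to \(d\rho(N_x^{\mathrm B})\).

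For the ``in particular'' clause: if \(\rho\) is faithful, then \(d\rho\) is injective on \(\mathfrak g\) (characteristic zero, closed immersion) and \(\rho\) is injective on \(G\). So \(N_x\ne0\) and \(T_{x,u}\ne1\) survive, which shows log type is preserved. For a nonfaithful \(\rho\), the trivial representation already gives a counterexample: \(d\rho=0\) kills any nonzero \(N_x\).

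I expect the main obstacle to be making sure the residue decomposition \(\rho_*\widetilde\theta_x+d\rho(N_x)\) is genuinely the adjusted decomposition for the pushed-forward type. This is a question of whether the filtration on \(P_\rho\) at \(x\) is the model filtration \(F^{\rho_*\theta_x}\). I would handle it by choosing a compatible local parahoric trivialization and invoking the compatibility of Moy--Prasad filtrations with model filtrations recorded in Subsection~\ref{subsec:parahoric-bundles}.
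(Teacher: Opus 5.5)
Your proposal is correct and follows essentially the same route as the paper: the residue identity \(\lambda\theta_x^{\mathrm{coch}}+N_x\mapsto\lambda\,d\rho(\theta_x^{\mathrm{coch}})+d\rho(N_x)\), with \(d\rho(N_x)(F_{\ge a})\subset F_{>a}\) coming from the compatibility of Moy--Prasad and model filtrations, functoriality of monodromy and Jordan decomposition for the de Rham and Betti clauses, and the standard associated-bundle identities for exactness and monoidality. Your explicit \(\exp\)/\(\log\) argument for \(\rho(T_{x,u})\ne1\iff d\rho(\log T_{x,u})\ne0\), and your trivial-representation counterexample, spell out points that the paper only asserts.
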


    \begin{proof}
    The local assertions follow from functoriality of Moy--Prasad filtrations
    and residues under \(d\rho\).  If
    \[
    \operatorname{Res}_x(\nabla^\lambda)
    =\lambda\theta_x^{\mathrm{coch}}+N_x,
    \qquad
    N_x\in\mathfrak g_{>0}(\theta_x^{\mathrm{coch}}),
    \]
    then after applying \(\rho\) one obtains
    \[
    \operatorname{Res}_x(\nabla^{\lambda,V})
    =
    \lambda\,d\rho(\theta_x^{\mathrm{coch}})+d\rho(N_x),
    \]
    and
    \[
    d\rho(N_x)(F_{\ge a})\subset F_{>a}.
    \]
    Thus adjustedness is preserved.  For Higgs fields, functoriality under
    \(d\rho\) preserves algebraic type and gives the stated
    \(d\rho(N_x)\ne0\) criterion for log type.  For flat objects, local
    parahoric Riemann--Hilbert identifies the full gauge class with its
    monodromy pair.  A faithful representation preserves a nontrivial
    unipotent factor, whereas a nonfaithful one may annihilate it.  The Betti assertion
    follows directly from functoriality of Jordan decomposition and of the
    logarithm of the unipotent factor.
    Exactness and tensor compatibility are the standard identities
    \[
    P\times^G(V_1\oplus V_2)
    \simeq
    (P\times^G V_1)\oplus(P\times^G V_2),
    \]
    \[
    P\times^G(V_1\otimes V_2)
    \simeq
    (P\times^G V_1)\otimes(P\times^G V_2),
    \]
    together with the analogous identities for duals, internal Homs, and the
    tensor unit.  The Betti statement is the same assertion after composing
    representations.
    \end{proof}

    \begin{proposition}
    \label{prop:associated-representations-semisimplicity}
    Let
    \(
    \rho:G\longrightarrow \GL(V)
    \)
    be an algebraic representation.

    \begin{enumerate}
    \item If
    \(
    (P,\vartheta)\in
    \Higgs_G(\mathcal C,\boldsymbol\theta)_{\mathrm{poly},0},
    \)
    then the associated vector-valued parahoric Higgs bundle
    \(
    (P_\rho,\vartheta_\rho)
    \)
    is polystable of parahoric degree zero.

    \item If
    \(
    (P,\nabla)\in
    \MIC_G(\mathcal C,\boldsymbol\delta)_{\mathrm{red}},
    \)
    then
    \(
    (P_\rho,\nabla_\rho)
    \)
    is a semisimple vector-valued parahoric logarithmic flat bundle.

    \item If
    \(
    \varrho\in
    \Rep_G^{\mathrm{red}}
    (\pi_1^{\mathrm{orb}}(\mathcal C),\boldsymbol n),
    \)
    then
    \(
    \rho\circ\varrho
    \)
    is a semisimple vector-valued Betti object.
    \end{enumerate}
    \end{proposition}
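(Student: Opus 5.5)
The three parts have the same structure. In each case the associated object is obtained by evaluating the tensor functor of Proposition~\ref{prop:tannakian-reconstruction-principal-realizations} at \(V\). Semisimplicity, or polystability, is then transferred from a single faithful representation to all representations. We treat the Betti and de Rham parts first, since they are formal, and leave the Dolbeault part, which carries the analytic content, for last.

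\emph{Betti and de Rham.} Part (3) holds by definition. Reductivity of \(\varrho\) means that \(\rho\circ\varrho\) is semisimple for every algebraic \(\rho\). By Proposition~\ref{prop:associated-representations}(3), the associated object carries the induced local datum \(\rho_*\boldsymbol n\), so it lies in the Betti category with that datum. For part (2), Proposition~\ref{prop:associated-representations}(2) shows that \((P_\rho,\nabla_\rho)\) is a parahoric de Rham object of type \(\rho_*\boldsymbol\delta\). Reductivity of \((P,\nabla)\) is exactly semisimplicity of all associated flat bundles, so nothing further is needed. If reductivity is only known through one faithful \(\iota\), we use Proposition~\ref{prop:faithful-representation-reductivity}: every \(V\) is a direct summand of tensor constructions on \(W\) and \(W^\vee\), and semisimplicity is stable under tensor products, duals and summands. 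On \(C^\circ\) this is the monodromy statement, by Riemann--Hilbert.

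\emph{Dolbeault.} Here we first establish degree zero and then polystability.

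\emph{Degree zero.} By Proposition~\ref{prop:principal-parahoric-degree-independent},
\[
\operatorname{pdeg}_{\rho_*\boldsymbol\theta}P_\rho=\operatorname{pdeg}_{\boldsymbol\theta}(P,\det\circ\rho).
\]
The character \(\det\circ\rho\) lies in \(X^*(G)\), so the right-hand side vanishes by the principal degree-zero hypothesis.

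\emph{Polystability.} This is the main obstacle. Ramanathan's principal condition does not formally give vector polystability in characteristic zero without analytic input. We plan to go through the analytic correspondence, as follows.
\begin{enumerate}
\item Apply the parabolic principal Hitchin--Kobayashi correspondence \cite{BiswasStemmler11HermitianEinsteinParabolic,AnchoucheBiswas01EinsteinHermitian}. A polystable degree-zero parahoric principal Higgs bundle admits an adapted Hermitian--Einstein reduction \(h\) to a maximal compact subgroup \(K\subset G\). Its curvature satisfies \(F_h+[\vartheta,\vartheta^{*_h}]=0\), and \(h\) has tame growth prescribed by \(\boldsymbol\theta\). Equivalently, one can work on the orbifold chart through the dictionary of Proposition~\ref{prop:log-orbi-parahoric-dictionary}.
\item Choose \(\rho\) to be \(K\)-unitary. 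The metric \(h\) then induces a metric \(h_\rho\) on \(P_\rho\), and the Hermitian--Einstein equation pushes forward under \(d\rho\).
\item The induced metric \(h_\rho\) is adapted to the pushed-forward filtration \(\rho_*\boldsymbol\theta\). This is the compatibility of model filtrations with weights \(\langle\chi,\theta_x\rangle\) from \S\ref{subsec:parahoric-bundles}.
\item Simpson's vector correspondence \cite{Sim90harmobdonnoncomcurves} then gives that a tame harmonic bundle with this growth is polystable of parahoric degree zero.
\end{enumerate}

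The delicate point is the last growth comparison at logarithmic points with nilpotent residue. There one must check that the \(\log|z|\) corrections of the model metric are preserved by \(d\rho\). We would check this on Simpson's rank-two \(\SL_2\)-model, its symmetric powers and its direct sums, which are stable under \(\rho\).

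As a fallback avoiding metrics, one could combine degree zero with a destabilizing-subobject argument. Take the maximal destabilizing Higgs subsheaf of \(P_\rho\). By uniqueness it is invariant under automorphisms of \((P,\vartheta)\), so it comes from a Higgs-invariant parahoric reduction \(P_Q\), in the style of Ramanan--Ramanathan \cite{RamananRamanathan84InstabilityFlag}. This would contradict \(\operatorname{pdeg}_{\boldsymbol\theta}(P_Q,\chi)\ge0\). Polystability would then follow by reducing to the Levi factor and using the equality cases.
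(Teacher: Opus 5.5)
Your proposal is correct and follows essentially the paper's route. For (1) the paper likewise takes a Hermitian--Einstein reduction from the tame parahoric principal correspondence, pushes it through a unitary $\rho$ to the vector Hitchin--Simpson equation, and gets degree zero from $\operatorname{pdeg}_{\boldsymbol\theta}(P,\det\circ\rho)=0$; it also spells out the step you take for granted, namely that vanishing degree for every character forces the central term $\tau$ to be zero. For (2)--(3) the paper argues through the reductive Zariski closure of the monodromy, whereas you rightly note these follow directly from its definitions of reductivity; your metric-free fallback is not needed, and as sketched it would require Kempf's instability-flag argument rather than invariance under automorphisms.
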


    \begin{proof}
    For the first statement, use the Hermitian--Einstein characterization of
    polystability on the Higgs side of the tame parahoric principal
    non-abelian Hodge correspondence
    \cite[Theorem~6.2, pp.~34--36]{HuangGeorgiosSunZhao22TameparahoricnonabelianHodge}.
    A polystable degree-zero principal object admits a Hermitian--Einstein
    reduction with zero central
    term: the central term \(\tau\in\mathfrak z(\mathfrak g)\) is paired with
    the degrees of the character lines, and the differentials of characters
    span \(\mathfrak z(\mathfrak g)^*\), so degree zero for every character
    forces \(\tau=0\).  After choosing a maximal compact subgroup and an invariant
    Hermitian form on \(V\), the induced reduction satisfies the vector
    Hitchin--Simpson equation with zero central term.  Hence
    \((P_\rho,\vartheta_\rho)\) is polystable, and
    \[
    \operatorname{pdeg}_{\rho_*\boldsymbol\theta}(P_\rho)
    =
    \operatorname{pdeg}_{\boldsymbol\theta}(P,\det\rho)
    =0.
    \]

    For the remaining statements, let \(H\subset G\) be the Zariski closure
    of the global monodromy image.  Reductivity means that \(H\) is reductive,
    so every algebraic \(G\)-module restricts to a completely reducible
    \(H\)-module.  Thus \(\rho\circ\varrho\) is semisimple; the same argument,
    through monodromy, proves the de Rham assertion.
    \end{proof}

    The preceding reconstruction proposition is the formal bridge from the
    vector-valued correspondence to the principal correspondence.  The
    principal passage below is the paper's Tannakian assembly of the cited
    vector inputs, rather than a theorem imported verbatim from one source.  We
    apply the vector NAH/RH equivalences to all associated representations and
    reconstruct the resulting principal objects by
    Proposition~\ref{prop:tannakian-reconstruction-principal-realizations}.

\subsection{The realization theorem and its functoriality}
\label{subsec:realization-theorem-functoriality}

    \begin{theorem}[Tannakian log--orbi NAH/RH realization]
    \label{thm:tannakian-parahoric-realization}
    Let \(\mathcal C\) be a log--orbi curve, let \(G\) be a connected complex
    reductive group, and let
    \(\boldsymbol\eta=(\boldsymbol\theta,
    \boldsymbol\delta,\boldsymbol n)\)
    be a corresponding realization datum: the first two components are
    admissible rational full local parameters with zero transformed Betti
    weight, of orbifold algebraic type at \(D_{\mathrm{orb}}\) and adjusted
    type at \(D_{\log}\), and \(\boldsymbol n\) is the induced local
    monodromy datum.  Then there are
    natural equivalences of groupoids
    \[
    \Higgs_G(\mathcal C,\boldsymbol\theta)_{\mathrm{poly},0}
    \xrightarrow{\;\sim\;}
    \MIC_G(\mathcal C,\boldsymbol\delta)_{\mathrm{red}}
    \xrightarrow{\;\sim\;}
    \Rep_G^{\mathrm{red}}
    \bigl(\pi_1^{\mathrm{orb}}(\mathcal C),\boldsymbol n\bigr).
    \]
    For \(V\in\Rep_{\mathbb C}(G)\), let
    \(\boldsymbol\eta_V\) be the induced corresponding realization datum defined
    above.  The equivalences are compatible with
    associated objects through functorial isomorphisms
    \[
    \bigl(\operatorname{NAH}_G(P,\vartheta)\bigr)_V
    \simeq
    \operatorname{NAH}_{\boldsymbol\eta_V}(P_V,\vartheta_V),
    \qquad
    \bigl(\operatorname{RH}_G(P,\nabla)\bigr)_V
    \simeq
    \operatorname{RH}_{\boldsymbol\eta_V}(P_V,\nabla_V),
    \]
    compatibly with the rigid tensor operations and morphisms in
    \(\Rep_{\mathbb C}(G)\).  The principal Dolbeault/de Rham parameters and
    Betti monodromy datum are recovered from the family
    \(\boldsymbol\eta_V\).  The Betti groupoid consists of ordinary
    representations, with no additional filtration.  At an orbifold point,
    orbifold algebraic-type (regular equivariant) data correspond to finite
    semisimple inertia; the
    coarse strongly parabolic residue may be nonzero.  At a logarithmic point,
    the full adjusted regular-singular locus is allowed.
    \end{theorem}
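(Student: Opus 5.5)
The plan is to build the principal equivalences representation by representation and then reassemble them Tannakianly. Start from \((P,\vartheta)\in\Higgs_G(\mathcal C,\boldsymbol\theta)_{\mathrm{poly},0}\). By Proposition~\ref{prop:tannakian-reconstruction-principal-realizations} it is an exact faithful tensor functor \(V\mapsto(P_V,\vartheta_V)\) into \(\Higgs(\mathcal C)\). By Proposition~\ref{prop:associated-representations} the value at \(V\) has full type \(\boldsymbol\theta_V\), and by Proposition~\ref{prop:associated-representations-semisimplicity}(1) it is polystable of parahoric degree zero. The compatibility of \(\boldsymbol\eta\) gives, after pushforward, a compatible vector datum \(\boldsymbol\eta_V\): the local calculation preceding Theorem~\ref{thm:vector-parahoric-nah-rh} is stable under \(d\rho_V\), which sends \(\mathfrak g_{>0}\) into the positive piece. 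So each \((P_V,\vartheta_V)\) lies in the domain of the fixed-datum equivalence \(\mathrm{NAH}_{\boldsymbol\eta_V}\) of Theorem~\ref{thm:vector-parahoric-nah-rh}.

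Next, compose \(V\mapsto \mathrm{NAH}_{\boldsymbol\eta_V}(P_V,\vartheta_V)\). Proposition~\ref{prop:vector-tensor-compatibility} says the fixed-datum equivalences assemble into a rigid symmetric monoidal equivalence of the ambient varying-data categories. The composite is therefore an exact faithful tensor functor \(\Rep_{\mathbb C}(G)\to\MIC(\mathcal C)\) whose value at \(V\) has de Rham datum \(\boldsymbol\delta_V\). Faithfulness and exactness are inherited because equivalences are conservative and exact on the ambient categories.

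Proposition~\ref{prop:tannakian-reconstruction-principal-realizations} then reconstructs a principal object \(\mathrm{NAH}_G(P,\vartheta)\in\MIC_G(\mathcal C,\boldsymbol\delta)\). It is reductive by Proposition~\ref{prop:faithful-representation-reductivity}, since its value on a faithful \(W\) is semisimple. The same recipe with \(\mathrm{RH}\circ\mathrm{Mon}_c\) produces the Betti side, and Proposition~\ref{prop:faithful-representation-reductivity}(1) gives reductivity there. Quasi-inverses come from the vector quasi-inverses, and arrows correspond to tensor natural isomorphisms. This makes the principal functors equivalences of groupoids, with the displayed isomorphisms \((\mathrm{NAH}_G(P,\vartheta))_V\simeq\mathrm{NAH}_{\boldsymbol\eta_V}(P_V,\vartheta_V)\) holding by construction. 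Local parameters are recovered from the family \(\boldsymbol\eta_V\) because a principal type is determined by its images in all representations, or already in a faithful one. The orbifold and logarithmic local statements come from Theorem~\ref{thm:vector-parahoric-nah-rh} and Corollary~\ref{cor:log-orbi-parahoric-local-behavior}, applied in a faithful representation.

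The main obstacle is essential surjectivity on the Dolbeault side together with the converse stability statement. Given \((P,\nabla)\) reductive, the reconstructed principal Higgs object has polystable degree-zero associated bundles, and it must then be shown to be Ramanathan-polystable as a principal object. I would first try the harmonic-metric route. A reductive flat principal object has a tame harmonic reduction to a maximal compact subgroup, obtained from the harmonic metric on a faithful associated bundle and made \(G\)-compatible by uniqueness of harmonic metrics up to automorphisms. The cited principal Hermitian--Einstein theory (Huang et al.; Biswas--Stemmler) then converts this into principal polystability with zero central term. Degree zero follows from Proposition~\ref{prop:principal-parahoric-degree-independent} applied to characters. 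Functoriality under finite étale pullback follows from the vector statements together with Corollary~\ref{cor:log-orbi-parahoric-functoriality}.
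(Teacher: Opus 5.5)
Your proposal follows essentially the same route as the paper: the fixed-datum vector NAH/RH equivalences are applied in every representation and reassembled by Tannakian reconstruction, and on the converse a principal tame harmonic reduction is used only to certify that the reconstructed Higgs object is polystable of degree zero. The paper takes that reduction directly from Biquard--Garc\'ia-Prada--Mundet (Theorem~6.6 and Proposition~7.7), rather than deriving $G$-compatibility from a faithful harmonic metric plus uniqueness. One small correction: the principal local type is recovered from the whole tensor family $\boldsymbol\eta_V$, as the paper says, but not in general from a single faithful representation; for $G=\mathbb G_m^2$ acting on $\mathbb C^2$ by its two coordinate characters, the cocharacters $(a,b)$ and $(b,a)$ with $a\neq b$ have $\GL_2$-conjugate images, yet they are distinct types because $W$ is trivial.
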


    \begin{proof}
    We prove the Dolbeault--de Rham equivalence first.  Let
    \[
    (P,\vartheta)\in
    \Higgs_G(\mathcal C,\boldsymbol\theta)_{\mathrm{poly},0}.
    \]
    For each \(V\in\Rep_{\mathbb C}(G)\),
    Proposition~\ref{prop:associated-representations-semisimplicity} gives
    \[
    (P_V,\vartheta_V)
    \in
    \Higgs(\mathcal C,\boldsymbol\theta_V)_{\mathrm{poly},0}.
    \]
    Define
    \[
    \mathscr F_{P,\vartheta}(V)
    :=
    \operatorname{NAH}_{\boldsymbol\eta_V}(P_V,\vartheta_V).
    \]
    By Proposition~\ref{prop:vector-tensor-compatibility}, this is an exact
    faithful tensor functor to the ambient varying-type de Rham category, with
    semisimple \(V\)-component of type \(\boldsymbol\delta_V\).
    Proposition~\ref{prop:tannakian-reconstruction-principal-realizations}
    reconstructs from it a reductive principal logarithmic \(G\)-connection
    of type \(\boldsymbol\delta\).

    A morphism \(f:(P,\vartheta)\to(P',\vartheta')\) induces the tensor natural
    transformation
    \[
    \bigl(\operatorname{NAH}_{\boldsymbol\eta_V}(f_V)\bigr)_V:
    \mathscr F_{P,\vartheta}\Longrightarrow
    \mathscr F_{P',\vartheta'}.
    \]
    Full faithfulness of Tannakian reconstruction therefore defines the
    principal NAH functor on morphisms.

    Conversely, applying the inverse vector NAH functor to every associated
    object of a reductive principal connection gives an exact faithful tensor
    functor to the ambient varying-type Dolbeault category.  Reconstruction
    produces a principal parahoric Higgs object.  Its Betti realization has
    zero weight and local monodromy datum \(\boldsymbol n\); flatness and
    zero weight make every character parabolic degree vanish.  In this case
    \cite[Proposition~7.7]{BiquardGarciaPradaMundet2020ParabolicHiggs}
    identifies the polystable parabolic-local-system locus with the reductive
    representation locus, and
    \cite[Theorem~6.6]{BiquardGarciaPradaMundet2020ParabolicHiggs} supplies one
    adapted principal harmonic reduction and its induced polystable principal
    parabolic Higgs object.  The transformed parameters in the Main Table and
    the analytic/parahoric comparison
    \cite[Main Table, pp.~3--4; Proposition~4.21 and pp.~25--29]
    {HuangGeorgiosSunZhao22TameparahoricnonabelianHodge}, together with
    Proposition~\ref{prop:log-orbi-parahoric-dictionary}, places it in
    Dolbeault type \(\boldsymbol\theta\).  The full
    \(\boldsymbol n\)-datum is retained.

    We use this principal reduction only to certify the reconstructed object's
    polystable degree-zero membership.  Its associated reductions give the
    vector NAH objects tensor-compatibly; Tannakian full faithfulness therefore
    identifies it with the reconstructed object.  The vector unit and counit
    assemble into principal unit and counit isomorphisms, whose triangle
    identities may be checked after applying
    every \(V\), where they are those of the vector equivalence.  Hence the
    two principal NAH functors are quasi-inverse.

    The de Rham--Betti equivalence is constructed from ordinary
    regular-singular Riemann--Hilbert and the tensor functor
    \[
    V\longmapsto\operatorname{RH}_{\boldsymbol\eta_V}(P_V,\nabla_V)
    \]
    and its inverse.  Tensor natural transformations reconstruct morphisms,
    while the vector unit and counit reconstruct the principal quasi-inverse
    isomorphisms.  Reductivity agrees on the two sides because it is detected
    by semisimplicity of the associated global local systems.

    Finally, every \(V\)-component has the relevant Dolbeault/de Rham parameter
    or Betti monodromy datum from \(\boldsymbol\eta_V\); local Tannakian
    reconstruction therefore recovers \(\boldsymbol\eta\).
    Adjustedness is preserved representation by representation, while the
    algebraic stratum is detected by a faithful member of the compatible
    tensor family.  The complementary log stratum is preserved by faithful
    representations but not unconditionally by nonfaithful ones, as in
    Proposition~\ref{prop:associated-representations}.
    \end{proof}


    We next investigate the functorialities needed later for the canonical
    \(\PSL_2\)-object.  On the vector side, pullback for parabolic Higgs
    bundles and parabolic connections is compatible with the usual operations
    \cite[Lemma~3.2(1), Section~5.2, Theorems~5.6 and~7.1, and
    Remark~7.2]{AlfayaBiswas23pullbackanddirectimofparconandparHiggs}; effective
    descent follows from descent for the corresponding torsors and
    quasi-coherent morphisms \cite[Tags 04US and 03O6]{StacksProject}.
    Pullback local data are interpreted in the unnormalized periodic convention
    of Subsection~\ref{subsec:parahoric-bundles}; the proposition below records
    the resulting cocharacter, residue, and monodromy formulas.

    \begin{proposition}
    \label{prop:realization-finite-etale-pullback}
    Let \(f:\mathcal C_1\to\mathcal C_2\) be a finite \'etale morphism of
    log--orbi curves, and let
    \[
    \boldsymbol\eta
    =(\boldsymbol\theta,\boldsymbol\delta,\boldsymbol n)
    \]
    be a corresponding realization datum on \(\mathcal C_2\).  Its pullback is
    \[
    f^*\boldsymbol\eta
    =(f^*\boldsymbol\theta,f^*\boldsymbol\delta,
      f^*\boldsymbol n).
    \]
    At a point \(y\mapsto x\), the Dolbeault/de Rham cocharacter components
    scale by \(e_y\) in the periodic convention, residues have their natural
    scaled pullback, and the Betti datum pulls back by
    \[
    T_y=T_x^{e_y}.
    \]
    There are natural pullback functors
    \[
    f^*:\Higgs_G(\mathcal C_2,\boldsymbol\theta)
    \longrightarrow
    \Higgs_G(\mathcal C_1,f^*\boldsymbol\theta),
    \]
    \[
    f^*:\MIC_G(\mathcal C_2,\boldsymbol\delta)
    \longrightarrow
    \MIC_G(\mathcal C_1,f^*\boldsymbol\delta),
    \]
    and
    \[
    f^*:\Rep_G(\pi_1^{\mathrm{orb}}(\mathcal C_2),\boldsymbol n)
    \longrightarrow
    \Rep_G(\pi_1^{\mathrm{orb}}(\mathcal C_1),f^*\boldsymbol n),
    \]
    where, after choosing compatible base points, the Betti pullback is
    precomposition with
    \[
    f_*:\pi_1^{\mathrm{orb}}(\mathcal C_1)
    \longrightarrow
    \pi_1^{\mathrm{orb}}(\mathcal C_2).
    \]
    The realization equivalences commute with these pullback functors:
    \[
    f^*\circ \operatorname{NAH}_{\boldsymbol\eta}
    \simeq
    \operatorname{NAH}_{f^*\boldsymbol\eta}\circ f^*,
    \qquad
    f^*\circ \operatorname{RH}_{\boldsymbol\eta}
    \simeq
    \operatorname{RH}_{f^*\boldsymbol\eta}\circ f^*.
    \]
    They preserve degree zero, polystability, reductivity, the Dolbeault/de
    Rham local parameters, and the Betti local monodromy datum.
    \end{proposition}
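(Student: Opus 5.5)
The plan is to reduce every assertion to the vector-valued setting, one representation at a time, and then reassemble the principal statements with Proposition~\ref{prop:tannakian-reconstruction-principal-realizations}.

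\emph{Local data.} First I identify the pulled-back datum \(f^*\boldsymbol\eta\). Pass to coarse curves by Corollary~\ref{cor:log-orbi-parahoric-functoriality}, indexing by \(D_f=(\bar f^{-1}D)_{\mathrm{red}}\). Proposition~\ref{prop:parahoric-lambda-finite-pullback} then shows that the Dolbeault and de Rham cocharacter components scale by \(e_y\) in the periodic convention, residues scale by \(e_y\), and adjustedness is preserved. For the Betti datum, a small loop around \(y\) maps under \(f_*\) to the \(e_y\)-th power of a small loop around \(x\). This is read off from the local charts \(t=u^{e_y}\) of Definition~\ref{def:etale-morphism-log-orbi}, with \(e_ym_y=m_x\) at orbifold points. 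Hence \(T_y=T_x^{e_y}\).

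\emph{Pullback functors.} The three pullback functors are defined representationwise. Pullback of associated vector objects is exact and tensor compatible, so composing an exact faithful tensor functor \(\Rep_{\mathbb C}(G)\to\Higgs(\mathcal C_2)\) with \(f^*\) gives another such functor. Its values have local type \(f^*\boldsymbol\theta_V\), and Proposition~\ref{prop:tannakian-reconstruction-principal-realizations} turns this into a principal object. The de Rham and Betti cases are handled the same way; on the Betti side, pullback is precomposition with \(f_*\).

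\emph{Commutation with the realizations.} To get \(f^*\circ\mathrm{NAH}_{\boldsymbol\eta}\simeq\mathrm{NAH}_{f^*\boldsymbol\eta}\circ f^*\), I first prove the vector-valued isomorphisms \(f^*\mathrm{NAH}_{\boldsymbol\eta_V}\simeq\mathrm{NAH}_{f^*\boldsymbol\eta_V}f^*\), naturally and tensor-compatibly in \(V\). Tannakian full faithfulness, as in the proof of Theorem~\ref{thm:tannakian-parahoric-realization}, then promotes these to principal natural isomorphisms.

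For the vector statement I use uniqueness of the tame harmonic metric. Over \(C_1^\circ\), \(f\) is a finite unramified covering, so the pullback of a harmonic metric is harmonic. Near a marked point the local model is \(t=u^{e_y}\), and pullback preserves tameness, so the pulled-back metric is tame with weights multiplied by \(e_y\). Simpson's table is linear in \((\alpha,b,c)\) and compatible with this scaling: a jump \(\alpha\) with eigenvalue \(0\) becomes \(e_y\alpha\), and the monodromy \(\exp(-2\pi i\alpha)\) becomes its \(e_y\)-th power. Consequently the pulled-back flat bundle is \(\mathrm{NAH}\) of the pulled-back Higgs bundle for the datum \(f^*\boldsymbol\eta_V\).

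The Riemann--Hilbert compatibility is simply that the local system of horizontal sections commutes with pullback on \(C_1^\circ\), together with extension across the boundary (Deligne).

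\emph{Preserved properties.} Degree zero is preserved by Proposition~\ref{prop:parahoric-lambda-finite-pullback}, since \(\operatorname{pdeg}\) is multiplied by \(\deg f\). Reductivity is preserved because \(f_*\) has finite-index image and restriction of a semisimple representation to a finite-index subgroup is semisimple (Clifford). Polystability I obtain through the correspondence itself rather than by a direct argument: the pulled-back harmonic reduction exists, and it certifies polystability exactly as in Theorem~\ref{thm:tannakian-parahoric-realization}.

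\emph{Main obstacle.} The main difficulty is the uniqueness comparison at marked points. One must check that the pulled-back harmonic metric has exactly the growth rates prescribed by \(f^*\boldsymbol\theta\) in the unnormalized periodic convention, including the integral lattice shifts at order-one preimages of orbifold points and at logarithmic points. I would first settle this for Simpson's rank-one and rank-two local models, where it is explicit. The general case then follows by the tensor closure (symmetric powers, rank-one twists, direct sums) used in Subsection~\ref{subsec:vector-valued-input}.
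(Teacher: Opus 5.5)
Your proposal is correct, and its skeleton matches the paper's proof. Types and residues scale by \(e_y\) via Proposition~\ref{prop:parahoric-lambda-finite-pullback}. Degree zero follows from \(\operatorname{pdeg}\) being multiplied by \(\deg f\), and polystability from the pulled-back zero-central-term harmonic (Hermitian--Einstein) reduction. Reductivity comes from finite index. The principal comparisons are assembled representation by representation through Proposition~\ref{prop:tannakian-reconstruction-principal-realizations}, with RH coming from naturality of horizontal sections.

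The one genuine difference is the vector-level identity \(f^*\circ\operatorname{NAH}_{\boldsymbol\eta_V}\simeq\operatorname{NAH}_{f^*\boldsymbol\eta_V}\circ f^*\). The paper does not prove it; it imports it as ``vector functoriality'' from Alfaya--Biswas. You instead derive it from uniqueness of tame harmonic metrics, together with the \(e_y\)-scaling of growth rates and of Simpson's local table. Your version exposes the mechanism, and it also justifies \(T_y=T_x^{e_y}\) by the loop computation \(c_y\mapsto c_x^{e_y}\), which the paper records without separate argument. The cost is the local growth-rate verification you flag as the main obstacle, which the citation lets the paper skip.

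Two small corrections:
\begin{enumerate}
\item For \(f\colon\mathcal C_1\to\mathcal C_2\), the coarse map is unramified over \(C_2^\circ\), not over \(C_1^\circ\). A nonstacky point of \(C_1\) lying over an orbifold point of order \(m_x\) is coarsely ramified with \(e_y=m_x\). Harmonicity still pulls back there, because \(f\) is \'etale on the stack chart \(\Delta_w\to[\Delta_w/\mu_{m_x}]\).
\item Clifford's theorem concerns normal subgroups. Either pass to the normal core of \(f_*\pi_1^{\mathrm{orb}}(\mathcal C_1)\) and average over the finite quotient, or use the paper's remark that the two monodromy Zariski closures share their identity component.
\end{enumerate}
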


    \begin{proof}
    The type formula is the pullback of the periodic lattice filtration.  In
    particular, for the canonical \(\PSL_2\)-type one has
    \[
    e_y\kappa_x-\kappa_y=e_y-1\in\mathbb Z;
    \]
    the associated integral modification is the one occurring under
    \(f^*\omega_{\mathcal C_2}\simeq\omega_{\mathcal C_1}\), in agreement
    with Proposition~\ref{prop:maximal-psl2-functoriality}.

    For an algebraic representation \(\rho:G\to\GL(V)\), after choosing a
    representative \(\delta_x^{\mathrm{coch}}\) of the
    cocharacter component, write
    \[
    \operatorname{Res}_x(\nabla^{\lambda,V})
    =
    \lambda\,d\rho(\delta_x^{\mathrm{coch}})+N_{V,x},
    \qquad
    N_{V,x}(F_{\ge a})\subset F_{>a}.
    \]
    The residue pullback formula gives
    \[
    \operatorname{Res}_y(f^*\nabla^{\lambda,V})
    =
    \lambda\,d\rho(e_y\delta_x^{\mathrm{coch}})+e_yN_{V,x},
    \]
    and \(e_yN_{V,x}\) still strictly raises the pulled-back filtration.
    For Higgs residues, this scaling preserves vanishing and nonvanishing
    because \(e_y\ne0\).  For flat objects, the
    local parahoric Riemann--Hilbert description sends the unipotent factor to
    its \(e_y\)-th power, which is nontrivial whenever the original factor is
    nontrivial in characteristic zero.  Thus adjusted, algebraic, and log type
    are preserved at the principal level.

    If \(E_Q\) is a parahoric reduction and \(\chi\in X^*(Q)\), then
    \[
    \operatorname{pdeg}_{f^*\boldsymbol\theta}(f^*E_Q,\chi)
    =
    \deg(f)\operatorname{pdeg}_{\boldsymbol\theta}(E_Q,\chi).
    \]
    Here \(\boldsymbol\theta\) is the Dolbeault component of
    \(\boldsymbol\eta\).  Moreover, the
    pullback of a zero-central-term Hermitian--Einstein reduction is again such
    a reduction; the same Hermitian--Einstein characterization on the Higgs
    side of the tame parahoric principal non-abelian Hodge correspondence
    therefore gives preservation of polystability and degree zero.
    On the Betti side, \(f_*\pi_1^{\mathrm{orb}}(\mathcal C_1)\) has finite
    index in \(\pi_1^{\mathrm{orb}}(\mathcal C_2)\), so the two monodromy
    Zariski closures have the same identity component; reductivity is
    preserved.

    Finally, pullback commutes with associated representations:
    \[
    f^*(P_\rho,\vartheta_\rho)
    \simeq
    \bigl((f^*P)_\rho,(f^*\vartheta)_\rho\bigr),
    \]
    and similarly for logarithmic connections.  
    Applying vector functoriality representation by representation and then
    reconstructing gives the NAH comparison.  Naturality of horizontal
    sections gives the RH comparison.  Pullback naturality makes both
    comparisons compatible with identity maps and composition.
    \end{proof}
    \begin{proposition}
    \label{prop:realization-galois-descent}
    Let \(\pi:\widetilde{\mathcal C}\to\mathcal C\) be a finite Galois
    \'etale cover of log--orbi curves with deck group \(\Delta\), and let
    \(\boldsymbol\eta\) be a corresponding realization datum on
    \(\mathcal C\).  For any of the groupoids below, write
    \(\operatorname{Desc}_\Delta(-)\) for the groupoid of objects equipped
    with isomorphisms \(\epsilon_\delta:\delta^*A\to A\) satisfying
    \(\epsilon_{\delta\delta'}=
    \epsilon_{\delta'}\circ\delta'^*\epsilon_\delta\).  Then
    \[
    \Higgs_G(\mathcal C,\boldsymbol\theta)
    \simeq
    \operatorname{Desc}_\Delta\!
    \bigl(\Higgs_G(\widetilde{\mathcal C},
    \pi^*\boldsymbol\theta)\bigr),
    \]
    \[
    \MIC_G(\mathcal C,\boldsymbol\delta)
    \simeq
    \operatorname{Desc}_\Delta\!
    \bigl(\MIC_G(\widetilde{\mathcal C},
    \pi^*\boldsymbol\delta)\bigr),
    \]
    and
    \[
    \Rep_G(\pi_1^{\mathrm{orb}}(\mathcal C),\boldsymbol n)
    \simeq
    \operatorname{Desc}_\Delta\!\bigl(
    \Rep_G(\pi_1^{\mathrm{orb}}(\widetilde{\mathcal C}),
    \pi^*\boldsymbol n)\bigr).
    \]
    Here \(\pi^*\boldsymbol n\) is the restricted local monodromy datum,
    equivalently obtained by taking the appropriate local powers.
    In the last display, descent is intrinsically descent of \(G\)-local
    systems; a presentation by based representations requires choices of a
    base point, its lifts, and connecting paths.  The NAH and RH equivalences
    identify these effective descent data, and the equivalences restrict to
    the polystable and reductive subgroupoids.
    \end{proposition}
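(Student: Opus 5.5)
The plan is to treat the three realizations separately, each by the same two-step pattern. First, Proposition~\ref{prop:realization-finite-etale-pullback} supplies the pullback functor \(\pi^*\). Second, we show that \(\pi^*\) together with the canonical comparisons \(\delta^*\pi^*\simeq\pi^*\) (coming from \(\pi\circ\delta=\pi\)) identifies the base groupoid with \(\operatorname{Desc}_\Delta\) of the cover groupoid. Since the types \(\pi^*\boldsymbol\theta\), \(\pi^*\boldsymbol\delta\) and \(\pi^*\boldsymbol n\) are pulled back from \(\mathcal C\), they are \(\Delta\)-invariant. Hence \(\delta^*\) preserves each fixed-type groupoid on \(\widetilde{\mathcal C}\), and the descent groupoid makes sense.

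For the Dolbeault and de Rham cases, the strategy is to pass to associated representations and use Proposition~\ref{prop:tannakian-reconstruction-principal-realizations}. A descent datum \((A,\epsilon_\delta)\) gives, for each \(V\in\Rep_{\mathbb C}(G)\), a descent datum on the associated vector object \(A_V\). These data are tensor-natural in \(V\) because the \(\epsilon_\delta\) are principal isomorphisms.

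Via the dictionary of Proposition~\ref{prop:log-orbi-parahoric-dictionary} and Corollary~\ref{cor:log-orbi-parahoric-functoriality}, the vector objects may be viewed as adjusted parahoric \(\lambda\)-connections with \(\lambda=0\) or \(1\) on the coarse curves. Proposition~\ref{prop:parahoric-lambda-galois-descent} then descends each one effectively, preserving the full facetwise periodic enhancements and the algebraic/log stratification. By naturality of descent, the descended family is again an exact faithful tensor functor, and its type is \(\boldsymbol\theta_V\) (resp.\ \(\boldsymbol\delta_V\)). This holds because the type is fpqc-local and \(\pi^*\) of the descended value is \(A_V\). Reconstruction then gives a principal object on \(\mathcal C\).

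Full faithfulness is handled the same way. Morphisms on \(\mathcal C\) are exactly the \(\Delta\)-equivariant morphisms upstairs: this is checked on associated vector bundles, using descent for quasi-coherent morphisms \cite[Tag 03O6]{StacksProject}, and then reconstructed tensorially. Essential surjectivity is the effectivity just described.

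The Betti case is topological. \(G\)-local systems on \(\mathcal C^\circ\) with its orbifold structure are equivalent to \(\Delta\)-equivariant \(G\)-local systems on \(\widetilde{\mathcal C}^\circ\), since \(\pi\) is a finite Galois covering of orbifolds away from the boundary and local systems form a stack for the analytic topology. In based terms, \(\pi_1^{\mathrm{orb}}(\widetilde{\mathcal C},\tilde c)\) is normal in \(\pi_1^{\mathrm{orb}}(\mathcal C,c)\) with quotient \(\Delta\), and the equivariance data extend the representation to the whole group. The local datum condition is handled point by point. At a point \(y\) over \(x\), \(T_y=T_x^{e_y}\). Conversely, the local class \(n_x\) of the descended system is the one prescribed by \(\boldsymbol n\). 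Here the argument uses the fact that \(\boldsymbol n\) is the datum pulled back, and that the equivariant structure fixes the full local monodromy, including the inertia factor, via the stabilizer action at \(y\).

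Finally, we compare the equivalences and restrict to the relevant subgroupoids. By Proposition~\ref{prop:realization-finite-etale-pullback}, \(\pi^*\circ\mathrm{NAH}\simeq\mathrm{NAH}\circ\pi^*\) compatibly with composition. Applied to each \(\delta\), this transports descent data to descent data, and the cocycle identity is preserved by the coherence of these comparisons; the same argument applies to RH. For restriction to the polystable and reductive subgroupoids, we check membership upstairs. Degree zero follows from \(\operatorname{pdeg}\) scaling by \(\deg\pi\). Reductivity holds because \(\pi_1^{\mathrm{orb}}(\widetilde{\mathcal C})\) has finite index, so the two monodromy Zariski closures share their identity component.

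The main obstacle is the converse direction for polystability: showing that a descended object is polystable when its pullback is. I would argue through the Hermitian--Einstein characterization. Average the harmonic reduction over \(\Delta\), or use uniqueness of the harmonic metric up to automorphisms so that it is \(\Delta\)-invariant, and descend it. Alternatively, one can route the argument through the Betti side, where reductivity is manifest, and then apply Theorem~\ref{thm:tannakian-parahoric-realization}.
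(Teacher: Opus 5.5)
Your plan breaks at the step where you identify the local datum of the descended object.

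In the Betti case you say the equivariant structure fixes the full local monodromy through the stabilizer action at \(y\). It does fix it, but nothing forces it into \(n_x\). An object of \(\operatorname{Desc}_\Delta\bigl(\Rep_G(\pi_1^{\mathrm{orb}}(\widetilde{\mathcal C}),\pi^*\boldsymbol n)\bigr)\) is constrained only through \(T_y=T_x^{e_y}\). The action of the stabilizer \(\Delta_y\) is what selects \(T_x\) among the \(e_y\)-th roots of \(T_y\), and it is unconstrained extra data.

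Here is an example. Let \(\widetilde{\mathcal C}\) be an elliptic curve and \(\mathcal C=[\widetilde{\mathcal C}/\{\pm1\}]\), whose coarse space is \(\mathbb P^1\) with four points of order \(2\). Take \(\Delta=\mathbb Z/2\), \(G=\mathbb G_m\), and \(\boldsymbol n=([-1],[-1],[1],[1])\).
\begin{itemize}
\item Then \(\pi^*\boldsymbol n\) is trivial at all four preimages.
\item So covering descent identifies the descent groupoid with all \(\mathbb G_m\)-local systems on \(\mathcal C\). There are eight isomorphism classes, since \(\pi_1^{\mathrm{orb}}(\mathcal C)^{\mathrm{ab}}\simeq(\mathbb Z/2)^3\), and only one of them has datum \(\boldsymbol n\).
\item Concretely, twist a descent datum by the sign character of \(\Delta\), which sends every orbifold generator \(c_i\) to \(-1\). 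This leaves the upstairs object unchanged and turns \(\boldsymbol n\) into \(([1],[1],[-1],[-1])\).
\end{itemize}

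The same central twist is an admissible descent datum in the Dolbeault and de Rham groupoids. It preserves fields, connections and all lattices, and it satisfies the cocycle identity. It also flips the inertia class \([\tau_{\theta_x}]\) at all four points. So your justification there, that the type is fpqc-local, fails as well. The fractional part of \(\theta_x\) is invisible after pullback to the \'etale cover; it is carried by \(\epsilon_\delta\) for \(\delta\in\Delta_y\). Citing Proposition~\ref{prop:parahoric-lambda-galois-descent} does not repair this, because it asserts the same effectivity on the same fpqc-locality grounds.

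What your argument does establish is that \(\pi^*\) is fully faithful into \(\operatorname{Desc}_\Delta\). Its essential image is the full subgroupoid of descent data whose stabilizer actions at the points of \(D_\pi\) induce the prescribed \((\theta_x,\delta_x,n_x)\). On the Betti side, the unrestricted descent groupoid is the disjoint union of the groupoids \(\Rep_G(\pi_1^{\mathrm{orb}}(\mathcal C),\boldsymbol n')\) over all \(\boldsymbol n'\) with \(\pi^*\boldsymbol n'=\pi^*\boldsymbol n\). The statement needs this restriction. The paper's proof does not supply the missing step either: it appeals only to effective descent of torsors and to finite covering descent, which produce some object on \(\mathcal C\) without determining its datum.

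With the restriction built in, NAH and RH respect the restricted subgroupoids because they commute with \(\pi^*\). The rest of your plan is then sound and differs from the paper only in route:
\begin{itemize}
\item You descend associated vector objects and reconstruct, whereas the paper descends principal torsors directly on lisse--\'etale/Kummer charts.
\item For polystability of a descended Higgs object, the paper applies \cite[Theorem~5.6]{AlfayaBiswas23pullbackanddirectimofparconandparHiggs} to associated objects and then the Hermitian--Einstein characterization. Your Betti detour transports the descent datum, descends the reductive flat object, applies the inverse NAH on \(\mathcal C\), and compares by full faithfulness of descent. It uses only the paper's own pullback compatibilities.
\item Your averaging alternative would need a fixed-point argument on the space of harmonic reductions, since pointwise averaging of metrics does not preserve harmonicity.
\end{itemize}
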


    \begin{proof}
    Principal torsors and their Higgs fields or connections descend
    effectively on the relevant lisse--\'etale/Kummer charts; the field and
    connection conditions descend because they are equalities of morphisms.
    Finite topological covering descent gives the corresponding statement for
    \(G\)-local systems.  Choices used to express it by representations alter
    the deck action by inner automorphisms only.

    Proposition~\ref{prop:realization-finite-etale-pullback} shows that NAH
    and RH carry the \(\epsilon_\delta\) to descent isomorphisms and preserve
    their cocycle.  Equivalently, this may be checked in every algebraic
    representation and reconstructed by
    Proposition~\ref{prop:tannakian-reconstruction-principal-realizations}.
    Finally, vector polystability is equivalent under finite pullback by
    \cite[Theorem~5.6]{AlfayaBiswas23pullbackanddirectimofparconandparHiggs};
    applying this to associated objects and using the Hermitian--Einstein
    characterization on the Higgs side of the tame parahoric principal
    non-abelian Hodge correspondence gives the principal assertion.  Reductivity is
    detected by the identity component of the monodromy Zariski closure.
    Hence the stated restrictions also descend.
    \end{proof}


    We next record the compatibility with extension of structure group.

    \begin{proposition}
    \label{prop:realization-change-structure-group}
    Let
    \(
    \varphi:G\longrightarrow H
    \)
    be a homomorphism of connected complex reductive groups.  The corresponding
    realization datum \(\boldsymbol\eta\) for \(G\) induces
    \(\varphi_*\boldsymbol\eta\) for \(H\): the Dolbeault/de Rham parameters
    push forward in full and a Betti class \([T_x]\) maps to
    \([\varphi(T_x)]\).  Extension of structure group
    gives functors
    \[
    \varphi_*:\Higgs_G(\mathcal C,\boldsymbol\theta)
    \longrightarrow
    \Higgs_H(\mathcal C,\varphi_*\boldsymbol\theta),
    \]
    \[
    \varphi_*:\MIC_G(\mathcal C,\boldsymbol\delta)
    \longrightarrow
    \MIC_H(\mathcal C,\varphi_*\boldsymbol\delta),
    \]
    and
    \[
    \varphi_*:
    \Rep_G(\pi_1^{\mathrm{orb}}(\mathcal C),\boldsymbol n)
    \longrightarrow
    \Rep_H(\pi_1^{\mathrm{orb}}(\mathcal C),
    \varphi_*\boldsymbol n).
    \]
    In the Higgs or de Rham realization, write \(\boldsymbol\xi\) for the
    relevant full component.  If an adjusted residue at \(x\) is written
    \[
    \lambda\xi_x^{\mathrm{coch}}+N_x,
    \qquad N_x\in\mathfrak g_{>0}(\xi_x^{\mathrm{coch}}),
    \]
    then its pushout residue is
    \[
    \lambda\varphi_*(\xi_x^{\mathrm{coch}})+d\varphi(N_x).
    \]
    Thus adjustedness and algebraic type are preserved.  For a Higgs field,
    log type at \(x\) is preserved precisely when \(d\varphi(N_x)\ne0\); for a
    flat object, it is preserved precisely when the unipotent local monodromy
    remains nontrivial after applying \(\varphi\).  This holds in particular
    for a closed immersion, but need not hold for an arbitrary \(\varphi\).
    For arbitrary \(\varphi\), extension of structure group preserves the
    polystable degree-zero Dolbeault groupoid and the reductive de Rham and
    Betti groupoids.  On these realization groupoids, NAH and RH commute with
    \(\varphi_*\).  For a nonzero Hitchin--Simpson central term \(\tau\), the
    same argument only gives polystability when
    \(d\varphi(\tau)\in\mathfrak z(\mathfrak h)\).
    \end{proposition}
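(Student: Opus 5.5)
The proof runs through the three realizations in turn and then treats polystability, reductivity and the commutation with NAH/RH separately.

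\textbf{Functors and local data.} On the Dolbeault and de Rham sides, \(\varphi_*\) is the Tannakian pushout \(P\mapsto P\times^G H\), with Higgs field or connection transported by \(d\varphi\) on the Atiyah sequence. Equivalently, I precompose the tensor functor of Proposition~\ref{prop:tannakian-reconstruction-principal-realizations} with restriction \(\varphi^*\colon\Rep_{\mathbb C}(H)\to\Rep_{\mathbb C}(G)\). Since \((\varphi_*P)_W\simeq P_{\varphi^*W}\), the local type of the value at \(W\) is \(\boldsymbol\theta_{\varphi^*W}=(\varphi_*\boldsymbol\theta)_W\), so the pushout lands in type \(\varphi_*\boldsymbol\theta\). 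The residue formula, the preservation of adjustedness and algebraic type, and the two log-type criterions are then exactly Proposition~\ref{prop:functoriality-parahoric-lambda}, transported to \(\mathcal C\) by Corollary~\ref{cor:log-orbi-parahoric-functoriality}. On the Betti side, \(\varphi_*\) is composition \(\varrho\mapsto\varphi\circ\varrho\), and \([T_x]\mapsto[\varphi(T_x)]\) is immediate. For a closed immersion \(d\varphi\) is injective and \(\varphi\) is injective on unipotents, which gives the closed-immersion case.

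\textbf{Preservation of the realization groupoids.} Reductivity on the Betti and de Rham sides follows from the definitions. Every \(W\in\Rep(H)\) gives \(\varphi^*W\in\Rep(G)\), and \((\varphi\circ\varrho)_W=\varrho_{\varphi^*W}\) is semisimple; the de Rham case is identical through associated objects.

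Polystability of degree zero is the main obstacle. Ramanathan reductions of \(\varphi_*P\) need not come from reductions of \(P\), so a direct check is not available. I would go through the Hermitian--Einstein characterization used in Proposition~\ref{prop:associated-representations-semisimplicity}. A polystable degree-zero \(P\) carries a harmonic reduction to a maximal compact \(K\) with central term \(\tau=0\). I choose a maximal compact \(K_H\supset\varphi(K)\) and push the reduction forward. The Hitchin--Simpson equation is linear in the curvature and in \([\vartheta,\vartheta^*]\), so it is transported by \(d\varphi\) and gives central term \(d\varphi(\tau)\). When \(\tau=0\) this vanishes, and the characterization in \cite[Theorem~6.2]{HuangGeorgiosSunZhao22TameparahoricnonabelianHodge} gives polystability of degree zero. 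The tame parahoric boundary behaviour is preserved because the local models push forward as in the previous paragraph. For general \(\tau\), the transported equation is Hermitian--Einstein only if \(d\varphi(\tau)\) is central in \(\mathfrak h\), which is the closing remark of the statement.

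\textbf{Commutation with NAH and RH.} For \(W\in\Rep(H)\) the chain
\[
(\operatorname{NAH}_H\varphi_*(P,\vartheta))_W\simeq\operatorname{NAH}_{\boldsymbol\eta_{\varphi^*W}}(P_{\varphi^*W},\vartheta_{\varphi^*W})\simeq(\varphi_*\operatorname{NAH}_G(P,\vartheta))_W
\]
follows from the compatibility isomorphisms in Theorem~\ref{thm:tannakian-parahoric-realization}. These isomorphisms are natural in \(W\) and tensor compatible, so Tannakian full faithfulness (Proposition~\ref{prop:tannakian-reconstruction-principal-realizations}) assembles them into a principal isomorphism \(\operatorname{NAH}_H\circ\varphi_*\simeq\varphi_*\circ\operatorname{NAH}_G\). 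The same argument, using horizontal sections, gives the RH statement.
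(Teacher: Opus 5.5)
Your proposal is correct and follows essentially the same route as the paper. The local data come from functoriality of the Moy--Prasad grading, which you cite through Proposition~\ref{prop:functoriality-parahoric-lambda}. Polystability comes from pushing forward a zero-central-term Hermitian--Einstein reduction, and commutation with NAH/RH from $(\varphi_*P)_W\simeq P_{\varphi^*W}$ together with Tannakian reconstruction. The only cosmetic difference is that you check reductivity representation by representation via $\varphi^*W$, whereas the paper uses that the image of a reductive Zariski closure is reductive; both are immediate.
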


    \begin{proof}
    Functoriality of the Moy--Prasad grading gives
    \[
    d\varphi\bigl(\mathfrak g_{>0}(\theta_x^{\mathrm{coch}})\bigr)
    \subset
    \mathfrak h_{>0}(\varphi_*\theta_x^{\mathrm{coch}}).
    \]
    This proves the residue formula and adjustedness.  Functoriality under
    \(d\varphi\) preserves the zero Higgs residue term, while functoriality of
    Jordan decomposition preserves trivial unipotent local monodromy.  This
    gives the algebraic and log-type assertions above.

    If \((P,\vartheta)\) is polystable of degree zero, then for every character
    \(\chi\in X^*(H)\),
    \[
    \operatorname{pdeg}_{\varphi_*\boldsymbol\theta}
    \bigl((\varphi_*P)_\chi\bigr)
    =
    \operatorname{pdeg}_{\boldsymbol\theta}
    \bigl(P_{\chi\circ\varphi}\bigr)
    =0.
    \]
    By the same character-pairing argument as in
    Proposition~\ref{prop:associated-representations-semisimplicity}, its
    Hermitian--Einstein reduction has zero central term.  Extension to \(H\)
    again solves the zero-central-term equation; hence the pushout is
    polystable of degree zero
    \cite{AnchoucheBiswas01EinsteinHermitian,
    BiswasStemmler11HermitianEinsteinParabolic}.  With central term \(\tau\),
    the induced equation has term \(d\varphi(\tau)\), giving the stated
    qualification.  On the Betti side, the algebraic image of a reductive
    Zariski closure is reductive; the de Rham assertion follows through global
    monodromy.

    Finally, for every representation \(\sigma:H\to\GL(W)\), there is a
    canonical identification
    \[
    (\varphi_*P)_\sigma\simeq P_{\sigma\circ\varphi}.
    \]
    The vector NAH and RH functors are natural under this identification, and
    the resulting comparisons are tensor natural in \(\sigma\).  Tannakian
    reconstruction gives the asserted principal compatibilities.
    \end{proof}

    \paragraph{Canonical realization datum.}
    \label{par:canonical-realization-domain}
    We finally specialize to the object of
    Theorem~\ref{thm:canonical-maximal-psl2-higgs}.  Let \(\mathcal C\) be a
    hyperbolic log--orbi curve and let
    \[
    \theta_{\mathcal C}
    =
    (\theta_{\mathcal C,x})_{x\in D_{\mathrm{orb}}\cup D_{\log}},
    \qquad
    \theta_{\mathcal C,x}
    =
    \kappa_x\varpi^\vee,
    \]
    be its cocharacter system.  The full Dolbeault parameter also includes
    the local Higgs residue computed in that theorem.  In every associated
    representation, write
    \(
    \varphi_{\alpha,x}=s_{\alpha,x}+Y_{\alpha,x}
    \)
    for the Jordan decomposition of the Levi factor of that residue.  At an
    orbifold point the nonzero coarse residue strictly raises the filtration,
    so its Levi image is zero; at a logarithmic point its Levi residue is
    nilpotent.  Hence \(s_{\alpha,x}=0\) in both cases.  The local parameter
    transformation
    \[
    \gamma_x=-(s_{\alpha,x}+\overline{s}_{\alpha,x})
    \]
    therefore gives \(\gamma_x=0\) at every marked point
    \cite[Main Table, p.~3; Proposition~4.21 and the clarification on
    pp.~27--28]{HuangGeorgiosSunZhao22TameparahoricnonabelianHodge}.  The same
    argument holds after every algebraic representation.  We may consequently
    let
    \[
    \boldsymbol\eta_{\mathcal C}
    =
    (\boldsymbol\theta_{\mathcal C},
     \boldsymbol\delta_{\mathcal C},
     \boldsymbol n_{\mathcal C})
    \]
    be the corresponding zero-Betti-weight realization datum: the cocharacter
    component of \(\boldsymbol\theta_{\mathcal C}\) is
    \(\theta_{\mathcal C}\), and its residue component is the one just
    computed.  The Betti component is the full monodromy datum produced by
    realization; it has finite semisimple inertia at orbifold points and
    carries no filtration.  Its logarithmic conjugacy classes are not
    identified here; their parabolic character is proved in
    Theorem~\ref{thm:uniformizing-representation}.

    On a
    square-root chart the maximal Higgs field has the unique invariant line
    \(L=\Theta^{-1}\).  Since \(L^2\simeq\omega_{\mathcal C}^{-1}\), its
    normalized parahoric degree is
    \[
    \operatorname{pdeg}(L)
    =-\frac12\deg\omega_{\mathcal C}<0.
    \]
    Equivalently, the primitive strictly antidominant character of the induced
    Borel reduction has character line
    \(L^{-2}\simeq\Theta^2\simeq\omega_{\mathcal C}\), of strictly positive
    parahoric degree, in agreement with our
    \(\operatorname{pdeg}\ge0\) convention.  The central \(\mu_2\)-twists
    preserve the projective invariant line and act trivially on \(L^{-2}\), so
    this reduction and its degree calculation descend to the canonical
    \(\PSL_2\)-object.  It is therefore stable; since
    \(X^*(\PSL_2)=0\), the principal degree-zero condition is automatic.  Thus
    \[
    (\mathcal U_{\mathcal C},\vartheta_{\mathcal C})
    \in
    \Higgs_{\PSL_2}(\mathcal C,
    \boldsymbol\theta_{\mathcal C})_{\mathrm{poly},0}.
    \]

\section{Categorical Uniformization}
\label{sec:categorical-uniformization}

    The previous sections constructed the realization package for principal
    log--orbi objects, with parahoric structures used only for their
    coarse-curve local data:
    \[
    \text{Dolbeault}
    \;\simeq\;
    \text{de Rham}
    \;\simeq\;
    \text{Betti}.
    \]
    We now apply this package to the canonical maximal
    \(\PSL_2\)-Higgs object
    \(
    (\mathcal U_{\mathcal C},\vartheta_{\mathcal C})
    \)
    constructed in Subsection~\ref{subsec:canonical-maximal-psl2}.  We first
    reconstruct its projective flat connection and Betti local system; their
    uniformizing interpretation is proved in
    Theorem~\ref{thm:uniformizing-representation}.

    The analytic input is the maximal-Higgs period-map argument in its compact,
    tame, and orbifold local forms
    \cite{Hitchin87TheselfdualequonRiemansurface,
    Simpson88constrctingVHSusingYMandapptouniformization,
    Sim90harmobdonnoncomcurves,BenBrian95orbiRSandYMHeuq}.

    The reconstruction below uses this intrinsic projective object together
    with its functorial Tannakian realizations.

\subsection{Betti realization of the maximal object}
\label{subsec:betti-realization-maximal-object}

    Let \(\mathcal C\) be a hyperbolic log--orbi curve, and retain the
    corresponding canonical realization datum
    \(\boldsymbol\eta_{\mathcal C}\) from
    Paragraph~\ref{par:canonical-realization-domain}, which proves
    \[
    (\mathcal U_{\mathcal C},\vartheta_{\mathcal C})
    \in
    \Higgs_{\PSL_2}(\mathcal C,
    \boldsymbol\theta_{\mathcal C})_{\mathrm{poly},0}.
    \]
    This subsection only constructs the de Rham and Betti realizations of this
    object and records their finite \'etale functoriality.  The reality,
    discreteness, faithfulness, and finite-covolume properties of the Betti
    representation are proved in the next subsection.

    \begin{proposition}
    \label{prop:Betti-realization-maximal-object}
    The canonical maximal \(\PSL_2\)-Higgs object
    \(
    (\mathcal U_{\mathcal C},\vartheta_{\mathcal C})
    \)
    has a de Rham realization
    \[
    (\mathcal P_{\mathcal C},\nabla_{\mathcal C})
    \in
    \MIC_{\PSL_2}(\mathcal C,
    \boldsymbol\delta_{\mathcal C})_{\mathrm{red}},
    \]
    and, after choosing \(c\in C^\circ\), a Betti realization
    \[
    \rho_{\mathcal C}:
    \pi_1^{\mathrm{orb}}(\mathcal C,c)
    \longrightarrow
    \PSL_2(\mathbb C),
    \]
    with local monodromy datum \(\boldsymbol n_{\mathcal C}\), well-defined up to
    \(\PSL_2(\mathbb C)\)-conjugacy.
    \end{proposition}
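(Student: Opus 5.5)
The plan is to apply Theorem~\ref{thm:tannakian-parahoric-realization} with \(G=\PSL_2\) and the canonical datum \(\boldsymbol\eta_{\mathcal C}=(\boldsymbol\theta_{\mathcal C},\boldsymbol\delta_{\mathcal C},\boldsymbol n_{\mathcal C})\) of Paragraph~\ref{par:canonical-realization-domain}. The first step is to verify that this datum meets the theorem's hypotheses.

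\emph{Admissibility at orbifold points.} Since \(\theta_{\mathcal C,x}=(1-\tfrac1{m_x})\varpi^\vee\), we have \(m_x\theta_{\mathcal C,x}=(m_x-1)\varpi^\vee\in X_*(T_{\mathrm{ad}})\).

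\emph{Local types.} Theorem~\ref{thm:canonical-maximal-psl2-higgs} shows the object is of orbifold algebraic type: it is regular on the stack chart, and its coarse residue lies in \(\mathfrak g_{>0}\). At logarithmic points it is adjusted, with residue \(F\in\mathfrak g_1(\varpi^\vee)\).

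\emph{Zero Betti weight.} The paragraph computed \(\gamma_x=0\) at every marked point, and this holds in every associated representation.

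Since the paragraph also establishes \((\mathcal U_{\mathcal C},\vartheta_{\mathcal C})\in\Higgs_{\PSL_2}(\mathcal C,\boldsymbol\theta_{\mathcal C})_{\mathrm{poly},0}\), the first equivalence of Theorem~\ref{thm:tannakian-parahoric-realization} gives \((\mathcal P_{\mathcal C},\nabla_{\mathcal C}):=\operatorname{NAH}_{\PSL_2}(\mathcal U_{\mathcal C},\vartheta_{\mathcal C})\in\MIC_{\PSL_2}(\mathcal C,\boldsymbol\delta_{\mathcal C})_{\mathrm{red}}\). Concretely, one takes the tensor functor \(V\mapsto\operatorname{NAH}_{\boldsymbol\eta_{\mathcal C,V}}(\mathcal U_V,\vartheta_V)\) and reconstructs via Proposition~\ref{prop:tannakian-reconstruction-principal-realizations}. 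Here \(\boldsymbol\delta_{\mathcal C}\) is defined as the de Rham transform of \(\boldsymbol\theta_{\mathcal C}\). In a faithful representation, say the adjoint one \(\mathfrak{sl}_2\) (which is faithful for \(\PSL_2\)), Simpson's table with \(b=c=0\) determines its de Rham jumps as equal to the Dolbeault jumps.

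Next I would apply the RH equivalence to obtain an unbased reductive \(\PSL_2\)-local system with local monodromy datum \(\boldsymbol n_{\mathcal C}\). Choosing \(c\in C^\circ\) and a lift to the universal orbifold cover (Definition~\ref{def:discrete-orbifold-fundamental-group}), the monodromy functor \(\mathrm{Mon}_c\) of Theorem~\ref{thm:vector-parahoric-nah-rh} presents it as \(\rho_{\mathcal C}\colon\pi_1^{\mathrm{orb}}(\mathcal C,c)\to\PSL_2(\mathbb C)\). The homomorphism factors through the orbifold group because, at each orbifold point, \(T_x^{m_x}=1\); this follows from regular equivariance, as in the proof of Theorem~\ref{thm:vector-parahoric-nah-rh}. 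The eigenvalues are consistent with this: \(\exp(\mp2\pi i(1-1/m_x))\) in the adjoint representation, a primitive \(m_x\)-th root of unity. Changing the lift or the base point acts by an inner automorphism of the orbifold group, hence by \(\PSL_2(\mathbb C)\)-conjugation. Changing the trivialization of the fiber at \(c\) is likewise conjugation. So \(\rho_{\mathcal C}\) is well defined up to conjugacy, and it is reductive with local datum \(\boldsymbol n_{\mathcal C}\).

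The main obstacle I expect is the precise compatibility of the canonical datum with the hypotheses of the realization theorem. In particular, one must check that the residue components (the nonzero coarse residue at orbifold points and the nilpotent residue at cusps) yield an \emph{admissible} compatible triple, with \(\boldsymbol\delta_{\mathcal C}\) carrying the correct resonant extension data. I would handle this by working in the faithful adjoint representation. There the local blocks are Simpson's standard rank-two model: at a cusp its symmetric square, and at an orbifold point the \(\mu_{m}\)-equivariant model pushed to the coarse curve via Proposition~\ref{prop:log-orbi-parahoric-dictionary}. One then defines \(\boldsymbol\delta_{\mathcal C},\boldsymbol n_{\mathcal C}\) as the transforms of that full periodic datum, rather than prescribing them independently. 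By construction this makes the triple compatible, and the remaining assertions follow formally from the cited equivalences.
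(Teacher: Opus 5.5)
Your proposal is correct and follows the paper's route. The paper's proof is exactly the specialization of Theorem~\ref{thm:tannakian-parahoric-realization} to \(G=\PSL_2\), using the canonical datum \(\boldsymbol\eta_{\mathcal C}\) and the stable degree-zero membership from Paragraph~\ref{par:canonical-realization-domain}, with conjugacy-invariance coming from the intrinsic unbased local system; your additional checks (admissibility, zero Betti weight, the adjoint-representation eigenvalues, and taking \(\boldsymbol\delta_{\mathcal C},\boldsymbol n_{\mathcal C}\) as transforms rather than independent prescriptions) simply spell out what the paper packs into that paragraph.
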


    \begin{proof}
    This is the specialization of
    Theorem~\ref{thm:tannakian-parahoric-realization} to \(G=\PSL_2\), the
    datum \(\boldsymbol\eta_{\mathcal C}\), and the stable degree-zero
    membership proved in
    Paragraph~\ref{par:canonical-realization-domain}.  The two realization
    functors give the stated de Rham and ordinary Betti objects; tensor
    naturality makes the unbased local system, and hence its conjugacy class,
    intrinsic.
    \end{proof}

    \begin{proposition}
    \label{prop:Betti-realization-functoriality}
    Let \(f:\mathcal C_1\to\mathcal C_2\) be a finite \'etale morphism of
    hyperbolic log--orbi curves, and let \(\mathbb V_{\mathcal C_i}\) denote
    the unbased principal \(\PSL_2\)-local system reconstructed above.  There
    is a natural isomorphism
    \[
    f^*\mathbb V_{\mathcal C_2}\simeq\mathbb V_{\mathcal C_1}.
    \]
    After choosing base points \(c_i\in C_i^\circ\) and a connecting
    path from \(f(c_1)\) to \(c_2\), the induced homomorphism \(f_*\) satisfies
    \[
    [\rho_{\mathcal C_1}]
    =
    [\rho_{\mathcal C_2}\circ f_*].
    \]
    \end{proposition}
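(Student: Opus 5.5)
The natural route is to chain three compatibilities that are already in place: the Dolbeault pullback isomorphism \(\alpha_f\) of Proposition~\ref{prop:maximal-psl2-functoriality}, the commutation of NAH and RH with finite \'etale pullback from Proposition~\ref{prop:realization-finite-etale-pullback}, and the identification of the Betti pullback with precomposition by \(f_*\).

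\emph{Step 1: match the realization data.} By Proposition~\ref{prop:realization-finite-etale-pullback}, \(f^*\boldsymbol\eta_{\mathcal C_2}\) has cocharacter components \(e_y\kappa_x\varpi^\vee\). These differ from \(\kappa_y\varpi^\vee\) by the integer \(e_y-1\), and this is exactly the integral shift absorbed by \(f^*\omega_{\mathcal C_2}\simeq\omega_{\mathcal C_1}\). The residue components pull back to \(e_y\) times the computed residues. Both have zero Levi semisimple part, so \(f^*\boldsymbol\eta_{\mathcal C_2}\) and \(\boldsymbol\eta_{\mathcal C_1}\) are the same zero-Betti-weight realization datum, up to this canonical lattice identification.

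\emph{Step 2: transport \(\alpha_f\) through the realizations.} Apply \(\operatorname{RH}\circ\operatorname{NAH}\) to
\[
\alpha_f\colon f^*(\mathcal U_{\mathcal C_2},\vartheta_{\mathcal C_2})\xrightarrow{\sim}(\mathcal U_{\mathcal C_1},\vartheta_{\mathcal C_1}).
\]
The commutation isomorphisms \(f^*\circ\operatorname{NAH}\simeq\operatorname{NAH}\circ f^*\) and \(f^*\circ\operatorname{RH}\simeq\operatorname{RH}\circ f^*\) then give
\[
f^*\mathbb V_{\mathcal C_2}\simeq\operatorname{RH}\operatorname{NAH}\bigl(f^*(\mathcal U_{\mathcal C_2},\vartheta_{\mathcal C_2})\bigr)\simeq\mathbb V_{\mathcal C_1}.
\]
Naturality is inherited from three sources: the coherence \(\alpha_{g\circ f}=\alpha_f\circ f^*\alpha_g\), the pullback naturality of the comparison isomorphisms stated in Proposition~\ref{prop:realization-finite-etale-pullback}, and Tannakian full faithfulness. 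Full faithfulness reduces every check to associated representations, where the vector statements apply.

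\emph{Step 3: pass to based representations.} Choose \(c_1\in C_1^\circ\) and a path \(\gamma\) from \(f(c_1)\) to \(c_2\).
\begin{itemize}
\item[(a)] The monodromy of \(f^*\mathbb V_{\mathcal C_2}\) at \(c_1\) is the monodromy of \(\mathbb V_{\mathcal C_2}\) at \(f(c_1)\) composed with \(f_*\). Transport along \(\gamma\) identifies the latter with \(\rho_{\mathcal C_2}\) at \(c_2\), up to conjugation by the parallel transport.
\item[(b)] The isomorphism of Step 2 identifies the fibres at \(c_1\) equivariantly. This gives \(\rho_{\mathcal C_1}=h(\rho_{\mathcal C_2}\circ f_*)h^{-1}\) for some \(h\in\PSL_2(\mathbb C)\).
\item[(c)] Changing choices alters both sides only by inner automorphisms, so the identity of conjugacy classes is well defined.
\end{itemize}

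\emph{Main obstacle.} The delicate point is Step 1: showing the pulled-back realization datum is literally the canonical datum on \(\mathcal C_1\). The difficulty sits at orbifold points whose preimages have order one, and at logarithmic points where \(e_y\kappa_x\ne\kappa_y\). There the unnormalized periodic types differ by an integral shift. I would resolve this through the auxiliary-divisor convention of Corollary~\ref{cor:log-orbi-parahoric-functoriality}, checking that the elementary modification induced by \(e_y-1\) is precisely the one implementing \(f^*\omega_{\mathcal C_2}\simeq\omega_{\mathcal C_1}\).
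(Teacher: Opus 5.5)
Your proposal is correct and follows the paper's proof: you transport $\alpha_f$ from Proposition~\ref{prop:maximal-psl2-functoriality} through the NAH/RH pullback comparisons of Proposition~\ref{prop:realization-finite-etale-pullback}, absorb base-point and path choices into conjugation, and get naturality from the coherence of $\alpha_f$ and of realization pullback. Your Step~1, which matches $f^*\boldsymbol\eta_{\mathcal C_2}$ with $\boldsymbol\eta_{\mathcal C_1}$ via the integral shift $e_y\kappa_x-\kappa_y=e_y-1$ and $f^*\omega_{\mathcal C_2}\simeq\omega_{\mathcal C_1}$, is the same check the paper makes, except that the paper places it inside the proof of Proposition~\ref{prop:realization-finite-etale-pullback} rather than in this proof.
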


    \begin{proof}
    Proposition~\ref{prop:maximal-psl2-functoriality} gives
    \[
    f^*(\mathcal U_{\mathcal C_2},\vartheta_{\mathcal C_2})
    \simeq
    (\mathcal U_{\mathcal C_1},\vartheta_{\mathcal C_1}).
    \]
    Applying the pullback comparison of
    Proposition~\ref{prop:realization-finite-etale-pullback} gives the unbased
    local-system isomorphism and hence the displayed conjugacy-class equality.
    Changing a base point or connecting path changes \(f_*\) by an inner
    automorphism, which conjugacy absorbs.  The coherence of the isomorphisms
    \(\alpha_f\) and of realization pullback shows that these identifications
    respect identities and composition.
    \end{proof}

    \subsection{Reality, discreteness, and finite covolume}
    \label{subsec:reality-discreteness-finite-covolume}

    We now identify the complex Betti representation constructed in
    Proposition~\ref{prop:Betti-realization-maximal-object} with the classical
    uniformizing representation.  This identification uses only the
    Hitchin--Simpson maximality and period-map argument; it does not use the
    categorical uniformization theorem, which will be proved only afterwards.

    \begin{theorem}[Uniformizing representation]
    \label{thm:uniformizing-representation}
    Let \(\mathcal C\) be a hyperbolic log--orbi curve, let
    \(\mathcal C^\circ\) be its open analytic orbifold obtained by removing the
    logarithmic points, and choose \(c\in C^\circ\).  A representative
    of the Betti conjugacy class
    \[
    \rho_{\mathcal C}:
    \pi_1^{\mathrm{orb}}(\mathcal C,c)
    \longrightarrow
    \PSL_2(\mathbb C)
    \]
    associated with the canonical maximal \(\PSL_2\)-Higgs object is conjugate
    into
    \[
    \PU(1,1)\simeq \PSL_2(\mathbb R).
    \]
    The representation is faithful, and its image is discrete and cofinite.
    After choosing such a conjugation and an orientation-preserving Cayley
    identification \(\mathbb D\simeq\mathbb H\), write
    \[
    \Gamma_{\mathcal C}
    :=
    \rho_{\mathcal C}
    \bigl(\pi_1^{\mathrm{orb}}(\mathcal C,c)\bigr)
    \subset \PSL_2(\mathbb R)
    \]
    for the resulting chosen subgroup.  Its \(\PSL_2(\mathbb R)\)-conjugacy
    class is independent of these choices and is the uniformizing cofinite
    Fuchsian lattice of \(\mathcal C\).  The associated projective period map
    identifies the analytic orbifold universal cover with the disk by an
    orientation-preserving biholomorphic isometry:
    \[
    \widetilde{\mathcal C^\circ}\xrightarrow{\sim}\mathbb D.
    \]
    \end{theorem}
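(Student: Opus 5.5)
The plan is to run the Hitchin--Simpson maximal-Higgs period-map argument on the harmonic metric furnished by Theorem~\ref{thm:tannakian-parahoric-realization}, and then read off discreteness, faithfulness and cofiniteness from a local isometry that is a covering.

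\textbf{Step 1: reality via the $\mathbb C^*$-action.} Since $(\mathcal U_{\mathcal C},\vartheta_{\mathcal C})$ is stable of degree zero (Paragraph~\ref{par:canonical-realization-domain}), the realization produces a tame harmonic reduction $h$. First show that the object is fixed by $t\cdot\vartheta=t\vartheta$. On each square-root chart the gauge $\mathrm{diag}(t^{1/2},t^{-1/2})$ carries $\vartheta$ to $t\vartheta$. Its sign ambiguity is central, so it descends to $\PSL_2$. By uniqueness of the harmonic metric up to automorphism, $h$ is then compatible with the splitting $\Theta\oplus\Theta^{-1}$. In the standard way this makes the flat connection a variation of Hodge structure of weight one. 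Its monodromy preserves the indefinite form $h|_{\Theta}\oplus(-h|_{\Theta^{-1}})$, which puts $\rho_{\mathcal C}$ into $\PU(1,1)$. All of this can be checked on local charts, where the central $\mu_2$-ambiguity disappears in $\PSL_2$.

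\textbf{Step 2: the period map is a local isometry.} Consider the equivariant period map
\[
\Phi\colon\widetilde{\mathcal C^\circ}\to\PU(1,1)/\mathrm U(1)\simeq\mathbb D.
\]
Its differential is $\vartheta$, which is an isomorphism $\Theta\to\Theta^{-1}\otimes\omega_{\mathcal C}$ by Proposition~\ref{prop:maximal-psl2-functoriality}. Hence $\Phi$ is holomorphic and everywhere immersive, also on orbifold charts, because there the Higgs field is regular and maximal (Theorem~\ref{thm:canonical-maximal-psl2-higgs}). The pullback $g=\Phi^*g_{\mathbb D}$ is then a $\pi_1^{\mathrm{orb}}$-invariant metric of curvature $-1$; in fact it is the Hitchin--Simpson metric $h|_\Theta^{-2}$ on $T=\Theta^{-2}$.

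\textbf{Step 3 (main obstacle): completeness.} The task is to show $g$ is complete and of finite area on $\mathcal C^\circ$. At orbifold points the metric is smooth in the chart, so there $\mathcal C^\circ$ is a complete orbifold cone of angle $2\pi/m_x$. At cusps, use the tame asymptotics of Simpson \cite{Sim90harmobdonnoncomcurves}. The residue is nilpotent and nonzero with trivial weight, so the norm behaves like $|\log|z||$, and $g$ is asymptotic to the Poincaré cusp metric $|dz|^2/(|z|^2\log^2|z|)$, which is complete with finite area. I would try to prove this by comparing with the model $\SL_2$-solution on the punctured disc via the local uniqueness and decay estimates. Once completeness holds, Gauss--Bonnet gives area $=-2\pi\chi_{\mathrm{orb}}(\mathcal C)<\infty$.

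\textbf{Step 4: conclusion.} A local isometry from a complete simply connected hyperbolic orbifold to $\mathbb D$ is a covering, hence a biholomorphic isometry $\widetilde{\mathcal C^\circ}\simeq\mathbb D$. Equivariance then shows that $\rho_{\mathcal C}$ is the deck action. The deck action is faithful and properly discontinuous, so the image is discrete, and its quotient is $\mathcal C^\circ$ with finite area, so the image is cofinite. Local elliptic monodromy of order $m_x$ matches the finite inertia in the datum $\boldsymbol n_{\mathcal C}$, and cusp monodromy is parabolic. After the Cayley transform the image lies in $\PSL_2(\mathbb R)$. Independence of choices holds because the remaining freedoms (base point, conjugation, orientation-preserving Cayley map) change $\Gamma_{\mathcal C}$ only by conjugacy. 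Finally, uniqueness in Theorem~\ref{thm:classical-uniformization-essential-surjectivity} identifies $\Gamma_{\mathcal C}$ with the uniformizing lattice.
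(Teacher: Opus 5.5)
Your proposal is correct and follows essentially the same route as the paper: Simpson's weight-one variation of Hodge structure gives the \(\PU(1,1)\)-reduction, and maximality makes the equivariant period map a holomorphic immersion, including on the cyclic orbifold charts. The orbifold-cone and Poincar\'e-cusp local models then give completeness and finite area, and since a complete local isometry to \(\mathbb D\) is a covering, you obtain faithfulness, discreteness, and cofiniteness.

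The differences are cosmetic. You get the Hodge splitting from the explicit \(\mathbb C^*\)-gauge plus uniqueness of the harmonic metric, whereas the paper glues local polarized length-two lifts using uniqueness and tensor compatibility of the tame harmonic metrics. Your closing appeal to classical uniqueness is unnecessary, since the equivariant identification \(\Gamma_{\mathcal C}\backslash\mathbb D\simeq\mathcal C^\circ\) already exhibits \(\Gamma_{\mathcal C}\) as the uniformizing lattice.
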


    \begin{proof}
    The stable degree-zero membership proved in
    Paragraph~\ref{par:canonical-realization-domain} and
    Proposition~\ref{prop:Betti-realization-maximal-object}
    supply the tame harmonic and Betti realizations used below.  We use
    Simpson's construction of the polarized tame weight-one variation
    of Hodge structure and its projective period map
    \cite[\S8, especially the discussion preceding
    Proposition~8.2]{Simpson88constrctingVHSusingYMandapptouniformization}
    and the corresponding tame boundary analysis
    \cite[Main Theorem, \S7, and
    Theorem~8]{Sim90harmobdonnoncomcurves}.  On a local chart on which a square
    root \(\Theta^{\otimes2}\simeq\omega_{\mathcal C}\) is chosen, the canonical
    object has the standard polarized length-two lift of Hodge type
    \((1,0)+(0,1)\).  Two such square roots differ by a \(\mu_2\)-torsor, so the
    corresponding lifts differ by unitary central signs.  Uniqueness and
    tensor compatibility of the tame harmonic metrics identify the twisted
    local metrics; after projectivization the signs act trivially and preserve
    the polarization.  Consequently the projective polarization, the
    \(\PU(1,1)\)-reduction, and the local period maps descend and glue without
    choosing a global square root.

    Fix \(c\in C^\circ\), a lift to
    \(\widetilde{\mathcal C^\circ}\), and the corresponding representative of
    \(\rho_{\mathcal C}\).  The descended projective period map is then a
    \(\rho_{\mathcal C}\)-equivariant holomorphic map
    \[
    u:\widetilde{\mathcal C^\circ}\longrightarrow
    \mathbb D,
    \qquad
    \mathbb D\simeq \PU(1,1)/U(1).
    \]

    On every square-root chart the maximal lower-left Higgs entry is an
    isomorphism.  This assertion is invariant under the central signs above,
    and under the period-map correspondence that entry is the differential of
    \(u\).  The smooth interior curvature calculation therefore makes \(u\) a
    local isometry for the Poincar\'e metrics, normalized to have curvature
    \(-1\); see
    \cite[Example~1.5 and
    Corollary~4.23]{Hitchin87TheselfdualequonRiemansurface}.  The lower-left
    convention fixes the holomorphic, hence orientation-preserving, branch.

    It remains to check the metric at the two kinds of marked points.  At an
    orbifold point of order \(m\), the local-isometry calculation is made on
    the cyclic chart, where the metric is smooth, and then descends to the
    standard orbifold cone model of angle \(2\pi/m\); see
    \cite[Corollary~3.4 and
    Theorem~6.17]{BenBrian95orbiRSandYMHeuq}.  At a logarithmic point, the
    parabolic and tame local models have Poincar\'e-cusp asymptotics
    \[
    ds^2\asymp
    \frac{|dz|^2}{|z|^2(\log|z|)^2};
    \]
    see
    \cite[Lemma~2.2 and proof of
    Theorem~2.3(2)]{BiswasAresGastesiGovindarajan97ParabolicHiggsTeichmuller}
    and \cite[\S7 and Theorem~8]{Sim90harmobdonnoncomcurves}.  These citations
    supply the respective local models, not a single theorem for the mixed
    curve.  The metric \(u^*g_{\mathbb D}\) on the universal cover is
    deck-invariant and therefore descends to a curvature \(-1\) orbifold metric
    \(g_{\mathcal C}\) on \(\mathcal C^\circ\).  Since the orbifold and
    logarithmic neighborhoods are disjoint, the cited local models together
    with the compact core show that \(g_{\mathcal C}\) is complete and has
    finite area.

    The lifted metric on \(\widetilde{\mathcal C^\circ}\) is therefore complete,
    and \(u\) is a local isometry to the connected disk.  A local isometry
    from a complete connected Riemannian manifold to a connected target is a
    covering, since geodesics lift for all time; hence \(u\) is a covering map.
    Since \(\mathbb D\) is simply connected, \(u\) is an orientation-preserving
    biholomorphic isometry
    \[
    \widetilde{\mathcal C^\circ}\xrightarrow{\sim}\mathbb D.
    \]

    For every \(\gamma\in\pi_1^{\mathrm{orb}}(\mathcal C,c)\), equivariance gives
    \[
    u(\gamma\cdot z)=\rho_{\mathcal C}(\gamma)\,u(z).
    \]
    Hence the chosen representative preserves \(\mathbb D\).  Since \(u\) is
    injective, \(\rho_{\mathcal C}(\gamma)=1\) implies \(\gamma=1\), proving
    faithfulness.  Through \(u\), the image action is conjugate to the properly
    discontinuous deck action, so the image is discrete, and
    \[
    \rho_{\mathcal C}\bigl(\pi_1^{\mathrm{orb}}(\mathcal C,c)\bigr)
    \backslash\mathbb D
    \simeq
    \mathcal C^\circ.
    \]
    The finite area established above gives finite covolume.  After the chosen
    orientation-preserving Cayley identification, the image is the subgroup
    \(\Gamma_{\mathcal C}\subset\PSL_2(\mathbb R)\) in the statement.  Changing
    the basepoint, its lift, the representative of the Betti conjugacy class,
    or the Cayley identification changes this subgroup only by
    \(\PSL_2(\mathbb R)\)-conjugacy.
    \end{proof}

    \begin{remark}[Local monodromy]
    \label{rem:local-monodromy-uniformizing}
    The local monodromy agrees with the log--orbi structure.  At an orbifold
    point of order \(m\), faithfulness shows that the local generator maps to
    an elliptic element of exact order \(m\).  At a logarithmic point, a
    local loop around the point acts as a parabolic, hence projectively
    unipotent, element
    because it generates the stabilizer of a complete finite-area cusp.  Under
    the regular-singular local Riemann--Hilbert dictionary, this is the
    log-type stratum.  This conclusion comes from the preceding complete
    finite-area cusp argument; it is not inferred from adjustedness or from the
    nonzero strictly filtration-raising Dolbeault residue.
    \end{remark}

    The analytic input in Theorem~\ref{thm:uniformizing-representation} is the
    standard Hitchin--Simpson period-map argument for maximal Higgs bundles.
    Independently of this analytic step, the preceding construction produces
    the canonical principal \(\PSL_2\)-object without an \(\SL_2\)-lift, and
    the Tannakian realization transports it functorially to the de Rham and
    Betti sides.  Thus \(\mathcal C\) intrinsically determines a
    \(\PSL_2(\mathbb R)\)-conjugacy class of uniformizing lattices.  After
    choosing a base point, a Betti representative, and a Cayley identification,
    we write \(\Gamma_{\mathcal C}\) for one representative.  The next
    subsection makes compatible choices to construct a quasi-inverse to the
    compactified quotient functor.

    \subsection{The categorical uniformization theorem}
    \label{subsec:categorical-uniformization-theorem}

    For each hyperbolic log--orbi curve \(\mathcal C\), choose the
    orientation-preserving universal-cover identification supplied by
    Theorem~\ref{thm:uniformizing-representation}, and let
    \(\Gamma_{\mathcal C}\subset\PSL_2(\mathbb R)\) be the resulting deck
    group.  If \(f:\mathcal C_1\to\mathcal C_2\) is finite \'etale, a lift of
    its restriction to the open orbifolds is an element
    \(g_f\in\PSL_2(\mathbb R)\) satisfying
    \[
    g_f\Gamma_{\mathcal C_1}g_f^{-1}
    \subset\Gamma_{\mathcal C_2}
    \]
    with finite index.  A different lift left-multiplies \(g_f\) by an element
    of \(\Gamma_{\mathcal C_2}\), so \([g_f]\) is a well-defined morphism in
    \(\mathbf{FL}\).  Lifts of composites multiply; hence these choices define
    a functor
    \[
    U:\mathbf{HypLO}\longrightarrow \mathbf{FL}.
    \]

    \begin{theorem}[Categorical uniformization]
    \label{thm:categorical-uniformization}
    The compactified quotient functor
    \[
    Q_c:\mathbf{FL}\longrightarrow \mathbf{HypLO},
    \qquad
    \Gamma\longmapsto (\Gamma\backslash\mathbb H)^c,
    \]
    is an equivalence of categories.  
    After the preceding choices, a quasi-inverse is
    \[
    U:\mathbf{HypLO}\longrightarrow \mathbf{FL},
    \qquad
    \mathcal C\longmapsto \Gamma_{\mathcal C}.
    \]
    If
    \(
    f:\mathcal C_1\longrightarrow \mathcal C_2
    \)
    is finite \'etale and \(U(f)=[g_f]\), then
    \[
    g_f\Gamma_{\mathcal C_1}g_f^{-1}
    \subset\Gamma_{\mathcal C_2},
    \qquad
    \deg(f)
    =
    [\Gamma_{\mathcal C_2}:
    g_f\Gamma_{\mathcal C_1}g_f^{-1}].
    \]
    In particular, after identifying the source with its conjugate, the
    subgroup direction and degree formula are
    \(
    \Gamma_{\mathcal C_1}\hookrightarrow\Gamma_{\mathcal C_2}
    \)
    and
    \(
    \deg(f)=[\Gamma_{\mathcal C_2}:\Gamma_{\mathcal C_1}]
    \), respectively.
    \end{theorem}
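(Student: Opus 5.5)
The plan is to show directly that \(U\) and \(Q_c\) are mutually quasi-inverse, using Theorem~\ref{thm:uniformizing-representation} to supply the object-level identifications. Essential surjectivity is then immediate, and full faithfulness reduces to covering theory.

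\emph{Step 1: the isomorphism \(Q_c\circ U\simeq\mathrm{id}\).} For each \(\mathcal C\), Theorem~\ref{thm:uniformizing-representation} gives an equivariant biholomorphism \(u_{\mathcal C}\colon\widetilde{\mathcal C^\circ}\to\mathbb D\simeq\mathbb H\). This induces an isomorphism of analytic orbifolds \(\Gamma_{\mathcal C}\backslash\mathbb H\simeq\mathcal C^\circ\). I would extend it across the cusps to an isomorphism \(\epsilon_{\mathcal C}\colon Q_c(\Gamma_{\mathcal C})\xrightarrow{\sim}\mathcal C\) as follows.
\begin{enumerate}
\item By Remark~\ref{rem:local-monodromy-uniformizing}, the logarithmic points correspond bijectively to the parabolic conjugacy classes, and the orbifold orders agree with the elliptic orders.
\item The punctured-disc charts match the standard cusp charts, so the map extends by Riemann's removable singularity theorem.
\item The result is algebraic by GAGA on the proper coarse curve, together with the orbifold structure.
\end{enumerate}
Naturality of \(\epsilon\) holds by construction: \(g_f\) is a lift of \(f|_{\mathcal C_1^\circ}\), so \(\epsilon_{\mathcal C_2}\circ Q_c(U(f))\) and \(f\circ\epsilon_{\mathcal C_1}\) agree on the dense open parts. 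By separatedness they then agree everywhere. The same argument shows \(U\) is a functor: the lifts \(g_f\) exist because \(\widetilde{\mathcal C_1^\circ}\) is simply connected and \(f\) is an orbifold covering on open parts, and different lifts differ by deck transformations.

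\emph{Step 2: the isomorphism \(U\circ Q_c\simeq\mathrm{id}\).} For a lattice \(\Gamma\), the uniformization of \(Q_c(\Gamma)\) produces \(\Gamma_{Q_c(\Gamma)}\) together with a biholomorphism \(\mathbb H\simeq\widetilde{Q_c(\Gamma)^\circ}\xrightarrow{u}\mathbb H\). This lifts the identity of the quotient, so it is some \(h_\Gamma\in\PSL_2(\mathbb R)\) with \(h_\Gamma\Gamma h_\Gamma^{-1}=\Gamma_{Q_c(\Gamma)}\). That \(u\) lies in \(\PSL_2(\mathbb R)\) uses the orientation-preserving conclusion of Theorem~\ref{thm:uniformizing-representation}, since \(\mathrm{Aut}(\mathbb H)=\PSL_2(\mathbb R)\). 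Then \([h_\Gamma]\) is an isomorphism in \(\mathbf{FL}\) with inverse \([h_\Gamma^{-1}]\). Naturality with respect to a morphism \([g]\colon\Gamma_1\to\Gamma_2\) holds because \(h_{\Gamma_2}gh_{\Gamma_1}^{-1}\) is a lift of \(Q_c([g])\), so it coincides with \(g_{Q_c([g])}\) modulo \(\Gamma_{Q_c(\Gamma_2)}\).

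\emph{Step 3: the degree formula and the remaining claims.} The map \(f|_{\mathcal C_1^\circ}\) is an orbifold covering, and it is identified with \(g_f\Gamma_{\mathcal C_1}g_f^{-1}\backslash\mathbb H\to\Gamma_{\mathcal C_2}\backslash\mathbb H\). Counting the fiber over a generic point of \(C_2^\circ\), which has trivial stabilizers, gives the number of cosets. Hence \(\deg f=[\Gamma_{\mathcal C_2}:g_f\Gamma_{\mathcal C_1}g_f^{-1}]\), and this index is finite. Independence of choices up to natural isomorphism follows from the conjugacy statement in Theorem~\ref{thm:uniformizing-representation}: different choices yield conjugating elements \(k\) that assemble into an isomorphism \([k]\) between the two versions of \(U\).

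\emph{Main obstacle.} I expect the hardest step to be the extension across cusps in Step~1, including checking that it produces an isomorphism of log--orbi curves whose morphisms respect the Kummer structures. This uses Definition~\ref{def:etale-morphism-log-orbi} and the restriction equivalence from the proof of Proposition~\ref{prop:FEt-galois}. I would handle it through the latter: both sides are determined by their restrictions to \(\mathcal C^\circ\), and morphisms extend uniquely across the boundary.
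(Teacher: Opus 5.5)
Your proposal is correct and follows essentially the paper's argument. Theorem~\ref{thm:uniformizing-representation} supplies the object-level identifications $Q_cU(\mathcal C)\simeq\mathcal C$ and $U(Q_c(\Gamma))=h_\Gamma\Gamma h_\Gamma^{-1}$, and lifting open-orbifold maps to elements of $\PSL_2(\mathbb R)$ handles morphisms and the degree count; the only differences are that you check naturality of the unit and counit where the paper proves essential surjectivity and full faithfulness of $Q_c$ directly, and that you spell out the extension across cusps which the paper leaves implicit in its ``Hence''.
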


    \begin{proof}
    If \(\mathcal C\in\mathbf{HypLO}\), then, by
    Theorem~\ref{thm:uniformizing-representation}, the period map identifies the
    open orbifold universal cover with \(\mathbb H\) and its deck group with
    \(\Gamma_{\mathcal C}\).  Hence
    \[
    Q_cU(\mathcal C)
    =
    (\Gamma_{\mathcal C}\backslash\mathbb H)^c
    \simeq
    \mathcal C.
    \]

    Conversely, for \(\Gamma\in\mathbf{FL}\), put
    \[
    \mathcal C=(\Gamma\backslash\mathbb H)^c.
    \]
    Theorem~\ref{thm:uniformizing-representation} identifies the Betti
    realization of the canonical maximal \(\PSL_2\)-object on \(\mathcal C\) with
    the classical uniformizing representation of \(\mathcal C\).  Therefore the
    reconstructed lattice \(\Gamma_{\mathcal C}\) is conjugate to \(\Gamma\),
    and the conjugating element represents an isomorphism in \(\mathbf{FL}\).

    For full faithfulness, a class
    \([g]\in\operatorname{Hom}_{\mathbf{FL}}(\Gamma_1,\Gamma_2)\) gives the
    quotient map \([z]\mapsto[gz]\), as in
    Definition~\ref{def:FL}.  Conversely, let
    \[
    f:(\Gamma_1\backslash\mathbb H)^c
    \longrightarrow
    (\Gamma_2\backslash\mathbb H)^c
    \]
    be finite \'etale.  On the open orbifolds,
    \(f\circ q_1:\mathbb H\to\Gamma_2\backslash\mathbb H\) lifts through
    \(q_2\) to an orientation-preserving automorphism
    \(g\in\PSL_2(\mathbb R)\).  Equivariance gives
    \(g\Gamma_1g^{-1}\subset\Gamma_2\), with finite index because \(f\) is
    finite.  Changing the lift left-multiplies \(g\) by a target deck
    transformation, so \(f\) determines the unique coset \(\Gamma_2g\).
    These constructions are inverse.  At a cusp the quotient covering has the
    chart \(t=u^e\), so its compactification is precisely a finite \'etale
    morphism in the log--orbi sense.  This proves full faithfulness.

    The same covering description gives
    \(\deg(f)=[\Gamma_2:g\Gamma_1g^{-1}]\).  Finally, changing the chosen
    uniformizing identification of each \(\mathcal C\) by
    \(a_{\mathcal C}\in\PSL_2(\mathbb R)\) conjugates its lattice; the classes
    \([a_{\mathcal C}]\) form a natural isomorphism between the resulting
    quasi-inverses.  Thus the equivalence is independent of the choices up to
    natural isomorphism.
    \end{proof}

    Thus classical hyperbolic uniformization is promoted from an existence
    theorem to a reconstruction theorem.  The conjugacy class underlying the
    quasi-inverse is reconstructed by the chain
    \[
    \mathcal C
    \longmapsto
    (\mathcal U_{\mathcal C},\vartheta_{\mathcal C})
    \longmapsto
    \rho_{\mathcal C}
    \longmapsto
    \Gamma_{\mathcal C}.
    \]

    The analytic--\'etale comparison has the following uniformizing
    deck-group form.

    \begin{theorem}
    \label{thm:galois-enhancement}
    Let \(\mathcal C\) be a hyperbolic log--orbi curve.  Choose a uniformizing
    universal orbifold covering
    \[
    q_{\mathcal C}:\mathbb H\longrightarrow\mathcal C^\circ
    \]
    and set
    \[
    \Gamma_{\mathcal C}
    :=
    \operatorname{Deck}(q_{\mathcal C})
    \subset
    \operatorname{Aut}^+(\mathbb H)
    =
    \PSL_2(\mathbb R).
    \]
    Then there is an equivalence of
    Galois categories
    \[
    \mathbf{FEt}(\mathcal C)
    \simeq
    \mathbf{FSet}^{\mathrm{cont}}_{\widehat{\Gamma}_{\mathcal C}}.
    \]
    Choose also a geometric base point \(\bar x\to C^\circ\), the corresponding
    analytic base point \(x\), and a lift
    \(\widetilde x\in q_{\mathcal C}^{-1}(x)\).  Then the equivalence matches
    the two fiber functors and induces an isomorphism of profinite groups
    \[
    \pi_1^{\mathrm{\acute et}}(\mathcal C,\bar x)
    \simeq
    \widehat{\Gamma}_{\mathcal C}.
    \]
    \end{theorem}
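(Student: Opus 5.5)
The plan is to obtain the statement by composing the analytic--\'etale comparison of Theorem~\ref{thm:analytic-etale-comparison} with the identification of the discrete orbifold group with the deck group \(\Gamma_{\mathcal C}\) provided by the uniformizing covering. The first step is to observe that \(q_{\mathcal C}\colon\mathbb H\to\mathcal C^\circ\) is a universal analytic orbifold cover. By Theorem~\ref{thm:uniformizing-representation} (after the Cayley identification \(\mathbb D\simeq\mathbb H\)), the period map identifies \(\widetilde{\mathcal C^\circ}\) with \(\mathbb H\) equivariantly. Any other uniformizing covering differs from this one by an element of \(\PSL_2(\mathbb R)\). Hence, by Definition~\ref{def:discrete-orbifold-fundamental-group}, the lift \(\widetilde x\) gives an isomorphism
\[
\pi_1^{\mathrm{orb}}(\mathcal C,x)\xrightarrow{\sim}\operatorname{Deck}(q_{\mathcal C})=\Gamma_{\mathcal C},
\]
sending a path class to the deck transformation carrying \(\widetilde x\) to the endpoint of its lift. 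This is the same identification that \(\rho_{\mathcal C}\) realizes up to conjugacy.

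Next, I would pass to profinite completions. I would apply Theorem~\ref{thm:analytic-etale-comparison} with the base points \(\bar x\), \(x\), \(\widetilde x\) to get
\[
\pi_1^{\et}(\mathcal C,\bar x)\simeq\widehat{\pi_1^{\mathrm{orb}}(\mathcal C,x)}\simeq\widehat\Gamma_{\mathcal C}.
\]
For the categorical statement, I would describe the functor directly. A finite \'etale cover \(\mathcal D\to\mathcal C\) is restricted to \(\mathcal C^\circ\) as in the proof of Proposition~\ref{prop:FEt-galois}. By the Riemann existence step in Theorem~\ref{thm:analytic-etale-comparison}, it then becomes a finite topological covering stack of \(\mathcal C^\circ\). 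Pulling that back along \(q_{\mathcal C}\) gives a trivial cover of the simply connected \(\mathbb H\), so the cover is \(\mathbb H\times^{\Gamma_{\mathcal C}}S\) for the finite \(\Gamma_{\mathcal C}\)-set \(S=\mathcal D_x\), the fibre identified via \(\widetilde x\). Since \(S\) is finite, the action factors through a finite quotient, so it is continuous for \(\widehat\Gamma_{\mathcal C}\). Conversely, a finite continuous \(\widehat\Gamma_{\mathcal C}\)-set \(S\) gives \((\Gamma_{\mathcal C}\cap\text{stabilizers})\backslash\mathbb H\)-type quotients. Concretely, its orbits give finite-index subgroups \(\Gamma'\subset\Gamma_{\mathcal C}\), which are Fuchsian lattices. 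Then \(Q_c(\Gamma')\to\mathcal C\) is the finite \'etale cover produced by Theorem~\ref{thm:categorical-uniformization}, and the degree formula there matches orbit cardinalities. Full faithfulness is the full-faithfulness part of that theorem, or equivalently the covering-space classification. The equivalence matches the fibre functor \(F_{\bar x}\) with the forgetful functor to \(\mathbf{FSet}\), by construction via \(\widetilde x\). Taking automorphism groups of the fibre functors and using the Galois-category formalism of Proposition~\ref{prop:FEt-galois} then gives the group isomorphism, compatible with the previous paragraph.

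The main obstacle I expect is the bookkeeping at the special points rather than any new geometry. One issue is that a finite cover of \(\mathcal C^\circ\) coming from \(\Gamma'\subset\Gamma_{\mathcal C}\) must be representable at the elliptic points, i.e.\ the inertia maps must be injective; this holds because stabilizers in \(\Gamma'\) are subgroups of stabilizers in \(\Gamma_{\mathcal C}\). The other is that the cusp extension must be Kummer \'etale. I would handle both with the local charts \([\Delta_w/\mu_{m_y}]\to[\Delta_z/\mu_{m_x}]\) and \(t=u^e\) of Definition~\ref{def:etale-morphism-log-orbi}, as in the proof of Theorem~\ref{thm:categorical-uniformization}. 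Independence of choices is the remaining point: changing \(\widetilde x\) or \(q_{\mathcal C}\) alters the isomorphism by an inner automorphism or a \(\PSL_2(\mathbb R)\)-conjugation, as already recorded in Theorem~\ref{thm:analytic-etale-comparison}.
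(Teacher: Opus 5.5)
Your proposal is correct and follows the paper's argument. The lift \(\widetilde x\) identifies \(\pi_1^{\mathrm{orb}}(\mathcal C,x)\) with \(\Gamma_{\mathcal C}\), Theorem~\ref{thm:analytic-etale-comparison} then identifies \(\mathbf{FEt}(\mathcal C)\) with finite \(\Gamma_{\mathcal C}\)-sets, which are exactly the finite continuous \(\widehat\Gamma_{\mathcal C}\)-sets, and the compatible base point and lift match the fiber functors, giving the group isomorphism. Your explicit inverse via \(Q_c\) of finite-index subgroups and your bookkeeping at elliptic points and cusps are already absorbed into Theorem~\ref{thm:analytic-etale-comparison} and Proposition~\ref{prop:FEt-galois}, so they are unnecessary. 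For full faithfulness, the covering-space classification is the cleaner justification: Theorem~\ref{thm:categorical-uniformization} only treats connected objects and morphisms that are not a priori over \(\mathcal C\).
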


    \begin{proof}
    The lift \(\widetilde x\) identifies
    \(\pi_1^{\mathrm{orb}}(\mathcal C,x)\) with the deck group
    \(\Gamma_{\mathcal C}\).  Theorem~\ref{thm:analytic-etale-comparison}
    therefore identifies finite \(\Gamma_{\mathcal C}\)-sets with
    \(\mathbf{FEt}(\mathcal C)\).  Since every finite action factors through
    a finite quotient, these are precisely the finite continuous
    \(\widehat{\Gamma}_{\mathcal C}\)-sets.

    With the compatible base point and lift, this equivalence sends the
    geometric fiber functor to the forgetful functor on finite
    \(\widehat{\Gamma}_{\mathcal C}\)-sets, giving the stated group
    isomorphism.  Changing the lift or a connecting path changes that
    isomorphism by an inner automorphism; changing the chosen universal cover
    or its identification with \(\mathbb H\) conjugates the literal subgroup
    \(\Gamma_{\mathcal C}\subset\PSL_2(\mathbb R)\).
    \end{proof}

    \begin{remark}[Subgroup dictionary]
    \label{rem:subgroup-dictionary}
    Under the equivalence of Theorem~\ref{thm:galois-enhancement}, connected finite \'etale covers of \(\mathcal C\) correspond to conjugacy classes of finite-index subgroups
    \(
    H\le \Gamma_{\mathcal C}.
    \)
    The corresponding cover is
    \[
    (H\backslash\mathbb H)^c
    \longrightarrow
    (\Gamma_{\mathcal C}\backslash\mathbb H)^c
    \simeq
    \mathcal C.
    \]
    It is Galois precisely when \(H\triangleleft \Gamma_{\mathcal C}\), in which case the deck group is
    \(
    \Gamma_{\mathcal C}/H.
    \)
    \end{remark}

    \section{Examples and applications}
    \label{sec:examples-moduli}

    We record two standard illustrations of categorical uniformization: the
    identification of marked Teichmüller space with the relative Fuchsian
    component, and the Gauss--Schwarz equation for a hyperbolic triangle
    orbifold.  Neither supplies foundational input to the main theorem; the
    reconstruction identifies the canonical Betti object in each picture.

    \subsection{Teichm\"uller space and the Fuchsian component}
    \label{subsec:teichmuller-fuchsian-component}

    Fix a connected oriented topological log--orbifold surface
    \[
    \Sigma=
    \bigl(
    |\Sigma|;(p_1,n_1),\ldots,(p_r,n_r);q_1,\ldots,q_s
    \bigr)
    \]
    with negative orbifold Euler characteristic.  Thus \(\Sigma\) is of
    hyperbolic type.  Choose a regular base point
    \(s_0\in |\Sigma|\setminus\{p_1,\ldots,p_r,q_1,\ldots,q_s\}\), together with
    positively oriented loops \(c_i\) and \(d_j\), based at \(s_0\)
    along fixed paths, around \(p_i\) and \(q_j\), respectively.  Let
    \(\operatorname{Teich}(\Sigma)\) denote the marked Teichmüller space of
    complete finite-area hyperbolic orbifold structures on \(\Sigma\), with cone
    angle \(2\pi/n_i\) at \(p_i\) and cusps at \(q_j\), marked by
    orientation-preserving orbifold homeomorphisms that respect the labeled cone
    points and cusps.  We emphasize that this is the marked Teichmüller space,
    not the unmarked moduli space.

    Let
    \[
    X_{\mathrm{Fuch}}(\Sigma)
    \subset
    \operatorname{Hom}\bigl(\pi_1^{\mathrm{orb}}(\Sigma,s_0),\PSL_2(\mathbb R)\bigr)
    /\PSL_2(\mathbb R)
    \]
    be the orientation-compatible relative Fuchsian locus: its points are the
    conjugacy classes of discrete, faithful, cofinite representations \(\rho\)
    such that \(\rho(c_i)\) lies in the positive elliptic conjugacy class of angle
    \(2\pi/n_i\), \(\rho(d_j)\) lies in the positive parabolic conjugacy class
    represented by \(z\mapsto z+1\), and the marking of the quotient orbifold
    induced by \(\rho\) is orientation preserving.

    \begin{theorem}
    \label{thm:teichmuller-fuchsian-component}
    The holonomy construction defines a real-analytic diffeomorphism
    \[
    \operatorname{hol}:
    \operatorname{Teich}(\Sigma)
    \xrightarrow{\sim}
    X_{\mathrm{Fuch}}(\Sigma).
    \]
    \end{theorem}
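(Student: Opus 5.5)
I will construct the map \(\operatorname{hol}\), show it is a bijection, and obtain real-analyticity and a real-analytic inverse from the deformation theory of the relative Fuchsian locus.

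\textbf{Construction and well-definedness.} A point of \(\operatorname{Teich}(\Sigma)\) is a complete finite-area hyperbolic orbifold structure on \(\Sigma\) together with a marking. Its developing map gives a holonomy representation
\[
\rho\colon\pi_1^{\mathrm{orb}}(\Sigma,s_0)\to\PSL_2(\mathbb R),
\]
well defined up to conjugation. Equivalently, one may pass to the underlying hyperbolic log--orbi curve and apply Theorem~\ref{thm:uniformizing-representation}: this identifies \(\rho\) with the Betti realization \(\rho_{\mathcal C}\) and shows it is discrete, faithful and cofinite. Remark~\ref{rem:local-monodromy-uniformizing} places \(\rho(c_i)\) in the elliptic class of angle \(2\pi/n_i\) and \(\rho(d_j)\) in the parabolic class. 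Orientation-preservation of the marking, together with the holomorphic branch of the period map, selects the positive classes. Isotopic markings give conjugate representations, so \(\operatorname{hol}\) is well defined.

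\textbf{Bijectivity.} For injectivity, suppose two structures have conjugate holonomy. Conjugating, the developing maps become \(\rho\)-equivariant isometries \(\widetilde\Sigma\to\mathbb H\). They descend to an isometry of the quotients \(\rho(\pi_1^{\mathrm{orb}})\backslash\mathbb H\) compatible with the markings up to isotopy. Hence the two marked structures coincide.

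For surjectivity, take \(\rho\in X_{\mathrm{Fuch}}(\Sigma)\). The quotient \(\rho(\pi_1^{\mathrm{orb}})\backslash\mathbb H\) is a complete finite-area hyperbolic orbifold. By the prescribed classes it has the right cone angles and cusps, i.e.\ the signature of \(\Sigma\). Since the group is isomorphic to \(\pi_1^{\mathrm{orb}}(\Sigma)\) and the peripheral loops map to the right types, the Dehn--Nielsen--Baer theorem for orbifolds realizes this isomorphism by an orbifold homeomorphism. The orientation condition in the definition of \(X_{\mathrm{Fuch}}\) makes this homeomorphism orientation preserving, so it is a marking. This marked structure is a preimage of \(\rho\).

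\textbf{Analyticity (the main obstacle).} This is the hard step. I would use the Ehresmann--Thurston principle: holonomy is a local homeomorphism from deformations of \((\PSL_2(\mathbb R),\mathbb H)\)-structures to the relative representation variety, where the peripheral conjugacy classes are fixed. The relative Fuchsian locus is open in the relative character variety, because discreteness and faithfulness are open for cofinite groups with fixed parabolic boundary classes (Weil rigidity / Kazhdan--Margulis openness). At a Fuchsian point the relative character variety is smooth of dimension
\[
6g-6+2r+2s,
\]
by Weil's computation \(H^1_{\mathrm{par}}(\pi_1^{\mathrm{orb}},\mathfrak{sl}_2)\) and the vanishing of \(H^0\) and \(H^2_{\mathrm{par}}\), since \(\rho\) is irreducible. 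I take the real-analytic structure on \(\operatorname{Teich}(\Sigma)\) to be the one pulled back via Fenchel--Nielsen coordinates. These coordinates are trace functions of holonomy of simple closed curves in a pants-type decomposition, and they are analytic in \(\rho\); conversely the holonomy is analytic in these coordinates by the explicit gluing construction. Hence the bijection \(\operatorname{hol}\) is a real-analytic local diffeomorphism, and therefore a real-analytic diffeomorphism.
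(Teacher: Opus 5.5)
Your argument is correct in outline, but at the analyticity step it takes a different route from the paper.

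\textbf{Bijectivity.} This part agrees with the paper in substance. The paper also passes from a developing pair to a discrete, faithful, cofinite holonomy in the positive local classes, and goes back via the marked quotient by the image; it packages this as the objectwise quasi-inverse of Theorem~\ref{thm:categorical-uniformization}. You are more explicit: you use Dehn--Nielsen--Baer to produce the marking and a Baer-type isotopy argument for injectivity. The paper instead builds ``the marking induced by $\rho$'' into the definition of $X_{\mathrm{Fuch}}(\Sigma)$.

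\textbf{Analyticity.} The paper constructs no chart. It imports the local statement that the relative holonomy map, with local conjugacy classes fixed, is a local real-analytic diffeomorphism (Hitchin for $r+s=0$; Mondello for $r+s>0$, with hypotheses checked via $2\pi/n_i\le\pi$). Bijectivity then upgrades this to a global diffeomorphism. You instead:
\begin{enumerate}
\item fix the real-analytic structure on $\operatorname{Teich}(\Sigma)$ by Fenchel--Nielsen coordinates;
\item get the manifold structure of $X_{\mathrm{Fuch}}(\Sigma)$ from Weil's parabolic cohomology count together with openness;
\item reduce analyticity of $\operatorname{hol}$ and its inverse to analyticity of the gluing map and of the coordinates as functions of $\rho$.
\end{enumerate}
This gives an explicit global chart and a concrete meaning for ``real-analytic'' on the Teichm\"uller side. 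With this convention your Ehresmann--Thurston paragraph is not actually needed. The cost is that the pants machinery must work for every signature, which the paper's citation absorbs.

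\textbf{Repairs needed in your analyticity step.}
\begin{enumerate}
\item Fenchel--Nielsen coordinates are not all trace functions of the pants curves. Those traces give only the length parameters. The twists must be read off from traces of transverse curves or from the relative position of the axes. These still depend analytically on $\rho$ near a Fuchsian point, so your claim survives, but not for the reason you give.
\item Generalized pants degenerate at order-$2$ cone points. A disc with two cone points of angle $\pi$ and geodesic boundary has orbifold Euler characteristic $0$, so it carries no hyperbolic structure. For a signature such as $(0;2,2,2,3;0)$, every decomposition contains such a piece. There you need either a modified chart (the distance between the two order-$2$ points plus a fold parameter), or a passage to a torsion-free finite-index cover, realizing $\operatorname{Teich}(\Sigma)$ as the fixed locus of a finite group acting on an ordinary Teichm\"uller space.
\item The openness you need is Weil's openness theorem, in its relative form with parabolic classes fixed. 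It is not Weil rigidity, which fails for Fuchsian groups.
\item Remark~\ref{rem:local-monodromy-uniformizing} only gives exact order $n_i$. The rotation angle $2\pi/n_i$, rather than $2\pi k/n_i$, must come from your developing-map argument.
\end{enumerate}
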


    \begin{proof}
    A marked complete hyperbolic structure has an orientation-preserving
    developing pair; its holonomy factors through
    \(\pi_1^{\mathrm{orb}}(\Sigma,s_0)\), and changing the pair conjugates the
    representation.  Completeness and finite area make the holonomy discrete,
    faithful, and cofinite, while the oriented local models give the prescribed
    positive elliptic and parabolic classes.  Conversely, a point of
    \(X_{\mathrm{Fuch}}(\Sigma)\) yields the marked quotient by its image, so
    the two constructions are inverse.

    For \(r+s=0\), this is the classical real-analytic identification with the
    oriented Teichm\"uller component for \(\PSL_2(\mathbb R)\)
    \cite[Theorem~7.5 and the following paragraph]
    {Hitchin92LiegpsandTeichSp}.  For \(r+s>0\), the relative holonomy map is
    a local real-analytic diffeomorphism with the local conjugacy classes fixed
    \cite[Theorems~3.4 and~4.3; Corollary~4.4(1)]
    {Mondello2010PoissonConical}; its hypotheses hold because the cone angles
    are \(2\pi/n_i\leq\pi\) and the cusp angles are zero.  The objectwise
    quasi-inverse in Theorem~\ref{thm:categorical-uniformization}, with the
    marking retained as above, shows that \(\operatorname{hol}\) is bijective
    and hence a global real-analytic diffeomorphism.
    \end{proof}

    \begin{remark}
    \label{rem:teich-betti-realization}
    For the marked hyperbolic log--orbi curve \(\mathcal C\) corresponding to
    a point of \(\operatorname{Teich}(\Sigma)\), the Betti realization of the
    canonical maximal \(\PSL_2\)-object determines the holonomy class
    \[
    [\rho_{\mathcal C}]\in X_{\mathrm{Fuch}}(\Sigma).
    \]
    The pair \((\rho_{\mathcal C},\Gamma_{\mathcal C})\) is determined up to
    simultaneous \(\PSL_2(\mathbb R)\)-conjugacy.
    \end{remark}

    \subsection{Triangle orbifolds and hypergeometric uniformization}
    \label{subsec:triangle-orbifolds-hypergeometric}

    The simplest compact hyperbolic orbifold curves are triangle orbifolds,
    whose canonical uniformizing \(\PSL_2\)-opers admit hypergeometric
    expressions \cite[\S15.17(ii)]{DLMF15.17HypergeometricFunction}.  Using
    Theorem~\ref{thm:uniformizing-representation} and the rigidity of a
    three-point projective Fuchsian equation, the theorem below gives a
    geometric derivation of this hypergeometric uniformization, classically
    obtained by Schwarz via the Schwarz map.

    \begin{theorem}[Triangle orbifolds and Gauss--Schwarz uniformization]
    \label{thm:triangle-hypergeometric}
    Let
    \(
    \mathcal X=\mathbb P^1(p,q,r)
    \)
    be a hyperbolic triangle orbifold.  
    Then the uniformizing \(\PSL_2\)-oper on \(\mathcal X\) is represented on
    \[
    \mathbb P^1\setminus\{0,1,\infty\}
    \]
    by a Gauss hypergeometric equation whose local exponent differences are
    \(
    \left(\frac1p,\frac1q,\frac1r\right).
    \)
    Its projective monodromy is the hyperbolic triangle group
    \[
    \Gamma_{p,q,r}
    \cong
    \langle A,B,C\mid A^p=B^q=C^r=ABC=1\rangle
    \subset \PSL_2(\mathbb R).
    \]
    More precisely, set
    \[
    a=\frac{1-\frac1p-\frac1q+\frac1r}{2},\qquad
    b=\frac{1-\frac1p-\frac1q-\frac1r}{2},\qquad
    c=1-\frac1p.
    \]
    The equation is
    \[
    z(1-z)y''+\bigl(c-(a+b+1)z\bigr)y'-aby=0.
    \]
    \end{theorem}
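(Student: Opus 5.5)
We will combine Theorem~\ref{thm:uniformizing-representation} with the rigidity of second-order Fuchsian equations that have three singular points. The uniformizing \(\PSL_2\)-oper on \(\mathcal X\) is the de Rham realization \((\mathcal P_{\mathcal X},\nabla_{\mathcal X})\) of the canonical maximal object. It carries a canonical Borel reduction, namely the Hodge line coming from the invariant line \(\Theta^{-1}\), and the Higgs field is transverse to it. Take the coarse curve \(\mathbb P^1\) and place the three orbifold points of orders \(p,q,r\) at \(0,1,\infty\). On \(\mathbb P^1\setminus\{0,1,\infty\}\) an \(\SL_2\)-lift always exists, since the curve is affine and \(\omega\) has a square root there. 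So the oper is represented locally by a projective class of equations \(y''+Q(z)y=0\), i.e.\ by a Schwarzian equation \(\{u,z\}=2Q\), where \(u\) is the period map \(\widetilde{\mathcal X^\circ}\to\mathbb D\) of Theorem~\ref{thm:uniformizing-representation}.

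\textbf{Step 1: local exponent differences.} At an orbifold point of order \(m\), Theorem~\ref{thm:uniformizing-representation} shows that \(u\) descends to the cone model of angle \(2\pi/m\). Hence on the coarse coordinate \(u\) behaves like \(z^{1/m}\) times a holomorphic unit, up to a Möbius transformation. So the projective equation is Fuchsian at \(z=x\) with exponent difference \(1/m\), and the local monodromy is elliptic of exact order \(m\), as in Remark~\ref{rem:local-monodromy-uniformizing}. This gives exponent differences \((1/p,1/q,1/r)\) at \((0,1,\infty)\). In particular \(Q\) has at most double poles at the three points, with the prescribed leading coefficients \((1-1/m^2)/4\), and it is holomorphic elsewhere.

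\textbf{Step 2: rigidity.} Consider a quadratic differential on \(\mathbb P^1\) with at most double poles at three points, prescribed leading coefficients at all three, and the decay at \(\infty\) forced by the exponent difference there. Such a \(Q\) is uniquely determined. Concretely, the difference of two such \(Q\) is a meromorphic quadratic differential with at most simple poles at three points, and \(h^0(\mathbb P^1,K^2(3))=h^0(\mathcal O(-1))=0\), so the difference vanishes. It therefore suffices to exhibit one equation with these exponent differences.

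\textbf{Step 3: the hypergeometric equation.} The Gauss equation with the stated \(a,b,c\) has exponent differences \(|1-c|\), \(|c-a-b|\), \(|a-b|\) at \(0,1,\infty\). These equal \(1/p\), \(1/q\), \(1/r\) by direct substitution: \(1-c=1/p\), \(c-a-b=1/q\), \(a-b=1/r\). Its normal form therefore coincides with \(Q\), so it represents the uniformizing oper.

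\textbf{Step 4: monodromy.} By Theorem~\ref{thm:uniformizing-representation}, the projective monodromy is the faithful discrete image of \(\pi_1^{\mathrm{orb}}(\mathcal X)=\langle A,B,C\mid A^p=B^q=C^r=ABC=1\rangle\) in \(\PSL_2(\mathbb R)\), which is \(\Gamma_{p,q,r}\).

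The main obstacle is Step 1. We must justify rigorously that the descended projective period map has the local form \(z^{1/m}\cdot(\text{unit})\), and that the normal-form \(Q\) has exactly the leading coefficient \((1-1/m^2)/4\). This should follow from the orbifold regularity of \(\nabla\) on the cyclic chart, with \(u\) locally biholomorphic in \(w=z^{1/m}\), together with the Schwarzian chain rule \(\{w,z\}=(1-1/m^2)/(2z^2)\).
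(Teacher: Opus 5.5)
Your proposal is correct and shares the paper's skeleton: uniformizing representation, then exponent differences $(1/p,1/q,1/r)$, then three-point rigidity, then the Gauss parameters, then monodromy. You justify the two middle steps differently.

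\begin{itemize}
\item \emph{Exponent differences.} The paper reads the exponent difference $1/n$ off the local monodromy, namely the positive elliptic class of angle $2\pi/n$. You instead use that the period map $u$ of Theorem~\ref{thm:uniformizing-representation} is a local biholomorphism in the orbifold chart coordinate $w=z^{1/m}$. From this you extract the double-pole coefficient $(1-1/m^2)/4$ of $Q=\tfrac12\{u,z\}$ via the Schwarzian chain rule.
\item \emph{Rigidity.} The paper cites rigidity of three-point projective Fuchsian equations. You prove it directly from $h^0\bigl(\mathbb P^1,K_{\mathbb P^1}^{\otimes2}(D)\bigr)=h^0(\mathcal O(-1))=0$ for $D=[0]+[1]+[\infty]$.
\end{itemize}

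Your route buys something real. Monodromy determines an exponent difference only modulo $\mathbb Z$: a difference $1+1/n$ produces the same rotation. Rigidity, however, needs the exact value. Local injectivity of $u$ in $w$ is precisely what excludes the integer shifts. The paper's version is shorter, but it leaves this point implicit and outsources the rigidity step.

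To close your ``main obstacle'', note that $u(\zeta w)$ is a M\"obius transform of $u(w)$. Hence $\{u,w\}\,dw^2$ is a $\mu_m$-invariant holomorphic quadratic differential on $\Delta_w$. It therefore descends to a term with at most a simple pole in $z$, so only $\{w,z\}=(1-1/m^2)/(2z^2)$ contributes to the double pole. Single-valuedness of $\{u,z\}$ on the punctured coarse chart follows from M\"obius invariance of the Schwarzian.

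One harmless slip: the oper's Hodge line is $F^1\simeq\Theta$, whose second fundamental form $\Theta\to\Theta^{-1}\otimes\omega$ is the Higgs field. $\Theta^{-1}$ is the Higgs-invariant graded quotient, not the Hodge line, and the Higgs field annihilates it rather than being transverse to it. Your argument uses only $u$, so this does not affect the proof.
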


    \begin{proof}
    By Theorem~\ref{thm:uniformizing-representation}, the Betti realization of the canonical maximal \(\PSL_2\)-object on \(\mathcal X\) is the uniformizing Fuchsian representation.  
    The positively oriented generator around an orbifold point of order
    \(n\) maps to the positive elliptic class of angle \(2\pi/n\).  Thus its
    projective exponent difference is \(1/n\), rather than merely an
    unspecified unit modulo \(n\).  At \(0,1,\infty\) the differences are
    \[
    \left(\frac1p,\frac1q,\frac1r\right).
    \]

    A second-order projective Fuchsian equation with precisely three regular
    singularities is rigid once these differences are fixed and is, up to a
    rank-one gauge and projective equivalence, the Gauss equation displayed
    in the theorem.  Its exponent differences at \(0,1,\infty\) are,
    respectively,
    \[
    1-c=\frac1p,\qquad
    c-a-b=\frac1q,\qquad
    a-b=\frac1r,
    \]
    \cite[Sections~15.10(i) and~15.11(i)]{DLMF15.17HypergeometricFunction}.
    Hence these exponent differences give the uniformizing projective
    connection identified by
    Theorem~\ref{thm:uniformizing-representation}, and its monodromy is the
    uniformizing triangle lattice with the presentation stated in the theorem;
    this is the genus-zero three-orbifold-point case of
    \cite[\S2, p.~228]{Faltings83RealprojstronRiemannsur}.
    \end{proof}

    \begin{remark}[Accessory parameters beyond the rigid case]
    \label{rem:accessory-parameters}
    The triangle case is rigid.  For four or more marked points, or for
    positive-genus hyperbolic log--orbi curves, a scalar Fuchsian equation is
    not determined solely by its local exponent data; one then encounters the
    classical accessory-parameter problem.  In the present framework, these
    parameters are encoded by the Betti realization of the canonical maximal
    \(\PSL_2\)-object as the log--orbi curve varies in moduli.
    \end{remark}

    \section{Function fields and orbifold approximation}
    \label{sec:function-field-galois}

    We conclude with an application to the Galois theory of function fields of
    transcendence degree one over \(\mathbb C\).  
    Let
    \(
    F=\mathbb C(C)
    \)
    be the function field of a smooth projective connected complex curve \(C\),
    fix a separable closure \(F^{\mathrm{sep}}/F\), and let
    \[
    \bar\eta:\operatorname{Spec}F^{\mathrm{sep}}
    \longrightarrow \operatorname{Spec}F
    \]
    be the resulting geometric generic point.  Thus
    \[
    G_F:=\operatorname{Gal}(F^{\mathrm{sep}}/F)
    =\pi_1^{\mathrm{\acute et}}(\operatorname{Spec}F,\bar\eta).
    \]
    Finite extensions of \(F\) correspond to finite covers of \(C\), but these
    covers are generally ramified and therefore are not detected by the ordinary
    finite \'etale theory of \(C\).  The role of orbifold curves is to absorb
    ramification into inertia: after assigning suitable stabilizers at the branch
    points, a ramified cover becomes finite \'etale over an orbifold model of
    \(C\).

    The first part proves the based inverse-limit approximation of \(G_F\) in
    Theorem~\ref{thm:orbifold-approximation-Galois}.  The second then applies
    categorical uniformization separately to each hyperbolic stage, giving an
    objectwise Fuchsian lattice and profinite comparison.

    \subsection{Orbifold approximation of the absolute Galois group}
    \label{subsec:orbifold-approximation-galois}

    We use the standard effective root-stack models over \(C\).  For a
    finite-support order function
    \[
    \mathbf m=(m_x)_{x\in C},
    \qquad
    m_x\geq1,
    \qquad
    m_x=1\ \text{for all but finitely many }x,
    \]
    let \(\mathcal X_{\mathbf m}\) be the iterated root stack obtained by taking
    the \(m_x\)-th root along each divisor \(x\) with \(m_x>1\).  It has coarse
    space \(C\), trivial generic stabilizer, and stabilizer \(\mu_{m_x}\) at
    \(x\); see
    \cite[Definition~2.2.4 and Theorem~4.1]{Cadman2007UsingStacksTangency}.
    We define \(\operatorname{Orb}(C)\) to be the chosen skeletal poset of these
    models.  If \(\mathbf m\mid\mathbf m'\) pointwise, its unique refinement
    arrow is
    \[
    r_{\mathbf m',\mathbf m}:\mathcal X_{\mathbf m'}
    \longrightarrow \mathcal X_{\mathbf m}.
    \]
    Locally at \(x\), if \(m'_x=q m_x\), the induced inertia homomorphism is
    \[
    \mu_{m'_x}\longrightarrow\mu_{m_x},
    \qquad
    \zeta\longmapsto\zeta^q.
    \]
    It generally has nontrivial kernel, so a refinement arrow need not be
    representable and is not, in general, finite \'etale in the sense
    of Definition~\ref{def:etale-morphism-log-orbi}.  Nevertheless, finite
    \'etale covers may be pulled back along it, since representable finite
    \'etale morphisms are stable under arbitrary base change.

    \begin{proposition} \label{prop:OrbC-cofiltered} 
    The category \(\operatorname{Orb}(C)\) is cofiltered. 
    \end{proposition}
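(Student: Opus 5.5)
The plan is to reduce the claim to properties of the indexing poset of order functions under pointwise divisibility. By construction, \(\operatorname{Orb}(C)\) is a skeletal poset: there is at most one arrow \(r_{\mathbf m',\mathbf m}\), and it exists exactly when \(\mathbf m\mid\mathbf m'\) pointwise. For a poset, being cofiltered amounts to three conditions. It must be nonempty. Any two objects must admit a common object mapping to both. Parallel arrows must be equalized, and this last condition holds vacuously because hom-sets have at most one element. I would record that last point first, since it uses only skeletality and the uniqueness of refinement arrows.

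Nonemptiness is witnessed by the trivial order function \(\mathbf m\equiv1\), whose model \(\mathcal X_{\mathbf 1}=C\) is the curve itself. For the common-refinement condition, given \(\mathbf m\) and \(\mathbf m'\), I would define
\[
\mathbf m''=(\operatorname{lcm}(m_x,m'_x))_{x\in C}.
\]
This function is again of finite support, since it equals \(1\) outside the union of the two finite supports, and it satisfies \(\mathbf m\mid\mathbf m''\) and \(\mathbf m'\mid\mathbf m''\). Hence the refinement arrows \(r_{\mathbf m'',\mathbf m}\) and \(r_{\mathbf m'',\mathbf m'}\) both exist. Their existence requires a morphism of iterated root stacks \(\mathcal X_{\mathbf m''}\to\mathcal X_{\mathbf m}\) over \(C\). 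Locally this is the map \(w\mapsto w^{q}\) on root coordinates with \(q=m''_x/m_x\), which induces \(\zeta\mapsto\zeta^q\) on inertia. Globally it comes from the universal property of root stacks: the \(m''_x\)-th root line bundle and section, raised to the power \(q\), give an \(m_x\)-th root of \(\mathcal O(x)\) with its canonical section \cite[Definition~2.2.4]{Cadman2007UsingStacksTangency}.

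The only step needing genuine care, and so the main obstacle, is confirming that these refinement maps are well defined and compose, that is, \(r_{\mathbf m,\mathbf m'}\circ r_{\mathbf m',\mathbf m''}=r_{\mathbf m,\mathbf m''}\). Without this, \(\operatorname{Orb}(C)\) would not honestly be a category with the stated skeleton. I would settle it with the universal property, by checking that \((L^{q})^{q'}=L^{qq'}\) compatibly with sections, up to the canonical 2-isomorphisms. Skeletality then identifies these 2-isomorphic arrows, and cofilteredness follows.
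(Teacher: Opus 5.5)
Your argument is correct and is essentially the paper's proof: the constant order function $1$ gives nonemptiness, the pointwise lcm (of finite support) gives a common refinement, and parallel arrows need no equalizing because $\operatorname{Orb}(C)$ is a poset. Your extra check that refinement maps exist and compose, via the root-stack universal property, is a sound check on the definition, but the proposition itself does not need it, since $\operatorname{Orb}(C)$ is defined directly as a poset. Strictly, it is this passage to a poset, not skeletality (which concerns objects), that identifies the $2$-isomorphic arrows.
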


    \begin{proof}
    The constant function \(m_x=1\) gives a nonempty object.  Given
    \(\mathcal X_{\mathbf m}\) and \(\mathcal X_{\mathbf n}\), the pointwise
    least common multiple
    \[
    \ell_x=\operatorname{lcm}(m_x,n_x)
    \]
    has finite support and defines a common refinement
    \(\mathcal X_{\boldsymbol\ell}\).  Since the chosen category is a poset,
    there are no distinct parallel arrows to equalize.  Hence
    \(\operatorname{Orb}(C)\) is cofiltered.
    \end{proof}

    The basic mechanism is the following elementary resolution of ramification: ramification in a finite cover of \(C\) can be converted into orbifold inertia on the target and source.

    \begin{proposition}[Orbifold \'etale resolution of ramification]
    \label{prop:orbifold-etale-resolution} 
    Let \(f\colon Y\longrightarrow C\) be a finite morphism of smooth projective connected curves over \(\mathbb C\). Then there exist orbifold curves \(\mathcal Y\) and \(\mathcal X\), with coarse spaces \(Y\) and \(C\), and a finite \'etale morphism
    \[
    \bar f\colon\mathcal Y\longrightarrow \mathcal X
    \]
    whose underlying coarse map is \(f\).

    More explicitly, for a branch point \(x\in C\), one may choose
    \[
    m_x=\operatorname{lcm}\{e_y:y\in f^{-1}(x)\},
    \]
    where \(e_y\) is the ramification index of \(f\) at \(y\). 
    Then the point \(y\in f^{-1}(x)\) receives stabilizer order
    \(h_y=m_x/e_y\).
    Any common multiple of the ramification indices over \(x\) would also give a valid orbifold \'etale model; the least common multiple is the minimal convenient choice. 
    \end{proposition}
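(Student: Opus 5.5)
The plan is to take \(\mathcal X\) to be the root stack \(\mathcal X_{\mathbf m}\) over \(C\), with \(m_x\) the least common multiple of the ramification indices over each branch point and \(m_x=1\) elsewhere. For \(\mathcal Y\) I would take the normalization of \(\mathcal X\) in the function field of \(Y\), equivalently the normalization of the fiber product \(Y\times_C\mathcal X\). The result will then be proved by a purely local computation over each branch point, followed by gluing. Since \(D_{\log}=\emptyset\) here, Definition~\ref{def:etale-morphism-log-orbi} reduces to three requirements: \(\bar f\) is representable, finite, and étale.

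\emph{Local model.} Fix a branch point \(x\) with coordinate \(t\). Fix \(y\in f^{-1}(x)\) with coordinate \(u\), chosen so that \(t=u^{e_y}\); this is possible after a unit change, since we work in characteristic zero. Near \(x\), the stack \(\mathcal X\) is \([\operatorname{Spec}\mathbb C[[z]]/\mu_{m_x}]\) with \(t=z^{m_x}\). Put \(h_y=m_x/e_y\), which is an integer because \(e_y\mid m_x\). Near \(y\), set \(\mathcal Y:=[\operatorname{Spec}\mathbb C[[w]]/\mu_{h_y}]\) with \(u=w^{h_y}\). Then \(t=u^{e_y}=w^{m_x}\), so \(z=w\) defines a chart
\[
[\Delta_w/\mu_{h_y}]\longrightarrow[\Delta_z/\mu_{m_x}],
\]
equivariant for the inclusion \(\mu_{h_y}\hookrightarrow\mu_{m_x}\). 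This is exactly the internal chart described after Definition~\ref{def:etale-morphism-log-orbi}. The map on atlases is the identity, so it is étale. The inertia map is injective, so the morphism is representable. Its coarse map is \(u\mapsto u^{e_y}\), so it recovers \(f\).

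To see that these local models are the normalization, I would compute the normalization of \(\mathbb C[[u]]\otimes_{\mathbb C[[t]]}\mathbb C[[z]]\), where \(u^{e_y}=t=z^{m_x}\). It splits into \(e_y\) branches of the form \(u=\zeta z^{h_y}\), with \(\zeta\in\mu_{e_y}\). The group \(\mu_{m_x}\) permutes these branches transitively, and the stabilizer of each branch is \(\mu_{h_y}\). Taking the quotient stack therefore gives the model above. I expect this identification, together with the check that the result is a smooth orbifold curve with trivial generic stabilizer, to be the main point requiring care. The cleanest route is to define \(\mathcal Y\) as the normalization, which is canonical and hence glues automatically, and then verify locally that it agrees with the model just described.

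\emph{Gluing and properness.} Away from the branch locus, \(\bar f\) is just \(f\), which is finite étale there, and the normalization does not change anything. Normalization is compatible with étale localization on \(\mathcal X\), so the local computations glue to a global \(\mathcal Y\) whose coarse space is \(Y\). The stack \(\mathcal Y\) is connected because \(Y\) is. Finiteness of \(\bar f\) follows from finiteness of \(f\) together with properness of \(\mathcal Y\). Finally, if \(m'_x\) is any other common multiple of the \(e_y\), the same computation applies with \(h_y=m'_x/e_y\), which proves the last assertion of the statement.
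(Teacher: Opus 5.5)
Your proposal is correct, and its local content is the same as the paper's. The chart \([\Delta_w/\mu_{h_y}]\to[\Delta_z/\mu_{m_x}]\), \(z=w\), is identical. Your splitting of \(\mathbb C[[z]][u]/(u^{e_y}-z^{m_x})\) into the \(e_y\) branches \(u=\zeta z^{h_y}\) is the same computation as the paper's remark that the pullback to the atlas \(\Delta_z\) is a disjoint union of \(e_y\) isomorphisms.

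You differ in the global construction. The paper takes \(\mathcal Y\) to be the root stack of \(Y\) with orders \(h_y\). It gets the global representable lift \(\mathcal Y\to\mathcal X_{\mathbf m}\) from Cadman's universal property of the root construction, then checks finite \'etaleness on charts. (Presumably the root is available because the pullback of \(x\) to \(\mathcal Y\) is \(m_x\) times the reduced divisor over \(f^{-1}(x)\).) You instead define \(\mathcal Y\) as the normalization of \(Y\times_C\mathcal X\). Since normalization is canonical and commutes with \'etale localization, gluing is automatic, and only the local identification with the model needs proof.

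Your route gives several things for free:
\begin{itemize}
\item Finiteness is immediate, because \(\mathcal Y\to Y\times_C\mathcal X\to\mathcal X\) composes a finite normalization with a base change of \(f\). This is cleaner than your properness argument.
\item \(\mathcal Y\) is unique given \(\mathcal X\): any finite \'etale \(\mathcal Y'\to\mathcal X\) with function field \(\mathbb C(Y)\) is normal and finite over \(\mathcal X\), hence is this normalization.
\item Automorphisms of \(Y\) over \(C\) lift by functoriality. This is exactly what Corollary~\ref{cor:finite-Galois-quotients} uses, and it matches the normalization description in the proof of Proposition~\ref{prop:FEt-galois}.
\end{itemize}
In exchange, the paper's route exhibits \(\mathcal Y\) directly as a root-stack model over \(Y\). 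It also produces the morphism from the moduli description without a separate identification step.
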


    \begin{proof}
    Put \(h_y=m_x/e_y\).  Near \(y\), choose coarse coordinates in which
    \(t=u^{e_y}\).  The corresponding map of root-stack charts is
    \[
    [\Delta_w/\mu_{h_y}]
    \longrightarrow
    [\Delta_z/\mu_{m_x}],
    \qquad
    z=w,
    \]
    with the injective inertia homomorphism
    \(\mu_{h_y}\hookrightarrow\mu_{m_x}\).  In coarse coordinates
    \[
    u=w^{h_y},
    \qquad
    t=z^{m_x}=u^{e_y},
    \]
    so this chart map lifts the given ramified map.  Its inertia maps are
    injective, hence it is representable.  After pullback to the target atlas
    \(\Delta_z\to[\Delta_z/\mu_{m_x}]\), it is the disjoint union of \(e_y\)
    isomorphisms.  It is therefore finite \'etale.

    Away from the branch locus no stabilizers are needed.  The universal
    property of the root construction gives the global representable lift and
    glues these local maps; see
    \cite[Proposition~3.3.3 and Theorem~3.3.6]{Cadman2007UsingStacksTangency}.
    The explicit chart calculation above shows that the resulting morphism is
    finite \'etale:
    \[
    \bar f\colon\mathcal Y\longrightarrow \mathcal X
    \]
    with coarse map \(f\).
    \end{proof}

    For each \(\mathcal X_{\mathbf m}\in\operatorname{Orb}(C)\), let
    \(j_{\mathbf m}:\operatorname{Spec}F\to\mathcal X_{\mathbf m}\) be the
    generic map.  Restriction gives
    \[
    j_{\mathbf m}^*:\mathbf{FEt}(\mathcal X_{\mathbf m})
    \longrightarrow \mathbf{FEt}(\operatorname{Spec}F).
    \]
    If \(\mathbf m\mid\mathbf m'\), pullback along the refinement gives
    \[
    r_{\mathbf m',\mathbf m}^*:
    \mathbf{FEt}(\mathcal X_{\mathbf m})
    \longrightarrow
    \mathbf{FEt}(\mathcal X_{\mathbf m'}),
    \qquad
    j_{\mathbf m'}^*r_{\mathbf m',\mathbf m}^*
    \simeq j_{\mathbf m}^*.
    \]
    The proof below shows that generic restriction at every fixed stage is
    fully faithful; consequently these pullback functors are fully faithful.

    \begin{theorem}[Orbifold presentation of finite extensions]
    \label{thm:orbifold-presentation-finite-extensions} 
    The compatible generic-restriction functors induce an equivalence
    \[
    \mathbf{FEt}(\operatorname{Spec}F)
    \simeq
    \operatorname*{2\text{-}colim}_{\mathcal X\in\operatorname{Orb}(C)^{\mathrm{op}}}
    \mathbf{FEt}(\mathcal X).
    \]
    The right-hand side is the filtered \(2\)-colimit along the fully faithful
    pullback functors \(r_{\mathbf m',\mathbf m}^*\).
    \end{theorem}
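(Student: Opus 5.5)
The plan is to show that the family of generic-restriction functors \(j_{\mathbf m}^*\), which are compatible with the refinement pullbacks via \(j_{\mathbf m'}^*r_{\mathbf m',\mathbf m}^*\simeq j_{\mathbf m}^*\), induces a functor out of the filtered \(2\)-colimit that is fully faithful and essentially surjective. Since \(\operatorname{Orb}(C)\) is cofiltered by Proposition~\ref{prop:OrbC-cofiltered}, objects of the \(2\)-colimit are represented at a single stage, and morphisms are computed as a filtered colimit of Hom-sets at common refinements. So it suffices to prove two things: generic restriction is fully faithful at each fixed stage, and every finite étale \(F\)-algebra lies in the essential image of some stage.

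\emph{Full faithfulness at a fixed stage.} Let \(\mathcal Y_1,\mathcal Y_2\to\mathcal X_{\mathbf m}\) be finite étale. Finite étale covers of a normal one-dimensional DM stack are normal, and each connected component has function field a finite separable extension of \(F\). I will reduce to the case where \(\mathcal Y_1\) is connected and work on a smooth atlas. By the valuative argument used in the proof of Proposition~\ref{prop:FEt-galois}, a morphism of generic fibers \(\mathcal Y_{1,\eta}\to\mathcal Y_{2,\eta}\) over \(F\) extends uniquely. Concretely, \(\mathcal Y_2\) is the normalization of \(\mathcal X_{\mathbf m}\) in its generic algebra, and this normalization is functorial for maps from normal covers. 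Faithfulness follows because \(\mathcal Y_1\) is reduced with dense generic point and \(\mathcal Y_2\to\mathcal X_{\mathbf m}\) is separated. The conclusion is that each \(j_{\mathbf m}^*\) is fully faithful. Hence the \(r^*_{\mathbf m',\mathbf m}\) are fully faithful, and the colimit Hom-sets coincide with the Hom-sets over \(F\).

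\emph{Essential surjectivity.} A finite étale \(F\)-algebra is a product of finite separable extensions \(L_i/F\). For each \(i\), I take the normalization \(Y_i\) of \(C\) in \(L_i\); this is a smooth projective connected curve with a finite map \(f_i\colon Y_i\to C\). Proposition~\ref{prop:orbifold-etale-resolution} then gives an orbifold model \(\mathcal X_{\mathbf m_i}\) with \(m_x=\operatorname{lcm}\{e_y\}\) and a finite étale \(\mathcal Y_i\to\mathcal X_{\mathbf m_i}\) with coarse map \(f_i\), hence with generic fiber \(\operatorname{Spec}L_i\). I then take the pointwise lcm \(\mathbf m\) of the \(\mathbf m_i\). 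The pullbacks \(r^*_{\mathbf m,\mathbf m_i}\mathcal Y_i\) are finite étale over \(\mathcal X_{\mathbf m}\) by base change, still with generic fiber \(L_i\), and their disjoint union realizes the given algebra at stage \(\mathbf m\).

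\emph{Main obstacle.} The delicate point is the compatibility between the fixed skeletal root-stack model \(\mathcal X_{\mathbf m_i}\) in \(\operatorname{Orb}(C)\) and the target produced in Proposition~\ref{prop:orbifold-etale-resolution}. I would identify them using the universal property of Cadman's root stack, so the target is literally \(\mathcal X_{\mathbf m_i}\) up to unique isomorphism over \(C\). A second check is that the base-changed cover along a nonrepresentable refinement is still an object of \(\mathbf{FEt}\), which may mean allowing a disconnected source. These identifications, together with the \(2\)-categorical coherence of the pullback isomorphisms, give the equivalence.
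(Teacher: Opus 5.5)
Your proposal is correct and follows essentially the same route as the paper. You first prove fixed-stage full faithfulness of \(j_{\mathbf m}^*\): you argue via functoriality of normalization on the stack, while the paper uses unique extension on a normal \'etale atlas followed by descent. This makes the refinement pullbacks fully faithful and identifies the colimit Hom-sets with Hom-sets over \(F\). Essential surjectivity then comes from Proposition~\ref{prop:orbifold-etale-resolution} at a common lcm stage. The only cosmetic difference is that the paper chooses one common multiple of all ramification indices up front, whereas you resolve each factor separately and pull back to the pointwise lcm, which lands at the same stage.
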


    \begin{proof}
    We first record fixed-stage full faithfulness.  Choose a normal \'etale
    atlas \(U\to\mathcal X_{\mathbf m}\).  For finite \'etale
    covers \(\mathcal P,\mathcal Q\) of \(\mathcal X_{\mathbf m}\), a morphism
    between their generic fibers extends uniquely over every normal component
    of \(U\) by
    \cite[Exp.~I, Proposition~10.1 and Corollaries~10.2--10.3]{SGA11971}.
    The two pullbacks of this extension to
    \(U\times_{\mathcal X_{\mathbf m}}U\) agree generically and hence agree by
    uniqueness.  Thus the extension descends uniquely to
    \(\mathcal X_{\mathbf m}\).  Therefore every \(j_{\mathbf m}^*\) is fully
    faithful.  It follows in turn that every refinement pullback is fully
    faithful: extend a morphism between two pulled-back covers first at the
    generic point and then uniquely at the original stage.

    Let \(A=\prod_{i=1}^a K_i\) be an arbitrary finite \'etale
    \(F\)-algebra, and let \(Y_i\to C\) be the normalization of \(C\) in
    \(K_i\).  At each branch point choose \(m_x\) to be a common multiple of all
    ramification indices occurring in all the covers \(Y_i\to C\), and put
    \(m_x=1\) elsewhere.  Proposition~\ref{prop:orbifold-etale-resolution}
    gives finite \'etale covers
    \[
    \mathcal Y_i\longrightarrow\mathcal X_{\mathbf m}.
    \]
    Their disjoint union has generic fiber \(\operatorname{Spec}A\), proving
    essential surjectivity.

    Finally, take objects represented over stages \(\mathcal X_{\mathbf m}\)
    and \(\mathcal X_{\mathbf n}\), together with an arbitrary morphism of their
    generic fibers.  Pull both objects back to the pointwise-lcm refinement
    \(\mathcal X_{\boldsymbol\ell}\).  Fixed-stage full faithfulness extends the
    generic morphism uniquely there.  The same uniqueness shows that two
    representatives define the same morphism in the filtered \(2\)-colimit
    exactly when their generic morphisms agree.  This proves full faithfulness
    and the asserted equivalence.
    \end{proof}

    \begin{theorem}[Orbifold approximation of the absolute Galois group]
    \label{thm:orbifold-approximation-Galois} 
    Relative to the fixed separable closure \(F^{\mathrm{sep}}/F\) and geometric
    generic point \(\bar\eta\), there is a canonical isomorphism of profinite
    groups
    \[
    G_F
    =\pi_1^{\mathrm{\acute et}}(\operatorname{Spec}F,\bar\eta)
    \xrightarrow{\sim}
    \varprojlim_{\mathcal X\in\operatorname{Orb}(C)}
    \pi_1^{\mathrm{\acute et}}(\mathcal X,\bar\eta).
    \]
    \end{theorem}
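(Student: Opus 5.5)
The plan is to derive the isomorphism from the 2-colimit equivalence of Theorem~\ref{thm:orbifold-presentation-finite-extensions} through Grothendieck's Galois-category formalism, using \(\bar\eta\) as a fiber functor at every stage. First, for each \(\mathcal X_{\mathbf m}\in\operatorname{Orb}(C)\), the geometric generic point \(\bar\eta\to\operatorname{Spec}F\to\mathcal X_{\mathbf m}\) factors through \(C^\circ\) for any choice of marked locus. So \(F_{\bar\eta}(\mathcal P):=\mathcal P_{\bar\eta}\) is a fiber functor on \(\mathbf{FEt}(\mathcal X_{\mathbf m})\) satisfying \(F_{\bar\eta}=F^F_{\bar\eta}\circ j_{\mathbf m}^*\). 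Proposition~\ref{prop:FEt-galois}, with its fiber-functor-independent Galois-category statement, then defines \(\pi_1^{\et}(\mathcal X_{\mathbf m},\bar\eta)=\Aut(F_{\bar\eta})\). For a refinement \(\mathbf m\mid\mathbf m'\), we have \(F_{\bar\eta}\circ r^*_{\mathbf m',\mathbf m}=F_{\bar\eta}\) on the nose, because \(\bar\eta\) lifts compatibly through \(j_{\mathbf m'}\). Restricting automorphisms therefore gives continuous homomorphisms \(\pi_1^{\et}(\mathcal X_{\mathbf m'},\bar\eta)\to\pi_1^{\et}(\mathcal X_{\mathbf m},\bar\eta)\) that compose strictly. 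Proposition~\ref{prop:OrbC-cofiltered} makes the inverse system well defined, and \(j_{\mathbf m}^*\) yields compatible maps \(G_F\to\pi_1^{\et}(\mathcal X_{\mathbf m},\bar\eta)\), hence the displayed canonical map \(\Phi\).

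Next I would prove that \(\Phi\) is an isomorphism using a general lemma. Suppose \((\mathscr C_i,F_i)\) is a cofiltered system of Galois categories with exact, fiber-functor-compatible, fully faithful transition functors, and suppose \((\mathscr C,F)\simeq\operatorname{2-colim}\mathscr C_i\) compatibly with fiber functors. Then \(\Aut(F)\xrightarrow{\sim}\varprojlim\Aut(F_i)\). The proof of the lemma runs as follows. By Theorem~\ref{thm:orbifold-presentation-finite-extensions}, every object of \(\mathbf{FEt}(\operatorname{Spec}F)\) comes from some stage, and morphisms are computed at a common refinement. An element of the inverse limit therefore gives a family of automorphisms of \(F_{\bar\eta}\) on each stage, which is compatible under pullback. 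Essential surjectivity and full faithfulness let this family define a natural automorphism of \(F^F_{\bar\eta}\): the action on an object \(A\) is taken from any stage representing it, and it is independent of the stage because of the compatibility and the cofilteredness. This gives an inverse to \(\Phi\) on abstract groups. Since \(\Phi\) is a continuous bijection between profinite groups, it is a homeomorphism. Continuity of the inverse can also be seen directly: the open subgroups \(\operatorname{Stab}(a)\), \(a\in F^F_{\bar\eta}(A)\), pull back from stabilizers at a representing stage.

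Alternatively, one can argue with injectivity and surjectivity separately. For injectivity, if \(\sigma\in G_F\) acts trivially on every \(\mathcal P_{\bar\eta}\), then it fixes every finite extension by essential surjectivity, so \(\sigma=1\). For surjectivity, each \(\pi_1^{\et}(\mathcal X_{\mathbf m})\) is a quotient of \(G_F\), since \(j_{\mathbf m}^*\) is fully faithful and preserves connected objects (a connected normal stack has a field as its generic fiber). A compatible system therefore yields a compatible system of nonempty compact fibers, and their intersection is nonempty.

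The main obstacle is the bookkeeping in the middle step: the automorphism defined from the inverse-limit element must be well defined, independent of the representing stage, and natural with respect to morphisms that only become defined after refinement. All of this depends on the full faithfulness of the refinement pullbacks \(r^*\) established in the proof of Theorem~\ref{thm:orbifold-presentation-finite-extensions}, together with the strict equality \(F_{\bar\eta}\circ r^*=F_{\bar\eta}\). The latter requires the chosen skeletal models and the fixed \(\bar\eta\), and without these choices the identification holds only up to inner automorphism, as the theorem statement notes. I would state the strict equality explicitly before invoking the lemma.
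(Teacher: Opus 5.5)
Your proposal is correct and follows essentially the same route as the paper. You take the stage fiber functors to be $\omega_{\bar\eta}\circ j_{\mathbf m}^*$ and map $G_F$ to the inverse limit by restriction. Injectivity comes from every finite \'etale $F$-algebra being represented at some stage. The inverse lets a compatible family act through any representing stage, with independence and naturality supplied by a pointwise-lcm refinement and full faithfulness. Topologies are matched via stabilizers of points in fibers.

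The strict equality $F_{\bar\eta}\circ r^*_{\mathbf m',\mathbf m}=F_{\bar\eta}$ that you insist on is more than needed, since the paper works with the canonical coherent isomorphism $\omega_{\mathbf m'}\circ r^*_{\mathbf m',\mathbf m}\simeq\omega_{\mathbf m}$. Your alternative compactness argument for surjectivity is a valid, standard variant.
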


    \begin{proof}
    Let
    \[
    \omega_{\bar\eta}:\mathbf{FEt}(\operatorname{Spec}F)
    \longrightarrow\mathbf{FSet}
    \]
    be the geometric fiber functor.  At the stage \(\mathcal X_{\mathbf m}\), set
    \[
    \omega_{\mathbf m}:=\omega_{\bar\eta}\circ j_{\mathbf m}^*.
    \]
    The refinement compatibility of the generic maps gives
    \(\omega_{\mathbf m'}\circ r_{\mathbf m',\mathbf m}^*\simeq
    \omega_{\mathbf m}\), and hence a transition homomorphism
    \[
    r_{\mathbf m',\mathbf m,*}:
    \pi_1^{\mathrm{\acute et}}(\mathcal X_{\mathbf m'},\bar\eta)
    \longrightarrow
    \pi_1^{\mathrm{\acute et}}(\mathcal X_{\mathbf m},\bar\eta).
    \]

    By definition,
    \[
    G_F=\operatorname{Aut}(\omega_{\bar\eta}),
    \qquad
    \pi_1^{\mathrm{\acute et}}(\mathcal X_{\mathbf m},\bar\eta)
    =\operatorname{Aut}(\omega_{\mathbf m}).
    \]
    Restriction therefore gives a homomorphism
    \[
    \operatorname{Aut}(\omega_{\bar\eta})
    \longrightarrow
    \varprojlim_{\mathbf m}\operatorname{Aut}(\omega_{\mathbf m}).
    \]
    It is injective because every finite \'etale \(F\)-algebra is
    represented at some stage of the filtered \(2\)-colimit in
    Theorem~\ref{thm:orbifold-presentation-finite-extensions}.  Conversely, a
    compatible family of stage automorphisms acts on the geometric fiber of a
    finite generic object by choosing any stage at which that object is
    represented.  A pointwise-lcm
    refinement and full faithfulness show that this action is independent of
    the presentation and is natural for every generic morphism.  It therefore
    defines an element of \(\operatorname{Aut}(\omega_{\bar\eta})\), proving the
    displayed homomorphism is an isomorphism.

    The profinite topology on each automorphism group has a basis given by
    stabilizers of elements in finite objects.  Since every finite generic
    object occurs at a stage, the isomorphism identifies this basis with the
    inverse-limit topology.  The construction of fundamental groups from fiber
    functors and their profinite topology is standard Galois-category
    background; see \cite[Exp.~V, \S\S5--7]{SGA11971}.  The filtered
    \(2\)-colimit/inverse-limit passage itself is the direct argument above.
    Without the chosen geometric generic point, the resulting identification
    is canonical only up to inner automorphism.
    \end{proof}

    \begin{corollary}[Finite Galois quotients] \label{cor:finite-Galois-quotients}
    Let \(K/F\) be a finite Galois extension with group \(G=\operatorname{Gal}(K/F)\). Then there exists an orbifold model \(\mathcal X\in\operatorname{Orb}(C)\) and a connected finite \'etale Galois cover \(\mathcal Y\longrightarrow \mathcal X\) whose generic fiber is \(\operatorname{Spec}K\to\operatorname{Spec}F\).
    Choose an \(F\)-embedding \(K\hookrightarrow F^{\mathrm{sep}}\), and let
    \(\bar\eta_Y\) be the induced lift of \(\bar\eta\) to
    \(\mathcal Y\).

    Therefore there is an exact sequence
    \[
    1
    \longrightarrow
    \pi_1^{\mathrm{\acute et}}(\mathcal Y,\bar\eta_Y)
    \longrightarrow
    \pi_1^{\mathrm{\acute et}}(\mathcal X,\bar\eta)
    \longrightarrow
    G
    \longrightarrow
    1.
    \]
    \end{corollary}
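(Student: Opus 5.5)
The plan has two parts: realize the extension geometrically at a single orbifold stage, then read the exact sequence off the Galois-category formalism of Proposition~\ref{prop:FEt-galois}.

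For existence, regard $\operatorname{Spec}K\to\operatorname{Spec}F$ as a connected object of $\mathbf{FEt}(\operatorname{Spec}F)$. By the essential surjectivity in Theorem~\ref{thm:orbifold-presentation-finite-extensions}, it is the generic fiber of some finite \'etale cover $\mathcal Y\to\mathcal X_{\mathbf m}$. Concretely, take $Y$ to be the normalization of $C$ in $K$ and apply Proposition~\ref{prop:orbifold-etale-resolution} with $m_x$ equal to the lcm of the ramification indices. Then:
\begin{itemize}
\item $\mathcal Y$ is connected, since its generic fiber $\operatorname{Spec}K$ is connected and $\mathcal Y$ is normal and irreducible.
\item Each $g\in G$ acts on the generic fiber, and fixed-stage full faithfulness of $j_{\mathbf m}^*$ extends this action uniquely to an automorphism of $\mathcal Y$ over $\mathcal X$.
\item This gives an injection $G\hookrightarrow\operatorname{Aut}_{\mathcal X}(\mathcal Y)$. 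It is an isomorphism because, again by full faithfulness, $\operatorname{Aut}_{\mathcal X}(\mathcal Y)=\operatorname{Aut}_F(K)$.
\item Since $|G|=[K:F]=\deg(\mathcal Y/\mathcal X)$, the cover is Galois.
\end{itemize}

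For the exact sequence, use the chosen embedding $K\hookrightarrow F^{\mathrm{sep}}$. It gives a point $\bar\eta_Y$ of the fiber $\omega_{\mathbf m}(\mathcal Y)$ lying over $\bar\eta$. In the Galois category $\mathbf{FEt}(\mathcal X)$ with fiber functor $\omega_{\mathbf m}$, the connected Galois object $\mathcal Y$ corresponds to the open normal subgroup $H=\operatorname{Stab}(\bar\eta_Y)$ of $\pi_1^{\et}(\mathcal X,\bar\eta)$. The action on the fiber yields a surjection $\pi_1^{\et}(\mathcal X,\bar\eta)\to\operatorname{Aut}_{\mathcal X}(\mathcal Y)^{\mathrm{op}}\simeq G$ with kernel $H$; this is the standard statement from \cite[Exp.~V]{SGA11971} already invoked in Proposition~\ref{prop:pi1-limit-galois}. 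It remains to identify $H$ with $\pi_1^{\et}(\mathcal Y,\bar\eta_Y)$. Composition with $\mathcal Y\to\mathcal X$ gives a functor $\mathbf{FEt}(\mathcal Y)\to\mathbf{FEt}(\mathcal X)/\mathcal Y$. This functor is an equivalence, because finite \'etale morphisms compose and are representable. It is compatible with the fiber functors at $\bar\eta_Y$, and it identifies $\pi_1^{\et}(\mathcal Y,\bar\eta_Y)$ with the stabilizer $H$. Injectivity on the left follows.

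The main obstacle is the compatibility of base points and orientation conventions. One has to check that the fiber functor on $\mathbf{FEt}(\mathcal Y)$ defined via $\bar\eta_Y$ agrees with the slice fiber functor. One also has to check that the identification of $\operatorname{Aut}(\mathcal Y/\mathcal X)$ with $G$, which uses the chosen embedding, turns the quotient map into an honest homomorphism rather than an anti-homomorphism. I would handle this by identifying $\omega_{\mathbf m}(\mathcal Y)$ with $\operatorname{Hom}_F(K,F^{\mathrm{sep}})$. On this set $G_F$ acts by postcomposition and $G$ acts by precomposition, and both actions are simply transitive. The quotient map is therefore $\sigma\mapsto\sigma|_K$ (evaluated through the chosen embedding), which is the usual restriction $G_F\to\operatorname{Gal}(K/F)$, and it factors through $\pi_1^{\et}(\mathcal X,\bar\eta)$ by Theorem~\ref{thm:orbifold-approximation-Galois}.
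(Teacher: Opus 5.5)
Your proposal is correct and follows essentially the paper's route: normalize $C$ in $K$, apply Proposition~\ref{prop:orbifold-etale-resolution} with lcm orders, and read the exact sequence off the Galois-category formalism for the connected Galois cover $\mathcal Y\to\mathcal X$. (For the slice equivalence $\mathbf{FEt}(\mathcal Y)\simeq\mathbf{FEt}(\mathcal X)/\mathcal Y$, the fact that makes it an equivalence is cancellation---a morphism between finite \'etale covers of $\mathcal X$ is itself finite \'etale---rather than closure under composition.) The one real difference is that you extend the $G$-action, and identify $\operatorname{Aut}_{\mathcal X}(\mathcal Y)$ with $G$, via fixed-stage full faithfulness of $j_{\mathbf m}^*$, whereas the paper notes that the assigned stabilizer orders depend only on ramification indices, which are constant on $G$-orbits; your version makes the uniqueness of the extension and the base-point and orientation bookkeeping explicit, where the paper simply calls the sequence standard.
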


    \begin{proof} 
    Let \(Y\to C\) be the normalization of \(C\) in \(K\). Choose an orbifold \'etale resolution \(\mathcal Y\longrightarrow \mathcal X\) as in Proposition~\ref{prop:orbifold-etale-resolution}. Since \(K/F\) is Galois, the group \(G\) acts on \(Y\) over \(C\). The assigned stabilizer order at a point of \(Y\) depends only on its ramification index over \(C\), and this index is constant along \(G\)-orbits. Hence the \(G\)-action preserves the orbifold structure on \(\mathcal Y\), and \(\mathcal Y\longrightarrow \mathcal X\) is a connected finite \'etale Galois cover with deck group \(G\). The exact sequence is the standard sequence of \'etale fundamental groups associated with a connected finite \'etale Galois cover.
    \end{proof}
    \subsection{Hyperbolic stages and objectwise uniformization}
    \label{subsec:hyperbolic-stages-galois}
    We now apply categorical uniformization objectwise to the hyperbolic
    models.  Let
    \[
    \operatorname{Orb}_{\mathrm{hyp}}(C)
    \subset
    \operatorname{Orb}(C)
    \]
    be the full subcategory consisting of hyperbolic orbifold models.  We use
    only its objects in the following comparison.

    \begin{proposition}[Objectwise hyperbolic-stage comparison]
    \label{prop:hyperbolic-stage-comparison}
    Let \(\mathcal X\in\operatorname{Orb}_{\mathrm{hyp}}(C)\).  Choose a
    uniformizing identification and its lattice
    \(\Gamma_{\mathcal X}\subset\PSL_2(\mathbb R)\), a regular analytic base
    point with a lift to \(\mathbb H\), and an \'etale path from the fixed
    geometric generic point \(\bar\eta\) to the corresponding geometric base
    point.  Then
    \[
    \pi_1^{\mathrm{\acute et}}(\mathcal X,\bar\eta)
    \simeq
    \widehat{\Gamma}_{\mathcal X}.
    \]
    Without these choices the isomorphism is defined only up to inner
    automorphism.
    \end{proposition}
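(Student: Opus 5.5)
The plan is to reduce the statement to Theorem~\ref{thm:galois-enhancement} and then change the base point. The object \(\mathcal X\in\operatorname{Orb}_{\mathrm{hyp}}(C)\) is an iterated root stack with coarse space \(C\), trivial generic stabilizer and empty \(D_{\log}\). So it is a connected log--orbi curve in the sense of Definition~\ref{def:log-orbi-curve}, and the hyperbolicity assumption makes it an object of \(\mathbf{HypLO}\).

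\emph{Step 1: uniformization.} Theorem~\ref{thm:uniformizing-representation} and Theorem~\ref{thm:categorical-uniformization} give the chosen uniformizing covering \(q_{\mathcal X}\colon\mathbb H\to\mathcal X^\circ=\mathcal X\) with deck group \(\Gamma_{\mathcal X}\subset\PSL_2(\mathbb R)\).

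\emph{Step 2: profinite comparison at a regular point.} Let \(x\in C^\circ\) be the chosen regular analytic base point, \(\bar x\) the corresponding geometric point, and \(\widetilde x\in q_{\mathcal X}^{-1}(x)\) the chosen lift. Theorem~\ref{thm:galois-enhancement} then gives an equivalence \(\mathbf{FEt}(\mathcal X)\simeq\mathbf{FSet}^{\mathrm{cont}}_{\widehat\Gamma_{\mathcal X}}\). This equivalence matches \(F_{\bar x}\) with the forgetful functor, and so yields \(\pi_1^{\mathrm{\acute et}}(\mathcal X,\bar x)\simeq\widehat\Gamma_{\mathcal X}\).

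\emph{Step 3: moving the base point to \(\bar\eta\).} Here \(\pi_1^{\mathrm{\acute et}}(\mathcal X,\bar\eta)\) means \(\operatorname{Aut}(\omega_{\mathbf m})\), where \(\omega_{\mathbf m}=\omega_{\bar\eta}\circ j_{\mathbf m}^*\) as in the proof of Theorem~\ref{thm:orbifold-approximation-Galois}. The chosen étale path is an isomorphism of fiber functors \(\omega_{\mathbf m}\xrightarrow{\sim}F_{\bar x}\) on \(\mathbf{FEt}(\mathcal X)\), and conjugating by it gives \(\operatorname{Aut}(\omega_{\mathbf m})\simeq\operatorname{Aut}(F_{\bar x})\). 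To justify that such paths exist, I will use that both functors are fiber functors of the connected Galois category of Proposition~\ref{prop:FEt-galois}. For \(\omega_{\mathbf m}\) this requires exactness and conservativity, which follow from the fixed-stage full faithfulness of \(j_{\mathbf m}^*\) proved in Theorem~\ref{thm:orbifold-presentation-finite-extensions}, since generic fibers of finite étale covers of an integral base detect their degree. Any two fiber functors of a connected Galois category are isomorphic \cite[Exp.~V]{SGA11971}. The isomorphism of fiber functors also respects the profinite topologies, because each topology is generated by stabilizers of points in fibers of finite objects. Composing with Step 2 gives the displayed isomorphism \(\pi_1^{\mathrm{\acute et}}(\mathcal X,\bar\eta)\simeq\widehat\Gamma_{\mathcal X}\).

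\emph{Step 4: dependence on choices.} Changing the étale path composes it with an automorphism of \(F_{\bar x}\), so the isomorphism changes by an inner automorphism. Changing the lift \(\widetilde x\) or the connecting analytic path also changes it by an inner automorphism, as in Theorem~\ref{thm:analytic-etale-comparison}. Changing the uniformizing identification only conjugates \(\Gamma_{\mathcal X}\) inside \(\PSL_2(\mathbb R)\). This accounts for the final sentence of the statement.

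I expect the main obstacle to be Step 3. The base point \(\bar\eta\) is generic rather than a geometric point of \(C^\circ\), so the argument needs two checks: that \(\omega_{\mathbf m}\) is a genuine fiber functor, and that the path isomorphism is compatible with the topology. Everything else is a direct invocation of earlier results.
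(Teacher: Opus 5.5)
Your proposal is correct and is essentially the paper's proof, which applies Theorem~\ref{thm:galois-enhancement} to \(\mathcal X\) and transports its geometric fiber functor along the chosen \'etale path; your Steps~3--4 spell out why \(\omega_{\mathbf m}\) is a fiber functor and where the inner-automorphism ambiguity comes from. One small point of attribution: exactness of \(\omega_{\mathbf m}\) comes from exactness of pullback to the generic point, while the fixed-stage full faithfulness of \(j_{\mathbf m}^*\) is what gives conservativity.
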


    \begin{proof}
    Apply Theorem~\ref{thm:galois-enhancement} to the individual hyperbolic
    orbifold \(\mathcal X\), and transport its geometric fiber functor along
    the chosen \'etale path.
    \end{proof}

    This comparison is deliberately objectwise.  The root-stack refinement
    arrows in \(\operatorname{Orb}(C)\) are generally nonrepresentable and are
    not finite \'etale morphisms in \(\mathbf{HypLO}\); consequently the
    argument supplies neither finite-index transition maps between the literal
    lattices nor a Fuchsian-lattice-valued pro-system.  In particular, it does
    not define a representation \(G_F\to\PSL_2(\mathbb R)\).

    It remains to identify which orbifold models populate this hyperbolic sector. This membership is governed by the sign of the orbifold canonical degree and depends on the genus.
    We partition the full orbifold approximation into geometrically distinct sectors. For an orbifold model 
    \[
    \mathcal X=\bigl(C;(x_1,m_1),\ldots,(x_r,m_r)\bigr)
    \]
    over the fixed coarse curve \(C\), its orbifold canonical degree is
    \[
    \deg\omega_{\mathcal X}
    =
    2g(C)-2+
    \sum_{i=1}^r
    \left(1-\frac1{m_i}\right)
    =
    -\chi(\mathcal X).
    \]
    The sign of \(\deg\omega_{\mathcal X}\) yields the standard geometric trichotomy, stratifying \(\operatorname{Orb}(C)\) into three full subcategories: the \emph{spherical} sector \(\operatorname{Orb}_{\mathrm{sph}}(C)\) (\(<0\)), the \emph{Euclidean} sector \(\operatorname{Orb}_{\mathrm{euc}}(C)\) (\(=0\)), and the \emph{hyperbolic} sector \(\operatorname{Orb}_{\mathrm{hyp}}(C)\) (\(>0\)).  This is a partition of objects, not a coproduct decomposition of \(\operatorname{Orb}(C)\): a refinement may pass from one sector to another.

    \begin{proposition}[Sector decomposition]
    \label{prop:sector-decomposition}
    The presence of these sectors is strictly governed by the genus of the underlying coarse curve \(C\):
    \begin{enumerate}
    \item If \(g(C)\ge2\), then \(\operatorname{Orb}(C) = \operatorname{Orb}_{\mathrm{hyp}}(C)\). Every orbifold model over \(C\) is hyperbolic.
    \item If \(g(C)=1\), the trivial unramified model is Euclidean, while every nontrivial orbifold model is hyperbolic.
    \item If \(C=\mathbb P^1\), the spherical, Euclidean, and hyperbolic sectors are all nonempty.
    \end{enumerate}
    \end{proposition}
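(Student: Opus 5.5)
The argument is a direct computation with the degree formula
\[
\deg\omega_{\mathcal X}=2g(C)-2+\sum_{i=1}^r\Bigl(1-\frac1{m_i}\Bigr),
\]
using only two facts: each summand satisfies \(\tfrac12\le 1-\tfrac1{m_i}<1\) because \(m_i\ge2\), and the objects of \(\operatorname{Orb}(C)\) are exactly the finite-support order functions \(\mathbf m\) on the fixed coarse curve \(C\). Every orbifold model \(\mathcal X_{\mathbf m}\) therefore has signature \((g(C);m_1,\ldots,m_r;0)\), with no logarithmic points. I treat the three genus ranges separately.

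\emph{Case \(g(C)\ge2\).} Here \(2g-2\ge2\) and all summands are nonnegative, so \(\deg\omega_{\mathcal X}\ge2>0\) for every \(\mathbf m\), including the trivial one. Hence \(\operatorname{Orb}(C)=\operatorname{Orb}_{\mathrm{hyp}}(C)\).

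\emph{Case \(g(C)=1\).} For the trivial model \(r=0\), so \(\deg\omega=0\) and the model is Euclidean. For any nontrivial model \(r\ge1\), so \(\deg\omega\ge\tfrac12>0\) and the model is hyperbolic.

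\emph{Case \(C=\mathbb P^1\).} Here \(2g-2=-2\), and nonemptiness of each sector just needs one explicit order function. The trivial model gives \(-2<0\), which is spherical. Four points of order \(2\) give \(-2+4\cdot\tfrac12=0\), which is Euclidean (so do \((2,3,6)\), \((3,3,3)\) and \((2,4,4)\)). Three points of orders \((2,3,7)\) give \(-2+\tfrac12+\tfrac23+\tfrac67=\tfrac1{42}>0\), which is hyperbolic; alternatively, five points of order \(2\) give \(\tfrac12\). Each of these is realized as a root stack \(\mathcal X_{\mathbf m}\) because \(\mathbb P^1\) has infinitely many points, so the prescribed number of distinct points \(x_i\) can always be chosen.

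There is no real obstacle. The only point needing care is that \(\operatorname{Orb}(C)\) carries no logarithmic points, so \(s=0\) and the formula above agrees with \(\chi_{\mathrm{orb}}\) from Definition~\ref{def:hyperbolic-log-orbi}. One should also remark that the trichotomy is a partition of objects, not a decomposition of the category, consistent with the sentence preceding the proposition.
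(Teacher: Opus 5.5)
Your proposal is correct and follows the same route as the paper: a case-by-case sign analysis of \(\deg\omega_{\mathcal X}=2g(C)-2+\sum_i\bigl(1-\tfrac1{m_i}\bigr)\), using that the orbifold contributions are nonnegative (indeed at least \(\tfrac12\)). You are somewhat more explicit than the paper, for instance checking that \((2,3,7)\) gives \(\tfrac1{42}>0\) for the hyperbolic sector on \(\mathbb P^1\), where the paper only refers to ``sufficiently many points or large enough orders.''
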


    \begin{proof}
    The trichotomy follows immediately from the degree formula. For \(g(C)\ge2\), the base degree \(2g(C)-2\) is strictly positive, so adding nonnegative orbifold contributions preserves hyperbolicity. For \(g(C)=1\), the base degree is zero, meaning the total degree is positive if and only if there is at least one stacky point. For \(C=\mathbb P^1\), the base degree is \(-2\); the trivial model is spherical, classical signatures such as \((2,3,6)\) or \((2,4,4)\) yield Euclidean models, and configurations with sufficiently many points or large enough orders produce hyperbolic models.
    \end{proof}

    For \(F=\mathbb C(t)\) all three sectors occur.  After a uniformizing
    choice, each
    \(\mathcal X=\mathbb P^1(m_1,\ldots,m_r)
    \in\operatorname{Orb}_{\mathrm{hyp}}(\mathbb P^1)\) has, up to conjugacy,
    the genus-zero cofinite Fuchsian lattice
    \[
    \Gamma_{\mathcal X}
    \cong
    \left\langle
    \gamma_1,\ldots,\gamma_r
    \ \middle|\
    \gamma_1^{m_1}=\cdots=\gamma_r^{m_r}=1,\ 
    \gamma_1\cdots\gamma_r=1
    \right\rangle.
    \]
    For each such \(\mathcal X\), Proposition~\ref{prop:hyperbolic-stage-comparison}
    identifies its based \'etale fundamental group with
    \(\widehat{\Gamma}_{\mathcal X}\), subject to the choices stated there.

\section*{Acknowledgments}
    The first author thanks Nianzi Li for patient explanations of local
    harmonic models and their relation to the non-abelian Hodge
    correspondence, and Jianping Wang for helpful discussions on parahoric
    structures.

\bibliographystyle{plain}
\bibliography{reference}

\end{document}